\documentclass[11pt]{article} % use larger type; default would be 10pt
\usepackage[utf8]{inputenc} % set input encoding (not needed with XeLaTeX)
\usepackage[margin=1in]{geometry} % to change the page dimensions
\usepackage{graphicx} % support the \includegraphics command and options

\usepackage{booktabs} % for much better looking tables
\usepackage{array} % for better arrays (eg matrices) in maths
\usepackage{paralist} % very flexible & customisable lists (eg. enumerate/itemize, etc.)
\usepackage{verbatim} % adds environment for commenting out blocks of text & for better verbatim
\usepackage{mathrsfs}
\usepackage{amssymb}
\usepackage{amsthm}
\usepackage{amsmath,amsfonts,amssymb}
\usepackage{esint}
\usepackage{graphics}
\usepackage{enumerate}
\usepackage{mathtools}
\usepackage{xfrac}
\usepackage{nicefrac}
\usepackage{subcaption}
\usepackage[normalem]{ulem}
\usepackage{cancel}
\usepackage{enumitem}
\usepackage{fancyvrb}

\let\olddiamond\diamond
\let\oldsquare\square % Must go before mathabx

\usepackage{mathabx}

\renewcommand{\square}{\oldsquare}
\renewcommand{\diamond}{\olddiamond}

\usepackage[dvipsnames]{xcolor}
\usepackage[colorlinks=true, pdfstartview=FitV, linkcolor=blue, citecolor=blue, urlcolor=blue]{hyperref}
\usepackage[normalem]{ulem}

\usepackage{tikz}
\usetikzlibrary{calc}
\usepackage{pgf}
\usetikzlibrary{external}
\numberwithin{equation}{section}
\numberwithin{figure}{section}

\newtheorem{theorem}{Theorem}[section]

\newtheorem{corollary}[theorem]{Corollary}
\newtheorem{proposition}[theorem]{Proposition}
\newtheorem{lemma}[theorem]{Lemma}

\theoremstyle{definition}

\newtheorem{remark}[theorem]{Remark}

\DeclarePairedDelimiter{\norm}{\lVert}{\rVert}

\newcommand*{\supp}{\ensuremath{\mathrm{supp\,}}}
\newcommand*{\Id}{\ensuremath{\mathrm{I}_d}}

\newcommand*{\Itwod}{\ensuremath{\mathrm{I}_{2d}}}
\newcommand*{\tr}{\ensuremath{\mathrm{trace\,}}}

\newcommand*{\N}{\ensuremath{\mathbb{N}}}

\newcommand*{\Z}{\ensuremath{\mathbb{Z}}}

\newcommand*{\R}{\ensuremath{\mathbb{R}}}
\newcommand*{\Zd}{\ensuremath{\mathbb{Z}^d}}
\newcommand*{\Rd}{\ensuremath{\mathbb{R}^d}}

\renewcommand*{\tilde}{\widetilde}

\renewcommand{\P}{\ensuremath{\mathbb{P}}}
\renewcommand{\O}{\ensuremath{\mathcal{O}}}

\renewcommand{\b}{\ensuremath{\mathbf{b}}}

\newcommand{\g}{\mathbf{g}}
\newcommand{\h}{\mathbf{h}}
\newcommand{\s}{\mathbf{s}}

\newcommand{\A}{\mathcal{A}}
\renewcommand{\L}{\underline{L}}
\renewcommand{\S}{\mathcal{S}}

\DeclareMathOperator{\dist}{dist}

\newcommand{\q}{\mathbf{q}}

\newcommand{\E}{\mathbb{E}}

\DeclareSymbolFont{boldoperators}{OT1}{cmr}{bx}{n}
\SetSymbolFont{boldoperators}{bold}{OT1}{cmr}{bx}{n}
\usepackage{accents}
\newcommand\thickbar[1]{\accentset{\rule{.45em}{.6pt}}{#1}}
\renewcommand{\bar}{\thickbar}
\renewcommand{\a}{\mathbf{a}}
\renewcommand{\k}{\mathbf{k}}
\newcommand{\m}{\mathbf{m}}

\newcommand{\ahom}{\bar{\a}}
\newcommand{\bhom}{\bar{\mathbf{b}}}
\newcommand{\shom}{\bar{\mathbf{s}}}
\newcommand{\khom}{\bar{\mathbf{k}}}

\newcommand{\hhom}{\bar{\mathbf{h}}}
\newcommand{\thom}{\bar{\mathbf{t}}}

\newcommand{\CFS}{\mathsf{CFS}}

\newcommand{\bfA}{\mathbf{A}}
\newcommand{\bfAhom}{\overline{\mathbf{A}}}

\newcommand{\bfJ}{\mathbf{J}}
\newcommand{\bfE}{\mathbf{E}_0}

\newcommand{\Bring}{\mathring{\underline{B}}}

\makeatletter 
\newcommand{\negphantom}{\v@true\h@true\negph@nt} 
\newcommand{\neghphantom}{\v@false\h@true\negph@nt} 
\newcommand{\negph@nt}{\ifmmode\expandafter\mathpalette 
  \expandafter\mathnegph@nt\else\expandafter\makenegph@nt\fi} 
\newcommand{\makenegph@nt}[1]{% 
  \setbox\z@\hbox{\color@begingroup#1\color@endgroup}\finnegph@nt} 
\newcommand{\finnegph@nt}{% 
  \setbox\tw@\null 
  \ifv@ \ht\tw@\ht\z@\dp\tw@\dp\z@\fi \ifh@\wd\tw@-\wd\z@\fi\box\tw@} 
\newcommand{\mathnegph@nt}[2]{% 
  \setbox\z@\hbox{$\m@th #1{#2}$}\finnegph@nt} 
\makeatother

\def\Xint#1{\mathchoice
{\XXint\displaystyle\textstyle{#1}}%
{\XXint\textstyle\scriptstyle{#1}}%
{\XXint\scriptstyle\scriptscriptstyle{#1}}%
{\XXint\scriptscriptstyle\scriptscriptstyle{#1}}%
\!\int}
\def\XXint#1#2#3{{\setbox0=\hbox{$#1{#2#3}{\int}$}
\vcenter{\hbox{$#2#3$}}\kern-.5\wd0}}

\def\fint{\Xint-}

\newcommand{\avsum}{\mathop{\mathpalette\avsuminner\relax}\displaylimits}

\makeatletter
\newcommand\avsuminner[2]{%
  {\sbox0{$\m@th#1\sum$}%
   \vphantom{\usebox0}%
   \ooalign{%
     \hidewidth
     \smash{\,\rule[.23em]{8.8pt}{1.1pt} \relax}%
     \hidewidth\cr
     $\m@th#1\sum$\cr
   }%
  }%
}
\makeatother

\makeatletter
\newcommand\avsuminnerr[2]{%
  {\sbox0{$\m@th#1\sum$}%
   \vphantom{\usebox0}%
   \ooalign{%
     \hidewidth
     \smash{\,\rule[.23em]{6pt}{0.7pt} \relax}%
     \hidewidth\cr
     $\m@th#1\sum$\cr
   }%
  }%
}
\makeatother

\let\originalleft\left
\let\originalright\right
\renewcommand{\left}{\mathopen{}\mathclose\bgroup\originalleft}
\renewcommand{\right}{\aftergroup\egroup\originalright}

\newcommand{\rotatedsquare}{
\mathchoice
{\mathrel{\rotatebox{45}{$\square$}}}
{\mathrel{\rotatebox{45}{$\textstyle\square$}}}
{\mathrel{\rotatebox{45}{$\scriptstyle{\square}$}}}
{\mathrel{\rotatebox{45}{$\scriptscriptstyle{\square}$}}}
}

\newcommand{\innerbox}[2]{%
	\raisebox{0.0ex}{%
		\ooalign{%
		\raisebox{#2ex}{\scalebox{#1}{$\square$}}%
	}}
}
\newcommand{\cu}{%
  \mathbin{%
    \mathchoice
      {\innerbox{1.0}{-0.2}}%
      {\innerbox{1.0}{-0.2}}%
      {\innerbox{0.75}{-0.2}}%
      {\innerbox{0.8}{-0.2}}%
  }%
}

\newcommand{\innerdiamond}[2]{%
	\raisebox{0.0ex}{%
		\ooalign{%
		\raisebox{#2ex}{\scalebox{#1}{$\rotatedsquare$}}%
	}}
}
\newcommand{\cus}{%
  \mathbin{%
    \mathchoice
      {\innerdiamond{1.0}{-0.5}}%
      {\innerdiamond{1.0}{-0.5}}%
      {\innerdiamond{0.75}{-0.5}}%
      {\innerdiamond{0.8}{-0.5}}%
  }%
}

\newcommand{\innerboxdot}[3]{%
  \raisebox{-0.2ex}{%
    \ooalign{%
      \raisebox{0ex}{\scalebox{#1}{$\square$}}%
      \cr
      \hfil \raisebox{#3ex}{\scalebox{#2}{$\cdot$}}\hfil
    }%
  }%
}

\newcommand{\cudot}{%
  \mathbin{%
    \mathchoice
      {\innerboxdot{1.0}{1.2}{0.1}}%
      {\innerboxdot{1.0}{1.2}{0.1}}%
      {\innerboxdot{0.75}{0.9}{0.0}}%
      {\innerboxdot{0.8}{1.0}{0.1}}%
  }%
}

\newcommand{\innerdiamonddot}[3]{%
  \raisebox{-0.5ex}{%
    \ooalign{%
      \raisebox{0ex}{\scalebox{#1}{$\rotatedsquare$}}%
      \cr
      \hfil \raisebox{#3ex}{\scalebox{#2}{$\cdot$}}\hfil
    }%
  }%
}

\newcommand{\cusdot}{%
  \mathbin{%
    \mathchoice
      {\innerdiamonddot{1.0}{1.2}{0.45}}%
      {\innerdiamonddot{1.0}{1.2}{0.45}}%
      {\innerdiamonddot{0.75}{0.9}{0.4}}%
      {\innerdiamonddot{0.8}{1.0}{0.45}}%
  }%
}

\newcommand{\Pipar}{\Pi_{\mathrm{par}}}

\newcommand{\indc}{{\boldsymbol{1}}}
\renewcommand{\hat}{\widehat}

\usepackage{titlesec}

\newcommand{\addperiod}[1]{#1.}
\titleformat{\section}
   {\centering\normalfont\Large}{\thesection.}{0.5em}{}
\titleformat{\subsection}[runin]
  {\normalfont\bfseries}
  {\thesubsection.}
  {0.5em}
  {\addperiod}
\titleformat{\subsubsection}[runin]
  {\normalfont\bfseries}
  {\thesubsubsection.}
  {0.5em}
  {\addperiod}
\titleformat*{\subsubsection}{\normalfont\itshape}
\titleformat*{\paragraph}{\bfseries}
\titleformat*{\subparagraph}{\large\bfseries}

\title{Coarse-graining and quantitative stochastic homogenization of parabolic equations in high contrast}

\author{
Aidan Lau
\thanks{Courant Institute of Mathematical Sciences, New York University.
{\footnotesize \href{mailto:aidan.lau@nyu.edu}{aidan.lau@nyu.edu}.}
}
}

\date{April 8, 2026}

\usepackage[nottoc,notlot,notlof]{tocbibind}

\begin{document}

\maketitle

\begin{abstract}
We prove quantitative homogenization results for high contrast parabolic equations with random coefficients depending on both space and time. In particular, we prove that under a sufficient decorrelation assumption the homogenization length scale is bounded by~$\exp(C\log^2(1+\Lambda/\lambda)) + C\sqrt{\lambda}$. The proof is based on a parabolic coarse-graining framework which generalizes the results of~\cite{AK.HC} in the elliptic setting. 
\end{abstract}

\setcounter{tocdepth}{1}
\tableofcontents

\newpage

\section{Introduction}

\subsection{Motivation and informal statement of results}

We consider the parabolic equation
\begin{equation}
\label{e.the.PDE}
\left\{
\begin{aligned}
& \partial_t u^\epsilon - \nabla \cdot \a(\nicefrac{t}{\epsilon^2},\nicefrac{x}{\epsilon})\nabla u^\epsilon = \nabla \cdot \mathbf{f} \quad & \mbox{ in } (0,T) \times U\,, \\
& u^\epsilon = g \quad & \mbox{ on } \partial_{\sqcup}((0,T) \times U)\,,
\end{aligned}
\right.
\end{equation}
in~$d\geq 2$, where the coefficient field~$\a(\cdot,\cdot)$ is a~$\mathbb{Z}\times \mathbb{Z}^d$ stationary random field, the domain~$U\subset \mathbb{R}^d$ is bounded and Lipschitz, and the parabolic boundary is defined by
\begin{equation}
\label{e.sqcup.def}
\partial_{\sqcup}((0,T)\times U) := \bigl( \{0\} \times U\bigr) \cup \bigl( (0,T] \times \partial U \bigr)\,.
\end{equation}
It is well-known, by generalizing time-independent methods, that when the coefficient field is stationary, ergodic and uniformly elliptic, the equation~\eqref{e.the.PDE} homogenizes as~$\epsilon \to 0$ to the effective equation
\begin{equation}
\label{e.hom.PDE}
\left\{
\begin{aligned}
& \partial_t u - \nabla \cdot \ahom \nabla u = \nabla \cdot \mathbf{f} \quad & \mbox{ in } (0,T) \times U\,, \\
& u = g \quad & \mbox{ on } \partial_{\sqcup}((0,T) \times U)\,,
\end{aligned}
\right.
\end{equation}
where the effective diffusivity matrix~$\ahom$ is given by a corrected ergodic averaging of the coefficient field. The proof is a very special case of the more general work~\cite{Ben2024}, where a discussion of qualitative homogenization results can also be found. The goal of quantitative stochastic homogenization, originating in the elliptic case with~\cite{GO1,GO2}, is to obtain precise estimates on the homogenization error in terms of the scale separation~$\epsilon$. Quantitative homogenization estimates for the parabolic problem~\eqref{e.the.PDE}, in the case~$\mathbf{f}=0$, were proved in~\cite{ABM}. The authors proved that for a uniformly elliptic, time-dependent coefficient field with space-time unit range of dependence the homogenization error is
\begin{equation}
\label{e.hom.error}
\| u^\epsilon - u\|_{L^2((0,T)\times U)} \leq C(\epsilon\mathcal{X})^{\alpha}\,,
\end{equation}
where the constant~$C$, exponent~$\alpha >0$ and the random variable~$\mathcal{X}$ depend on the Lipschitz character of the domain~$U$, the range of dependence, the dimension~$d$, the boundary data~$g$, and the ellipticity constants~$0 < \lambda \leq \Lambda < \infty$ of the coefficient field.

The random scale~$\mathcal{X}$ quantifies the homogenization length scale, because it fixes the scale to which one needs to ``zoom out" in order to achieve a given acceptable homogenization error. It was shown in~\cite{AK.HC}, in the elliptic case and assuming sufficient decorrelation, that the homogenization length scale is at most
\begin{equation}
\mathcal{X} \lesssim \exp(C\log^2(1+\Lambda/\lambda))\,.
\end{equation}
The main result of this paper is that the coarse-graining framework developed in that paper generalizes to the time-dependent case. In particular, we prove that under the assumptions of uniform ellipticity and unit space-time range of dependence, the homogenization length scale is at most
\begin{equation}
\label{e.heuristic.scale}
\mathcal{X} \lesssim \exp\left(C\log^2(1+\Lambda/\lambda) \right) + C\sqrt{\lambda} \,.
\end{equation}
The~$\sqrt{\lambda}$ term appears in the estimate because we have to work at a length scale~$L$ for which the corresponding diffusive time scale~$L^2/\lambda$ is larger than the correlation time scale. The~$\Lambda/\lambda$ factor appears as in the elliptic case as a measure of the ellipticity contrast.

The results of this paper go beyond~\eqref{e.heuristic.scale} and extend the high-contrast coarse-graining framework of~\cite{AK.HC} to the parabolic case. In particular, we handle more general ellipticity and ergodicity assumptions and develop a complete parabolic coarse-graining framework, including coarse-grained parabolic inequalities.  Although we have not done it here, the coarse-grained estimates in this paper suffice to prove parabolic large-scale regularity results, as done, for example, in~\cite[Section 6]{ABM}. In the rest of this introduction we describe our assumptions in detail and then state the main results.

\subsection{Basic assumptions}
\label{ss.assumptions}

Let $\mathbb{R}^{d\times d}_+$ denote the set of real-valued $d\times d$ matrices~$A\in\mathbb{R}^{d\times d}$ such that $e\cdot Ae > 0$ for all $e\in\Rd\setminus\{0\}$. If~$\a:\R \times \Rd \to \mathbb{R}^{d\times d}_+$ is a matrix-valued function, define the symmetric part $\s$ and the skew-symmetric part $\k$ by
\begin{equation}
\label{e.sk.def}
\s(t,x) := \frac{1}{2}(\a(t,x) + \a^t(t,x)) \ \text{ and } \ \k(t,x) := \frac{1}{2}(\a(t,x) - \a^t(t,x)), \ \forall (t,x) \in \R_+ \times \Rd\,,
\end{equation}
where $A^t$ denotes the transpose of a matrix $A\in \mathbb{R}^{d\times d}$. The set of symmetric matrices is denoted by~$\mathbb{R}^{d\times d}_{\mathrm{sym}}$, while the set of skew-symmetric matrices is denoted by~$\mathbb{R}^{d\times d}_{\mathrm{skew}}$. We introduce the minimal qualitative assumption that our fields belong to the space
\begin{equation}
\label{e.Omega.def}
\Omega := \{\text{Measurable } \a:\R\times \Rd \to \R^{d\times d}_+: \s,\s^{-1} \in L^1_{\mathrm{loc}}(\mathbb{R} \times \Rd)\,, \s^{-\nicefrac12}\k\s^{-\nicefrac12} \in L^\infty_{\mathrm{loc}}(\mathbb{R} \times \Rd) \}\,,
\end{equation}
and we show in Section~\ref{s.coarse.graining} that under this assumption the associated Cauchy-Dirichlet problem is well-posed. The canonical element of~$\Omega$ is~$\a(\cdot,\cdot)$, with~$\s(\cdot,\cdot)$ and~$\k(\cdot,\cdot)$ always taken to be the random fields defined in~\eqref{e.sk.def}, and we will often suppress the explicit dependence on~$t$ and~$x$. It is convenient to describe the field~$\a$ in terms of a~$\R^{2d\times 2d}_{\mathrm{sym}}$-valued field
\begin{equation}
\label{e.bfA.def}
\bfA(t,x)
:= 
\begin{pmatrix} 
( \s + \k^t\s^{-1}\k )(t,x) 
& -(\k^t\s^{-1})(t,x) 
\\ - ( \s^{-1}\k )(t,x) 
& \s^{-1}(t,x) 
\end{pmatrix}\,,
\end{equation}
which arises when considering the variational formulation of parabolic equations -- see~\cite[Appendix A]{ABM}. Instead of viewing~$\a$ as the canonical element of~$\Omega$ we may instead view~$\bfA$ as the canonical element, with~$\bfA\in \Omega$ implying that~$\bfA,\bfA^{-1}\in L^1_{\mathrm{loc}}(\R \times \Rd;\R^{2d\times 2d}_{\mathrm{sym}})$.

\smallskip

For any Borel subsets~$U\subseteq\Rd$ and~$I\subseteq\R$ define the~$\sigma$-field~$\mathcal{F}(I\times U)$ to be the~$\sigma$-field generated by the random variables
\begin{equation*}
\a \mapsto \int_{\R\times \Rd} e' \cdot \a e \,\varphi \quad \mbox{for fixed}\, e,e'\in\Rd \, \mbox{and}\, \varphi \in C_c^\infty(I\times U)\,,
\end{equation*}
and let~$\mathcal{F} := \mathcal{F}(\R \times \Rd)$. We will consider throughout the paper a probability measure~$\P$ on~$(\Omega, \mathcal{F})$, satisfying the three basic assumptions of stationary~\ref{a.stationarity}, ellipticity~\ref{a.ellipticity}, and ergodicity~\ref{a.CFS}.

\begin{enumerate}[label=(\textrm{P\arabic*})]
\setcounter{enumi}{0}
\item \emph{Stationarity with respect to~$\Z\times \Zd$--translations:}
\label{a.stationarity}
For every~$(s,y)\in \mathbb{Z}\times \mathbb{Z}^d$, let~$T_{s,y}: \Omega \to \Omega$ be the translation operator given by~$T_{s,y}\a := \a(\cdot + s, \cdot + y)$. Then
\begin{equation}
\label{e.P.stationary}
\P \circ T_{s,y} = \P, \quad \forall (s,y) \in \Z\times\Zd\,.
\end{equation}
\end{enumerate}

\smallskip

We will see in Section~\ref{ss.bfA.def} that the qualitative assumption~$\a\in\Omega$ is sufficient to define the coarse-grained matrices~$\s(I\times U),\s_*(I\times U)$ and~$\k(I\times U)$ which are the central objects of study in this paper. These matrices are collected together into a double-variable random field~$\bfA(I\times U)$ which represents a coarse-graining of the field given in~\eqref{e.bfA.def}. The coarse-grained matrices depend only on the restriction~$\a|_{I\times U}$ of the field~$\a$ to~$I\times U$. It is convenient to carry out the coarse-graining in the parabolic cubes defined for~$n\in\Z$ by
\begin{equation*}
I_n: = \bigg( -\frac{3^{2n}}{2},\frac{3^{2n}}{2}\bigg)\,, \quad \cu_n: = \bigg( -\frac{3^n}{2},\frac{3^n}{2}\bigg)^d\,, \quad \mbox{and} \quad \cudot_n := I_n \times \cu_n\,.
\end{equation*}
We will also use the notation
\begin{equation}
\label{e.Z.def}
\mathcal{Z}_n: = 3^{2n}\Z\times 3^n\Zd
\end{equation}
to refer to the standard space-time lattice and for~$s\in (0,1)$ define the coarse-grained ellipticity constants
\begin{align}
\label{e.lambda.infty.def}
\left\{
\begin{aligned}
& \Lambda_{s,\infty}(\cudot_n) \coloneqq \sup_{k \leq n} 3^{2s(k-n)} \max_{z\in \mathcal{Z}_k \cap \cudot_n} |(\s + \k^t\s_*^{-1}\k)(z+\cudot_k)|\,, \\
& \lambda_{s,\infty}(\cudot_n) \coloneqq  \biggl(\sup_{k \leq n} 3^{2s(k-n)} \max_{z\in \mathcal{Z}_k \cap \cudot_n} |\s_*^{-1}(z+\cudot_k)| \biggr)^{-1} \,. \\
\end{aligned}
\right.
\end{align}
Here~$|A|$ denotes the spectral norm of a square matrix~$A$; that is, the square root of the largest eigenvalue of~$A^tA$. Our ellipticity assumption states that the coarse-grained ellipticity constants are bounded by a deterministic constant above a large (random) scale.

\begin{enumerate}[label=(\textrm{P\arabic*})]
\setcounter{enumi}{1}
\item \emph{Ellipticity above a minimal scale.} \label{a.ellipticity}
There exist constants~$0 < \lambda_0 \leq \Lambda_0 < \infty$, an exponent~$\gamma \in [0,1)$, an increasing function~$\Psi_\S:\R_+ \to [1,\infty)$ and a constant~$K_{\Psi_\S}\in (1,\infty)$ satisfying the growth condition
\begin{equation}
\label{e.Psi.S.growth}
t \Psi_\S(t) \leq \Psi_\S(K_{\Psi_\S}  t), \quad \forall t \in [1,\infty)\,,
\end{equation}
and a nonnegative random variable~$\S$ satisfying the bound
\begin{equation}
\label{e.S.integrability}
\P \bigl[ \S > t   \bigr]
\leq 
\frac{1}{\Psi_\S(t)}
\,, 
\quad \forall t\in (0,\infty) \,,
\end{equation}
such that for every~$m\in\mathbb{Z}$,
\begin{equation}
\label{e.ellip.assumption}
3^m \geq \mathcal{S} \implies \Lambda_{\nicefrac{\gamma}{2},\infty}(\cudot_m) \leq \Lambda_0 \quad \mbox{and} \quad \lambda_0 \leq \lambda_{\nicefrac{\gamma}{2},\infty}(\cudot_m)\,.
\end{equation}
\end{enumerate}

One way to state the classical assumption of uniform ellipticity is to assume that there exist constants~$0 < \lambda \leq \Lambda < \infty$ such that
\begin{equation}
\label{e.uniform.ellipticity}
\s^{-1} \leq \lambda^{-1} \Id \quad \mbox{and} \quad \s + \k^t\s^{-1}\k \leq \Lambda\Id\,,
\end{equation}
with the inequality in the sense of the Loewner partial ordering.
If the coefficient field is uniformly elliptic then the coarse-grained matrices~$(\s + \k^t\s_*^{-1}\k)(z+\cudot_k)$ and~$\s_*^{-1}(z+\cudot_k)$ are controlled by the uniform ellipticity constants (see~\eqref{e.a.bounds}), and this implies that the coarse-grained ellipticity constants~$\Lambda_{0,\infty}(\cudot_m)$ and~$\lambda^{-1}_{0,\infty}(\cudot_m)$ are controlled by the uniform ellipticity constants for every~$m\in\mathbb{Z}$. However, the ellipticity assumption~\ref{a.ellipticity} is more general than a uniform ellipticity condition in that it permits degenerate and/or singular coefficient fields, provided that the degeneracy, parametrized by~$\gamma$, is not too strong. Examples of time-independent fields satisfying this assumption can be found in~\cite[Appendix D]{AK.HC}, and we note that one of the motivations for the coarse-grained ellipticity assumption is that it is renormalizable, in the sense of Lemma~\ref{l.renormalize.ellipticity}.

\smallskip

In order to state our space-time ergodicity assumption we first make one definition. Given an~$\mathcal{F}$-measurable random variable~$X$ on~$\Omega$ and a Borel subset~$V\subseteq \R \times \Rd$, let
\begin{align}
\label{e.malliavin.def}
|D_V &  X|(\bfA) \nonumber \\
& := \limsup_{t\to 0} \frac{1}{2t}\sup \biggl\{ X(\bfA_1)-X(\bfA_2) : \bfA_1,\bfA_2 \in \Omega\,, |\bfA^{-\nicefrac12}\bfA_i\bfA^{-\nicefrac12} - \Itwod| \leq t\indc_V\,, \forall i\in\{1,2\} \biggr\}\,,
\end{align}
where~$\bfA\in\Omega$.

\begin{enumerate}[label=(\textrm{P\arabic*})]
\setcounter{enumi}{2}
\item \emph{Concentration for sums ($\CFS$):}
\label{a.CFS}
There exist~$\beta \in \left[0,1\right)$,~$\nu\in (\gamma,\frac{d+2}{2}]$, an increasing function~$\Psi:\R_+ \to [1,\infty)$ and a constant~$K_\Psi\in [3,\infty)$ satisfying the growth condition
\begin{equation}
\label{e.Psi.growth}
t \Psi(t) \leq \Psi(K_\Psi  t), \quad \forall t \in [1,\infty)\,,
\end{equation}
such that,
for every~$m,n\in\N$ with $\beta m < n<m$ and  family $\{ X_z \,:\, z\in \mathcal{Z}_n \cap \cudot_m\}$ of random variables satisfying, for every~$z\in \mathcal{Z}_n \cap \cudot_m$,
\begin{equation} 
\label{e.CFS.ass}
\left\{
\begin{aligned}
& \E\left[ X_z \right]=0 \,, 
\\ & 
\left| X_z \right| \leq 1\,,
\\ &   
\left| D_{z+\cudot_n} X_z\right| \leq 1 \,,
\\
& X_z \ \ \mbox{is $\mathcal{F}(z+\cudot_n)$--measurable} \,, \\
\end{aligned}
\right.
\end{equation}
we have the estimate 
\begin{equation} 
\label{e.CFS}
\P \Biggl[ 
\biggl| \,
\avsum_{z\in \mathcal{Z}_n \cap \cudot_m}  X_z
\biggr| 
\geq 
t3^{-\nu(m-n)}
\Biggr]
\leq
\frac{1}{\Psi(t)}
\,, \quad \forall t\in [1,\infty) \,.
\end{equation}
\end{enumerate}

The standard example we have in mind is a field with finite space-time range of dependence. However, we note that~\ref{a.CFS} is much more general, and many examples of time-independent fields satisfying this condition are given explicitly in~\cite[Chapter 3]{AK.Book}. It is convenient to have notation to refer to estimates of the form~\eqref{e.CFS}, so given an increasing function~$\Psi:\mathbb{R}_+ \to [1,\infty)$ we write~$X \leq \mathcal{O}_{\Psi}(A)$ as shorthand for
\begin{equation*}
\mathbb{P}[ X > tA] \leq \frac{1}{\Psi(t)}\,, \quad \forall t \in [1,\infty)\,.
\end{equation*}
Inequalities of this type are discussed further in~\cite[Appendix C]{AK.HC}.

Our assumptions are stated for general ellipticity and ergodicity parameters. To illustrate how these apply in a specific context, suppose that~$\a$ is a uniformly elliptic field satisfying~\eqref{e.uniform.ellipticity} with constants~$0<\lambda \leq \Lambda < \infty$, and with finite range of dependence~$L$ in space and~$T$ in time. If we define the dimensionless variables
\begin{equation}
\label{e.rescale.variables}
t' = \frac{\lambda t}{L^2}\,, \quad x' = \frac{x}{L} \,, \quad \mbox{and} \quad \tilde{\a}(t',x') = \lambda^{-1}\a(t,x)\,,
\end{equation}
then the new coefficient field~$\tilde{\a}$ has uniform ellipticity lower bound~$1$, upper ellipticity bound~$\Lambda/\lambda$, range of dependence 1 in space, and range of dependence $\lambda T/L^2$ in time. Up to rescaling, this reduces the problem to the two dimensionless parameters~$\Lambda/\lambda$ (appearing in~\ref{a.ellipticity}) and~$\lambda T / L^2$ (appearing in~\ref{a.CFS}) -- see Section~\ref{ss.rescaling}.

\smallskip

\subsection{Main Results}

In this sub-section we state two main results. The first is a bound on the coarse-grained matrices, while the second is a homogenization statement for the Dirichlet problem. Convergence of the coarse-grained matrices can be viewed as the fundamental object of study because bounds on the homogenization error at the level of the coarse-grained matrices imply, deterministically, homogenization of the Dirichlet problem, as explained in Section~\ref{ss.homogenize}.

We first introduce some notation. For every scale~$3^n$ we define in~\eqref{e.all.homs.def} the symmetric, deterministic matrices~$\shom(\cudot_n),\shom_*(\cudot_n)$ and a deterministic matrix~$\khom(\cudot_n)$ which satisfy
\begin{equation}
\left\{
\begin{aligned}
(\shom +\khom^t\shom_*^{-1}\khom)(\cudot_n) &= \mathbb{E}[(\s +\k^t\s_*^{-1}\k)(\cudot_n)]\,,  \\
\shom_*(\cudot_n) &= \mathbb{E}[ \s_*^{-1}(\cudot_n)]^{-1}\,. \\
\end{aligned}
\right.
\end{equation}
There exist a symmetric, deterministic matrix~$\shom$ and a skew-symmetric, deterministic matrix~$\khom$ (the homogenized matrices, defined in Section~\ref{ss.homogenized.mat}) such that~$\shom(\cudot_n),\shom_*(\cudot_n) \to \shom$ and~$\khom(\cudot_n) \to \khom$ as~$n\to\infty$, and the full homogenized matrix is defined by~$\ahom = \shom + \khom$. We also denote~$\bhom = \shom + \khom^t \shom^{-1}\khom$, which is an upper ellipticity bound for the homogenized matrix.

The degree to which we have homogenized by scale~$3^n$ is characterized by the ratio of the ellipticity upper bound~$(\shom+\khom^t\shom_*^{-1}\khom)(\cudot_n)$ to the lower bound~$\shom_*(\cudot_n)$, modulo a ``centring" operation which we describe in Section~\ref{ss.bfA.def}. This is quantified by
\begin{equation}
\Theta_n := 
\min_{\h_0 \in \R^{d\times d}_{\mathrm{skew}}}
\bigl| ( \shom_{*}^{-\nicefrac12}(\cudot_n) (\shom(\cudot_n) + (\khom(\cudot_n) - \h_0)^t \shom_*^{-1}(\cudot_n) (\khom(\cudot_n) - \h_0) ) \,\shom_{*}^{-\nicefrac12}(\cudot_n)  )  \bigr|
\,,
\end{equation}
which converges monotonically downwards to 1 as~$n\to \infty$. It is one of the fundamental assertions in~\cite{AK.HC} that the quantity~$\Theta_n - 1$ is a good quantifier of the homogenization error at scale~$3^n$ and can be iterated to obtain quantitative convergence estimates. For this reason our first main result in~\eqref{e.Theta.conv.thm} is stated in terms of~$\Theta_n$. The particular focus of the following theorem is the dependence of the homogenization length scale on ellipticity, for which we define
\begin{equation}
\Pi_{\mathrm{par}} : = \max\bigg\{\frac{\Lambda_0}{\lambda_0},\lambda_0^{-1},\lambda_0\bigg\}\,,
\end{equation}
with~$\Lambda_0$ and~$\lambda_0$ as in~\eqref{e.lambdas.def}.

\begin{theorem}[Convergence of the coarse-grained matrices]
\label{t.theoremB}
Suppose that~$\mathbb{P}$ satisfies~\ref{a.stationarity},~\ref{a.ellipticity} and~\ref{a.CFS}. There exists a constant~$c(d)\in (0,\nicefrac{1}{4}]$ and exponents
\begin{equation}
\alpha := (\min\{\nu,1\} - \gamma)(1-\beta) \quad \mbox{and} \quad \kappa := \min\{c,c\alpha\}
\end{equation}
such that the following statements hold:
\begin{itemize}
\item \textbf{Estimate of the homogenization length scale:} There exists a constant~$C(d)<\infty$ and length scale
\begin{equation}
L := \exp \biggl( \frac{C}{\alpha}\log\biggl( \frac{K_{\Psi_{\S}}K_{\Psi}\Pi_{\mathrm{par}}}{\alpha}\biggr)\log(1+\Lambda_0/\lambda_0)  \biggr)
\end{equation}
such that
\begin{equation}
\label{e.Theta.conv.thm}
\Theta_n - 1 \leq \left(\frac{L}{3^n}\right)^{\kappa} \,.
\end{equation}
\item \textbf{Quenched convergence of the coarse-grained matrices:} For any~$\delta >0$ and~$\gamma' \in (\gamma,1)$ there exist a constant~$C = C(d,K_{\Psi},\gamma'-\gamma,\kappa,\delta)$, an exponent~$\theta:= \frac{1}{8}\min\{\kappa,\gamma'-\gamma\}$ and a random minimal scale~$\mathcal{Y}_{\delta,\gamma'}$ satisfying
\begin{equation}
\mathcal{Y}_{\delta,\gamma'}^{(\nu-\gamma)(1-\beta)} = \O_{\Psi} \bigl( C L^{\nicefrac{d}{\kappa}} \bigr)
\end{equation}
such that if~$3^m \geq \max\{\mathcal{Y}_{\delta,\gamma'}, \mathcal{S}\}$ then for every integer~$k\leq m$,
\begin{align}
\label{e.b.quenched}
\b(z+\cudot_k) \leq \biggl( 1 + \delta 3^{\gamma'(m-k)} \biggl(\frac{\max\{\mathcal{Y}_{\delta,\gamma'},\S\}}{3^m} \biggr)^{\theta} \biggr) \bhom \qquad \forall z\in \mathcal{Z}_k \cap \cudot_m \,,
\end{align}
and
\begin{align}
\label{e.s.quenched}
\s_*^{-1}(z+\cudot_k) \leq \biggl( 1 + \delta 3^{\gamma'(m-k)} \biggl(\frac{\max\{\mathcal{Y}_{\delta,\gamma'},\S\}}{3^m} \biggr)^{\theta} \biggr) \shom^{-1} \qquad \forall z\in \mathcal{Z}_k \cap \cudot_m \,.
\end{align}
\end{itemize}
\end{theorem}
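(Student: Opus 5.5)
We follow the renormalization strategy of~\cite{AK.HC}, transported to parabolic cubes~$\cudot_n$. We first use the basic deterministic properties of the coarse-grained matrices, established in Section~\ref{s.coarse.graining}: subadditivity over the partition~$\{z+\cudot_k : z\in\mathcal{Z}_k\cap\cudot_n\}$, monotonicity of~$\shom(\cudot_n)$, $\shom_*(\cudot_n)$, $\khom(\cudot_n)$ in~$n$, and the resulting monotonicity of~$\Theta_n$. Next comes the key deterministic step, a parabolic analogue of the ``one-step improvement'' of~\cite{AK.HC}. The defect of subadditivity between scale~$3^n$ and scale~$3^{n+h}$ controls, via coarse-grained Caccioppoli and Poincar\'e-type inequalities, the fluctuations of the spatial gradients and fluxes of the parabolic maximizers. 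It therefore forces~$\Theta$ to drop unless~$\Theta_n$ is already close to~$1$. Concretely, we aim for an inequality of the form
\begin{equation*}
\Theta_{n+h}-1 \leq \Bigl(1-\frac{c}{\log(1+\Lambda_0/\lambda_0)}\Bigr)(\Theta_n-1) + \mathrm{Err}_{n,h},
\end{equation*}
or equivalently a doubling statement in~$\log\Theta$ across~$h\simeq C\log(1+\Lambda_0/\lambda_0)$ scales. The error~$\mathrm{Err}_{n,h}$ collects the variance of the coarse-grained matrices on subcubes, which is where the probabilistic input enters.

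The variance term is estimated by applying~\ref{a.CFS} to the centered, truncated variables~$X_z$ built from~$\bfA(z+\cudot_k)$. Their Malliavin derivatives~$|D_{z+\cudot_k}X_z|$ are bounded through the first variation of the coarse-grained matrices in~$\bfA$, which is a local energy density of the maximizers. On the event~$3^m\geq\S$, the ellipticity assumption~\ref{a.ellipticity} bounds these quantities by~$\Lambda_0$ and~$\lambda_0^{-1}$, up to a loss~$3^{\gamma(m-k)}$. Combined with~\eqref{e.CFS}, this gives fluctuations of order~$3^{-(\nu-\gamma)(m-n)}$ with~$\beta m<n$. Optimizing over~$n$ produces the exponent~$\alpha=(\min\{\nu,1\}-\gamma)(1-\beta)$, where~$\min\{\nu,1\}$ reflects the deterministic boundary-layer error of order~$3^{-(m-n)}$. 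Tail terms from~$\S$ and from~\eqref{e.CFS} are absorbed using the growth conditions on~$\Psi_\S$ and~$\Psi$, which yields the~$\log(K_{\Psi_\S}K_\Psi/\alpha)$ factor. The~$\lambda_0,\lambda_0^{-1}$ entries of~$\Pipar$ enter through the parabolic scaling: we must work at scales where the diffusive time~$3^{2n}/\lambda_0$ is comparable to the unit correlation time. Iterating the one-step inequality, each improvement of~$\Theta-1$ by a fixed factor costs about~$\frac{C}{\alpha}\log(\cdots)$ scales, and~$\Theta_0-1\lesssim\Lambda_0/\lambda_0$ requires~$\log(1+\Lambda_0/\lambda_0)$ such improvements. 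This gives the~$\log\times\log$ form of~$L$ and then the algebraic rate~$\kappa$ beyond~$L$, which proves~\eqref{e.Theta.conv.thm}.

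For the quenched statement, we combine~\eqref{e.Theta.conv.thm}, which gives~$\bhom(\cudot_k)\leq(1+(L3^{-k})^\kappa)\bhom$ together with the analogous bound for~$\shom_*$, with a union bound over~$z\in\mathcal{Z}_k\cap\cudot_m$ and over~$k$. On each subcube we again use~\ref{a.CFS} to bound~$\b(z+\cudot_k)-\bhom(\cudot_{k})$ via subadditivity from smaller scales. The loss~$3^{\gamma'(m-k)}$ with~$\gamma'>\gamma$ gives summability over~$k$ of the~$3^{-(\gamma'-\gamma)(m-k)}$ tails. The random scale~$\mathcal{Y}_{\delta,\gamma'}$ is defined as the smallest scale above which all these events hold, and its~$\O_\Psi$ bound follows from summing the tail bounds; the factor~$L^{d/\kappa}$ counts the subcubes at the scale where~\eqref{e.Theta.conv.thm} becomes effective.

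We expect the main obstacle to be the parabolic one-step improvement. Parabolic maximizers control spatial gradients only, so controlling the time derivative in the coarse-grained Caccioppoli and Poincar\'e inequalities requires the~$H^{-1}$-in-space norm of~$\partial_t u$ together with careful cutoffs in time. This step should be attempted first, modeled on~\cite[Appendix A]{ABM} and the double-variable formulation~\eqref{e.bfA.def}, while keeping all constants dependent only on~$\Lambda_0/\lambda_0$ rather than on~$\Lambda_0$ and~$\lambda_0$ separately.
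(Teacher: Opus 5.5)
Your $\log\times\log$ bound rests entirely on the one-step inequality $\Theta_{n+h}-1\le\bigl(1-c/\log(1+\Lambda_0/\lambda_0)\bigr)(\Theta_n-1)+\mathrm{Err}_{n,h}$, equivalently on a fixed-factor improvement of $\Theta-1$ every $\frac{C}{\alpha}\log(\cdots)$ scales. You give no mechanism that makes the cost of such a contraction depend only logarithmically on the contrast. The mechanism you do sketch is the small-contrast argument: an additivity defect between scales $3^n$ and $3^{n+h}$ controls fluctuations of gradients and fluxes through coarse-grained Caccioppoli and Poincar\'e inequalities, forcing $\Theta$ to drop. In high contrast this loses powers of the contrast. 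The constant in Proposition~\ref{p.coarse.caccio} and Corollary~\ref{c.adapted.caccio} is a power of $\Lambda_{s,1}/\lambda_{s,1}$, and converting an energy defect into a bound on $\Theta_n-1$ costs further such factors. Carried out honestly, this gives a per-step factor of the form $1-c\Pipar^{-p}$. The resulting homogenization length is exponential in a power of $\Lambda_0/\lambda_0$, not $\exp(C\log^2(1+\Lambda_0/\lambda_0))$.

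The missing idea is the dichotomy of Section~\ref{s.hc}, which never proves that $\Theta$ itself contracts at a prescribed rate.
\begin{itemize}
\item Because $n\mapsto\bfAhom(\cudot_n)$ is monotone and $(\det\bfAhom(\cudot_n))^{1/d}\in[1,\Theta_0]$ for $n\ge0$, Lemma~\ref{l.pigeon} gives, within $N\simeq\delta^{-1}|\log\sigma|\,l$ scales, one of two outcomes. Either the determinant drops by $\sigma$, or there is a window with $\bfAhom(\cudot_{m-l})\le(1+\delta)\bfAhom(\cudot_m)$.
\item In the second case, Proposition~\ref{p.renormalization.P} lets you zoom out so that $\bfE$ is replaced by essentially $\bfAhom(\cudot_m)$. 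You then center ($\khom_m=\khom_m^t$) and pass to adapted parabolic cylinders built from $\m_0=\b_0\#\s_{*,0}$ and the rounded time scale $\lambda_r$.
\item There, the div--curl estimate of Lemma~\ref{l.Jtilde.energy.bound}, combined with Lemma~\ref{l.Jcontrols.Theta}, gives Proposition~\ref{p.renormalize.reduce}: $\Theta_m-1\le\sigma\Theta$. The genuinely parabolic input is the integration by parts of Lemma~\ref{l.testing.with.u}; its extra term $\fint(v_n-\ell_{P'})^2\partial_t\varphi$ is handled by the multiscale Poincar\'e inequality.
\item The contraction factor $\sigma$ is fixed. The contrast enters only through the scale separation $\Pipar^{p}3^{-cl}\le\delta\sigma^2$, i.e.\ $l\simeq\alpha^{-1}\log(\cdots\Pipar)$.
\item Hence $(\det\bfAhom(\cudot_m))^{1/d}(\Theta_m-1-2\sigma)$ drops by $\sigma$ every $N$ scales, and $\log(1+\Theta)$ iterations give Theorem~\ref{t.highcontrast}.
\end{itemize}
Only once $\Theta_n-1\le\sigma_0(d)$ does an additivity-defect iteration like yours take over, giving the rate $\kappa$. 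This is Theorem~\ref{t.small.contrast.rate}, with Lemma~\ref{l.bfAhoms.equivalent} transferring between standard and adapted cylinders. The quenched part then follows as in Theorem~\ref{t.quenched}, roughly as you describe.

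Finally, your aim of constants depending only on $\Lambda_0/\lambda_0$ is not attainable. With $\Z\times\Zd$ stationarity, $\lambda_0$ fixes the diffusive time relative to the correlation time. This is why $\lambda_0^{\pm1}$ enter $\Pipar$, through the adapted geometry of Lemmas~\ref{l.adapted.cylinders}--\ref{l.adapted.means}.
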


The statement of Theorem~\ref{t.theoremB} simplifies if we fix a particular setting. To illustrate this, we consider in Corollary~\ref{c.frd} a uniformly elliptic coefficient field with finite-range of dependence, and apply Theorem~\ref{t.theoremB} along with the rescaling~\eqref{e.rescale.variables}. For simplicity we state only the quenched convergence of the coarse-grained matrices.

\begin{corollary}
\label{c.frd}
Suppose that~$\a$ is a coefficient field with law~$\mathbb{P}$ satisfying~\ref{a.stationarity}, the uniform ellipticity assumption~\eqref{e.uniform.ellipticity} with constants~$0<\lambda \leq \Lambda <\infty$, and with finite range of dependence 1 in space and $T$ in time: that is, given Borel subsets $U,V \subset \mathbb{R}^d$ and~$I,J\subset \mathbb{R}$
\begin{equation*}
\dist(U,V) \geq 1 \quad \mbox{or} \quad \dist(I,J) \geq T \implies \mathcal{F}(I\times U) \mbox{ and } \mathcal{F}(J\times V) \mbox{ are } \mathbb{P}\mbox{-independent}.
\end{equation*}
There exist a constant~$c(d) \in (0,\nicefrac{1}{4}]$, a constant~$C(d) < \infty$, an exponent~$\theta >0$, a length scale
\begin{equation}
\label{e.L.frd}
L \coloneqq \exp( C\log^2(1+\Lambda/\lambda) ) + C\sqrt{\lambda T}\,,
\end{equation}
and a random minimal scale~$\mathcal{X}$ satisfying
\begin{equation}
\mathcal{X} = \O_{\Gamma_2}(CL^C)\,, \mbox{ where } \Gamma_2(t) = e^{\frac{t^2}{2}} -1\,,
\end{equation}
such that if~$3^m \geq \mathcal{X}$ then for every integer~$k\leq m$ and every~$z\in (\lambda^{-1}3^{2k}\mathbb{Z} \times 3^k\mathbb{Z}^d) \cap (\lambda^{-1} 3^{2m}\mathbb{Z} \times 3^m \mathbb{Z}^d)$,
\begin{align}
\label{e.frd.quenched}
\left\{
\begin{aligned}
& \b(z+ (\lambda^{-1}I_k \times \cu_k) ) \leq \biggl( 1 + 3^{c(m-k)} \biggl(\frac{\mathcal{X}}{3^m} \biggr)^{\theta} \biggr) \bhom \,, \\
& \s_*^{-1}(z+ (\lambda^{-1}I_k \times \cu_k) ) \leq \biggl( 1 + 3^{c(m-k)} \biggl(\frac{\mathcal{X}}{3^m} \biggr)^{\theta} \biggr) \shom^{-1} \,.
\end{aligned}
\right.
\end{align}
\end{corollary}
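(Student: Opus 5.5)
The corollary follows from Theorem~\ref{t.theoremB}, applied to the rescaled field~$\tilde{\a}$ of~\eqref{e.rescale.variables} with spatial length scale~$1$. Throughout, the rescaling lemma of Section~\ref{ss.rescaling} transports coarse-grained matrices of~$\tilde\a$ on~$\cudot_k$ to those of~$\a$ on~$\lambda^{-1}I_k\times\cu_k$. The plan has three steps.

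\emph{Step 1: the rescaled field.} The rescaled field~$\tilde{\a}(t',x')=\lambda^{-1}\a(t'/\lambda,x')$ is uniformly elliptic with constants~$1$ and~$\Lambda/\lambda$. It has range of dependence~$1$ in space and~$\lambda T$ in time. By~\eqref{e.a.bounds} the coarse-grained ellipticity constants satisfy~$\Lambda_{0,\infty}\le \Lambda/\lambda$ and~$\lambda_{0,\infty}\ge 1$ at every scale. Hence~\ref{a.ellipticity} holds with~$\gamma=0$,~$\S=0$,~$\lambda_0=1$ and~$\Lambda_0=\Lambda/\lambda$, so~$\Pi_{\mathrm{par}}=\Lambda/\lambda$.

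\emph{Step 2: stationarity and $\CFS$.} Stationarity of~$\tilde\a$ is with respect to~$\lambda^{-1}\Z\times\Zd$ rather than~$\Z\times\Zd$. This is the source of the lattice~$\lambda^{-1}3^{2k}\Z\times 3^k\Zd$ in the statement. To handle it, I would first coarsen space and time by a factor~$3^{j_0}$, with~$3^{j_0}\simeq \max\{1,\sqrt{\lambda T}\}$, so that a single cube~$\cudot_{j_0}$ in the rescaled variables contains the full space-time dependence range.

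Then I verify~\ref{a.CFS} with~$\beta=0$,~$\nu=\frac{d+2}{2}$ and~$\Psi=\Gamma_2$ (sub-Gaussian), for sums indexed at scales~$n\ge j_0$. The argument is to split~$\mathcal{Z}_n\cap\cudot_m$ into~$3^{d+1}$ sublattices. In each sublattice the variables~$X_z$ are independent, because distinct cubes are separated by at least one cube, of size at least the range in both space and time. Hoeffding's inequality then bounds each sublattice sum by~$\O_{\Gamma_2}(C3^{-\frac{d+2}{2}(m-n)})$, and a union bound combines the sublattices. The derivative condition~$|D X_z|\le1$ is not needed for this.

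Consequently, at scales~$3^n$ below~$3^{j_0}$ the hypothesis fails. I would absorb this by applying Theorem~\ref{t.theoremB} to the field viewed at unit scale~$3^{j_0}$. This replaces~$L$ by~$3^{j_0}L_0$ with~$L_0=\exp(C\log^2(1+\Lambda/\lambda))$, and since~$\log K\lesssim\log(1+\Lambda/\lambda)$, we get~$3^{j_0}L_0\lesssim L_0+\sqrt{\lambda T}\cdot L_0$.

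\emph{Main obstacle.} The obstacle is obtaining the additive form~\eqref{e.L.frd} rather than the product~$L_0\sqrt{\lambda T}$. I would argue as follows.
\begin{itemize}
\item If~$\lambda T\le 1$, no coarsening is needed.
\item If~$\lambda T>1$, then at scale~$3^{j_0}$ the rescaled field, seen as a field on cubes of size~$3^{j_0}$, has ellipticity ratio still bounded by~$\Lambda/\lambda$.
\item For the bound on~$\mathcal{X}$, note that~$\O_{\Gamma_2}(CL^C)$ tolerates polynomial losses, and~$L_0\sqrt{\lambda T}\le (L_0+\sqrt{\lambda T})^2$.
\end{itemize}
So the product form suffices for the random-scale statement after enlarging~$C$.

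\emph{Step 3: the quenched statement.} Apply the quenched part of Theorem~\ref{t.theoremB} with~$\gamma'=c$ and~$\delta=1$. This gives~$\theta$ and~$\mathcal{Y}$ with~$\mathcal{Y}^{d/2}=\O_{\Gamma_2}(CL^{d/\kappa})$, where~$\kappa=c$ is a dimensional constant. Set~$\mathcal{X}:=\max\{\mathcal{Y},L\}$, which yields~$\mathcal{X}=\O_{\Gamma_2}(CL^C)$. Finally, undo the rescaling~\eqref{e.rescale.variables}, using that~$\bhom$ and~$\shom$ transform consistently with~$\b$ and~$\s_*$. This gives~\eqref{e.frd.quenched}.
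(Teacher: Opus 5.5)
Your overall route is the paper's. You rescale time by $\lambda$ so that $\lambda_0=1$ and $\Pipar=\Lambda/\lambda$, and dilate by $3^{n_0}$ with $3^{2n_0}\gtrsim\lambda T$ as in Proposition~\ref{p.rescaling}. You then apply Theorem~\ref{t.theoremB} with $\gamma=\beta=0$, $\nu=\frac{d+2}{2}$, $\S=0$ (so $\alpha=1$, $\kappa=c$), and return to $\a$ by dilation invariance and Lemma~\ref{l.change.variables}.

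The stationarity step, however, is wrong as written. Since $\tilde{\a}(t'+s,x')=\lambda^{-1}\a((t'+s)/\lambda,x')$, the rescaled field is stationary under $\lambda\Z\times\Zd$, not $\lambda^{-1}\Z\times\Zd$. The lattice $\lambda^{-1}3^{2k}\Z\times 3^k\Zd$ in the statement is just $\mathcal{Z}_k$ written in the original time variable. Hence \ref{a.stationarity} for $D_{j_0}\tilde{\a}$ is inherited from $\a$ only when $3^{2j_0}\in\lambda\Z$, and choosing $j_0$ from the dependence range does not ensure this. You need the normalization $\lambda\in 9^{\Z}$ (the paper's ``without loss of generality''; for general $\lambda$ there may be no admissible $j_0$) together with $3^{2j_0}\geq\lambda$. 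The paper gets the latter from $T\geq 1$ and $3^{2n_0}\geq 3\lambda T$ in~\eqref{e.first.scale.sep}.

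In particular, your bullet ``if $\lambda T\leq 1$, no coarsening is needed'' fails when $\lambda>1$, and it cannot be rescued: for $T<1$ one must use $\max\{T,1\}$ in place of $T$ in $L$. To see this, take the deterministic, $1$-periodic field $\a=\lambda f(t)\Id$ with $f$ continuous, nonconstant and $1\leq f\leq 2$. It has range of dependence $T$ for every $T>0$. Testing~\eqref{e.energymaps.flux} with affine solutions gives $\b(I\times U)\geq \lambda(\fint_I f)\Id$, while $\bhom=\lambda(\int_0^1 f)\Id$. On cylinders of duration $\lambda^{-1}3^{2m}\ll 1$ centered near a maximum of $f$, the first line of~\eqref{e.frd.quenched} with $k=m$ then forces $\mathcal{X}\gtrsim\sqrt{\lambda}$. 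This is incompatible with $\mathcal{X}=\O_{\Gamma_2}(CL^C)$ for $T=\lambda^{-1}$ and $\lambda\to\infty$.

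With that repaired, the rest is sound and somewhat more explicit than the paper. Verifying \ref{a.CFS} by Hoeffding's inequality on $3^{d+1}$ sublattices is a self-contained substitute for the paper's appeal to~\cite[Section 3.2]{AK.Book} with $d$ replaced by $d+2$, and you are right that the derivative condition is unused. You should add one line, though. The stated range condition concerns only pairs of product sets, so it gives pairwise independence. Mutual independence within a sublattice follows by first separating the time slabs $I\times\R^d$ and then separating cubes spatially inside each slab, which works because each sublattice is a product lattice.

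Your remark that undoing the dilation multiplies the scale by $3^{n_0}$ is correct. This is harmless because only $\mathcal{X}=\O_{\Gamma_2}(CL^C)$ enters and $3^{n_0}L_0\leq(3^{n_0}+L_0)^2$; it is what the paper's ``adding $3^{n_0}$ to the length scale'' really means. Finally, the quenched integrability is for $\mathcal{Y}^{(d+2)/2}$, not $\mathcal{Y}^{d/2}$, which is harmless.
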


Corollary~\ref{c.frd} states that the homogenization length scale is at most on the order of the constant in~\eqref{e.L.frd}. The~$\sqrt{\lambda T}$ term ensures that the diffusive time scale~$L^2/\lambda$ is larger than the correlation time, so that averaging can occur in time as well as in space. Beyond this scale we see dependence in the ellipticity contrast of the form~$\exp(C\log^2(1+\Lambda/\lambda) )$, matching the estimate obtained in the elliptic case in~\cite{AK.HC}. As noted in the introduction to~\cite{AK.HC}, the optimal estimate on the homogenization length scale is expected to have a power law dependence on the ellipticity of the problem. This is not achieved here and remains a difficult open problem.

\smallskip

Our second main result is that convergence of the coarse-grained matrices implies, deterministically, homogenization of the associated Dirichlet problem. In Section~\ref{s.homogenize} we prove coarse-grained parabolic inequalities and a homogenization ``black box" theorem which controls the homogenization error of the PDE by a multiscale quantity measuring the homogenization error in the coarse-grained matrices. Combining this with quenched convergence of the coarse-grained matrices implies homogenization at an algebraic rate, with the homogenization length scale given as in Theorem~\ref{t.theoremB}. If we assume that our coefficient field is uniformly elliptic with finite range of dependence, as in Corollary~\ref{c.frd}, then the statement of Theorem~\ref{t.theoremA} holds with the parameters given in Corollary~\ref{c.frd}, because the homogenization statement is obtained by combining the convergence of the coarse-grained matrices with a deterministic coarse-graining estimate for the PDE.

Our homogenization theorem is stated in domains adapted to~$\ahom$, because in these coordinates~$\ahom$ looks like the identity and the dependence of constants on~$\ahom$ can be made explicit. These domains are defined by
\begin{equation}
\cus_0 = |\shom^{-\nicefrac12}|\shom^{\nicefrac12}( \cu_0)\,, J_0 = \left[-\frac{1}{2|\shom^{-1}|}, \frac{1}{2|\shom^{-1}|}\right]\,, \quad \mbox{ and } \quad \cusdot_0 =  J_0 \times  \cus_0\,,
\end{equation}
as in Section~\ref{ss.subadditivity}. To simplify the statement of the theorem, let~$t_0 = -\nicefrac{1}{(2|\shom^{-1}|)}$.

\begin{theorem}[Homogenization of the Dirichlet problem]
\label{t.theoremA}
Suppose that $\mathbb{P}$ satisfies assumptions\newline \ref{a.stationarity}, \ref{a.ellipticity}, and \ref{a.CFS}. Suppose that~$\nicefrac{\gamma}{2} < s < \nicefrac{1}{2}$, let~$\mathcal{Y}_{1,s+\nicefrac{\gamma}{2}}$ and~$\theta>0$ be the random scale and exponent given in Theorem~\ref{t.theoremB}, and for each~$\epsilon \in (0,1)$ define~$\a^\epsilon(t,x) =\a(\nicefrac{t}{\epsilon^2}, \nicefrac{x}{\epsilon})$. There exist
\begin{equation}
\rho = \frac{\theta s}{4s+\gamma} \quad \mbox{and} \quad \mathcal{X} = \max\{\mathcal{Y}_{1,s+\nicefrac{\gamma}{2}},\mathcal{S}\} ^{\nicefrac{\theta}{2\rho}}
\end{equation}
such that the following homogenization statement holds: given data~$\mathbf{f}\in B^{s}_{2,2}(\cusdot_0)^d$ and~$u_0\in L^2(\cus_0)$, if~$u^\epsilon$ and~$v$ are the unique solutions to
\begin{equation}
\left\{
\begin{aligned}
& \partial_t u^\epsilon -\nabla \cdot \a^\epsilon \nabla u^\epsilon = \nabla \cdot \mathbf{f} & \mbox{ in } \cusdot_0\\
& u^\epsilon = 0 & \mbox{ on } J_0 \times \partial \cus_0 \\
& u^\epsilon = u_0 & \mbox{ at } t = t_0 \\
\end{aligned}
\right.
\qquad 
\left\{
\begin{aligned}
& \partial_t v - \nabla \cdot \ahom \nabla v = \nabla \cdot \mathbf{f} & \mbox{ in } \cusdot_0 \\
& v = 0 & \mbox{ on } J_0 \times \partial \cus_0 \\
& v = u_0 & \mbox{ at } t = t_0 \,, \\
\end{aligned}
\right.
\end{equation}
then for every~$\epsilon^{-1} \geq \mathcal{X}$ we have
\begin{equation}
\label{e.L2.homogenize}
\| u^\epsilon - v\|_{\L^2(\cusdot_0)} \leq C(d,\gamma,s)  (\mathcal{X}\epsilon)^{\rho}  ( \||\shom^{-1}|\mathbf{f}\|_{\underline{B}^{s}_{2,2}(\cusdot_0)} + \|u_0\|_{\L^2(\cus_0)} )\,.
\end{equation}
\end{theorem}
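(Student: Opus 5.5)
The plan is to combine two ingredients. The first is a deterministic ``black box'' coarse-graining estimate for the parabolic Dirichlet problem, to be proved in Section~\ref{s.homogenize}. The second is the quenched convergence of the coarse-grained matrices from Theorem~\ref{t.theoremB}.

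\textbf{Step 1 (the black box).} First rescale: $u^\epsilon$ solves the equation with coefficients $\a(\cdot/\epsilon^2,\cdot/\epsilon)$ on the unit adapted cylinder $\cusdot_0$. This corresponds to the unscaled field on a cylinder of size $3^m\approx\epsilon^{-1}$. Using coarse-grained parabolic inequalities, namely a coarse-grained Caccioppoli/energy estimate and a coarse-grained Poincar\'e--type inequality in which $\s_*^{-1}(z+\cudot_k)$ and $\b(z+\cudot_k)$ replace the pointwise coefficients, I will prove an estimate of the form
\begin{equation*}
\| u^\epsilon - v\|_{\L^2(\cusdot_0)} \leq C \Bigl( \sum_{k\le m} 3^{-s(m-k)}\, \mathcal{E}_k \Bigr)^{\nicefrac12}\bigl( \||\shom^{-1}|\mathbf{f}\|_{\underline{B}^{s}_{2,2}(\cusdot_0)} + \|u_0\|_{\L^2(\cus_0)} \bigr)\,.
\end{equation*}
Here $\mathcal{E}_k$ is the maximum over $z\in\mathcal{Z}_k\cap\cudot_m$ of the deviations $|\bhom^{-\nicefrac12}\b(z+\cudot_k)\bhom^{-\nicefrac12}-\Id|$ and $|\shom^{\nicefrac12}\s_*^{-1}(z+\cudot_k)\shom^{\nicefrac12}-\Id|$. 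The construction goes as follows. I would build a two-scale expansion $w = v + \sum_i \partial_i v\,\phi_{e_i}$, using coarse-grained (finite-volume) correctors at a mesoscale $3^k$, $k<m$, obtained from the variational maximizers defining $\bfA(z+\cudot_k)$. I would then test the equation for $u^\epsilon-w$ against itself. The Besov regularity $B^s_{2,2}$ of $\mathbf{f}$ pays for the cutoff errors: smoothing $v$ at scale $3^{k-m}$ costs $3^{-s(m-k)}$ times the Besov norm, and the parabolic smoothing of the initial layer handles $u_0$. The main point is the flux mismatch $\a\nabla w - \ahom\nabla v$. Via the coarse-grained inequalities, its $H^{-1}$ size is controlled by $\mathcal{E}_k$ multiplied by the multiscale sum of local energies.

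\textbf{Step 2 (inserting Theorem~\ref{t.theoremB}).} Take $\delta=1$ and $\gamma'=s+\nicefrac{\gamma}{2}$. Once $3^m\ge\max\{\mathcal{Y}_{1,\gamma'},\S\}$, \eqref{e.b.quenched}--\eqref{e.s.quenched} give
\[
\mathcal{E}_k\le 3^{\gamma'(m-k)}\bigl(\max\{\mathcal{Y},\S\}3^{-m}\bigr)^{\theta}.
\]
This bound grows as $k$ decreases, so I truncate the sum at a mesoscale $k=m-h$. Below scale $3^{m-h}$ I use only the crude ellipticity bound from~\ref{a.ellipticity}, which holds with weight $3^{\gamma(m-k)/2}$, together with the $3^{-s(\cdot)}$ regularity gain. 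Since $s>\gamma/2$, the contribution of scales below $m-h$ is of order $3^{-(s-\gamma/2)h}$, or comparable. The contribution of scales above is of order $3^{\gamma' h}(\mathcal{Y}3^{-m})^{\theta}$. Optimizing $h$ balances these two powers, and the resulting exponent of $(\mathcal{Y}\epsilon)$ is of the stated form $\rho=\theta s/(4s+\gamma)$. Because the bound is in terms of $(\mathcal{Y}\epsilon)^{\theta/2}$-type quantities, rewriting it as $(\mathcal{X}\epsilon)^\rho$ with $\mathcal{X}=\max\{\mathcal{Y},\S\}^{\theta/(2\rho)}$ is pure algebra. The same rewriting shows that the condition $\epsilon^{-1}\ge\mathcal{X}$ implies $3^m\ge\max\{\mathcal{Y},\S\}$.

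\textbf{Main obstacle.} I expect the main difficulty to be Step~1, specifically the coarse-grained parabolic inequalities in the high-contrast, possibly degenerate setting. Pointwise bounds on $\a$ are unavailable, so every estimate must pass through $\bfA(z+\cudot_k)$. Doing this requires a parabolic analogue of the coarse-grained Poincar\'e inequality from~\cite{AK.HC}, which has to control the time derivative in a negative parabolic norm via the flux. It also requires handling the skew part through the centring by $\h_0$. My first attempt would follow the variational (Brezis--Ekeland type) formulation of~\cite[Appendix A]{ABM}. In that formulation the solution is a minimizer of a uniformly convex functional in the $\bfA$ variables, and I would coarse-grain the test pairs cube by cube, so that the error is expressed as a quadratic form in $\bfA(z+\cudot_k)-\bfAhom$.
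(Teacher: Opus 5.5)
You have the right two-ingredient structure: a deterministic black box plus the quenched bounds of Theorem~\ref{t.theoremB}. The closing algebra is also fine: since $\theta/(2\rho)=(4s+\gamma)/(2s)>1$, the condition $\epsilon^{-1}\geq\mathcal{X}$ does force $3^m\geq\max\{\mathcal{Y},\S\}$. But Step~1, which carries the whole content of the theorem, has a genuine gap, because the mechanism you describe does not close in this setting. Testing the equation for $u^\epsilon-w$ against itself pairs the residual $\nabla\cdot(\a\nabla w-\ahom\nabla v)-\partial_t(w-v)$ with $u^\epsilon-w$. So you need that residual to be small in $W^{-1}_{\s}$; after subtracting a space-time flux corrector, this means the flux defect must be small in an $\s^{-\nicefrac12}$-weighted $L^2$ norm. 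The coarse-grained matrices give nothing of the kind. Through \eqref{e.fluxmaps} they control only \emph{subcube averages} of $(\a-\ahom)\nabla u$ for genuine solutions $u$, i.e.\ smallness in a negative norm such as $\underline{\hat{B}}^{-s}_{2,2}$. Such smallness can only be paired against a gradient with positive regularity, which $\nabla(u^\epsilon-w)$ does not have. Under~\ref{a.ellipticity} there is no pointwise bound on $\s^{-1}$, and no weighted bound on a stream matrix built from glued cube-wise maximizers. Three further error sources would all have to be estimated in exactly these unavailable weighted norms: the jumps between neighbouring maximizers, the lateral layer where $w\neq0$ on $J_0\times\partial\cus_0$, and the initial layer, where $\nabla v(t_0,\cdot)=\nabla u_0$ with $u_0$ only in $L^2$.

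The missing idea is the duality argument behind Theorem~\ref{t.homogenize}, which never builds correctors. The difference $u^\epsilon-v$ solves the \emph{constant-coefficient} problem \eqref{e.b.equation} with right-hand side $\nabla\cdot(\a-\ahom)\nabla u^\epsilon$ and zero data on $\partial_{\sqcup}$. Test it against the solution of the backward homogenized heat equation with right-hand side $\nabla\cdot\h$; that solution has gradient in $\underline{B}^s_{2,2}$ by Lemma~\ref{l.heat.estimate}, which uses convexity of $\cus_0$. This reduces everything to $\|(\a-\ahom)\nabla u^\epsilon\|_{\underline{\hat{B}}^{-s}_{2,2}}$ (Proposition~\ref{p.duality}). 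By \eqref{e.fluxmaps} with $q=\ahom p$ and \eqref{e.Jaas.matform}, the subcube averages of this defect are bounded by the local energy times $J(z+\cusdot_k,\ahom^{-\nicefrac12}p,\ahom^{\nicefrac12}p)$, a quadratic form in $\bfA(z+\cusdot_k)-\bfAhom$. Only the positive part $(\bfA-\bfAhom)_+$ enters, which is exactly what Theorem~\ref{t.quenched} controls.

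When $\mathbf{f}\neq0$, $u^\epsilon$ is not locally a solution, so one subtracts local Dirichlet solutions on mesoscale cubes $z+\cusdot_n$. This costs $3^{s(n-m)}\|\mathbf{f}\|_{\underline{B}^s_{2,2}}$ (Propositions~\ref{p.poincare.RHS}, \ref{p.cg.energy} and~\ref{p.flux.diff.RHS}). The datum $u_0$ enters only through the coarse-grained energy estimate. The $\L^2$ norm is then recovered from the negative norms by the boundary multiscale Poincar\'e inequality \eqref{e.boundary.poincare}.

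This is also where $\rho$ comes from. One balances $\sup_z\mathcal{E}_s(z+\cusdot_n)\lesssim 3^{\gamma'(m-n)}(\max\{\mathcal{Y},\S\}3^{-m})^{\nicefrac{\theta}{2}}$ from Lemma~\ref{l.mathcalE.rate} against the $\mathbf{f}$-localization cost $3^{s(n-m)}$, which gives $m-n=\theta m/(4s+\gamma)$. There the weight $3^{2s(k-n)}$ with $2s>\gamma'$ makes the multiscale sum converge, so no truncation with crude bounds is needed. Your balance of $3^{-(s-\gamma/2)h}$ against $3^{\gamma' h}(\mathcal{Y}3^{-m})^{\theta}$ gives $h=\theta\log_3(3^m/\mathcal{Y})/(2s)$. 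That yields the exponent $\theta(2s-\gamma)/(4s)$, halved by your square root, which is not the stated $\theta s/(4s+\gamma)$.
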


We are also able to handle data on the spatial boundary, but we omit this in the theorem because it complicates the norm of the data appearing on the right-hand side of~\eqref{e.L2.homogenize} -- see Theorem~\ref{t.homogenize} and Remark~\ref{r.boundary}. Finally, the space~$B^s_{2,2}$ is defined in~\eqref{e.Besov.gen.def}.

\subsection{Outline of the paper}

The key objects in the paper are the coarse-grained matrices and coarse-grained ellipticity constants, which we define in Section~\ref{s.coarse.graining}. The definitions we make are equivalent to those in~\cite{ABM}, but with definitions and additional properties that allow us to parallel the coarse-graining theory of~\cite{AK.HC}. The coarse-graining properties of Section~\ref{s.coarse.graining} are the input for the high-contrast homogenization proof in Section~\ref{s.hc} and the small contrast iteration in Section~\ref{s.smallcontrast}. These sections follow closely the proof in the elliptic case in~\cite{AK.HC}, up to technical details and the use of parabolic functional inequalities. In Section~\ref{s.homogenize} we prove parabolic coarse-grained inequalities, including Poincar\'e and Caccioppoli estimates, and prove a black box coarse-grained homogenization statement.

\section{The coarse-grained diffusion matrices}
\label{s.coarse.graining}

\subsection{Sobolev space framework}
In this section we show that the Cauchy-Dirichlet problem is well-posed on bounded domains for coefficient fields~$\a\in\Omega$. Given a coefficient field~$\a$ we define
\begin{equation}
\label{e.def.sk}
\s(t,x) = \frac{\a(t,x) + \a^t(t,x)}{2} \quad \mbox{and} \quad \k(t,x) = \frac{\a(t,x) - \a^t(t,x)}{2}\,,
\end{equation}
and the assumption~$\a\in\Omega$ states that
\begin{equation}
\s,\s^{-1} \in L^1_{\mathrm{loc}}(\mathbb{R} \times \Rd) \quad \mbox{and} \quad \s^{-\nicefrac12}\k\s^{-\nicefrac12} \in L^\infty_{\mathrm{loc}}(\mathbb{R} \times \Rd)\,.
\end{equation}
For any finite interval~$I$ and bounded Lipschitz domain~$U$, define
\begin{equation}
\label{e.weird.para.def}
\big\| u \bigr\|_{W_\s^1(I\times U)}
\coloneqq
\Bigl( \int_I \int_U |u(t,x)|^2 \,dxdt + \int_I\int_U \nabla u(t,x) \cdot \s(t,x) \nabla u(t,x) \, dx dt
\Bigr)^{\nicefrac12} 
\,,
\end{equation}
and let~$W_{\s}^1(I\times U)$ be the completion of~$C^\infty(I\times U)$ with respect to this norm. Since~$\s,\s^{-1}\in L^1(I\times U)$, the space~$W_\s^1(I\times U)$ is a complete Hilbert space by~\cite[Theorem 1.11]{KO84}. By H\"older's inequality, 
\begin{equation}
\label{e.Omega.imp}
u\in W_\s^1(I\times U)
\implies 
\nabla u , \,\a\nabla u \in L^1(I\times U)
\,,
\end{equation}
so in particular~$W_{\s}^1(I\times U) \hookrightarrow L^1(I; W^{1,1}(U))$. If~$u\in W_{\s}^1(I\times U)$ then for almost every~$t$ the function~$u(t,\cdot)$ will belong to the space~$H_{\s(t,\cdot)}^1(U)$, defined as the completion of~$C^\infty(U)$ with respect to the norm
\begin{equation}
\label{e.Hs.def}
\|u\|_{H_{\s(t,\cdot)}^1(U)} \coloneqq \biggl( \int_U |u(t,x)|^2\, dx  + \int_U \nabla u(t,x) \cdot \s(t,x) \nabla u(t,x)\, dx \biggr)^{\nicefrac12}\,.
\end{equation}
The standard~$W^{1,1}$ trace operator is a continuous operator from~$H^1_{\s(t,\cdot)}(U) \to L^1(\partial U)$ for almost every~$t$; we therefore define~$W^1_{\s,0}(I\times U)$ to be the closed subspace of~$W_{\s}^1(I\times U)$ with zero trace at almost every time, which coincides with the closure of~$C^\infty_c (I\times U)$ with respect to the norm~\eqref{e.weird.para.def}. The dual to this space is denoted~$W_{\s}^{-1}(I\times U)$ and equipped with the dual norm
\begin{equation}
\label{e.dual.norm}
\| f \|_{W_{\s}^{-1}(I\times U)} \coloneqq \sup \big\{ \langle f,g \rangle : \|g\|_{W_{\s,0}^{1}(I\times U)} \leq 1 \bigr\}\,,
\end{equation}
where~$\langle , \rangle$ denotes the duality pairing. As in~\cite[Lemma 2.1]{CS1984_1} and~\cite[Lemma 40.2]{Treves},
\begin{equation}
\label{e.continuous.in.time}
u \in W_{\s,0}^1(I\times U) \quad \mbox{and} \quad \partial_t u \in W_{\s}^{-1}(I\times U) \implies u \in C(I; L^2(U))\,,
\end{equation}
which is the sense in which initial data will be understood.

Given~$f \in W_{\s}^{-1}(I\times U)$ and~$u_0 \in L^2(U)$, we now consider the Cauchy-Dirichlet problem
\begin{equation}
\label{e.H1a.Dirichlet}
\left\{
\begin{aligned}
& \partial_t u -\nabla \cdot \a\nabla u = f & \mbox{ in } &  \ (0,T)\times U\,,
\\
& u = 0 & \mbox{ on } & (0,T] \times \partial U
\\
& u = u_0 & \mbox{ at } & t = 0\,.
\end{aligned}
\right.
\end{equation}
Here the equation is understood to hold as an equality in~$W_{\s}^{-1}((0,T)\times U)$, the spatial boundary data holds in the sense that~$u \in W_{\s,0}^1((0,T)\times U)$, and the initial condition~$\lim_{t\to 0} u(t,\cdot) = u_0(\cdot)$ is understood as an~$L^2(U)$ limit, in view of~\eqref{e.continuous.in.time}. We will proceed as in~\cite[Chapters 40 and 41]{Treves}, using as our main tool the following statement of the Lions-Lax-Milgram lemma, reproduced from~\cite[Lemma 41.2]{Treves}.

\begin{lemma}[Lions-Lax-Milgram Lemma]
\label{l.lions.lax}
Suppose that~$H$ is a Hilbert space,~$\Phi$ is a linear subspace of~$H$ and~$B:H \times \Phi \to \R$ is a bilinear form such that for each~$\varphi \in \Phi$,~$B[\cdot,\varphi]$ is a continuous linear functional on~$H$, and there exists~$c>0$ such that
\begin{equation}
c\|\varphi\|_H^2 \leq B[\varphi,\varphi] \quad \forall \varphi \in \Phi\,.
\end{equation}
Then for every continuous linear functional~$F$ on~$H$ there exists~$u\in H$ such that
\begin{equation}
B[u,\varphi] = F(\varphi) \quad \forall \varphi\in \Phi \,.
\end{equation}
Moreover,~$\|u\|_H \leq c^{-1} \|F\|_{H'}$.
\end{lemma}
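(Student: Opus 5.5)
We prove the Lions--Lax--Milgram lemma by a Hilbert-space projection argument combined with the Riesz representation theorem. The difficulty, compared with the usual Lax--Milgram lemma, is that~$B$ is only coercive on the subspace~$\Phi$. Moreover,~$\Phi$ need not be closed, and~$B[u,\cdot]$ is not assumed continuous in~$\varphi$ with respect to the~$H$-norm. So we cannot invert an operator on~$H$ directly. Instead we work with the adjoint map~$\varphi \mapsto B[\cdot,\varphi]$.

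\emph{Step 1: representing the form.} For each~$\varphi\in\Phi$, the functional~$B[\cdot,\varphi]$ is continuous on~$H$. By the Riesz representation theorem there is a unique~$T\varphi\in H$ with
\begin{equation*}
B[v,\varphi] = (v,T\varphi)_H \quad \forall v\in H\,.
\end{equation*}
The map~$T:\Phi\to H$ is linear. Taking~$v=\varphi$ in the coercivity hypothesis gives
\begin{equation*}
c\|\varphi\|_H^2 \leq B[\varphi,\varphi] = (\varphi,T\varphi)_H \leq \|\varphi\|_H\|T\varphi\|_H\,,
\end{equation*}
so~$\|T\varphi\|_H \geq c\|\varphi\|_H$. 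In particular~$T$ is injective, and its inverse~$T^{-1}:T(\Phi)\to\Phi$ satisfies~$\|T^{-1}w\|_H\leq c^{-1}\|w\|_H$.

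\emph{Step 2: a functional on the range of~$T$.} Let~$F\in H'$. Define a linear functional~$\ell$ on the subspace~$T(\Phi)$ by~$\ell(w) := F(T^{-1}w)$. By Step 1,
\begin{equation*}
|\ell(w)| \leq \|F\|_{H'}\|T^{-1}w\|_H \leq c^{-1}\|F\|_{H'}\|w\|_H\,,
\end{equation*}
so~$\ell$ is bounded with norm at most~$c^{-1}\|F\|_{H'}$. It therefore extends uniquely by continuity to the closure~$\overline{T(\Phi)}$, which is a Hilbert space, with the same bound.

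\emph{Step 3: constructing~$u$.} Apply the Riesz representation theorem on~$\overline{T(\Phi)}$ to the extension of~$\ell$. This gives~$u\in \overline{T(\Phi)}\subseteq H$ with~$(w,u)_H=\ell(w)$ for every~$w\in \overline{T(\Phi)}$, and~$\|u\|_H\leq c^{-1}\|F\|_{H'}$. Taking~$w=T\varphi$ yields
\begin{equation*}
B[u,\varphi] = (u,T\varphi)_H = \ell(T\varphi) = F(\varphi) \quad \forall\varphi\in\Phi\,,
\end{equation*}
which is the desired identity together with the norm bound.

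The only point needing care is Step 2: one must not try to extend~$\ell$ to all of~$H$ through~$\Phi$ itself. The inverse~$T^{-1}$ is only controlled on~$T(\Phi)$, and the estimate~$\|T\varphi\|_H\geq c\|\varphi\|_H$ from Step 1 is exactly what makes~$\ell$ bounded there. Restricting to the closed subspace~$\overline{T(\Phi)}$ avoids any appeal to Hahn--Banach and gives the stated constant~$c^{-1}$ directly. The solution~$u$ need not be unique, and the lemma does not claim uniqueness.
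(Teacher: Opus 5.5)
Your proof is correct: representing $B[\cdot,\varphi]=(\cdot,T\varphi)_H$ via Riesz, deducing $\|T\varphi\|_H\geq c\|\varphi\|_H$ from coercivity, and applying Riesz on $\overline{T(\Phi)}$ to the bounded functional $T\varphi\mapsto F(\varphi)$ gives both the identity $B[u,\varphi]=F(\varphi)$ and the bound $\|u\|_H\leq c^{-1}\|F\|_{H'}$. The paper does not prove this lemma but quotes it from Treves, and your argument is the standard proof of this lemma (due to Lions), so there is nothing to add.
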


We will actually apply this lemma to find a function~$v$ solving
\begin{equation}
\label{e.eqn.for.v}
\left\{
\begin{aligned}
& \partial_t v -\nabla \cdot \a\nabla v + v = e^{-t}f & \mbox{ in } &  \ (0,T)\times U\,,
\\
& v = 0 & \mbox{ on } & (0,T] \times \partial U
\\
& v = u_0 & \mbox{ at } & t = 0\,.
\end{aligned}
\right.
\end{equation}
and recover the solution to~\eqref{e.H1a.Dirichlet} by~$u(t,x) = e^t v(t,x)$. We will take as our Hilbert space~$H$ the set of all pairs~$(v,v_0)\in W_{\s}^1((0,T)\times U) \times L^2(U)$ equipped with the scalar product
\begin{equation}
((w,w_0),(v,v_0))_H \coloneqq  \int_0^T \int_U \bigl( wv + \nabla w \cdot \s\nabla v\bigr) + \int_U w_0 v_0\,.
\end{equation}
The linear subspace~$\Phi \subset H$ will be the set of pairs~$(\varphi, \varphi_0)$ such that~$\varphi \in C^\infty(I\times U)$,~$\varphi_0(\cdot) = \varphi(0,\cdot)$, and~$\varphi$ vanishes on~$(\{T\} \times U) \cup ((0,T] \times \partial U)$. Finally, our bilinear form is defined by
\begin{equation}
\label{e.bilinear}
B[v,\varphi] \coloneqq \int_0^T\int_U \bigl( - v \partial_t \varphi + v\varphi + \a\nabla v \cdot \nabla \varphi \bigr) \,.
\end{equation}
The conditions of Lemma~\ref{l.lions.lax} are verified immediately. Given~$f\in W_{\s}^{-1}((0,T) \times U)$ we then define a linear functional on~$H$ by
\begin{equation}
F((v,v_0)) = \langle e^{-t}f, v\rangle_{W_{\s}^1} + \int_U u_0 v_0\,,
\end{equation}
and apply the lemma to conclude that there exists~$(v,v_0) \in H$ such that for all~$(\varphi,\varphi_0) \in \Phi$,
\begin{equation}
\label{e.weak.form.eqn}
\int_0^T\int_U \bigl( - v \partial_t \varphi + v\varphi + \a\nabla v \cdot \nabla \varphi \bigr) = \langle e^{-t}f, \varphi \rangle + \int_U u_0 \varphi_0\,.
\end{equation}
Because~$\s^{-\nicefrac12}\k\s^{-\nicefrac12} \in L^\infty((0,T)\times U)$ we have that~$e^{-t}f +\nabla \cdot \a \nabla v - v \in W_{\s}^{-1}((0,T)\times U)$ and therefore by~\eqref{e.weak.form.eqn} we conclude that~$\partial_t v \in W_{\s}^{-1}((0,T)\times U)$ and that~$v$ is a solution to~\eqref{e.eqn.for.v}. In order to prove that the solution is unique we test the equation for~$v$ with itself and conclude that the only solution with~$f=0$ and~$u_0=0$ is identically zero.

% I hoped that this could handle the case where the stream matrix is in s-weighted H^1, but this doesn't seem to be possible.
% A solution can be obtain by approximation (cut off s^{-1} from above and k from above and below), passing to the limit in
% W_{\s}^1 \cap C([0,T];L^2(U)), but we can't conclude that the time derivative exists better than distributionally. This means
% we can't test the equation with itself (at least in the obvious way) and I don't want to deal with this

\smallskip

If~$\s^{-\nicefrac12}\mathbf{f}\in L^2(I\times U)^d$ and~$u_0\in L^2(U)$, the Neumann problem
\begin{equation}
\label{e.H1a.Neumann}
\left\{
\begin{aligned}
& \partial_t u -\nabla \cdot \a\nabla u = \nabla \cdot \mathbf{f}
& \mbox{ in } & (0,T)\times U \,,
\\ 
& \mathbf{n} \cdot (\a\nabla u + \mathbf{f}) = 0 & \mbox{ on } & (0,T] \times \partial U\,,
\\
& u = u_0 & \mbox{ at } & t=0\,,
\end{aligned}
\right.
\end{equation}
can be solved similarly. The weak formulation of the equation is
\begin{equation*}
\int_I \int_U -u \partial_t \varphi + \a\nabla u \cdot \nabla \varphi = \int_I \int_U -\mathbf{f} \cdot \nabla \varphi + \int_U u_0\varphi_0\,, \quad \forall \varphi \in C^\infty(I\times U) : \varphi(T,\cdot) \equiv 0\,,
\end{equation*}
and we obtain the existence of a unique solution~$u\in W_{\s}^1(I\times U)$ such that~$\partial_t u \in \hat{W}_{\s}^{-1}(I\times U))$, where~$\hat{W}_{\s}^{-1}(I\times U)$ is defined as the dual to~$W_{\s}^1(I\times U)$.

\smallskip

\subsection{The coarse-grained matrices: definitions and basic properties}
\label{ss.bfA.def}

The above discussion indicates that the parabolic Cauchy-Dirichlet and Neumann problems are well-posed for coefficients~$\a\in \Omega$.

We introduce the (non-empty) solution space
\begin{equation}
\label{e.solution.space}
\A(I\times U) = \bigg\{u: \|\s^{\nicefrac12}\nabla u\|_{\L^2(I\times U)} < \infty\,, (u)_{I\times U} = 0\,, \partial_t u \in W_{\s}^{-1}(I\times U) \mbox{ and } \partial_t u = \nabla \cdot \a \nabla u \mbox{ in } I\times U \bigg\}\,,
\end{equation}
and the space of solutions to the adjoint equation
\begin{align}
\label{e.adjoint.solution}
\lefteqn{
\A^*(I\times U)
} \quad & \notag \\
& = \bigg\{u: \|\s^{\nicefrac12}\nabla u\|_{\L^2(I\times U)} < \infty\,, (u)_{I\times U} = 0\,, \partial_t u \in W_{\s}^{-1}(I\times U) \mbox{ and } \partial_t u = - \nabla \cdot \a^t \nabla u \mbox{ in } I\times U \bigg\}\,,
\end{align}
The space~$\mathcal{A}(I\times U)$ is a Hilbert space under the norm~$\|u\| \coloneqq \|\s^{\nicefrac12}\nabla u\|_{L^2(I\times U)}$. That this defines a norm follows from Proposition~\ref{p.dumb.poincare}, and the closure of the space follows from the weak formulation of the equation and the fact that if~$u\in \mathcal{A}(I\times U)$ then for a constant~$C$ depending on norms of~$\a$ but independent of~$u$,
\begin{equation*}
\|\partial_t u \|_{W_{\s}^{-1}(I\times U)} = \|\nabla \cdot \a\nabla u\|_{W_{\s}^{-1}(I\times U)} \leq C \|\s^{\nicefrac12}\nabla u\|_{L^2(I\times U)}\,.
\end{equation*}

For every realization of the coefficients~$\a\in\Omega$, bounded Lipschitz domain~$U\subseteq \Rd$, and finite time interval~$I\subseteq\R$ we define, for every~$p,q\in\Rd$, the quantity
\begin{equation}
\label{e.J.def}
J(I\times U,p,q) 
: =
\sup_{u \in  \A(I\times U)}
\fint_I \fint_{U} \biggl( - \frac12 \nabla u \cdot \s \nabla u - p \cdot \a \nabla u + q \cdot \nabla u \biggr) 
\,.
\end{equation} 
This is a well-posed variational problem, using the results of the previous subsection. The maximization is over the Hilbert space~$\mathcal{A}(I\times U)$, and the functional which is being maximized is upper-semi-continuous, strictly concave, and coercive. Therefore, by~\cite[Chapter II, Propositions 1 and 2]{Temam} we obtain the existence of a unique maximizer, denoted~$v(\cdot,\cdot,I\times U,p,q)$. By carrying out the first variation, the maximizer is a linear function of~$(p,q)$. It follows that the mapping~$(p,q) \mapsto J(I\times U,p,q)$ is quadratic. In fact, there exist positive-definite symmetric matrices~$\s_*(I\times U)$ and~$\s(I\times U)$ and a matrix~$\k(I\times U)$ (all~$\mathcal{F}(I \times U)$--measurable) such that
\begin{equation}
\label{e.J.mat}
J(I\times U,p,q) =
\frac 12p \cdot \s(I\times U)p 
+ \frac 12 (q+\k(I\times U) p) \cdot \s_*^{-1}(I\times U) (q+\k(I\times U) p) 
- p \cdot q \,.
\end{equation}
We also define
\begin{equation}
\label{e.b.def}
\b(I\times U) := (\s + \k^t\s_*^{-1}\k)(I\times U).
\end{equation}

The following properties, and their proofs, are identical to those in the elliptic case, and follow directly from the variational formulation in~\eqref{e.J.def}.

\begin{lemma}[Properties of the coarse-grained coefficients]
\label{l.J.basicprops}
For any finite interval~$I$, bounded Lipschitz domain~$U$, and~$p,q\in\Rd$, the following holds:

\begin{itemize} 
\item The coarse-grained matrices satisfy the bounds
\begin{align}
\label{e.a.bounds}
\biggl( \fint_{I} \fint_{U} \s^{-1} (t,x)\, dt dx \biggr)^{\!\!-1} 
\leq 
\s_*(I\times U) 
\quad \mbox{and} \quad
\b(I\times U)
\leq 
\fint_I \fint_U \bigl ( \s + \k^t \s_*^{-1} \k \bigr )(t,x)\, dt dx\,.
\end{align}

\item The first variation states that for every~$w\in \A(I\times U)$
\begin{equation}
\label{e.firstvar}
q\cdot \fint_I \fint_U \nabla w - p \cdot \fint_I \fint_U \a \nabla w 
=
\fint_I \fint_U \nabla w \cdot \s \nabla v(I\times U,p,q)\, . 
\end{equation}

\item The second variation states that for every~$w\in \A(I\times U)$ 
\begin{multline}
\label{e.quadresp}
J(I\times U,p,q) - \fint_I \fint_U \Bigl  ( -\frac12 \nabla w \cdot \s\nabla w -p\cdot \a\nabla w+ q\cdot \nabla w   \Bigr  )
\\
=
\fint_I \fint_U \frac12 \bigl ( \nabla v(I\times U,p,q) - \nabla w \bigr )\cdot \s\bigl ( \nabla v(I\times U,p,q) - \nabla w \bigr )\,.
\end{multline}

\item The value of~$J(I\times U,p,q)$ is given by the energy of the maximizer
\begin{equation}
\label{e.Jenergyv}
J(I\times U,p,q) = \fint_I \fint_U \frac12 \nabla v(I\times U,p,q) \cdot \s \nabla v(I\times U,p,q).
\end{equation}

\item The space-time averages of the gradient and flux of maximizers are given by
\begin{equation}
\label{e.a.formulas}
\left\{
\begin{aligned}
& \fint_I \fint_U  \nabla v(I\times U,p,q)
=
-p +
\s_*^{-1} (I\times U)
\bigl ( q + \k(I\times U)p \bigr )
\,,\\
&
\fint_I \fint_U  \a \nabla v(I\times U,p,q)
= 
\bigl (\Id -\k^t  \s_*^{-1}\bigr ) (I\times U)  q
-
\bigl (\s +\k^t  \s_*^{-1} \k   \bigr )(I\times U) p
\,.
\end{aligned}
\right.
\end{equation}

\item Subadditivity: for every disjoint partition~$\{I_i \times U_i\}_{i=1}^N$ of~$I\times U$ we have
\begin{equation}
\label{e.subaddJ}
J (I\times U, p, q)
\leq 
\sum_{i=1}^N \frac{|I_i\times U_i|}{|I\times U|} J(I_i\times U_i,p,q)
\end{equation}

\item We have the following coarse-graining inequalities: for every~$u\in \A(I\times U)$
\begin{align}
\label{e.fluxmaps}
\biggl | \fint_I\fint_{U} \bigl ( p \cdot \a \nabla u - q \cdot \nabla u \bigr ) \biggr |
& =
\biggl | \fint_I\fint_U \nabla u 
\cdot  \s \nabla v\bigl (I\times U, p,q \bigr )  \biggr |
\notag \\ & \leq
(2J \bigl (I\times U, p,q \bigr ) )^{\frac12}
\biggl( \fint_I\fint_U \nabla u \cdot \s \nabla u \biggr)^{\frac12}
\,.
\end{align}
%This inequality is useful when~$J(I\times U,p,q)$ is small, which requires~$q$ and~$p$ to be related and the gap between~$\s(I\times U)$ and~$\s_*(I\times U)$ to be small. For instance, choosing~$q=(\s_*-\k^t)(I\times U)p$ and taking the supremum over~$|p|=1$ yields, in view of~\eqref{e.diagonalset.nosymm}, 
%\begin{equation*}
%\biggl| \fint_I\fint_{U} \a \nabla w - (\s_*-\k^t)(I\times U) \fint_I\fint_U \nabla w \biggr |
%\leq
%2^{\frac12}
%\bigl| (\s-\s_*)(I\times U) \bigr|^{\nicefrac12} 
%\biggl( \fint_I\fint_U \nabla w \cdot \s \nabla w \biggr)^{\frac12}
%\,.
%\end{equation*}
and
\begin{align}
\label{e.energymaps}
\frac12\left( \fint_I\fint_U \nabla u \right) \cdot \s_*(I\times U) \left(\fint_I \fint_U \nabla u \right)
\leq
\fint_I\fint_U \frac12 \nabla u \cdot \s\nabla u \\
\label{e.energymaps.flux}
\frac12\left( \fint_I\fint_U \a \nabla u \right) \cdot \b^{-1} (I\times U) \left( \fint_I\fint_U \a \nabla u \right)
\leq
\fint_I\fint_U \frac12 \nabla u \cdot \s\nabla u 
\,.
\end{align}
\end{itemize}
\end{lemma}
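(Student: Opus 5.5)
The plan is to derive every item from the variational characterization~\eqref{e.J.def} and the matrix representation~\eqref{e.J.mat}, in the same order as in the elliptic case. The only genuinely parabolic feature is the admissible class $\A(I\times U)$: it is a linear space, it is closed under restriction to sub-cylinders (after subtracting the mean), and the functional is a concave quadratic on it. Nothing else about the time derivative is needed.

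\textbf{First and second variations, energy identity.} Write $v=v(I\times U,p,q)$ for the unique maximizer. Perturbing to $v+\epsilon w$ with $w\in\A(I\times U)$ and differentiating at $\epsilon=0$ gives~\eqref{e.firstvar}. Since the functional is exactly quadratic, the second-order expansion around $v$, with the linear term removed by~\eqref{e.firstvar}, is precisely~\eqref{e.quadresp}. Taking $w=v$ in~\eqref{e.firstvar} turns the linear part of $J$ into $\fint\fint\nabla v\cdot\s\nabla v$, and this yields~\eqref{e.Jenergyv}.

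\textbf{Averages~\eqref{e.a.formulas} and the coarse-graining inequalities.} By~\eqref{e.firstvar} and~\eqref{e.Jenergyv}, $J(p,q)$ is a quadratic form in $(p,q)$ whose associated bilinear form is $\fint\fint \nabla v(p,q)\cdot\s\nabla v(p',q')$. Applying~\eqref{e.firstvar} with $w=v(p',q')$ gives
\[
q\cdot\fint\fint\nabla v(p',q')-p\cdot\fint\fint\a\nabla v(p',q')
\]
for this bilinear form. Equivalently, $\fint\fint\nabla v(p',q')=\partial_q J$ and $-\fint\fint\a\nabla v(p',q')=\partial_p J$, with $J$ polarized, i.e. with the derivatives evaluated at $(p',q')$. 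Differentiating~\eqref{e.J.mat} then produces~\eqref{e.a.formulas}; one must remember to add back the $-p\cdot q$ term, which accounts for the $-p$ and $\Id$ contributions.

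For~\eqref{e.fluxmaps}, apply Cauchy--Schwarz with the weight $\s$ to~\eqref{e.firstvar} and use~\eqref{e.Jenergyv}. For~\eqref{e.energymaps}, choose $p=0$, so that $J(0,q)=\tfrac12 q\cdot\s_*^{-1}q$. The inequality~\eqref{e.fluxmaps} together with $ab\le\tfrac12a^2+\tfrac12b^2$ gives $q\cdot\fint\fint\nabla u - \tfrac12 q\cdot\s_*^{-1}q\le \fint\fint\tfrac12\nabla u\cdot\s\nabla u$, and optimizing over $q$ yields the claim. For~\eqref{e.energymaps.flux}, choose $q=\k^t\s_*^{-1}$-adjusted so that the cross term cancels. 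Concretely, take $q$ with $q+\k p=0$, i.e. $q=-\k p$. Then
\[
J=\tfrac12p\cdot\s p+p\cdot\k p=\tfrac12 p\cdot\s p .
\]
Since $-p\cdot \a\nabla u+q\cdot\nabla u$ then involves $\nabla u$ as well as the flux, I would instead minimize $J(p,q)-q\cdot \fint\fint\nabla u$ over $q$. The Schur complement in $q$ of the quadratic form~\eqref{e.J.mat} equals $\tfrac12p\cdot\b p$, which gives~\eqref{e.energymaps.flux}. This Schur complement computation is the only algebraic step requiring care.

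\textbf{Subadditivity and bounds~\eqref{e.a.bounds}.} Given a partition, restrict the maximizer $v$ on $I\times U$ to each $I_i\times U_i$ and subtract its mean. The restriction belongs to $\A(I_i\times U_i)$, since the equation and the integrability of $\s^{\nicefrac12}\nabla v$ localize, and the functional depends only on $\nabla v$. Hence each piece is at most $J(I_i\times U_i,p,q)$, and summing with the weights gives~\eqref{e.subaddJ}.

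For~\eqref{e.a.bounds}, bound $J$ from above by dropping the constraint that $u$ solve the equation. We have $J(0,q)\le\sup_{X\in L^2}\fint\fint(-\tfrac12X\cdot\s X+q\cdot X)=\tfrac12 q\cdot(\fint\fint\s^{-1})q$, which gives the $\s_*$ bound. The flux bound follows in the same way from the Schur complement representation: take $\sup$ over fields $X$ in $-\tfrac12X\cdot\s X-p\cdot\a X+q\cdot X$, then minimize over $q$, using the pointwise identity $\s+\k^t\s^{-1}\k$. This requires a careful Schur complement argument, because $\b$ involves the coarse-grained $\s_*$. I expect this last bound, as literally stated with $\s_*^{-1}$ inside the integral, to be the main obstacle, and I would interpret the integrand pointwise as $\s+\k^t\s^{-1}\k$.
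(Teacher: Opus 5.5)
Most of your proposal is correct and is the standard elliptic argument that the paper invokes by citing \cite[Lemma 5.1]{AK.Book}. The following pieces are all fine:
\begin{itemize}
\item the first and second variations and the energy identity;
\item the averages \eqref{e.a.formulas}, obtained by polarizing \eqref{e.J.mat};
\item the inequalities \eqref{e.fluxmaps} and \eqref{e.energymaps};
\item subadditivity by restriction;
\item the lower bound on $\s_*(I\times U)$.
\end{itemize}

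The gap is in the two statements involving $\b(I\times U)$, namely \eqref{e.energymaps.flux} and the upper bound in \eqref{e.a.bounds}. For both you rely on the claim that the Schur complement in $q$ of \eqref{e.J.mat} equals $\tfrac12 p\cdot\b p$. That claim is false. The Schur complement is $\min_q J(I\times U,p,q)$, attained at $q=(\s_*-\k)p$, and it equals $\tfrac12 p\cdot(\s-\s_*+\k+\k^t)(I\times U)p$.

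Minimizing over $q$ is also the wrong operation for both purposes:
\begin{itemize}
\item In the flux inequality, minimizing $J(I\times U,p,q)-q\cdot\fint_I\fint_U\nabla u$ over $q$ gives a bound that still depends on $\fint_I\fint_U\nabla u$.
\item In the upper bound, it produces an inequality for the Schur complement rather than for $\b$.
\end{itemize}
Your first attempt has a related error. One has $J(I\times U,p,-\k p)=\tfrac12 p\cdot\s p+p\cdot\k p$, and $p\cdot\k(I\times U)p\neq 0$ in general. The matrix $\k(I\times U)$ need not be skew-symmetric; only its symmetric part is controlled, by \eqref{e.symm.part.k}.

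The missing observation is that $\b$ is the diagonal block of the quadratic form, not a Schur complement. By \eqref{e.J.mat} and \eqref{e.b.def}, $J(I\times U,p,0)=\tfrac12 p\cdot\b(I\times U)p$, so the right choice in both places is simply $q=0$.
\begin{itemize}
\item For \eqref{e.energymaps.flux}: the inequality $J(I\times U,p,0)\ge\fint_I\fint_U\bigl(-\tfrac12\nabla u\cdot\s\nabla u-p\cdot\a\nabla u\bigr)$ gives
\[
\fint_I\fint_U\tfrac12\nabla u\cdot\s\nabla u\ \ge\ -p\cdot\fint_I\fint_U\a\nabla u-\tfrac12 p\cdot\b(I\times U)p .
\]
Then take $p=-\b^{-1}(I\times U)\fint_I\fint_U\a\nabla u$.
\item For the $\b$ bound: drop the constraint pointwise with $q=0$. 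This gives
\[
\sup_X\bigl(-\tfrac12 X\cdot\s X-p\cdot\a X\bigr)=\tfrac12 p\cdot\a\s^{-1}\a^t p=\tfrac12 p\cdot(\s+\k^t\s^{-1}\k)p ,
\]
using that the pointwise $\k$ is skew. Now integrate.
\end{itemize}
So the step you flagged as the main obstacle is a one-line argument. Your pointwise reading of the integrand as $\s+\k^t\s^{-1}\k$ is the intended one: it is the top-left block of the field $\bfA(t,x)$ in \eqref{e.bfA.def}.
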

\begin{proof}
Given the well-posedness of the variational problem~\eqref{e.J.def}, these properties follow exactly as in~\cite[Lemma 5.1]{AK.Book}.
\end{proof}

Inspired by the variational formulation of the parabolic problem, as in~\cite[Appendix A]{ABM}, we need to consider the adjoint operator and a double-variable quantity which considers both solutions to the parabolic equation and solutions to the adjoint problem. We first define
\begin{equation}
\label{e.Jstar.def}
J^*(I\times U,p,q') 
: =
\sup_{u \in  \A^*(I\times U)}
\fint_I \fint_{U} \biggl( - \frac12 \nabla u \cdot \s \nabla u - p \cdot \a^t \nabla u + q' \cdot \nabla u \biggr).
\end{equation}
All the properties of Lemma~\ref{l.J.basicprops} hold for~$J^*(I\times U,p,q')$, with the exception that the coarse-grained matrices will be the coarse-grained matrices of the reversed-in-time adjoint operator; we identify these matrices in~\eqref{e.J.mat.star} below. In order to define the double-variable quantities we introduce, for each pair~$(v,v^*) \in \mathcal{A}(I\times U) \times \mathcal{A}^*(I\times U)$, the notation
\begin{equation}
\label{e.X.def}
X(v,v^*) = \begin{pmatrix}
\nabla v + \nabla v^* \\ \a\nabla v - \a^t\nabla v^*
\end{pmatrix}\,,
\end{equation}
and define, for every~$P,Q \in \mathbb{R}^{2d}$,
\begin{align}
\label{e.bfJ.var}
\bfJ(I\times U, P,Q )  =
\sup_{\substack{v\in\mathcal{A}(I\times U) \\ v^* \in \mathcal{A}^*(I\times U)} }
\fint_I \fint_U
\biggl( -\frac12 X(v,v^*) \cdot \bfA X(v,v^*) -  P \cdot \bfA X(v,v^*)  + Q\cdot X(v,v^*)
\biggr)
\,.
\end{align}
Recall that~$\bfA$ is defined in~\eqref{e.bfA.def}. In view of the equality
\begin{align}
\label{e.double.equiv}
\lefteqn{
-\frac12 X(v,v^*) \cdot \bfA X(v,v^*) -  P \cdot \bfA X(v,v^*)  + Q\cdot X(v,v^*)
} \qquad \qquad & \notag \\
&  = -\frac{1}{2}\nabla v \cdot \a \nabla v - (p-p^*)\cdot \a\nabla v + (q^* - q) \cdot \nabla v
\notag \\ & \qquad
-\frac{1}{2}\nabla v^* \cdot \a \nabla v^* - (p^*+p)\cdot \a\nabla v^* + (q^* + q) \cdot \nabla v^*
\end{align}
it is clear that the functional in~\eqref{e.bfJ.var} is strictly concave, upper-semi-continuous and coercive over the product space~$\mathcal{A}(I\times U) \times \mathcal{A}^*(I\times U)$. By the same reasoning as for the variational problem in~\eqref{e.J.def}, this implies the existence of a unique maximizer~$(v,v^*)$; by~\eqref{e.double.equiv} we see that~$v$ is the maximizer in~\eqref{e.J.def} with parameters~$p-p^*$ and~$q^*-q$, while~$v^*$ is the maximizer in~\eqref{e.Jstar.def} with parameters~$p^*+p$ and~$q^*+q$. The well-posedness of the double-variable variational problem allows us to introduce the double-variable matrices and prove non-obvious facts about them. In view of~\cite[Lemma 2.6]{ABM} our definition~\eqref{e.bfJ.var} is equivalent to the~$J$ quantity in~\cite[Lemma 2.3]{ABM}. It follows that there exist symmetric, positive-definite matrices~$\bfA(I\times U)$ and~$\bfA_*(I\times U)$ such that for all~$p,p^*,q,q^* \in \Rd$,
\begin{equation}
\label{e.Jsplitting}
\bfJ \biggl(I\times U,
\begin{pmatrix}
p \\ q
\end{pmatrix}
,
\begin{pmatrix}
q^* \\ p^*
\end{pmatrix}
\biggr) =
\frac{1}{2} \begin{pmatrix}
p \\ q
\end{pmatrix} \cdot \bfA(I\times U) \begin{pmatrix}
p \\ q
\end{pmatrix}
+ \frac{1}{2}\begin{pmatrix}
q^* \\ p^*
\end{pmatrix}  \cdot \bfA_*^{-1}(I\times U)\begin{pmatrix}
q^* \\ p^*
\end{pmatrix} 
- \begin{pmatrix}
p \\ q
\end{pmatrix} \cdot \begin{pmatrix}
q^* \\ p^*
\end{pmatrix}\,.
\end{equation}

The following lemma collects the properties of the double-variable coarse-grained matrices. These properties follow from the well-posedness of the variational problem~\eqref{e.bfJ.var} and the representation~\eqref{e.Jsplitting}, using a combination of~\cite[Lemma 5.2]{AK.Book} and~\cite[Section 2B]{ABM}. Note that the quantity~$\mu(V,X)$ defined in~\cite{ABM} is equal to~$\frac{1}{2} X\cdot \bfA(I\times U)X$ and~$\mu_*(V,X^*)$ is equal to~$\frac{1}{2}X^* \cdot \bfA^{-1}_*(I\times U) X^*$.

\begin{lemma}[Further properties of the coarse-grained coefficients]
\label{l.J.further.props}

For every finite interval~$I$,\newline bounded Lipschitz domain $U$, and~$p,q,p^*,q^*\in\Rd$, the following holds:
\begin{itemize}	
\item The double-variable matrices have the representation
\begin{equation}
\label{e.bigA.def}
\bfA(I\times U)
:= 
\begin{pmatrix} 
( \s + \k^t\s_*^{-1}\k )(I\times U) 
& -(\k^t\s_*^{-1})(I\times U) 
\\ - ( \s_*^{-1}\k )(I\times U) 
& \s_*^{-1}(I\times U) 
\end{pmatrix}\,.
\end{equation}
and
\begin{equation}
\label{e.bigAstar.def}
\bfA_*^{-1}(I\times U):=
\begin{pmatrix}
\s_*^{-1}(I\times U) & -(\s_*^{-1}\k)(I\times U) \\
-(\k^t\s_*^{-1})(I\times U) & (\s+\k^t\s_*^{-1}\k)(I\times U)
\end{pmatrix}\,.
\end{equation}
\item[•] The double-variable matrices have the ordering
\begin{equation*}
\bigg(\fint_I\fint_U \bfA^{-1}(t,x)\, dtdx \bigg)^{-1} \leq \bfA_*(I\times U) \leq \bfA(I\times U) \leq \fint_I\fint_U \bfA(t,x)\, dtdx\,,
\end{equation*}
and consequently~$\s_*(I\times U) \leq \s(I\times U)$.
\item The adjoint quantity has the matrix representation
\begin{equation}
\label{e.J.mat.star}
J^*(I\times U,p,q) =
\frac 12p \cdot \s(I\times U)p 
+ \frac 12 (q-\k(I\times U) p) \cdot \s_*^{-1}(I\times U) (q-\k(I\times U) p) 
- p \cdot q \,.
\end{equation}
\item[•] The matrices~$\bfA(I\times U)$ and~$\bfA_*^{-1}(I\times U)$ are subadditive: for every disjoint partition~$\{I_i \times U_i\}_{i=1}^N$ of~$I\times U$ we have
\begin{equation}
\label{e.subadditivity}
\bfA (I\times U)
\leq 
\sum_{i=1}^N \frac{|I_i\times U_i|}{|I\times U|} \bfA (I_i\times U_i)
\quad \mbox{and} \quad 
\bfA_{*}^{-1} (I\times U)
\leq 
\sum_{i=1}^N \frac{|I_i\times U_i|}{|I\times U|} \bfA_{*}^{-1} (I_i\times U_i)
\,.
\end{equation}
\item[•] The quantity~$\k(I\times U)$ is not symmetric in general, but its symmetric part is controlled by the gap between~$\s(I\times U)$ and~$\s_*(I\times U)$:
\begin{equation}
\label{e.symm.part.k}
(\k + \k^t)(I\times U) \leq (\s - \s_*)(I\times U) \quad \mbox{and} \quad -(\k+\k^t)(I\times U) \leq (\s-\s_*)(I\times U).
\end{equation}
\end{itemize}
Moreover, the following useful algebraic identities hold:
\begin{itemize}
\item We have
\begin{equation}
\label{e.bfJ.def}
\frac{1}{2} \bfJ\bigg(I\times U,
\begin{pmatrix}
p \\ q
\end{pmatrix},
\begin{pmatrix}
q^* \\ p^*
\end{pmatrix}\bigg)
= J(I\times U,p-p^*,q^*-q) + J^*(I\times U, p^* + p, q^* + q).
\end{equation}
\item Both~$J(I\times U,p,q)$ and~$J^*(I\times U,p,q)$ can be represented in terms of the double-variable matrix as
\begin{align}
\label{e.Jaas.matform}
J(I\times U,p,q) 
=
\frac 12 
\begin{pmatrix} 
-p \\ q
\end{pmatrix}
\cdot \bfA(I\times U)
\begin{pmatrix} 
-p \\ q
\end{pmatrix}
-p\cdot q
\notag \\
J^*(I\times U,p,q) 
=
\frac 12 
\begin{pmatrix} 
p \\ q
\end{pmatrix}
\cdot \bfA(I\times U)
\begin{pmatrix} 
p \\ q
\end{pmatrix}
-p\cdot q\,.
\end{align}
\item By direct computation
\begin{equation}
\label{e.bigA.formulas.inv}
\left\{
\begin{aligned}
& 
\bfA^{-1}(I\times U)
= 
\begin{pmatrix} 
\s^{-1}(I\times U) 
& (\s^{-1}\k^t )(I\times U) 
\\ (\k \s^{-1})(I\times U) 
& (\s_* + \k \s^{-1}\k^t )(I\times U) 
\end{pmatrix}\,,
\\ & 
\bfA_*(I\times U)
= 
\begin{pmatrix} 
 (\s_* + \k \s^{-1}\k^t )(I\times U) 
& (\k \s^{-1})(I\times U) 
\\ (\s^{-1}\k^t )(I\times U) 
& \s^{-1}(I\times U)
\end{pmatrix}
\,.
\end{aligned}
\right.
\end{equation}
and for every~$\eta >0$,
\begin{equation}
\label{e.bigA.diag.bounds}
\left\{
\begin{aligned}
& 
\bfA(I\times U)
\leq 
\begin{pmatrix} 
(\s +(1+\eta^{-1})\k^t\s_*^{-1}\k)(I\times U)
& 0
\\ 0
& (1+\eta)\s_*^{-1}(I\times U) 
\end{pmatrix}\,,
\\ & 
\bfA^{-1}(I\times U)
\leq  
\begin{pmatrix} 
 (1+\eta)\s^{-1}(I\times U)
& 0 
\\ 0
& (\s_* + (1+\eta^{-1})\k \s^{-1}\k^t )(I\times U) 
\end{pmatrix}
\,.
\end{aligned}
\right.
\end{equation}
\item Introducing
\begin{equation}
\label{e.R.def}
\mathbf{R}:= \begin{pmatrix}
0 & \Id \\ \Id & 0
\end{pmatrix}\,,
\end{equation}
the two equations~\eqref{e.a.formulas} can be written
\begin{equation}
\label{e.all.averages}
\fint_I\fint_U \begin{pmatrix}
\nabla v \\ \a\nabla v
\end{pmatrix}
(\cdot,\cdot,I\times U,p,q) =
(\mathbf{R}\bfA(I\times U) + \Itwod)\begin{pmatrix}
-p \\ q
\end{pmatrix}\,.
\end{equation}
\item The two inequalities~\eqref{e.energymaps} and~\eqref{e.energymaps.flux} can be written
\begin{equation}
\label{e.energymaps.double}
\frac{1}{2}(X(v,v^*))_{I\times U} \cdot \bfA_*(I\times U) (X(v,v^*))_{I\times U} \leq \frac{1}{2}\fint_I\fint_U X(v,v^*)\cdot \bfA X(v,v^*)\,,
\end{equation}
for all~$(v,v^*) \in \mathcal{A}(I\times U) \times \mathcal{A}^*(I\times U)$.
\end{itemize}
\end{lemma}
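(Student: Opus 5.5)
The plan is to derive every item of Lemma~\ref{l.J.further.props} from the splitting identity~\eqref{e.double.equiv} together with the representation~\eqref{e.Jsplitting} and the matrix form~\eqref{e.J.mat}. First, I would prove the decomposition~\eqref{e.bfJ.def}. Since the functional in~\eqref{e.bfJ.var} splits by~\eqref{e.double.equiv} into a functional of $v$ alone plus a functional of $v^*$ alone, the supremum over the product space $\A(I\times U)\times\A^*(I\times U)$ is the sum of the two separate suprema. Here I would check the normalization carefully: for $v$ the quadratic term is $-\frac12\nabla v\cdot\s\nabla v$, since the skew part drops out, and matching against the $\nicefrac12$ in~\eqref{e.bfJ.def} may force a rescaling of $(P,Q)$ or $X$. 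Whatever constant appears, I would fix it by direct comparison with~\eqref{e.J.def} and~\eqref{e.Jstar.def}.

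Next, I would identify $J^*$. The map $u(t,x)\mapsto u(-t,x)$ takes $\A^*(I\times U)$ to solutions for the time-reversed coefficient $\a^t(-t,\cdot)$. Combining this with the first-variation formulas~\eqref{e.firstvar} and~\eqref{e.a.formulas} for $v$ and $v^*$, I would show that $J^*$ has the form~\eqref{e.J.mat.star} with the same $\s$, $\s_*$ and with $\k$ replaced by $-\k$. The route I would try first is a duality pairing: test the equation for $v(\cdot,I\times U,p,q)$ against $v^*$ and the adjoint equation against $v$. The time-derivative terms cancel because $\int\partial_t v\,v^*+\int v\,\partial_t v^*$ is a boundary term. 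This cancellation is exactly where the parabolic case differs from the elliptic one, and I expect it to be the main obstacle. The natural solutions are not zero on the parabolic boundary, so the pairing produces boundary contributions that must be handled. I would instead follow~\cite[Lemma 2.6]{ABM}: write $\mu$ and $\mu_*$ as sup and inf problems, and use the convex-duality identity relating the supremum over $\A^*$ to a saddle point over $\A$. That yields $J^*(p,q)=J(-p,q)$-type relations in matrix form, which is exactly~\eqref{e.Jaas.matform}.

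With~\eqref{e.J.mat},~\eqref{e.J.mat.star} and~\eqref{e.bfJ.def} in hand, expanding the quadratic forms and comparing with~\eqref{e.Jsplitting} identifies $\bfA(I\times U)$ as~\eqref{e.bigA.def} and $\bfA_*^{-1}(I\times U)$ as~\eqref{e.bigAstar.def}. The block inverses~\eqref{e.bigA.formulas.inv} then follow by a Schur-complement computation. The bounds~\eqref{e.bigA.diag.bounds} follow from Young's inequality $2|x\cdot\k^t\s_*^{-1}y|\le \eta\, y\cdot\s_*^{-1}y+\eta^{-1}\,\k x\cdot\s_*^{-1}\k x$ applied to the off-diagonal blocks. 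Formula~\eqref{e.all.averages} is a rewriting of~\eqref{e.a.formulas}.

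For the remaining items:
\begin{itemize}
\item Subadditivity~\eqref{e.subadditivity}: restricting a pair $(v,v^*)$ to each subdomain $I_i\times U_i$ gives admissible competitors there, once means are subtracted, which does not change gradients. Hence $\bfJ(I\times U,\cdot)$ is bounded by the weighted sum of the $\bfJ(I_i\times U_i,\cdot)$. Taking $Q=0$ isolates $\bfA$, and taking $P=0$ isolates $\bfA_*^{-1}$.
\item Bounds against space-time averages: the upper bound by $\fint\bfA$ comes from choosing the trivial competitor $X=0$ together with subadditivity in the limit of fine partitions. The lower bound comes from~\eqref{e.energymaps.double}.
\item Inequality~\eqref{e.energymaps.double}: this is the inequality $\bfJ\ge0$ at the optimal $Q$, rewritten by completing the square.
\item Ordering $\bfA_*\le\bfA$: this follows from $\bfJ(P,\bfA P)\ge0$.
\item Consequences: taking the Schur complements of $\bfA_*\le\bfA$ gives $\s_*\le\s$. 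Testing the same inequality on vectors $(x,\pm x)$ gives~\eqref{e.symm.part.k}.
\end{itemize}
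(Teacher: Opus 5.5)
Your overall architecture matches the paper's. The paper imports \eqref{e.Jsplitting} from \cite[Lemma 2.6]{ABM} and reads everything else off the variational definition \eqref{e.bfJ.var}, citing \cite[Lemma 5.2]{AK.Book} and \cite[Section 2B]{ABM}. Two corrections to your first half.

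First, your quadratic coefficient is wrong. For $X=(g,h)$ as in \eqref{e.X.def}, one has $h-\k g=\s(\nabla v-\nabla v^*)$ pointwise, hence $X\cdot\bfA X=2\nabla v\cdot\s\nabla v+2\nabla v^*\cdot\s\nabla v^*$. So the $v$-part carries $-\nabla v\cdot\s\nabla v$, not $-\tfrac12\nabla v\cdot\s\nabla v$; also, the $v^*$ flux term is $-(p+p^*)\cdot\a^t\nabla v^*$. The substitution $v\mapsto v/2$, $v^*\mapsto v^*/2$ then gives $\bfJ=\tfrac12 J(I\times U,p-p^*,q^*-q)+\tfrac12 J^*(I\times U,p+p^*,q^*+q)$. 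This is the normalization compatible with \eqref{e.Jsplitting} and \eqref{e.Jaas.matform}: for $\a\equiv\Id$ one has $J=J^*=\tfrac12|q-p|^2$ and $\bfJ(P,Q)=\tfrac12|P-Q|^2$. So the left side of \eqref{e.bfJ.def} should be $2\bfJ$ rather than $\tfrac12\bfJ$, and your suspicion of the constant was justified.

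Second, once \eqref{e.Jsplitting} is taken as input, the time-reversal and duality argument for $J^*$ is unnecessary, even though you single it out as the main obstacle. Write \eqref{e.J.mat} as $J(p,q)=\tfrac12(-p,q)\cdot\bfA(-p,q)-p\cdot q$, with $\bfA$ the block matrix in \eqref{e.bigA.def}. Write $J^*(p,q)=\tfrac12(p,q)\cdot M(p,q)-p\cdot q$ for some symmetric $M$, which is possible since $J^*$ is quadratic. Insert both into the splitting. The terms mixing $(p,q)$ and $(p^*,q^*)$ are $\tfrac12(p,q)\cdot(M-\bfA)(p^*,q^*)-p\cdot q^*-q\cdot p^*$. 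Matching the single mixed term $-P\cdot Q$ in \eqref{e.Jsplitting} forces $M=\bfA$. This gives \eqref{e.J.mat.star}, \eqref{e.Jaas.matform}, \eqref{e.bigA.def} and $\bfA_*^{-1}=\mathbf{R}\bfA\mathbf{R}$ (which is \eqref{e.bigAstar.def}) all at once. The boundary terms you anticipate in the duality pairing are exactly what \cite[Lemma 2.6]{ABM} has already absorbed into \eqref{e.Jsplitting}.

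Three of the justifications in your final list fail as stated.

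(i) For $\bfA(I\times U)\le\fint_I\fint_U\bfA$: in the sup formulation \eqref{e.bfJ.var}, the competitor $X=0$ only gives $\bfJ\ge0$, which is a lower bound. It would be the right competitor in the inf formulation of $\mu$ from \cite{ABM}, but you have not set that up. Refining partitions would require $\bfA(V_i)$ to approach $\bfA(t,x)$ on small cells, which is essentially the claim. What works is the pointwise bound $-\tfrac12X\cdot\bfA X-P\cdot\bfA X\le\tfrac12P\cdot\bfA P$, averaged, with $Q=0$.

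(ii) The lower bound cannot come from \eqref{e.energymaps.double}, which bounds $\bfA_*$ from \emph{above} along averages of solutions. Use instead the pointwise bound $-\tfrac12X\cdot\bfA X+Q\cdot X\le\tfrac12Q\cdot\bfA^{-1}Q$ with $P=0$, which gives $\bfA_*^{-1}(I\times U)\le\fint_I\fint_U\bfA^{-1}$. Relatedly, \eqref{e.energymaps.double} is not ``$\bfJ\ge0$''. It is the fact that $\bfJ(I\times U,0,Q)$ dominates the functional at the given pair, evaluated at $Q=\bfA_*(I\times U)(X)_{I\times U}$.

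(iii) Testing $\bfA-\bfA_*\ge0$ on $(x,\pm x)$ gives
$x\cdot(\s-\s_*)x+(\k x\mp x)\cdot\s_*^{-1}(\k x\mp x)-(\k^tx\pm x)\cdot\s^{-1}(\k^tx\pm x)\ge0$.
This does not imply \eqref{e.symm.part.k}. The values $\s=4\Id$, $\s_*=\Id$, $\k=5\Id$ make these $10|x|^2$ and $35|x|^2$, yet violate \eqref{e.symm.part.k}. The correct test is $J\ge0$ and $J^*\ge0$ (competitor $u=0$) at $q=(\s_*-\k)p$ and $q=(\s_*+\k)p$ respectively. There \eqref{e.J.mat} and \eqref{e.J.mat.star} give $\tfrac12p\cdot(\s-\s_*)p\pm p\cdot\k p$.

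The remaining steps are fine: subadditivity by restriction with $Q=0$ or $P=0$, $\bfA_*\le\bfA$ from $\bfJ\ge0$, $\s_*\le\s$ by Schur complements, the block inverses, Young's inequality for \eqref{e.bigA.diag.bounds}, and \eqref{e.all.averages}.
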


Although the double-variable quantities can be algebraically expressed in terms of the coarse-grained matrices~$\s(I\times U)$,~$\s_*(I\times U)$ and~$\k(I\times U)$, the variational formulation of~\eqref{e.bfJ.var} yields new information. For example, the ordering~$\s_*(I\times U) \leq \s(I\times U)$ cannot easily be deduced otherwise. We also note here that~$\bfA(I\times U)$,~$\bfA^{-1}_*(I\times U)$,~$ \b(I\times U)$ and~$\s^{-1}_*(I\times U)$ are all subadditive because they are defined directly from variational problems, but there is no sense in which~$\s(I\times U)$ and~$\k(I\times U)$ are subadditive.

%Variational formulation ======================================
%
%The larger matrix~$\bfA(I\times U)$ can be thought of as a coarse-graining of the matrix in~\eqref{e.bfA.def}, and it has a variational interpretation (see~\cite[Lemma 5.3]{AK.Book}), which gives an alternative way of defining the coarse-grained matrices. To see this, begin by defining~$L^2_{\s,\mathrm{par},0}(I\times U)$ to be the closure of
%\begin{equation*}
%\{(\nabla v,\h)\in (C_c^\infty((I_-,I_+)\times U))^2: \nabla \cdot \h = \partial_t v \text{ in } \hat{W}^{-1}_\s(I\times U)\}
%\end{equation*}
%with respect to the norm~$(\nabla v,\h) \mapsto (\fint_I\fint_U (\nabla v,\h)\cdot \bfA (\nabla v, \h))^{\nicefrac12}$. Note that
%\begin{equation*}
%(\nabla v, \h) \in L^2_{\s,\mathrm{par},0}(I\times U) \implies v \in W^1_{\s,0}(I\times U), \, \s^{-\nicefrac12}\h \in L^2(I\times U), \, \text{ and }\s^{-\nicefrac12}\k\nabla v \in L^2(I\times U).
%\end{equation*}
%The coarse-grained double variable matrix then satisfies
%\begin{equation}
%\label{e.J.P0.Dirichlet}
%\frac12 P \cdot \bfA(I\times U) P 
%=
%\inf\biggl\{ 
%\fint_{I}\fint_{U} 
%\frac12 (X + P) \cdot \bfA (X + P)
%\, : \, 
%X \in  L^2_{\s,\mathrm{par},0}(I\times U)  
%\biggr\}
%\,.
%\end{equation}

The algebraic structure of the double-variable quantities is also very useful. If we define, for any matrix~$\h \in \mathbb{R}^{d\times d}$,
\begin{equation}
\label{e.Gk.def}
\mathbf{G}_{\h}:= \begin{pmatrix} 
\Id
& 0
\\ \h
& \Id
\end{pmatrix}\,,
\end{equation}
then
\begin{equation}
\mathbf{G}_{\h_1}\mathbf{G}_{\h_2} = \mathbf{G}_{\h_1+\h_2} \qquad \forall \h_1,\h_2 \in \mathbb{R}^{d\times d}\,,
\end{equation}
and the double-variable matrices have the form
\begin{equation}
\bfA(I\times U) = \mathbf{G}_{-\k(I\times U)}^t \begin{pmatrix}
\s(I\times U) & 0 \\ 0 & \s_*^{-1}(I\times U)\,.
\end{pmatrix}
\mathbf{G}_{-\k(I\times U)}
\end{equation}
Conjugation by any invertible matrix preserves partial ordering. In particular, for~$n\leq m$ the means of the coarse-grained matrices (defined in~\eqref{e.bfA.hom.def}) satisfy~$\bfAhom(\cudot_m) \leq \bfAhom(\cudot_n)$, so conjugating with~$\mathbf{G}_{\khom(\cudot_n)}$ and comparing the diagonal entries we obtain
\begin{equation}
\label{e.monotone.s}
\shom(\cudot_m) \leq \shom(\cudot_n) 
\quad \mbox{and} \quad
\shom_{*}(\cudot_m) \geq \shom_{*}(\cudot_n) \,,
\end{equation}
which is not obvious from the definitions in~\eqref{e.all.homs.def}.

%As another application, if we take~$n\in\N$ large enough that~$3^n \geq \S$ then~$\bfA(\cudot_n) \leq \bfE$ so conjugating with~$\mathbf{G}_{\k_0}$ we obtain
%\begin{equation*}
%\s_{*,0} \leq \s_*(\cudot_n) \quad \mbox{and} \quad \s(\cudot_n) + (\k(\cudot_n) - \k_0)^t\s_{*}^{-1}(\cudot_n)(\k(\cudot_n) - \k_0) \leq \s_0\,,
%\end{equation*}
%recalling the definitions of~$\s_0,\k_0$ and~$\s_{*,0}$ in~\eqref{e.E0.blocks.def}. Since~$\s_{*,0}$ is positive-definite, and~$\s_*(\cudot_n) \leq \s(\cudot_n)$ we obtain that~$\s_0$ is positive-definite. \red{which should really just been assumed in the first place?}

Conjugation by an invertible matrix also leaves the eigenvalues of ratios of pairs of coarse-grained matrices unchanged. That is, for any~$\h\in\R^{d\times d}$ (not necessarily skew symmetric) and pair of symmetric matrices~$\mathbf{D},\mathbf{E}\in\R^{2d\times 2d}$ such that~$\mathbf{D}$ is positive definite, if we define
\begin{equation}
\label{e.eigen.unchanged}
\mathbf{D}_\h:= \mathbf{G}_\h^t \mathbf{D}\mathbf{G}_\h \quad \mbox{and} \quad \mathbf{E}_\h:= \mathbf{G}_\h^t \mathbf{E}\mathbf{G}_\h\,,
\end{equation} 
then~$\mathbf{D}^{-\nicefrac12}\mathbf{E}\mathbf{D}^{-\nicefrac12}$ and~$\mathbf{D}_\h^{-\nicefrac12}\mathbf{E}_\h\mathbf{D}_\h^{-\nicefrac12}$ have the same eigenvalues. This conjugation operation has a specific application if~$\h$ is a constant skew-symmetric matrix,\footnote{The matrix~$\h$ may depend on time, but we will not use this.} because the solutions to the parabolic equation
\begin{equation*}
\partial_t u - \nabla \cdot \a \nabla u =0
\end{equation*}
remain the same if~$\a$ is replaced by~$\a-\h$. This invariance is expressed in the coarse-grained quantities, as noted in~\cite[Section 2.5]{AK.HC}: if~$\a$ is a coefficient field with coarse-grained coefficient matrix~$\bfA(I\times U)$,~$\mathbf{h}$ a constant skew-symmetric matrix, and~$\bfA_{\mathbf{h}}(I\times U)$ denotes the coarse-grained matrix associated to the coefficient field~$\a-\h$ then
\begin{align}
\label{e.Ak.formula}
\bfA_{\h} (I\times U)
&
=
\mathbf{G}_{\h}^{t}
\bfA(I\times U)
\mathbf{G}_{\h}
\end{align}
Comparing~\eqref{e.Ak.formula} to~\eqref{e.bigA.def} we see that subtraction of an anti-symmetric matrix depending only on time ``commutes" with the coarse-graining operation in the sense that it simply subtracts~$\h$ from~$\k(I\times U)$. We similarly define
\begin{equation}
\label{e.b.centered.def}
\b_{\h}(\cudot_n) = \s(\cudot_n) + (\k(\cudot_n) - \h)^t\s_{*}^{-1}(\cudot_n)(\k(\cudot_n) - \h)\,. 
\end{equation}

The double-variable matrices are convenient to work with and appear very naturally. For this reason we rewrite the ellipticity assumption~\ref{a.ellipticity} in a double-variable formulation.

\begin{enumerate}[label=(\textrm{P\arabic*\textdagger})]
\setcounter{enumi}{1}
\item \emph{Coarse-grained ellipticity on large scales.} \label{a.ellipticity.dagger}
There exist a symmetric, positive-definite matrix~$\mathbf{E}_0$, an exponent~$\gamma \in [0,1)$, an increasing function~$\Psi_\S:\R_+ \to [1,\infty)$, a constant~$K_{\Psi_\S}\in (1,\infty)$ satisfying the growth condition
\begin{equation}
\label{e.Psi.S.growth.one}
t \Psi_\S(t) \leq \Psi_\S(K_{\Psi_\S}  t), \quad \forall t \in [1,\infty)\,,
\end{equation}
and a nonnegative random variable~$\S$ which satisfies the bound
\begin{equation}
\label{e.S.integrability.one}
\P \bigl[ \S > t   \bigr]
\leq 
\frac{1}{\Psi_\S(t)}
\,, 
\quad \forall t\in (0,\infty) \,,
\end{equation}
such that, for every~$m,n\in\mathbb{Z}$ with~$n\leq m$ we have
\begin{equation}
\label{e.ellipticity}
3^m \geq \S 
\implies
\bfA(z+\cudot_n)
\leq
3^{\gamma(m-n)}\bfE \qquad \forall z \in \mathcal{Z}_n \cap \cudot_m \,.
\end{equation}
\end{enumerate}

The inequality in~\eqref{e.ellipticity} is in the sense of partial ordering of matrices, namely that for~$A,B\in\mathbb{R}^{d\times d}_{\mathrm{sym}}$ we write~$A\leq B$ when~$B-A$ has nonnegative eigenvalues. The only difference between~\ref{a.ellipticity.dagger} and~\ref{a.ellipticity} is that we have replaced the last line with~\eqref{e.ellipticity}. This is equivalent up to a factor of 2 because
\begin{equation}
\label{e.E0.blocks.def}
\mathbf{E}_0 =
\begin{pmatrix}
\mathbf{E}_{11} & \mathbf{E}_{12} \\
\mathbf{E}_{21} & \mathbf{E}_{22}
\end{pmatrix}
\quad \mbox{and} \quad
\left\{
\begin{aligned}
&\s_{*,0} := \mathbf{E}_{22}^{-1} \,, \\
&\k_0:= -\mathbf{E}_{22}^{-1}\mathbf{E}_{21} \,,\\
&\s_0 := \mathbf{E}_{11} - \mathbf{E}_{12}\mathbf{E}_{22}^{-1}\mathbf{E}_{21}\,, \\
&\b_0 := \mathbf{E}_{11}\,,
\end{aligned}
\right.
\end{equation}
implies that
\begin{align*}
\mathbf{E}_0  =
\begin{pmatrix}
\s_0 + \k_0^t\s_{*,0}^{-1}\k_0 & -\k_0^t\s_{*,0}^{-1} \\
- \s_{*,0}^{-1}\k_0 & \s_{*,0}^{-1}
\end{pmatrix}
 \leq 2 \begin{pmatrix}
\s_0 + \k_0^t\s_{*,0}^{-1}\k_0 & 0\\
0 & \s_{*,0}^{-1}
\end{pmatrix}\,.
\end{align*}
Therefore~\ref{a.ellipticity.dagger} implies~\ref{a.ellipticity} with constants
\begin{equation*}
\Lambda_0 = 2|\s_0 + \k_0^t\s_{*,0}^{-1}\k_0|\,, \quad \mbox{and} \quad \lambda_0 = |2\s_{*,0}^{-1}|^{-1}\,,
\end{equation*}
while conversely given~\ref{a.ellipticity} we may take
\begin{equation*}
\mathbf{E}_0 = \begin{pmatrix}
\Lambda_0 & 0 \\ 0 & \lambda_0^{-1}
\end{pmatrix}\,.
\end{equation*}

The reason we use~\ref{a.ellipticity.dagger} is that it is natural to take~$\mathbf{E}_0 = \mathbb{E}[\bfA(\cudot_n)]$ at some scale~$n$ and renormalize the ellipticity assumption as in Lemma~\ref{l.renormalize.ellipticity}. We define the ellipticity ratio~$\Theta$ by
\begin{equation}
\label{e.Theta.def}
\Theta := \min_{\h\in\R^{d\times d}_{\mathrm{skew}}} \big| \s_{*,0}^{-\nicefrac12} (\s_0 + (\k_0-\h)^t\s_{*,0}^{-1}(\k_0-\h))\s_{*,0}^{-\nicefrac12} \big|\,.
\end{equation}
The subtraction of a constant skew-symmetric matrix reflects the invariance of divergence form equations under this transformation, as explored in this section. We denote by~$\h_0$ the minimizer in~\eqref{e.Theta.def} and define the ellipticity constants~$0<\lambda \leq \Lambda < \infty$ by
\begin{equation}
\label{e.lambdas.def}
\lambda_0 := \big|\s_{*,0}^{-1}\big|^{-1} \quad \mbox{and} \quad  \Lambda_0 := \min_{\h\in\R^{d\times d}_{\mathrm{skew}}} \big|\s_0 + (\k_0-\h)^t\s_{*,0}^{-1}(\k_0-\h)\big|\,,
\end{equation}
and the aspect ratio
\begin{equation}
\Pi_0 \coloneqq \frac{\Lambda_0}{\lambda_0}\,.
\end{equation}

Finally we state a purely algebraic lemma which will be useful later.
\begin{lemma}
\label{l.bfE.bounds}
Suppose~$\s_1,\s_{*,1}\in\R^{d\times d}_{\mathrm{sym}}$ are symmetric matrices,~$\k_1\in\R^{d\times d}$,
\begin{equation*}
\mathbf{E}_{1} :=
\begin{pmatrix}
\s_{1} + \k_1^t\s_{*,1}^{-1}\k_1 & -\k_1\s_{*,1}^{-1} \\
\s_{*,1}^{-1}\k_1 & \s_{*,1}^{-1}
\end{pmatrix}
\quad \mbox{and} \quad
\mathbf{E}_{*,1} :=
\begin{pmatrix}
\s_{*,1} + \k_1\s_1^{-1}\k_1^t & \k_1\s_1^{-1} \\
\s_1^{-1}\k_1^t & \s_1^{-1}
\end{pmatrix}\,,
\end{equation*}
and
\begin{equation*}
\mathbf{E}_{*,1} \leq \mathbf{E}_1\,.
\end{equation*}
Then for
\begin{equation*}
\tilde{\Theta}:= |\s_{*,1}^{-\nicefrac12}\s_1\s_{*,1}^{-\nicefrac12}|
\end{equation*}
we have
\begin{equation}
\label{e.symm.k.controlled}
|\s_{*,1}^{-\nicefrac12} (\k_1 + \k_1^t)\s_{*,1}^{-\nicefrac12} | \leq \tilde{\Theta} - 1\,,
\end{equation}
and
\begin{equation}
\label{e.bfA.E.diff}
|\mathbf{E}_{*,1}^{-\nicefrac12}\mathbf{E}_1\mathbf{E}_{*,1}^{-\nicefrac12} - \Itwod| \leq 6(\tilde{\Theta} - 1)\,.
\end{equation}
\end{lemma}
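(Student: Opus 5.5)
The plan is to reduce to the normalized case $\s_{*,1}=\Id$ and then read off both estimates from the block structure. I will interpret $\mathbf{E}_1$ and $\mathbf{E}_{*,1}$ as having the same structure as~\eqref{e.bigA.def} and~\eqref{e.bigA.formulas.inv}, that is, $\mathbf{E}_1=\mathbf{G}_{-\k_1}^t\,\mathrm{diag}(\s_1,\s_{*,1}^{-1})\,\mathbf{G}_{-\k_1}$ and $\mathbf{E}_{*,1}=\mathbf{R}\mathbf{E}_1^{-1}\mathbf{R}$. I am reading the sign and transpose conventions in the off-diagonal blocks of the displayed $\mathbf{E}_1$ in the way that makes this consistent.

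\textbf{Normalization.} Conjugate both matrices by $\mathrm{diag}(\s_{*,1}^{\nicefrac12},\s_{*,1}^{-\nicefrac12})$. This is congruence by an invertible matrix, so it preserves the hypothesis $\mathbf{E}_{*,1}\le\mathbf{E}_1$. It preserves the spectrum of $\mathbf{E}_{*,1}^{-\nicefrac12}\mathbf{E}_1\mathbf{E}_{*,1}^{-\nicefrac12}$, since that spectrum is the spectrum of $\mathbf{E}_{*,1}^{-1}\mathbf{E}_1$. It replaces $(\s_1,\s_{*,1},\k_1)$ by $(\s_{*,1}^{-\nicefrac12}\s_1\s_{*,1}^{-\nicefrac12},\Id,\s_{*,1}^{-\nicefrac12}\k_1\s_{*,1}^{-\nicefrac12})$. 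So from now on $\s_{*,1}=\Id$ and $\tilde\Theta=|\s_1|$.

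\textbf{Step 1: symmetric part of $\k_1$, proving \eqref{e.symm.k.controlled}.} Evaluate the hypothesis $\mathbf{E}_{*,1}\le\mathbf{E}_1$ on the test vectors $(e,\pm e)$ with $e\in\Rd$ a unit vector.
\begin{itemize}
\item The diagonal blocks give $e\cdot(\Id+\k_1\s_1^{-1}\k_1^t)e+e\cdot\s_1^{-1}e$ on the left and $e\cdot(\s_1+\k_1^t\k_1)e+|e|^2$ on the right.
\item The cross terms give $\pm 2e\cdot\k_1\s_1^{-1}e$ on the left and $\mp 2e\cdot\k_1 e$ on the right.
\end{itemize}
As a first consequence, the test vector $(e,0)$ gives $\Id+\k_1\s_1^{-1}\k_1^t\le\s_1+\k_1^t\k_1$, and combining with the $(0,e)$ direction yields $\Id\le\s_1$. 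This matches the ordering $\s_*\le\s$ in Lemma~\ref{l.J.further.props}.

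The cleaner route is to mimic the proof of~\eqref{e.symm.part.k}. Conjugating by $\mathbf{G}_{\k_1}$ turns $\mathbf{E}_1$ into $\mathrm{diag}(\s_1,\Id)$ and turns $\mathbf{E}_{*,1}$ into a matrix whose blocks involve $\k_1+\k_1^t$ and $\s_1^{-1}$. The hypothesis then reads as a $2\times2$ block inequality. Testing it with $(e,te)$ and optimizing over $t\in\R$ gives the quadratic-discriminant bound
\begin{equation*}
|e\cdot(\k_1+\k_1^t)e|\le e\cdot(\s_1-\Id)e\le\tilde\Theta-1 .
\end{equation*}
Since $\k_1+\k_1^t$ is symmetric, this is exactly~\eqref{e.symm.k.controlled}.

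\textbf{Step 2: the two-sided bound \eqref{e.bfA.E.diff}.} The lower bound $\mathbf{E}_{*,1}^{-\nicefrac12}\mathbf{E}_1\mathbf{E}_{*,1}^{-\nicefrac12}\ge\Itwod$ is the hypothesis, so only an upper bound is needed. Conjugate by the same $\mathbf{G}_{\k_1}$, so that $\mathbf{E}_1$ becomes $\mathbf{D}=\mathrm{diag}(\s_1,\Id)$. Write the conjugated $\mathbf{E}_{*,1}$ as $\mathbf{D}-\mathbf{N}$. The perturbation $\mathbf{N}$ is built from three pieces:
\begin{itemize}
\item $\s_1-\s_1^{-1}$, which satisfies $0\le\s_1-\s_1^{-1}\le 2(\tilde\Theta-1)$ because $\Id\le\s_1\le\tilde\Theta\Id$;
\item $\k_1+\k_1^t$, controlled by Step 1;
\item terms such as $\k_1(\s_1^{-1}-\Id)$.
\end{itemize}

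\emph{Main obstacle.} The third kind of term carries the possibly large skew part of $\k_1$, so it is not obviously small. Conjugating by $\mathbf{G}_{\k_1}$ is intended precisely to cancel these skew contributions, because the skew part commutes with the structure in~\eqref{e.Ak.formula}. I would first verify by direct computation that every surviving entry of $\mathbf{N}$ involves only $\s_1-\Id$ and $\k_1+\k_1^t$. If that works, then $|\mathbf{D}^{-\nicefrac12}\mathbf{N}\mathbf{D}^{-\nicefrac12}|\le c(\tilde\Theta-1)$ with $c$ at most $4$, using $\mathbf{D}\ge\Itwod$.

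\textbf{Step 3: inversion.} Set $\mathbf{X}=\mathbf{D}^{-\nicefrac12}\mathbf{N}\mathbf{D}^{-\nicefrac12}$. Since $\mathbf{E}_{*,1}^{-1}\mathbf{E}_1$ is conjugate to $(\Itwod-\mathbf{X})^{-1}$, an upper bound on its eigenvalues follows.
\begin{itemize}
\item In the regime $\tilde\Theta-1\le\frac18$, we have $|\mathbf{X}|\le\frac12$, so $(\Itwod-\mathbf{X})^{-1}\le\Itwod+2\mathbf{X}$. Together with the lower bound $\ge\Itwod$, this gives~\eqref{e.bfA.E.diff} with constant at most $6$.
\item For large $\tilde\Theta$, I would instead bound crudely: $\mathbf{E}_1\le\mathrm{diag}(2\s_1+2\k_1^t\k_1,2\Id)$ and $\mathbf{E}_{*,1}\ge$ a comparable matrix, obtained via~\eqref{e.bigA.diag.bounds}.
\end{itemize}
Tracking the constant $6$ exactly in both regimes is the part most likely to need adjustment.
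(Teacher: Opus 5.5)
\textbf{Gap: the large-$\tilde\Theta$ branch of your Step~3.} This is not a matter of tracking constants. With no restriction on $\tilde\Theta$, the estimate~\eqref{e.bfA.E.diff} is false, so no argument can close this branch. The paper's ``proof'' is only a citation, so the problem lies in the statement as transcribed.

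Take $d=1$, $\s_{*,1}=1$, $\s_1=9$, $\k_1=4$, with the symmetric convention you adopted (so $\mathbf{E}_{*,1}=\mathbf{R}\mathbf{E}_1^{-1}\mathbf{R}$). Then
\begin{equation*}
\mathbf{E}_1=\begin{pmatrix}25&-4\\-4&1\end{pmatrix},\qquad
\mathbf{E}_1-\mathbf{E}_{*,1}=\frac89\begin{pmatrix}25&-5\\-5&1\end{pmatrix}\geq 0,\qquad
\mathbf{E}_{*,1}^{-1}\mathbf{E}_1=\mathbf{R}\mathbf{E}_1\mathbf{R}\mathbf{E}_1=\begin{pmatrix}41&-8\\-200&41\end{pmatrix}.
\end{equation*}
The last matrix has eigenvalues $1$ and $81$. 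Hence $|\mathbf{E}_{*,1}^{-\nicefrac12}\mathbf{E}_1\mathbf{E}_{*,1}^{-\nicefrac12}-\Itwod|=80$, whereas $6(\tilde\Theta-1)=48$. Here~\eqref{e.symm.k.controlled} holds with equality, $|2\k_1|=8=\tilde\Theta-1$.

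More generally, $\s_1=\tilde\Theta$, $\k_1=\frac12(\tilde\Theta-1)$ gives exactly $\tilde\Theta^2-1$, which exceeds $6(\tilde\Theta-1)$ for every $\tilde\Theta>5$. Your crude comparison via~\eqref{e.bigA.diag.bounds} would in any case reintroduce $\k_1^t\k_1$, whose skew part is not controlled by $\tilde\Theta$ at all.

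\textbf{What is true, and provable along your lines.} Your ``main obstacle'' vanishes identically. Normalize $\s_{*,1}=\Id$ and set $\mathbf{m}:=\k_1+\k_1^t$. Then $\mathbf{G}_{\k_1}^t\mathbf{E}_1\mathbf{G}_{\k_1}=\mathrm{diag}(\s_1,\Id)$ and $\mathbf{G}_{\k_1}^t\mathbf{E}_{*,1}\mathbf{G}_{\k_1}=\mathbf{G}_{\mathbf{m}}^t\,\mathrm{diag}(\Id,\s_1^{-1})\,\mathbf{G}_{\mathbf{m}}$, so only $\s_1$ and $\mathbf{m}$ survive.

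Replace Step~3 by an exact spectral computation. The matrix $\mathbf{E}_{*,1}^{-1}\mathbf{E}_1=(\mathbf{R}\mathbf{E}_1)^2$ is similar to $\mathbf{T}^2$, where $\mathbf{T}=\bigl(\begin{smallmatrix}0&\Id\\ \s_1&-\mathbf{m}\end{smallmatrix}\bigr)$.
\begin{itemize}
\item Each eigenvalue $\lambda$ of $\mathbf{T}$ is real, because $\lambda^2$ is an eigenvalue of a matrix similar to the positive definite $\mathbf{E}_{*,1}^{-\nicefrac12}\mathbf{E}_1\mathbf{E}_{*,1}^{-\nicefrac12}$.
\item Each such $\lambda$ comes with a real unit vector $x$ satisfying $\lambda^2+\lambda\,x\cdot\mathbf{m}x-x\cdot\s_1x=0$.
\item Combined with $|x\cdot\mathbf{m}x|\le x\cdot\s_1x-1$ from Step~1, this forces $1\le\lambda^2\le(x\cdot\s_1x)^2\le\tilde\Theta^2$.
\end{itemize}
So the sharp general bound is $(\tilde\Theta+1)(\tilde\Theta-1)$. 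Consequently~\eqref{e.bfA.E.diff} holds exactly under an extra hypothesis such as $\tilde\Theta\le5$. The estimate is only useful for $\tilde\Theta$ near $1$, which is presumably the regime of the cited source.

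Your Neumann-series inversion alone reaches only $\tilde\Theta-1\le\frac14$. The blocks of $\mathbf{D}^{-\nicefrac12}\mathbf{N}\mathbf{D}^{-\nicefrac12}$ each have norm at most $\tilde\Theta-1$, which gives $c=2$ and hence constant $4$. With your $c=4$, the bound $(\Itwod-\mathbf{X})^{-1}\le\Itwod+2\mathbf{X}$ would give $8$, not $6$.

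\textbf{Step~1 in $d\ge2$.} Testing with $(e,te)$ and taking the discriminant does not directly yield $|e\cdot\mathbf{m}e|\le e\cdot(\s_1-\Id)e$; you need an extra Cauchy--Schwarz reduction to the scalar case. A cleaner route: $\mathbf{R}\mathbf{E}_1^{-1}\mathbf{R}\le\mathbf{E}_1$ is equivalent to $|\mathbf{E}_1^{-\nicefrac12}\mathbf{R}\mathbf{E}_1^{-\nicefrac12}|\le1$, i.e.\ $-\mathbf{E}_1\le\mathbf{R}\le\mathbf{E}_1$. After conjugation by $\mathbf{G}_{\k_1}$ this reads $\bigl(\begin{smallmatrix}\s_1\mp\mathbf{m}&\mp\Id\\ \mp\Id&\Id\end{smallmatrix}\bigr)\ge0$, and its Schur complement is exactly $\pm\mathbf{m}\le\s_1-\Id$.
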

\begin{proof}
This is established in~\cite[Section 2.7]{AK.HC}.
\end{proof}

%In particular, the inequality~$\bfAhom_*(\cudot_n) \leq \bfAhom(\cudot_n)$ implies that
%\begin{equation*}
%|\shom_{*}(\cudot_n)^{-\nicefrac12}\bigl( \khom(\cudot_n) + \khom(\cudot_n)^t)\shom_{*}(\cudot_n)^{-\nicefrac12}| \leq \Theta_n - 1\,,
%\end{equation*}
%and
%\begin{equation*}
%\bfAhom(\cudot_n) - \bfAhom_*(\cudot_n) \leq (2+\Theta_n^{\nicefrac12})(\Theta_n - 1)\bfAhom_*(\cudot_n)\,,
%\end{equation*}
%where~$\Theta_n$ is defined below in~\eqref{e.Theta.n.def}.

\subsection{Stochastic bounds on the coarse-grained matrices}
\label{ss.stochastic}
It is a consequence of~\eqref{e.bfA.crude.moment}, subadditivity and the inequality (see~\cite[Lemma C1]{AK.HC}) 
\begin{equation}
\label{e.E.Opsi}
X \leq \O_{\Psi}(a) \implies \mathbb{E}[X^p] \leq 2pa^p K_{\Psi}^{1+ \lceil\frac{1}{2}p(p+1)\rceil} \quad \forall p\geq 1\,,
\end{equation}
that all finite moments of~$\bfA(I\times U)$ are bounded, for any finite interval~$I\subseteq \R$ and bounded Lipschitz domain~$U\subseteq \Rd$. We therefore define
\begin{equation}
\label{e.bfA.hom.def}
\bfAhom(I\times U) := \E[\bfA(I\times U)]\,.
\end{equation}
Similarly, we define~$\shom(I\times U),\shom_*(I\times U),\khom(I\times U)$ and~$\bhom(I\times U)$ as the deterministic matrices satisfying
\begin{equation}
\label{e.all.homs.def}
\left\{
\begin{aligned}
\shom_*(I\times U) & := \E \bigl[ \s_*^{-1}(I\times U) \bigr]^{-1} \,, 
\\ 
\khom(I\times U) 
& :=  \shom_*(I\times U)\E \bigl[ \s_*^{-1}(I\times U) \k(I\times U) \bigr] \,, 
\\ 
\bhom(I\times U) &:= \shom(I\times U) + \khom^t(I\times U) \shom_*^{-1}(I\times U) \khom(I\times U)
\\ &
=
\E \bigl[ \s(I\times U) + \k^t(I\times U) \s_*^{-1}(I\times U) \k(I\times U) \bigr]\,.
\end{aligned}
\right.
\end{equation}
As a consequence of these definitions,
\begin{equation}
\label{e.bfAhom.mat}
\bfAhom(I\times U) 
= 
\begin{pmatrix} 
( \shom + \khom^t\shom_*^{-1}\khom)(I\times U)
& - (\khom^t \shom_*^{-1})(I\times U)
\\ -(\shom_*^{-1}\khom)(I\times U)
& \shom_*^{-1}(I\times U)
\end{pmatrix}\,,
\end{equation}
and
\begin{equation}
\label{e.bfAhom.star.mat}
\bfAhom_*(I\times U):= \mathbb{E} [\bfA_*^{-1}(I\times U)]^{-1} = 
\begin{pmatrix}
(\shom_* + \khom\shom^{-1}\khom^t)(I\times U) & (\khom\shom^{-1})(I\times U) \\
(\shom^{-1}\khom^t)(I\times U) & \shom^{-1}(I\times U)
\end{pmatrix}
\,.
\end{equation}
Taking the expectation of~\eqref{e.Jsplitting} with~$Q = \bfAhom_*(I\times U)P$ for any~$P\in\R^{2d}$, we get
\begin{equation*}
0 \leq \mathbb{E}[\bfJ(I\times U,P,\bfAhom_*(I\times U)P)] = \frac{1}{2}P\cdot \bigl( \bfAhom(I\times U) - \bfAhom_*(I\times U) \bigr)P\,,
\end{equation*}
so that we have the ordering~$\bfAhom_*(I\times U) \leq \bfAhom(I\times U)$, and consequently~$\shom_*(I\times U) \leq \shom(I\times U)$.

\begin{lemma}[Stochastic bounds on the coarse-grained matrices]
\label{l.bfA.bounds}
Assume that~$\P$ satisfies~\ref{a.stationarity},\newline \ref{a.ellipticity}, and~\ref{a.CFS}. Then the following holds:
\begin{itemize}

\item Improving ellipticity on large mesoscales: for every~$h\in\N$ there exists a random scale~$\S_h$ satisfying
\begin{equation}
\label{e.Sh.O}
\S_h \leq \O_{\Psi_{\S}} \bigl ( K_{\Psi_{\S}}^{4(d+3)} 3^h \bigr)
\end{equation}
such that, for every~$m\in\Z$ and~$n\in\Z \cap (-\infty,m]$,
\begin{equation}
\label{e.ellipticity.mesogrid}
3^m \geq \S_h \implies \bfA(z+\cudot_n) \leq 3^{\gamma(m-h-n)_+}\bfE \qquad \forall z\in \mathcal{Z}_n \cap \cudot_m\,.
\end{equation}

\item Upper bounds for coarse-grained matrices: for every~$m\in\N, n\in\Z$ with~$n\leq m$ and~$z\in\mathcal{Z}_n \cap \cudot_m$
\begin{equation}
\label{e.bfA.crude.moment}
|\bfE^{-\nicefrac12}\bfA(z + \cudot_n)\bfE^{-\nicefrac12}| \leq 3^{\gamma(m-n)} (1+\O_{\Psi_{\S}}(3^{\gamma-m})) \,.
\end{equation}
In particular, for every~$n\in \Z$,
\begin{equation}
\label{e.bfA.crude.moment.E}
\bigl|\mathbf{E}_0^{-\nicefrac12}\bfA(\cudot_n) \mathbf{E}_0^{-\nicefrac12}\bigr| \leq 1+\O_{\Psi_{\S}}(3^{\gamma-m})\,.
\end{equation}

\item Upper and lower bounds on the means: for every~$n\in\N$
\begin{equation}
\label{e.bfAhom.by.E0}
(1+3^{3-n}K_{\Psi_\S}^2)^{-1} \bfAhom(\cudot_n) \leq \mathbf{E}_0 \leq 2\big(1 + 32(\Theta - 1)\big)\bfAhom(\cudot_n)\,.
\end{equation}
% We don't need the 3^{-n}K_S factor on the upper bound since we can take very large n and then use monotonicity of the \bfAhom

\item Sensitivity and locality of~$\bfA$: for any finite interval~$I\subseteq\R$ and bounded Lipschitz~$U\subseteq\Rd$,
\begin{equation}
\label{e.bfA.malliavin}
\big| D_U (P \cdot \bfA(I\times U)P)\big| \leq P \cdot \bfA(I\times U)P\,, \forall P\in\R^{2d}
\end{equation}
and
\begin{equation}
\label{e.bfA.local}
\bfA(I\times U) \quad \mbox{is $\mathcal{F}(I\times U)$--measurable.}
\end{equation}

\item Concentration for sums of~$\bfA$'s: for every~$k,m,n\in \N$ with~$\beta k < n < k \leq m$ and~$z\in\mathcal{Z}_k \cap \cudot_m$,
\begin{equation}
\label{e.bfA.CFS.upper}
\avsum_{z'\in\mathcal{Z}_n \cap (z+\cudot_k)} \bfE^{-\nicefrac12}\big(\bfA(z' + \cudot_n) - \bfAhom(\cudot_n)\big)\bfE^{-\nicefrac12} \indc_{\{\S \leq 3^m\}} \leq \O_{\Psi} \bigg( 4 \cdot 3^{\gamma(m-n)} 3^{-\nu(k-n)} \bigg)\,.
\end{equation}

\end{itemize}
\end{lemma}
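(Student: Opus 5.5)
The plan is to prove the five items of Lemma~\ref{l.bfA.bounds} in the order: locality and sensitivity first, then the crude bounds, then improving ellipticity on mesoscales, then the bounds on the means, and finally concentration for sums. This mirrors the elliptic argument in~\cite{AK.HC}, and only the space-time bookkeeping changes.

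\textbf{Locality and sensitivity.} The locality~\eqref{e.bfA.local} is immediate, because~$\bfJ(I\times U,\cdot,\cdot)$ in~\eqref{e.bfJ.var} depends only on~$\a|_{I\times U}$. For~\eqref{e.bfA.malliavin}, we use the variational formula~\eqref{e.bfJ.var} with~$Q=0$. By~\eqref{e.Jsplitting} and convex duality in~$P$,~$\frac12 P\cdot\bfA(I\times U)P$ is an infimum of~$\fint\frac12 (X+P)\cdot\bfA(X+P)$. Since~$X$ ranges over a space that does not depend on~$\bfA$, we perturb~$\bfA\to\bfA_i$ with~$|\bfA^{-1/2}\bfA_i\bfA^{-1/2}-\Itwod|\le t\indc_V$. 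This changes the integrand by a factor in~$[1-t,1+t]$, so~$P\cdot\bfA(I\times U)P$ changes by at most a factor~$1\pm t$. Dividing by~$2t$ and letting~$t\to0$ gives the bound.

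\textbf{Crude bounds.} For~\eqref{e.bfA.crude.moment}, on the event~$\{\S\le 3^m\}$ we apply~\eqref{e.ellipticity} directly. On~$\{3^j<\S\le 3^{j+1}\}$ with~$j\ge m$, we apply~\eqref{e.ellipticity} at scale~$j+1$: the cube~$z+\cudot_n$ sits in~$\cudot_{j+1}$, giving a bound~$3^{\gamma(j+1-n)}$. Summing over~$j$ and using~\eqref{e.S.integrability} shows the excess factor is~$1+\O_{\Psi_\S}(3^{\gamma-m}\S^\gamma)$. I expect this to give the stated form once the definition of~$\O$ is rescaled, using~$\gamma<1$ to absorb the power. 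Item~\eqref{e.bfA.crude.moment.E} is the case~$n=m$.

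\textbf{Improving ellipticity on mesoscales.} For~\eqref{e.ellipticity.mesogrid}, define~$\S_h$ as the smallest~$3^m$ such that, for every~$j\ge m$, every subcube~$z'+\cudot_{j-h}$ of~$\cudot_j$ has its translated minimal scale~$\S\circ T_{z'}$ below~$3^{j-h}$. For~$n\ge m-h$ the claimed bound is then~$\bfA\le\bfE$. Smaller~$n$ are handled by~\eqref{e.ellipticity} applied inside the cube of size~$3^{m-h}$. A union bound over the~$3^{(d+2)h}$ subcubes, combined with stationarity and the growth condition~\eqref{e.Psi.S.growth.one} used~$d+3$ times, absorbs the count and yields~\eqref{e.Sh.O}.

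\textbf{Bounds on the means.} The lower bound in~\eqref{e.bfAhom.by.E0} follows by taking expectations in~\eqref{e.bfA.crude.moment.E} and applying~\eqref{e.E.Opsi}. The upper bound~$\bfE\le 2(1+32(\Theta-1))\bfAhom(\cudot_n)$ is the main obstacle, since it is purely algebraic. First, conjugating by~$\mathbf{G}_{\h_0}$ via~\eqref{e.eigen.unchanged} centers~$\k_0$. Next, monotonicity gives~$\bfAhom\ge\bfAhom_*$, and the large-scale limit gives~$\bfAhom_*(\cudot_n)\ge\mathbf{E}_{*,0}$-type lower bounds. Lemma~\ref{l.bfE.bounds} then gives~$\bfE\le(1+6(\Theta-1))\mathbf{E}_{*,0}$ up to the factor~$2$ coming from the off-diagonal blocks. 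I would follow~\cite[Section 2.7]{AK.HC} here; I expect the constant~$32$ to come out of~\eqref{e.symm.k.controlled} combined with~\eqref{e.bigA.diag.bounds}.

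\textbf{Concentration for sums.} For~\eqref{e.bfA.CFS.upper}, fix~$P$ and set~$X_{z'}=c\,3^{-\gamma(m-n)}P\cdot\bfE^{-1/2}(\bfA(z'+\cudot_n)\wedge\text{trunc}-\mathrm{mean})\bfE^{-1/2}P$. The truncation is at level~$3^{\gamma(m-n)}$, which is active off~$\{\S\le3^m\}$ by~\eqref{e.ellipticity}. The conditions~\eqref{e.CFS.ass} then follow from~\eqref{e.bfA.malliavin} and~\eqref{e.bfA.local}, and~\ref{a.CFS} yields the estimate. Finally, we remove the truncation on the event, bound the shift of the mean caused by truncation, and take a finite net over~$P$ to obtain a matrix inequality. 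The factor~$4$ absorbs these errors.
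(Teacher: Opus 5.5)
The overall route (union bounds over subcubes, truncation plus \ref{a.CFS}, Lemma~\ref{l.bfE.bounds} for the means) is the paper's, which defers to the elliptic argument of \cite{AK.HC}. The genuine gap is the mesoscale bullet, at the claim that the growth condition used $d+3$ times absorbs the count $3^{(d+2)h}$. Your union bound gives $\P[\S_h>3^m]\le\sum_{j\ge m}3^{(d+2)h}\Psi_{\S}(3^{j-h})^{-1}$. So at $3^m=tK_{\Psi_{\S}}^{4(d+3)}3^h$ with $t$ of order one you would need $3^{(d+2)h}\Psi_{\S}(t)\lesssim\Psi_{\S}(tK_{\Psi_{\S}}^{4(d+3)})$. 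Iterating $t\Psi_{\S}(t)\le\Psi_{\S}(K_{\Psi_{\S}}t)$ $J$ times gives only $\Psi_{\S}(K_{\Psi_{\S}}^Jt)\ge K_{\Psi_{\S}}^{J(J-1)/2}t^J\Psi_{\S}(t)$. For fixed $J$ this gain does not depend on $h$, so the count is not absorbed once $h$ is large.

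No cleverer choice of $\S_h$ fixes this. The case $n=m-h$ of \eqref{e.ellipticity.mesogrid} forces each of the $3^{(d+2)h}$ cubes $y+\cudot_{m-h}\subseteq\cudot_m$ to satisfy $\bfA(y+\cudot_{m-h})\le\bfE$. Take a model with i.i.d.\ space-time unit cells in which a patch of a fixed constant coefficient violating this bound appears with small positive probability. For $\gamma>0$ this is compatible with \ref{a.ellipticity.dagger} (for a suitable superpolynomial $\Psi_{\S}$) and with \ref{a.CFS}. There a cube of fixed size $L$ fails with probability bounded below uniformly in $h$, and disjoint lattice cubes are independent. Hence $\P[\S_h\le L3^h]\to0$ as $h\to\infty$ for every fixed $L$, and no $h$-uniform bound of the form \eqref{e.Sh.O} can hold. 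The paper's one-line deferral does not address this either.

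What your argument does prove is a tail bound with $3^{(d+2)h}$ kept explicit. Equivalently, $\S_h\le\O_{\Psi_{\S}}(A_h3^h)$ with $A_h$ growing in $h$; optimizing $J$ gives roughly $A_h=\exp(C\sqrt{(d+2)h\log K_{\Psi_{\S}}})$. That $h$-dependence must be carried into the places where $\S_h$ is used, e.g.\ Lemma~\ref{l.adapted.cylinders}.

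Some smaller points.
\begin{itemize}
\item The upper bound in \eqref{e.bfAhom.by.E0} is not purely algebraic, and your lower bound on $\bfAhom_*$ needs a mechanism. Comparing \eqref{e.bigA.def} with \eqref{e.bigAstar.def} gives $\bfA_*^{-1}(V)=\mathbf{R}\bfA(V)\mathbf{R}$. Hence \eqref{e.bfA.crude.moment.E} and \eqref{e.E.Opsi} give $\E[\bfA_*^{-1}(\cudot_N)]\le(1+3^{3-N}K_{\Psi_{\S}}^2)\mathbf{R}\bfE\mathbf{R}$. Combine this with $\bfAhom(\cudot_n)\ge\bfAhom(\cudot_N)\ge\bfAhom_*(\cudot_N)$ for $N\ge n$ and let $N\to\infty$. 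This yields $\bfAhom(\cudot_n)\ge(\mathbf{R}\bfE\mathbf{R})^{-1}$, and also the hypothesis $(\mathbf{R}\bfE\mathbf{R})^{-1}\le\bfE$ of Lemma~\ref{l.bfE.bounds}. That lemma then gives $\bfE\le(1+6(\Theta-1))\bfAhom(\cudot_n)$, with no centering and no extra factor $2$.
\item For \eqref{e.bfA.malliavin}, the infimum over an $\a$-independent admissible set (pairs $(\nabla v,\h)$ with $\partial_t v=\nabla\cdot\h$) does not follow from \eqref{e.Jsplitting} by duality. That identity is a supremum over $\a$-dependent solution spaces, so you need the parabolic variational principle of \cite{ABM}.
\item In the concentration bullet no mean-shift correction is needed: truncating from above lowers the mean, and \eqref{e.bfA.CFS.upper} is one-sided.
\item A union bound over a net of directions cannot be absorbed into the constant $4$, since the growth condition only gains a factor $t$. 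Either read the matrix inequality directionwise or accept a power of $K_\Psi$.
\item Your crude bound is fine. The excess factor is at most $3^{\gamma}(\S/3^m)^{\gamma}\indc_{\{\S>3^m\}}$, which is $\O_{\Psi_{\S}}(3^{\gamma-m})$ precisely because of the indicator; no summation over $j$ is needed.
\end{itemize}
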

\begin{proof}

The proofs are straightforward generalizations of the elliptic case, following~\cite[Section 2.8]{AK.HC}.
\end{proof}

\subsection{Renormalization of the ellipticity assumption}
\label{ss.renormalization}

As in the elliptic case, the assumption that~$\P$ satisfies~\ref{a.stationarity},~\ref{a.ellipticity.dagger} and~\ref{a.CFS} can be renormalized. 
To formalize this, we introduce the mapping~$D_{n_0}:\Omega \to \Omega$ given by dilation by~$3^{n_0}$,
\begin{equation}
\label{e.dilation.def}
(D_{n_0} \a )(t,x) = \a (3^{2n_0}t, 3^{n_0}x)
\end{equation}
and we define~$\P_{n_0}$ by
\begin{equation}
\label{e.Pn0}
\P_{n_0}:= \mbox{\,the pushforward of~$\P$ under~$D_{n_0}$.}
\end{equation}
The measure~$\P_{n_0}$ satisfies (almost) the same assumptions as~$\P$, but with the ellipticity matrix~$\mathbf{E}_0$ replaced by~$\bfAhom(\cudot_{n_0-l_0})$, where the scale separation~$l_0$ is sufficiently large enough. However, we expect the ellipticity ratio for~$\bfAhom(\cudot_{n_0-l_0})$ to be much smaller than for~$\bfE$. It is natural to define, for each~$n\in\N$, the renormalized ellipticity ratio~$\Theta_n \in [1,\infty)$ at scale~$3^n$, which is the ellipticity ratio for~$\bfAhom(\cudot_n)$. In view of~\eqref{e.Theta.def} and~\eqref{e.bigA.def}, we define it by 
\begin{equation}
\label{e.Theta.n.def}
\Theta_n := 
\min_{\h_0 \in \R^{d\times d}_{\mathrm{skew}}}
\bigl| ( \shom_{*}^{-\nicefrac12} \bhom_{\h_0}\,\shom_{*}^{-\nicefrac12}  ) (\cudot_n) \bigr|
\,.
\end{equation}
Note that~$n\mapsto \Theta_n$ is monotone decreasing, as a consequence of the subadditivity of~$\b$ and~$\s_*^{-1}$.  For convenience, we define an exponent~$\mu$, used throughout the rest of the paper, by
\begin{equation}
\label{e.def.mu}
\mu:= (\nu-\gamma)(1-\beta)\,.
\end{equation}

\begin{lemma}[Renormalization of the ellipticity]
\label{l.renormalize.ellipticity}
Let~$\gamma < \rho < 1$ and~$\delta > 0$. Suppose that~$l_0\in\N$ satisfies
\begin{equation*}
l_0 \geq \frac{1}{\rho - \gamma}\big( 1 + \frac{d+2}{\mu}\big)\big(9 + \log(\delta^{-1}\Theta)\big) + \frac{6}{\mu}\log K_{\Psi}\,.
\end{equation*}
Then for every~$n \in \N$ with~$n-l_0 \geq 2\log K_\Psi$, there exists a minimal scale~$\S'\geq \S$ satisfying
\begin{equation*}
\S' = \O_{\Psi_{\S'}}(3^n) \quad \mbox{with} \quad \Psi_{\S'}(t):= \frac{1}{2}\min\big\{\Psi_\S(3^nt),\Psi(t^\mu)\big\}\,,
\end{equation*}
such that for every~$m\in\N$ with~$m\geq n$ and every~$k\leq m$
\begin{equation*}
3^m \geq \S' \implies \sup_{z\in \mathcal{Z}_k \cap \cudot_m} \bfA(z+\cudot_k) \leq ( 1+ \delta 3^{\rho(m-k)}) \bfAhom(\cudot_{n-l_0})\,.
\end{equation*}
\end{lemma}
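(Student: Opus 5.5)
The plan is to follow the elliptic renormalization argument of~\cite[Lemma 2.x / Section 2.8]{AK.HC}. We combine the deterministic subadditivity of~$\bfA$ with the concentration estimate~\eqref{e.bfA.CFS.upper} at the mesoscale~$n-l_0$. The comparison matrix will be~$\bfAhom(\cudot_{n-l_0})$. By~\eqref{e.bfAhom.by.E0} it is comparable to~$\bfE$ up to the factor~$2(1+32(\Theta-1))\lesssim \Theta$, which is where the~$\log\Theta$ in the lower bound on~$l_0$ enters.

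\emph{Step 1: small cubes.} Fix~$m\ge n$ and~$k\le m$. If~$k\le n-l_0$, use~\ref{a.ellipticity.dagger} on the event~$3^m\ge\S$ to get~$\bfA(z+\cudot_k)\le 3^{\gamma(m-k)}\bfE\le C\Theta\,3^{\gamma(m-k)}\bfAhom(\cudot_{n-l_0})$. Since~$m-k\ge l_0$ and~$\rho>\gamma$, we have~$C\Theta 3^{\gamma(m-k)}\le \delta 3^{\rho(m-k)}$ as soon as~$(\rho-\gamma)l_0\ge \log(C\Theta/\delta)$, which the hypothesis on~$l_0$ guarantees.

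\emph{Step 2: large cubes.} If~$k>n-l_0$, partition~$z+\cudot_k$ into the subcubes~$z'+\cudot_j$, $z'\in\mathcal{Z}_j\cap(z+\cudot_k)$, with~$j:=\max\{n-l_0,\lceil\beta k\rceil+1\}$ chosen so that~\eqref{e.bfA.CFS.upper} applies. Subadditivity~\eqref{e.subadditivity} bounds~$\bfA(z+\cudot_k)$ by the average of the~$\bfA(z'+\cudot_j)$. This average equals~$\bfAhom(\cudot_j)\le\bfAhom(\cudot_{n-l_0})$ (monotonicity of the means) plus a fluctuation term. By~\eqref{e.bfA.CFS.upper}, the fluctuation is bounded by~$\O_\Psi(4\cdot3^{\gamma(m'-j)}3^{-\nu(k-j)})$ relative to~$\bfE$, and hence by~$C\Theta$ times this relative to~$\bfAhom(\cudot_{n-l_0})$. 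We then need this error to be at most~$\delta 3^{\rho(m-k)}$ for all~$m\ge n$, all~$k$, and all~$z\in\mathcal{Z}_k\cap\cudot_m$, simultaneously.

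\emph{Step 3: union bound and the scale~$\S'$.} Define~$\S'$ as the smallest~$3^{m_0}\ge\max\{\S,3^n\}$ such that, for all~$m\ge m_0$, all~$k\in(n-l_0,m]$ and all~$z$, the fluctuation bound of Step 2 holds with threshold~$\delta 3^{\rho(m-k)}/(C\Theta)$.

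\begin{itemize}
\item For fixed~$(m,k,z)$, the failure probability is at most~$1/\Psi(t)$ with~$t\approx \delta\Theta^{-1}3^{\rho(m-k)-\gamma(m-j)+\nu(k-j)}$.
\item Using~$j\approx\max\{n-l_0,\beta k\}$, the exponent is at least~$\mu(k-n+l_0)$ plus a gain~$(\rho-\gamma)(m-k)$.
\item The number of~$z$ is~$3^{(d+2)(m-k)}$. Using the growth condition~\eqref{e.Psi.growth} repeatedly absorbs this polynomial factor, provided~$(\rho-\gamma)l_0\gtrsim (1+\tfrac{d+2}{\mu})\log(\Theta/\delta)$ and~$l_0\gtrsim \mu^{-1}\log K_\Psi$. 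This is exactly the stated threshold.
\item Summing over~$k$ and~$m$ geometrically then gives~$\P[\S'>3^n t]\le \Psi_\S(3^nt)^{-1}+\Psi(t^\mu)^{-1}$. The first term comes from~$\S$; the second comes from the concentration failures, where the factor~$3^{\mu(m-n)}$ turns into~$t^\mu$. This yields~$\S'=\O_{\Psi_{\S'}}(3^n)$. The condition~$n-l_0\ge2\log K_\Psi$ ensures the crude moment term in~\eqref{e.bfAhom.by.E0} and the indicator in~\eqref{e.bfA.CFS.upper} are harmless.
\end{itemize}

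The main obstacle is the bookkeeping in Step 3. The choice of~$j$ relative to~$\beta k$ must be compatible with the~$\CFS$ range~$\beta k<j<k$, and the indicator~$\indc_{\{\S\le3^{m}\}}$ in~\eqref{e.bfA.CFS.upper} refers to scale~$m$. The exponents then have to combine into~$\mu$ uniformly over~$m$ and~$k$, while keeping the union-bound sum summable with only the growth condition on~$\Psi$. If this fails, the first fallback is the elliptic argument of~\cite{AK.HC}, run verbatim with~$d$ replaced by~$d+2$ as the parabolic cube-counting exponent.
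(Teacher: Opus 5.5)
Your ingredients are the right ones: crude ellipticity, subadditivity, \eqref{e.bfA.CFS.upper} and a union bound with $d$ replaced by $d+2$. The paper's own proof is only a pointer to the elliptic argument of~\cite{AK.HC} with that substitution. However, the way you split the scales makes Step~3 fail.

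You invoke concentration for \emph{every} $k\in(n-l_0,m]$ and build this into the definition of $\mathcal{S}'$. So $m-k$ ranges up to $m-n+l_0$, which is unbounded in $m$. For $k$ just above $n-l_0$ you cannot subdivide below scale $n-l_0$, since you need $\bfAhom(\cudot_j)\le\bfAhom(\cudot_{n-l_0})$. Take for instance $\beta=0$, $k=n-l_0+1$, $j=n-l_0$. Then your exponent is $(\rho-\gamma)(m-k)+\mu$. For large $m$ this falls below $\mu(m-n)$ whenever $\rho-\gamma<\mu$. That is the typical case: $\beta=0$ and $\rho<\nu$ is exactly the regime of Proposition~\ref{p.renormalization.P}.

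With $3^m\approx 3^n t$, these cubes contribute roughly $t^{d+2}/\Psi(c\,t^{\rho-\gamma})$ to your union bound, which is not $\lesssim 1/\Psi(t^\mu)$. Likewise, absorbing the unbounded count $3^{(d+2)(m-k)}$ into the gain $3^{(\rho-\gamma)(m-k)}$ via \eqref{e.Psi.growth} costs a factor like $K_\Psi^{C(d+2)/(\rho-\gamma)}$. That forces a condition like $l_0\gtrsim (d+2)(\rho-\gamma)^{-2}\log K_\Psi$, which the hypothesis does not give, since it only contains $\frac{6}{\mu}\log K_\Psi$. So neither the tail $\Psi_{\mathcal{S}'}$ nor the stated threshold on $l_0$ follows from your bookkeeping.

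The missing observation is that your Step~1 never uses $k\le n-l_0$, only that $m-k$ is large. On $\{\mathcal{S}\le 3^m\}$, \ref{a.ellipticity.dagger} and \eqref{e.bfAhom.by.E0} give $\bfA(z+\cudot_k)\le C\Theta\,3^{\gamma(m-k)}\bfAhom(\cudot_{n-l_0})\le \delta 3^{\rho(m-k)}\bfAhom(\cudot_{n-l_0})$ for every $k\le m-h$. Here $h\approx(\rho-\gamma)^{-1}\log(C\Theta/\delta)\le l_0$, and this costs no probability at all.

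Concentration is then needed only for the top scales $k\in(m-h,m]$. There the number of cubes is at most $h\,3^{(d+2)h}$. Take $j=\max\{n-l_0,\lfloor\beta k\rfloor+1\}$; this is $<k$ because $(1-\beta)(l_0-h)\ge 2$. Using $n\ge l_0$, one then gets $(\nu-\gamma)(k-j)\ge \mu(m-n)+\mu(l_0-h)-(\nu-\gamma)$.

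The hypothesis on $l_0$ is designed, up to numerical constants, so that the surplus $\mu(l_0-h)$ dominates $(d+2)h$, $\log(C\Theta/\delta)$ and $\log K_\Psi$; this is the role of the factor $1+\frac{d+2}{\mu}$. That surplus absorbs the count, the prefactor and the sum over $m$, and leaves the tail $\Psi(t^\mu)$. With the split at $m-k\approx h$ rather than at $k\approx n-l_0$, your outline goes through.
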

\begin{proof}
The proof is a straightforward generalization of the elliptic case in~\cite[Lemma 2.12]{AK.HC}, up to the factor of~$d+2$ instead of~$d$.
\end{proof}

\begin{proposition}[Renormalization of the assumptions]
\label{p.renormalization.P}
Suppose~$\P$ satisfies~\ref{a.stationarity},~\ref{a.ellipticity.dagger} and\newline \ref{a.CFS}.
Let~$\rho \in (\gamma, \min\{ \nu,1\})$ and~$\delta>0$. 
Suppose that~$l_0\in\N$ satisfies
\begin{equation}
\label{e.l0.condition}
l_0 \geq \frac{1}{\rho - \gamma}\big( 1 + \frac{d+2}{\mu}\big)\big(9 + \log(\delta^{-1}\Theta)\big) + \frac{6}{\mu}\log K_{\Psi}\,.
\end{equation}
For every~$n_0\in\N$ with~$n_0 \geq l_0 + 2\log K_\Psi$, the pushforward~$\P_{n_0}$ of~$\P$ under the dilation map given in~\eqref{e.dilation.def} satisfies the assumptions~\ref{a.stationarity},~\ref{a.ellipticity.dagger} and~\ref{a.CFS}, where the parameters~$(\gamma,\Psi_\S,\bfE)$ in assumption~\ref{a.ellipticity.dagger} are replaced by~$(\rho,\Psi_{\S'},(1+\delta) \bfAhom(\cu_{n_0-l_0}))$ and~$\Psi_{\S'}$ is defined by
\begin{equation}
\label{e.new.Psi.S}
\Psi_{\S'}(t):=\frac12 \min\bigl\{ \Psi_\S(3^{n_0}t) ,\Psi  ( t^{\mu })\bigr\}
\,.
\end{equation}
\end{proposition}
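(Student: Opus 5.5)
We check the three assumptions for $\P_{n_0}$ one at a time; essentially all the work is already done in Lemma~\ref{l.renormalize.ellipticity}, and what remains is to transport statements through the dilation $D_{n_0}$. The basic observation is the scaling covariance of the coarse-grained matrices. If $\a' = D_{n_0}\a$, then $u$ solves $\partial_t u = \nabla\cdot\a'\nabla u$ in $I\times U$ exactly when $u(3^{-2n_0}\cdot,3^{-n_0}\cdot)$ solves the original equation in $3^{2n_0}I\times 3^{n_0}U$. Since $J$ in~\eqref{e.J.def} is a space-time average of a quadratic expression in gradients, it is invariant under this parabolic rescaling. Hence
\begin{equation*}
\bfA_{\a'}(z+\cudot_k)=\bfA_{\a}(z'+\cudot_{k+n_0})\,, \qquad z' = (3^{2n_0}z_t,3^{n_0}z_x)\,,
\end{equation*}
and the dilation maps $\mathcal{Z}_k\cap\cudot_m$ bijectively onto $\mathcal{Z}_{k+n_0}\cap\cudot_{m+n_0}$. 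The same scaling relates the $\sigma$-fields and the sensitivity operator~\eqref{e.malliavin.def}: $\mathcal{F}(V)$ for $\a'$ corresponds to $\mathcal{F}(D_{n_0}^{-1}V)$ for $\a$, and likewise $D_V$ corresponds to $D_{D_{n_0}^{-1}V}$.

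\textbf{Stationarity.} For $(s,y)\in\Z\times\Zd$ we have $T_{s,y}D_{n_0}=D_{n_0}T_{3^{2n_0}s,3^{n_0}y}$. Since $(3^{2n_0}s,3^{n_0}y)$ is again in $\Z\times\Zd$, stationarity~\ref{a.stationarity} of $\P$ gives stationarity of $\P_{n_0}$.

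\textbf{Ergodicity.} Take $m,n$ with $\beta m<n<m$ and a family $\{X_z\}_{z\in\mathcal{Z}_n\cap\cudot_m}$ satisfying~\eqref{e.CFS.ass} under $\P_{n_0}$. Composing with $D_{n_0}$ produces a family indexed by $\mathcal{Z}_{n+n_0}\cap\cudot_{m+n_0}$ that satisfies~\eqref{e.CFS.ass} under $\P$. The scale pair is now $(n+n_0,m+n_0)$, and $\beta(m+n_0)<n+n_0$ still holds because $\beta<1$. The difference $(m+n_0)-(n+n_0)=m-n$ is unchanged, so~\eqref{e.CFS} transfers with the same $\Psi$, $\nu$ and $\beta$.

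\textbf{Ellipticity.} This is the substantive part. Apply Lemma~\ref{l.renormalize.ellipticity} with $n=n_0$; the hypotheses on $l_0$ and $n_0$ are exactly~\eqref{e.l0.condition} and $n_0-l_0\ge 2\log K_\Psi$. It provides $\S'\ge\S$ with $\S'=\O_{\Psi_{\S'}}(3^{n_0})$, and for all $m'\ge n_0$ and $k'\le m'$, on the event $3^{m'}\ge\S'$,
\begin{equation*}
\bfA(z'+\cudot_{k'})\le(1+\delta3^{\rho(m'-k')})\bfAhom(\cudot_{n_0-l_0})\le 3^{\rho(m'-k')}(1+\delta)\bfAhom(\cudot_{n_0-l_0})\,.
\end{equation*}
The second inequality uses $1+\delta x\le(1+\delta)x$ for $x\ge1$.

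Define the new minimal scale $\S'':=3^{-n_0}\S'$. Its tail satisfies $\P[\S''>t]=\P[\S'>3^{n_0}t]\le 1/\Psi_{\S'}(t)$, which is~\eqref{e.S.integrability.one} with $\Psi_{\S'}$ as in~\eqref{e.new.Psi.S}.

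The main obstacle is the range of $m$. We need~\eqref{e.ellipticity} for every $m\in\Z$, but the lemma only covers $m'=m+n_0\ge n_0$, that is $m\ge0$. I would handle this by arranging that $\S''\ge1$ almost surely, for instance by replacing $\S'$ with $\max\{\S',3^{n_0}\}$. Then $3^m\ge\S''$ forces $m\ge0$, and the remaining cases are vacuous.

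It then has to be checked that $\Psi_{\S'}$ still satisfies $\P[\S''>t]\le1/\Psi_{\S'}(t)$. For $t<1$ this may require the convention built into Lemma~\ref{l.renormalize.ellipticity}: that the minimal scale $\S'$ there is already at least $3^{n}$, or equivalently that its $\O$-bound is only asserted for $t\ge1$. I expect this to be a bookkeeping issue rather than a genuine one.

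Finally, the growth condition~\eqref{e.Psi.S.growth.one} for $\Psi_{\S'}$ should follow from those of $\Psi_\S$ and $\Psi$, with a new constant of the form $\max\{K_{\Psi_\S},K_\Psi^{1/\mu}\}$. This is the same computation as in the elliptic case~\cite[Proposition 2.13]{AK.HC}.

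Rescaling the conclusion via the covariance identity then gives~\eqref{e.ellipticity} for $\P_{n_0}$ with parameters $(\rho,\Psi_{\S'},(1+\delta)\bfAhom(\cudot_{n_0-l_0}))$.
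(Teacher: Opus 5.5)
Your proposal is correct and follows the paper's route. Stationarity and~\ref{a.CFS} transfer immediately through the dilation $D_{n_0}$, with the scale pair shifted to $(n+n_0,m+n_0)$. Assumption~\ref{a.ellipticity.dagger} is exactly Lemma~\ref{l.renormalize.ellipticity} with $n=n_0$, rescaled via $\bfA_{D_{n_0}\a}(z+\cudot_k)=\bfA_{\a}(z'+\cudot_{k+n_0})$ and the inequality $1+\delta x\le(1+\delta)x$ for $x\ge1$. The paper's one-line proof leaves implicit the bookkeeping you flag: the range $m<0$ (which you handle by taking $\max\{\S',3^{n_0}\}$) and the tail bound for $t<1$. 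Your argument is simply a more careful write-up of the same proof.
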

\begin{proof}
The conditions~\ref{a.stationarity} and~\ref{a.CFS} for~$\P_{n_0}$ are immediate from their validity for~$\P$, and~\ref{a.ellipticity.dagger} is checked in Lemma~\ref{l.renormalize.ellipticity}.
\end{proof}

The function~$\Psi_{\S'}$ satisfies~$t\Psi_{\S'}(t) \leq \Psi_{\S'}(K_{\Psi_{\S'}}t)$ for all~$t\geq 1$ with~$K_{\Psi_{\S'}}$ given by
\begin{equation}
\label{e.new.K.Psi.S}
K_{\Psi_{\S'}} := 
\max\big\{ K_{\Psi_{\S}} , K_{\Psi}^{\lceil \nicefrac1\mu \rceil} \bigr\}\,.
\end{equation}
This follows from the definition of~$\Psi_{\S'}$ in~\eqref{e.new.Psi.S} and~\cite[Appendix C]{AK.HC}. The new value of~$\Pi$ is at most~$256(1+\delta)^2\Pi$ by~\eqref{e.bfAhom.by.E0} and~$n_0-l_0\geq 2\log K_{\Psi}$, while the new value of~$\Theta$ is~$(1+\delta)^2\Theta_{n_0-l_0}\leq (1+\delta)^2\Theta$.

\subsection{Parabolic adapted geometry}
\label{ss.subadditivity}

The high-contrast homogenization proof requires the geometry to be adapted to the coefficient matrices, while maintaining parabolic scaling of the domains. We introduce the (metric) geometric mean of the matrices~$\b_0$ and~$\s_{*,0}$, denoted by
\begin{equation}
\label{e.m0.def}
\m_0 : = (\s_0 + \k_0^t\s_{*,0}^{-1}\k_0)\#\s_{*,0} \quad \mbox{and} \quad \mathbf{M}_0 =
\begin{pmatrix}
\m_0 & 0 \\ 0 & \m_0^{-1}
\end{pmatrix}\,,
\end{equation}
The definition of geometric mean is given in Appendix~\ref{aa.matrices}. We define
\begin{equation}
\label{e.m0.lambdas}
\lambda_{\m_0} := |\m_0^{-1}|^{-1}\,, \quad \Lambda_{\m_0} := |\m_0|\,, \quad \mbox{and} \quad \Pi_{\m_0} := \frac{\Lambda_{\m_0}}{\lambda_{\m_0}}\,.
\end{equation}
Note that the definition of~$\m_0$ is not invariant under the addition of a constant skew-symmetric matrix as considered in Section~\ref{ss.bfA.def}. We will however, make an appropriate centering assumption such that~$\m_0$ is the correct quantity, under which we will see that
\begin{equation*}
\Lambda_{\m_0} \leq \sqrt{8d}\Theta_m^{\nicefrac12}\Lambda\,,
\end{equation*}
while it is true under any centering that~$\lambda \leq \lambda_{\m_0}$.

\smallskip

We will work in domains adapted to~$\m_0$. For a large~$k_0\in\N$, to be selected below, define a matrix~$\q_0$ by
\begin{equation}
\label{e.q0.def}
(\q_0)_{ij} := 3^{-k_0}\lceil 3^{k_0}\lambda_{\m_0}^{-\nicefrac12}(\m_0^{\nicefrac12})_{ij}\rceil\,.
\end{equation}
Then every entry of~$\q_0$ belongs to~$3^{-k_0}\Z$,~$\q_0$ is symmetric, and
\begin{equation*}
|\q_0 -\lambda_{\m_0}^{-\nicefrac12}\m_0^{\nicefrac12}| \leq C(d)3^{-k_0}\,.
\end{equation*}
This implies that
\begin{equation*}
(1-C(d)3^{-k_0})\lambda_{\m_0}^{-\nicefrac12}\m_0^{\nicefrac12} \leq \q_0 \leq (1+ C(d)3^{-k_0})\lambda_{\m_0}^{-\nicefrac12}\m_0^{\nicefrac12}\,.
\end{equation*}
Choosing~$k_0$ sufficiently large, depending only on~$d$, we have
\begin{equation}
\label{e.q0.by.m0}
\frac{99}{100}\lambda^{-\nicefrac12}_{\m_0}\m_0^{\nicefrac12} \leq \q_0 \leq \frac{101}{100}\lambda^{-\nicefrac12}_{\m_0}\m_0^{\nicefrac12}\,,
\end{equation}
which implies that
\begin{equation}
\label{e.cus.in.cu}
\frac{99}{100}\cu_n \subseteq \q_0(\cu_n) \subseteq \frac{101}{100}\Pi_{\m_0}^{\nicefrac12}\cu_n\,.
\end{equation}
We round~$\lambda_{\m_0}$ up to
\begin{equation}
\label{e.adapted.cylinders.lambda.def}
\lambda_{r} : = \inf\{ 3^{2k_1} : k_1\in \mathbb{Z}\,, \lambda_{\m_0} \leq 3^{2k_1}\}\,,
\end{equation}
which is equivalent to~$\lambda_{\m_0}$ up to a factor of~$9$. As a consequence of the rounding, for the lattice defined by
\begin{equation}
\label{e.adapted.lattice.def}
\mathbb{L}_n : = 3^{2n}\mathbb{L}_{t} \times 3^n\mathbb{L}_{x} \quad \mbox{where} \quad \mathbb{L}_{t} := \lambda_{r}^{-1}\Z \,, \mathbb{L}_{x} := \q_0(\Zd)\,,
\end{equation}
we have~$\mathbb{L}_n \subseteq \Z^{d+1}$ when~$n\geq C\log (1+\lambda_{\m_0})$.
We introduce the adapted parabolic cubes 
\begin{equation}
\label{e.adapted.cyl.def}
\cusdot_n : =  J_n \times \cus_n\,, \quad \mbox{where} \quad  J_n := \bigg(-\frac{1}{\lambda_r}\frac{3^{2n}}{2},\frac{1}{\lambda_r} \frac{3^{2n}}{2}\bigg)\,, \quad \cus_n: = \q_0(\cu_n)\,.
\end{equation}
These are parallelepipeds in the spatial variable with the parabolic scaling in time, up to the rounding error in~\eqref{e.adapted.cylinders.lambda.def}. We again note that these domains are a function of the centring and will change throughout the paper. We will often use that for any~$n\in\mathbb{Z}$,
\begin{equation}
\label{e.adapted.scales.normal}
l \geq \log_3(9\max\{\Pi_{\m_0}^{\nicefrac12},\lambda_{\m_0}^{-\nicefrac12}\}) \implies \cusdot_n \subseteq \cudot_{n+l}\,,
\end{equation}
while
\begin{equation}
\label{e.normal.scales.adapted}
l \geq \max\{1, \log_3(9\lambda_{\m_0}^{\nicefrac12})\} \implies \cudot_n \subseteq \cusdot_{n+l}\,.
\end{equation}

\smallskip

We state here versions of the bounds on the coarse-grained matrices in adapted parabolic cubes. The lemmas in this section are generalizations of the elliptic case in~\cite[Section 2.10]{AK.HC}, but with parabolic geometry. We state the full proofs of these lemmas because they have an explicit ellipticity dependence which carries over into our main theorem on the homogenization length scale, and the ellipticity dependence (in particular the appearance of~$\lambda_{\m_0}$ as opposed to just~$\Pi_{\m_0}$) is parabolic in nature.

\begin{lemma}[Upper bounds for~$\bfA$ in adapted cylinders.]
\label{l.adapted.cylinders}
If~$\S_{h}$ is the random scale in Lemma~\ref{l.bfA.bounds}, we have for every~$n,m\in\N$ with~$n\leq m$, and every~$y\in \mathbb{L}_n$ such that~$y+\cusdot_n \subseteq \cusdot_m$
\begin{equation}
\label{e.adapted.cylinders}
3^m \geq \S_{h} \implies \bfA(y+\cusdot_n) \leq  \frac{C(d)}{1-\gamma}\max\{\Pi_{\m_0}^{\nicefrac{\gamma}{2}},\lambda_{\m_0}^{-\nicefrac{\gamma}{2}}\} \max\{1, \lambda_{\m_0}^{-\frac{1-\gamma}{2}}\} 3^{\gamma(m-h-n)_+}\mathbf{E}_0\,.
\end{equation}
\end{lemma}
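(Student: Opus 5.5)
The plan is to cover the adapted cylinder $y+\cusdot_n$ by standard parabolic cubes from the lattice $\mathcal{Z}_k$ at a slightly smaller scale $k$. We then use subadditivity~\eqref{e.subadditivity} of $\bfA$ together with the mesoscale ellipticity bound~\eqref{e.ellipticity.mesogrid} of Lemma~\ref{l.bfA.bounds} on each small cube.

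\textbf{Choice of the covering scale.} A difficulty is that subadditivity requires a genuine partition of $y+\cusdot_n$, and a parallelepiped is not a union of cubes. I would handle this as in~\cite[Section 2.10]{AK.HC}. Take $k\le n$ and cover $y+\cusdot_n$ by the cubes $z+\cudot_k$, $z\in\mathcal{Z}_k$, that meet it. Partition $y+\cusdot_n$ into the full interior cubes and the boundary cells $(z+\cudot_k)\cap(y+\cusdot_n)$. For the boundary cells I would not use the cells themselves. Instead I would refine dyadically (triadically) toward the boundary: at each finer scale $j<k$, use the cubes of $\mathcal{Z}_j$ that lie inside $y+\cusdot_n$ but not in a coarser cube already chosen. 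The number of cubes at scale $j$ is at most $C(d)\,|\partial \cus_n|\,3^{-(d-1)j}\cdot 3^{2n-2j}\lambda_r^{-1}$ in space-time, up to the time-direction boundary, which is handled identically. Their volume fraction is therefore at most $C(d)\,3^{j-n}\Pi_{\m_0}^{\nicefrac12}$-type factors.

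\textbf{Summing the bounds.} Subadditivity then gives
\[
\bfA(y+\cusdot_n)\le \sum_{j\le k}\sum_{z} \frac{|z+\cudot_j|}{|y+\cusdot_n|}\,\bfA(z+\cudot_j)
\le \sum_{j\le k} C\,\Pi_{\m_0}^{\nicefrac12}3^{(j-k)}\,3^{\gamma(m'-h-j)_+}\bfE .
\]
Here $m'$ is chosen so that $\cusdot_m\subseteq\cudot_{m'}$ by~\eqref{e.adapted.scales.normal}, namely $m'=m+\lceil\log_3(9\max\{\Pi_{\m_0}^{\nicefrac12},\lambda_{\m_0}^{-\nicefrac12}\})\rceil$. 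Since $3^{m'}\ge 3^m\ge\S_h$, the hypothesis of~\eqref{e.ellipticity.mesogrid} applies to every $z\in\mathcal{Z}_j\cap\cudot_{m'}$. The geometric series $\sum_{j\le k}3^{(1-\gamma)(j-k)}$ produces the factor $\frac{C}{1-\gamma}$.

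\textbf{Tracking the prefactors.} The choice of $k$ determines the remaining factors. Cubes of $\mathcal{Z}_k$ must fit inside the thinnest direction of $\cusdot_n$. By~\eqref{e.cus.in.cu} the spatial extent is at least $\tfrac{99}{100}3^n$, while the time extent is $3^{2n}/\lambda_r$. So we need $3^{2k}\lesssim 3^{2n}\min\{1,\lambda_{\m_0}^{-1}\}$, i.e. $k = n-\lceil\log_3(9\max\{1,\lambda_{\m_0}^{\nicefrac12}\})\rceil$. The factor $3^{\gamma(m'-h-k)_+}$ is then at most $3^{\gamma(m-h-n)_+}$ times $(9\max\{\Pi_{\m_0}^{\nicefrac12},\lambda_{\m_0}^{-\nicefrac12}\})^{\gamma}$, which gives the factor $\max\{\Pi_{\m_0}^{\gamma/2},\lambda_{\m_0}^{-\gamma/2}\}$. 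The loss from the gap $n-k$, together with the boundary-layer volume fraction when the time direction is the thin one, gives the factor $\max\{1,\lambda_{\m_0}^{-\frac{1-\gamma}{2}}\}$.

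\textbf{Main obstacle.} The hardest step is exactly this bookkeeping: getting the boundary-layer counts to sum to the stated powers, in particular the exponent $\frac{1-\gamma}{2}$ on $\lambda_{\m_0}^{-1}$, rather than something cruder. I would first try to bound the layer at scale $j$ by its volume fraction, which is $\lesssim 3^{j-n}(1+\lambda_{\m_0}^{\nicefrac12})$ from the spatial faces and time faces. I would then check that the series converges with the $(1-\gamma)$ decay.
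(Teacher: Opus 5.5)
Your strategy is the paper's: a triadic Whitney-type decomposition of $y+\cusdot_n$ into lattice cubes $z+\cudot_j$, subadditivity, \eqref{e.ellipticity.mesogrid} applied at the enlarged scale $m'=m+l$ from \eqref{e.adapted.scales.normal}, and a geometric series in $j$. The gap is in the final bookkeeping, which is where the lemma's content lies. Three things go wrong there.

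First, the factor $\Pi_{\m_0}^{1/2}$ in your volume fractions should not appear. Since $\tfrac{99}{100}\cu_n\subseteq\cus_n$, every width of the parallelepiped $\cus_n$ is at least $\tfrac{99}{100}3^n$, so $|\partial\cus_n|/|\cus_n|\le Cd\,3^{-n}$. Hence a layer of depth $C\sqrt d\,3^j$ has fraction at most $Cd^{3/2}3^{j-n}$, and the interior cubes have fraction at most $1$. As displayed, your sum overshoots the target by $\Pi_{\m_0}^{1/2}$. There is also no time layer: because $\lambda_r=3^{2k_1}$ and $y\in\mathbb{L}_n$, $y+\cusdot_n$ is tiled exactly in time at every scale $j\le n-k_1$.

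Second, $(m'-h-k)_+\le(m-h-n)_++l+(n-k)$, so your bound on $3^{\gamma(m'-h-k)_+}$ drops a factor $3^{\gamma(n-k)}$.

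Third, that factor multiplies the interior cubes, whose volume fraction is not small (it tends to $1$ as $n-k$ grows). With $3^{n-k}\le 27\max\{1,\lambda_{\m_0}^{1/2}\}$ it costs $\max\{1,\lambda_{\m_0}^{\gamma/2}\}$. Nothing turns this into $\max\{1,\lambda_{\m_0}^{-(1-\gamma)/2}\}$, which equals $1$ when $\lambda_{\m_0}\ge1$. So your sentence attributing that factor to ``the loss from the gap $n-k$'' is not a derivation. Carrying out your proposed check honestly gives
\[
\bfA(y+\cusdot_n)\le \frac{C(d)}{1-\gamma}\max\{\Pi_{\m_0}^{\gamma/2},\lambda_{\m_0}^{-\gamma/2}\}\max\{1,\lambda_{\m_0}^{\gamma/2}\}\,3^{\gamma(m-h-n)_+}\bfE\,.
\]

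The obstacle you flagged is real, not a matter of finer counting. The paper's proof reaches the stated factor only through a slip at exactly this point. It proves $|V_j(y)|/|\cusdot_n|\le Cd^{3/2}3^{j-n}$ for $j<j_{\max}$, but then uses it in the sum for $j=j_{\max}=n-1-\max\{k_1,0\}$ as well. When $k_1>0$ the top layer $V_{j_{\max}}(y)$ fills almost all of $y+\cusdot_n$ and contributes about $3^{\gamma(l+1+k_1)}3^{\gamma(m-h-n)_+}\bfE$, which is the same $\lambda_{\m_0}^{\gamma/2}$ loss.

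A different covering cannot avoid it either. Every lattice cube inside $y+\cusdot_n$ has time length at most $3^{2(n-k_1)}$, hence scale at most $n-k_1$. So subadditivity combined with \eqref{e.ellipticity.mesogrid} (which requires $m'\ge m$, since only $3^m\ge\S_h$ is known) cannot give better than $3^{\gamma(m-h-n+k_1)_+}\bfE$, and $3^{k_1}\ge\lambda_{\m_0}^{1/2}$.

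The displayed bound above is what this method proves. It implies the stated bound when $\lambda_{\m_0}\le1$, and is only polynomially worse in the ellipticity parameters otherwise. That loss looks harmless for the later uses, where these prefactors enter scale-separation conditions only through logarithms.
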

\begin{proof}
Fix~$h\in \N$ and take~$m\in\N$ such that~$3^m \geq \S_{h}$, where~$\S_{h}$ is the minimal scale given by Lemma~\ref{l.bfA.bounds}. Choose~$l$ to be the smallest integer satisfying~\eqref{e.adapted.scales.normal} so that~$y+\cusdot_n \subseteq \cusdot_m \implies y+\cusdot_n \subseteq \cudot_{m+l}$. We will decompose~$y+\cusdot_n$ into the disjoint union (up to a null set) of families~$\{V_j(y):-\infty < j\leq n\}$ of sets such that each~$V_j(y)$ is the disjoint union of cubes~$z+\cudot_j$ for~$z\in\mathcal{Z}_j$, and apply Lemma~\ref{l.bfA.bounds} to each subcube.

Define first
\begin{equation*}
V_n(y): = \bigcup \big\{ z+ \cudot_n:z\in\mathcal{Z}_n\,, z+\cudot_n \subseteq y + \cusdot_n\big\}\,,
\end{equation*}
and then recursively,
\begin{equation*}
V_{j-1}(y) := \bigcup \big\{z+\cudot_{j-1}: z\in\mathcal{Z}_{j-1}\,, z+\cudot_{j-1} \subseteq (y+\cusdot_n)\setminus (V_n\cup \cdots \cup V_j)\big\}\,.
\end{equation*}
Recalling from~\eqref{e.adapted.cylinders.lambda.def} that~$\lambda_r = 3^{2k_1}$, the largest~$j$ such that~$V_j(y)$ is non-empty is~$j_{\max} \coloneqq n-1-\max\{k_1,0\}$. Our choice of rounded~$\lambda_r$ means that there will be no boundary layer in the time direction, because the size of the interval~$J_n$ is an integer multiple of~$3^{2j}$ for every~$j \leq j_{\max}$.

If~$x\in \cusdot_n \setminus (V_{j_{\max}}(y) \cup \cdots \cup V_{j+1}(y))$ then it is within distance~$C\sqrt{d}3^j$ of the spatial boundary, and therefore~$V_j(y)$ is contained in a volume bounded by this depth times the surface of the perpendicular surface of~$\cusdot_n$, summed over the faces of~$\cusdot_n$. We may then place an upper bound on the ratio~$\frac{|V_{j}|}{|\cusdot_n|}$ by
\begin{equation}
\label{e.cube.decomp.bound}
\frac{|V_{j}(y)|}{|\cusdot_n|} \leq Cd^{\nicefrac{3}{2}}3^{j-n} \quad \forall j < j_{\max}\,.
\end{equation}
By subadditivity, Lemma~\ref{l.bfA.bounds}, the above display, and
\begin{equation*}
(m+l-h-j)_+ \leq (m-h-n)_+ + (n+l-j)_+\,,
\end{equation*}
we have
\begin{align*}
\bfA(y+\cusdot_n) & \leq \sum_{j=-\infty}^n \frac{|V_j(y)|}{|\cusdot_n|}\bfA(V_j(y)) \\
& \leq \sum_{j=-\infty}^{j_{\max}} \frac{|V_j(y)|}{|\cusdot_n|}3^{\gamma(m+l-h-j)_+}\mathbf{E}_0 \\
& \leq \sum_{j=-\infty}^{n-1-\max\{k_1,0\}} C(d) 3^{j-n} 3^{\gamma(n+l-j)_+} 3^{\gamma(m-h-n)_+}\mathbf{E}_0 \\
& \leq C(d) 3^{\gamma l} 3^{-(1-\gamma)\max\{k_1,0\}} \sum_{j=-\infty}^{n-1-\max\{k_1,0\}} 3^{(1-\gamma)(j-n+\max\{k_1,0\})} 3^{\gamma(m-h-n)_+}\mathbf{E}_0 \\
& \leq \frac{C(d)}{1-\gamma} \max\{\Pi_{\m_0}^{\nicefrac{\gamma}{2}},\lambda_{\m_0}^{-\nicefrac{\gamma}{2}}\} \max\{1, \lambda_{\m_0}^{-\frac{1-\gamma}{2}}\} 3^{\gamma(m-h-n)_+}\mathbf{E}_0\,,
\end{align*}
which concludes the proof.
\end{proof}

\begin{lemma}[Concentration for adapted cylinders]
\label{l.CFS.adapted}
There exists a constant~$C(d) < \infty$ such that for every~$m,n\in\N$ with~$\beta m < n\leq m$,
\begin{align}
\lefteqn{
\biggl|  \avsum_{z\in\mathbb{L}_n \cap \cusdot_m} \mathbf{E}_0^{-\nicefrac12}\bigl(\bfA(z+\cusdot_n) - \bfAhom(z+\cusdot_n)\bigr)\mathbf{E}_0^{-\nicefrac12} \biggr|
} \qquad & \\
& \leq \frac{C d^{\nicefrac{3}{2}}K_{\Psi_{\S}}^{4d+14} }{1-\gamma} \max\{\Pi_{\m_0}^{\nicefrac{\gamma}{2}},\lambda_{\m_0}^{-\nicefrac{\gamma}{2}}\} \max\{1, \lambda_{\m_0}^{-\frac{1-\gamma}{2}}\}  3^{\gamma(m-n)-m} \\
& \quad + \O_{\Psi_{\S}}\biggl( \frac{C d^{\nicefrac{3}{2}}K_{\Psi_{\S}}^{4d+12}}{1-\gamma} \max\{\Pi_{\m_0}^{\nicefrac{\gamma}{2}},\lambda_{\m_0}^{-\nicefrac{\gamma}{2}}\} \max\{1, \lambda_{\m_0}^{-\frac{1-\gamma}{2}}\} 3^{\gamma(m-n)-m} \biggr) \\
&\quad  + \O_{\Psi}\biggl(C(d)K_{\Psi}^7 \max\{1,\lambda_{\m_0}\} \max\{\Pi_{\m_0}^{\frac{d+2}{2}},\lambda_{\m_0}^{-\frac{d+2}{2}}\} \max\{1, \lambda_{\m_0}^{-\frac{1-\gamma}{2}}\} 3^{-(\nu-\gamma)(m-n)} \biggr)\,.
\end{align}
\end{lemma}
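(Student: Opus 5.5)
We will reduce the sum over adapted cylinders to sums over standard parabolic cubes, following the proof of Lemma~\ref{l.adapted.cylinders} and the elliptic analogue in~\cite[Section 2.10]{AK.HC}. For each~$z\in\mathbb{L}_n\cap\cusdot_m$ we decompose~$z+\cusdot_n$ into the families~$V_j(z)$, $j\leq j_{\max}=n-1-\max\{k_1,0\}$, of standard cubes~$z'+\cudot_j$ with~$z'\in\mathcal{Z}_j$. Since~$\mathbb{L}_n$ is commensurate with~$\mathcal{Z}_{j}$ for~$j\le j_{\max}$ (by the rounding of~$\lambda_r$ and of~$\q_0$), this decomposition has a translation structure. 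We will then split~$\bfA(z+\cusdot_n)-\bfAhom(z+\cusdot_n)$ using subadditivity. Subadditivity gives only a one-sided bound, so we write
\[
\bfA(z+\cusdot_n) \leq \sum_j \frac{|V_j(z)|}{|\cusdot_n|}\,\avsum_{z'} \bfA(z'+\cudot_j)
\]
and estimate the right side minus its expectation. Two further ingredients will be needed: a reverse inequality at the level of expectations, and the mean discrepancy~$\bfAhom(\cudot_{j})-\bfAhom(z+\cusdot_n)$. The difference between the subadditive bound and~$\bfA(z+\cusdot_n)$ has nonnegative expectation. Since the statement only bounds an absolute value, we aim first for the upper estimate, and recover the lower one from the~$\bfA_*^{-1}$ subadditivity together with~\eqref{e.bfA.E.diff}-type algebra, as in~\cite{AK.HC}.

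\textbf{Boundary layers.} For the layers~$j<j_{\max}$ we use~\eqref{e.cube.decomp.bound}: $|V_j|/|\cusdot_n|\le Cd^{3/2}3^{j-n}$. On~$\{\S_h\le 3^m\}$ we bound each cube by Lemma~\ref{l.bfA.bounds}. This reproduces the computation of Lemma~\ref{l.adapted.cylinders} and yields the first deterministic term, including the factor~$\max\{\Pi_{\m_0}^{\gamma/2},\lambda_{\m_0}^{-\gamma/2}\}\max\{1,\lambda_{\m_0}^{-(1-\gamma)/2}\}3^{\gamma(m-n)}$. The extra factor~$3^{-m}$ and the~$\O_{\Psi_\S}$ term come from the event~$\{\S>3^m\}$. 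There we use the crude moment bound~\eqref{e.bfA.crude.moment} and~\eqref{e.Sh.O}, with~$h$ chosen as a constant multiple of~$\log K_{\Psi_\S}$; this produces the powers~$K_{\Psi_\S}^{4d+12}$ and~$K_{\Psi_\S}^{4d+14}$, the latter from the expectation via~\eqref{e.E.Opsi}.

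\textbf{Bulk term.} The main contribution is the bulk term~$j=j_{\max}$, with~$j_{\max}$ about~$n-1-\max\{k_1,0\}$. After averaging over~$z$, it becomes an average over~$z'\in\mathcal{Z}_{j_{\max}}$ of~$\bfA(z'+\cudot_{j_{\max}})-\bfAhom(\cudot_{j_{\max}})$, taken over a region covering~$\cusdot_m$. We cover~$\cusdot_m$ by at most~$C\max\{\Pi_{\m_0}^{(d+2)/2},\lambda_{\m_0}^{-(d+2)/2}\}$ standard cubes~$\cudot_{k}$ with~$k=m+l$, with~$l$ as in~\eqref{e.adapted.scales.normal}, and apply~\eqref{e.bfA.CFS.upper} in each. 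The scale separation in this application is~$k-j_{\max}$. The loss of~$\max\{k_1,0\}$ in~$j_{\max}$ against~$3^{-\nu(\cdot)}$ gives the factors~$\max\{1,\lambda_{\m_0}\}$ and~$\max\{1,\lambda_{\m_0}^{-(1-\gamma)/2}\}$. Summing the~$\O_\Psi$ bounds costs powers of~$K_\Psi$, which accounts for~$K_\Psi^7$. We must also check that~$\beta k< j_{\max}$. If this fails, the claimed bound is trivial by the crude bounds, and we handle that case separately.

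\textbf{Main obstacle.} The hardest step is controlling the deterministic centring. Replacing~$\bfAhom(z+\cusdot_n)$ by the averaged~$\bfAhom(\cudot_j)$'s is only an inequality in one direction, since~$\bfAhom(z+\cusdot_n)\le\sum_j\frac{|V_j|}{|\cusdot_n|}\bfAhom(\cudot_j)$. The gap between the two sides must be absorbed into the same error terms. We will do this by taking expectations in the random upper bound just obtained. The expectation of the subadditive excess is bounded by the expected boundary-layer and concentration errors, and these are already of the stated size.
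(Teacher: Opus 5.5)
There is a genuine gap, and it sits at the step you flag as the main obstacle. Decomposing $z+\cusdot_n$ into standard cubes via subadditivity gives only the one-sided bound $\bfA(z+\cusdot_n)\le\sum_j\frac{|V_j(z)|}{|\cusdot_n|}\avsum_{z'}\bfA(z'+\cudot_j)$. After you concentrate the right side, you are centred at $\sum_j\frac{|V_j|}{|\cusdot_n|}\bfAhom(\cudot_j)$, not at $\bfAhom(z+\cusdot_n)$.

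The gap between these two means is the expected subadditivity defect. Taking expectations of your random upper bound only returns the deterministic inequality $\bfAhom(z+\cusdot_n)\le\sum_j\frac{|V_j|}{|\cusdot_n|}\bfAhom(\cudot_j)$; it gives no control on the size of that gap. The gap depends only on $n$ and $\m_0$: it measures how much $\bfAhom$ drops from scale $j_{\max}\approx n-1-\max\{k_1,0\}$ to the scale of $\cusdot_n$, plus the means of the boundary layers. So it does not decay as $m-n\to\infty$ or $m\to\infty$. It is exactly the additivity defect that Section~\ref{s.hc} has to pigeonhole, and it cannot be absorbed into errors of size $3^{\gamma(m-n)-m}$ and $3^{-(\nu-\gamma)(m-n)}$.

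Your boundary-layer accounting has the same flaw. On $\{\S_h\le 3^m\}$, the computation of Lemma~\ref{l.adapted.cylinders} bounds those layers by something of order $3^{\gamma(m-n)}$, with no factor $3^{-m}$. So they are not of the size of the first error term.

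The proposed lower bound is worse. Subadditivity of $\bfA_*^{-1}$ centres you at $\bfAhom_*(\cudot_j)$, and $\bfAhom-\bfAhom_*$ is of the order of the contrast. Lemma~\ref{l.bfE.bounds} only helps when $\tilde\Theta$ is close to $1$, which is not the regime here.

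The missing observation is that no decomposition is needed. $\bfA(z+\cusdot_n)$ is itself a valid input for \ref{a.CFS}: it is $\mathcal{F}(z+\cusdot_n)$-measurable by \eqref{e.bfA.local}, its sensitivity is controlled by \eqref{e.bfA.malliavin}, and its mean is exactly $\bfAhom(z+\cusdot_n)$ by definition. The paper's argument runs as follows.
\begin{itemize}
\item Take $n_0$ minimal with $\cusdot_0\subseteq\cudot_{n_0}$, so that $3^{n_0}\le 3\max\{\Pi_{\m_0}^{1/2},\lambda_{\m_0}^{-1/2}\}$.
\item Assign each $z\in\mathbb{L}_n$ to the nearest point $[z]\in\mathcal{Z}_{n+n_0}$, so that $z+\cusdot_n\subseteq[z]+\cudot_{n+n_0+1}$.
\item In each group, sum the at most $C3^{(d+2)n_0}(1+\lambda_{\m_0})$ cylinders, after truncating with a smooth cutoff at the level $T$ from \eqref{e.adapted.cylinders}.
\item The grouped variables are bounded and local, so \ref{a.CFS} applies between scales $n+n_0+1$ and $m+n_0+1$.
\end{itemize}
The scale condition $\beta(m+n_0+1)<n+n_0+1$ is automatic from $\beta m<n$. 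Your requirement $\beta(m+l)<j_{\max}$ is not, and its failure is not ``trivial by the crude bounds'', since those are of order $3^{\gamma(m-n)}$.

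The cutoff is inactive on $\{\S_0\le 3^m\}$. The event $\{\S_0>3^m\}$, where the brutal bound of Lemma~\ref{l.adapted.cylinders} is used together with the corresponding correction to the means, produces the deterministic and $\O_{\Psi_\S}$ terms carrying $3^{-m}$. The group size produces the prefactor $\max\{1,\lambda_{\m_0}\}\max\{\Pi_{\m_0}^{(d+2)/2},\lambda_{\m_0}^{-(d+2)/2}\}$ in the $\O_\Psi$ term.
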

\begin{proof}
Fix~$m,n\in\N$ such that~$\beta m < n \leq m$, and let~$n_0\in\N$ be the smallest integer such that~$\cusdot_0 \subseteq \cudot_{n_0}$; it follows that~$3^{n_0} \leq 3\max\{\Pi_{\m_0}^{\nicefrac12},\lambda_{\m_0}^{-\nicefrac12}\}$. We will prove concentration for adapted cylinders by grouping them into ordinary parabolic cylinders and applying~\ref{a.CFS} to those domains.

For each~$z\in\R^{d+1}$, let~$[z]$ denote the nearest point of the lattice~$\mathcal{Z}_{n+n_0}$ to~$z$, with lexicographical ordering used as a tiebreaker if this point is not unique. We have then that
\begin{equation*}
z+\cusdot_n \subseteq [z] + \cudot_{n+n_0+1}\,, \quad \forall z\in \R^{d+1}\,.
\end{equation*}
For any~$x\in\mathcal{Z}_{n+n_0}$, the set of~$z+\cusdot_n$ such that~$[z]=x$ is a disjoint union of cubes which is contained in~$x+\cudot_{n+n_0+1}$.

Then by dividing the volumes, there are at most~$C(d) 3^{(d+2)n_0}(1 + \lambda_{\m_0})$ points~$z\in \mathbb{L}_n$ such that~$[z]=x$. We can only apply~\ref{a.CFS} to bounded random variables, so select a smooth cutoff function~$\varphi:\R_+ \to [0,1]$ and for
\begin{equation}
T = \frac{C(d)}{1-\gamma}\max\{\Pi_{\m_0}^{\nicefrac{\gamma}{2}},\lambda_{\m_0}^{-\nicefrac{\gamma}{2}}\} \max\{1, \lambda_{\m_0}^{-\frac{1-\gamma}{2}}\} 3^{\gamma(m-n)}\,,
\end{equation}
which is the constant appearing on the right-hand side of~\eqref{e.adapted.cylinders}, define
\begin{equation*}
\indc_{[0,T]} \leq \varphi \leq \indc_{[0,2T]}\,, \quad |\varphi'| \leq 2T^{-1}\,,
\end{equation*}
and for each~$x\in\mathcal{Z}_{n+n_0}\cap \cudot_{m+n_0+1}$,
\begin{equation*}
X_x := \sum_{z\in\mathbb{L}_n \cap \cusdot_m,[z]=x} \varphi\big(|\mathbf{E}_0^{-\nicefrac12}\bfA(z+\cusdot_n)\mathbf{E}_0^{-\nicefrac12}|\big)\mathbf{E}_0^{-\nicefrac12}\bfA(z+\cusdot_n)\mathbf{E}_0^{-\nicefrac12}\,.
\end{equation*}
There are at most~$C(d)3^{(d+2)n_0}(1+\lambda_{\m_0}) \leq  C(d)\lambda_{\m_0}\max\{\Pi_{\m_0}^{\frac{d+2}{2}}, \lambda_{\m_0}^{-\frac{d+2}{2}}\}$ elements in the sum, so
\begin{equation*}
|X_x| \leq C(d)T (1+\lambda_{\m_0})\max \{\Pi_{\m_0}^{\frac{d+2}{2}}, \lambda_{\m_0}^{-\frac{d+2}{2}}\}\,.
\end{equation*}
We may now proceed exactly as in the elliptic case to conclude the proof: to briefly summarize, on the event~$\{\S_0 \leq 3^m\}$ we have~$\varphi\big(|\mathbf{E}_0^{-\nicefrac12}\bfA(z+\cusdot_n)\mathbf{E}_0^{-\nicefrac12}|\big) = 1$ and we can apply~\ref{a.CFS} between scales~$n+n_0+1$ and~$m+n_0+1$, and on the event~$\{ \S_0 > 3^m\}$ we use a more brutual bound using Lemma~\ref{l.adapted.cylinders}.

\end{proof}

\begin{lemma}[Means in adapted cylinders]
\label{l.adapted.means}
There exists a constant~$C(d)<\infty$ such that for all $y\in\mathbb{L}_n$ and~$k,n,m\in\N$ such that~$\cudot_k \subseteq \cusdot_n \subseteq \cudot_m$, and~$\mathbb{L}_n \subseteq \mathbb{Z}^{d+1}$,
\begin{equation}
\label{e.adapted.mean.by.Ahom}
\bfAhom(y+\cusdot_n) \leq \bfAhom(\cudot_k) + \frac{C(d) K_{\Psi_{\S}}^9}{1-\gamma}\max\{\Pi_{\m_0}^{\nicefrac{\gamma}{2}},\lambda_{\m_0}^{-\nicefrac{\gamma}{2}}\} 3^{-(1-\gamma)(n-k)}\mathbf{E}_0\,
\end{equation}
and
\begin{equation}
\label{e.Ahom.by.adapted.mean}
\bfAhom(\cudot_m) \leq \bfAhom(\cusdot_n) + \frac{C(d)K_{\Psi_{\S}}^9}{1-\gamma} \Pi_{\m_0}^{\frac{1-\gamma}{2}} 3^{-(1-\gamma)(m-n)}\mathbf{E}_0\,.
\end{equation}
\end{lemma}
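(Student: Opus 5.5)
Both inequalities come from the same three ingredients: subadditivity of $\bfA$ in expectation, stationarity, and a boundary-layer estimate controlled by Lemma~\ref{l.bfA.bounds}. The idea in each case is to tile the larger domain by translates of the smaller one as far as possible, and to treat the leftover region with the crude moment bound, as in the proof of Lemma~\ref{l.adapted.cylinders}.

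\emph{Proof of \eqref{e.adapted.mean.by.Ahom}.} I decompose $y+\cusdot_n$ (up to a null set) into the sets $V_j(y)$, $j\le j_{\max}$, built in the proof of Lemma~\ref{l.adapted.cylinders}.
\begin{itemize}
\item Using lattice cubes of scale at most $k$ instead, I set $W_k$ to be the union of the cubes $z+\cudot_k$, $z\in\mathcal{Z}_k$, contained in $y+\cusdot_n$. The remainder $(y+\cusdot_n)\setminus W_k$ is the union of the finer layers $V_j$ with $j<k$.
\item Since $\mathbb{L}_n\subseteq\Z^{d+1}$ and $\lambda_r$ is a power of $9$, there is no time boundary layer for the scales that occur. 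By \eqref{e.cube.decomp.bound}, $|V_j|/|\cusdot_n|\le C d^{3/2}3^{j-n}$, while $|W_k|\le|\cusdot_n|$.
\item Subadditivity \eqref{e.subadditivity}, after taking expectations, gives
\[
\bfAhom(y+\cusdot_n)\le \frac{|W_k|}{|\cusdot_n|}\,\bfAhom(\cudot_k)+\sum_{j<k}\frac{|V_j|}{|\cusdot_n|}\,\E[\bfA(z+\cudot_j)],
\]
where stationarity \ref{a.stationarity} makes every translate of $\cudot_k$ have mean $\bfAhom(\cudot_k)$. The factor $|W_k|/|\cusdot_n|\le 1$ can be dropped because $\bfAhom(\cudot_k)\ge 0$.
\item For the boundary terms, I bound $\E[\bfA(z+\cudot_j)]$ using \eqref{e.bfA.crude.moment} with $m$ the smallest scale such that $\cusdot_n\subseteq\cudot_m$, together with \eqref{e.E.Opsi} for $p=1$. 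This gives a bound of the form $C K_{\Psi_\S}^{9}\,3^{\gamma(m-j)}\mathbf{E}_0$, where the $K_{\Psi_\S}$ power is whatever \eqref{e.E.Opsi} and \eqref{e.Sh.O} produce.
\item By \eqref{e.adapted.scales.normal}, $3^{m-n}\lesssim\max\{\Pi_{\m_0}^{1/2},\lambda_{\m_0}^{-1/2}\}$, so $3^{\gamma(m-j)}\le C\max\{\Pi_{\m_0}^{\gamma/2},\lambda_{\m_0}^{-\gamma/2}\}\,3^{\gamma(n-j)}$.
\item Summing the geometric series $\sum_{j<k}3^{(1-\gamma)(j-n)}\le C(1-\gamma)^{-1}3^{-(1-\gamma)(n-k)}$ yields the claim.
\end{itemize}

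\emph{Proof of \eqref{e.Ahom.by.adapted.mean}.} Here I run the reverse tiling: I cover $\cudot_m$ by translates $y+\cusdot_n$, $y\in\mathbb{L}_n$, fully contained in $\cudot_m$.
\begin{itemize}
\item By stationarity, and because $\mathbb{L}_n\subseteq\Z^{d+1}$, each such translate has mean $\bfAhom(\cusdot_n)$.
\item The leftover region lies within spatial distance $C\,\mathrm{diam}(\cus_n)\le C\Pi_{\m_0}^{1/2}3^n$ of $\partial\cu_m$, using \eqref{e.cus.in.cu}. It also lies within a time layer of width $3^{2n}/\lambda_r$.
\item I decompose the leftover region dyadically into standard cubes $z+\cudot_j$, with $j$ up to about $n+\log_3\Pi_{\m_0}^{1/2}$, and bound these again by \eqref{e.bfA.crude.moment}.
\item The boundary volume at scale $j$ is $\lesssim 3^{j-m}$. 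Summing $3^{j-m}3^{\gamma(m-j)}$ gives $C(1-\gamma)^{-1}3^{(1-\gamma)(j_{\max}-m)}$, and $3^{j_{\max}}\approx\Pi_{\m_0}^{1/2}3^n$ produces the factor $\Pi_{\m_0}^{(1-\gamma)/2}3^{-(1-\gamma)(m-n)}$.
\end{itemize}

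\emph{Main obstacle.} The delicate step is the time-boundary layer in \eqref{e.Ahom.by.adapted.mean}. The interval $J_n$ has length $3^{2n}/\lambda_r$, which need not divide $3^{2m}$ evenly. If $\lambda_r<1$, the time slab left over could be comparable to $J_n$ itself. I will first check whether the rounding in \eqref{e.adapted.cylinders.lambda.def} makes $|I_m|/|J_n|$ an integer whenever $\cusdot_n\subseteq\cudot_m$; I expect it does, since both are powers of $9$.

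If it is not an integer, I will decompose the time remainder into standard cubes and use the hypothesis $\cusdot_n\subseteq\cudot_m$. That hypothesis gives $3^{2n}/\lambda_r\le 3^{2m}$, so this remainder is a layer of relative width at most $3^{2(n-m)}/\lambda_r$. I then need to check that the resulting contribution is dominated by the stated $\Pi_{\m_0}^{(1-\gamma)/2}$ term. This is the place where the argument could fail.
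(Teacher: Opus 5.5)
Your argument follows the paper's proof. For \eqref{e.adapted.mean.by.Ahom} you use the same interior/boundary-layer decomposition of $y+\cusdot_n$, and for \eqref{e.Ahom.by.adapted.mean} the same reverse tiling of $\cudot_m$ by $\mathbb{L}_n$-translates, with subadditivity, stationarity and the crude moment bound doing the work.

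The time-layer worry resolves as you guessed. Write $\lambda_r=3^{2k_1}$, so that $|J_n|=3^{2(n-k_1)}$. The inclusion $J_n\subseteq I_m$ forces $m-n+k_1\geq 0$, so $|I_m|/|J_n|=3^{2(m-n+k_1)}$ is an \emph{odd} integer. The time coordinates of $\mathbb{L}_n$ form $3^{2(n-k_1)}\Z$, so the centred intervals $t+J_n$ tile $I_m$ exactly. The leftover region is therefore a product $R_x\times I_m$. This is why the paper only discusses the spatial direction, and your fallback is not needed.

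The step that actually needs care is one you assert without justification. The paper makes the same assertion, via the claim that the perimeter of $W$ is at most $C(d)$ times that of $\cudot_m$. The assertion is that the spatial layer satisfies $|W_j|/|\cudot_m|\lesssim 3^{j-m}$ with a constant depending only on $d$. This fails in general:
\begin{itemize}
\item The inner boundary of $R_x$ is a staircase of faces of the parallelepipeds $\q_0(\cu_n)$, and these faces need not be aligned with the axes. Take $d=2$ and $\q_0$ stretching by $L\approx\Pi_{\m_0}^{1/2}$ along $(1,1)$. Then both edge directions $\q_0e_1$ and $\q_0e_2$ lie within angle $O(L^{-1})$ of $(1,1)$, so the union of tiles contained in $\cu_m$ has boundary length of order $L\,3^m$, not $3^m$.
\item Because those faces are tilted, the greedy decomposition places volume of order $3^j$ times this length at every fine scale $j$. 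So in general only $|W_j|/|\cudot_m|\lesssim \Pi_{\m_0}^{1/2}3^{j-m}$ holds.
\item With the weights $3^{\gamma(m-j)}$, your sum then produces $\Pi_{\m_0}^{1/2}$ instead of $\Pi_{\m_0}^{(1-\gamma)/2}$.
\end{itemize}
This changes nothing downstream, since only $\log\Pi_{\m_0}$ enters the scale separations.

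The stated power can be recovered by replacing the weight $3^{\gamma(m-j)}$ with the sharper bound $\E[\bfA(z+\cudot_j)]\leq CK_{\Psi_{\S}}^2\,3^{\gamma(-j)_+}\bfE$. For $j\geq 0$ this follows from stationarity and \eqref{e.bfAhom.by.E0}. For $j<0$ it follows from \ref{a.ellipticity.dagger} applied inside a unit lattice cube containing $z+\cudot_j$.

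With this bound the whole layer contributes at most $C(1-\gamma)^{-1}K_{\Psi_{\S}}^2\,D\,3^{-m}$, where $D:=\mathrm{diam}(\cus_n)$. The inclusion gives $D\leq \sqrt{d}\,3^m$, and \eqref{e.cus.in.cu} gives $D\leq C\Pi_{\m_0}^{1/2}3^n$. Together these yield $D\,3^{-m}\leq C(D\,3^{-m})^{1-\gamma}\leq C\Pi_{\m_0}^{(1-\gamma)/2}3^{-(1-\gamma)(m-n)}$.
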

\begin{proof}
Fix~$k,n\in\N$ with~$n-k$ satisfying~\eqref{e.normal.scales.adapted} so that~$\cudot_k \subseteq \cusdot_n$. Define the interior
\begin{equation*}
V: = \bigcup \big\{ z+\cudot_k : z\in\mathcal{Z}_k\,, z+\cudot_{k}\subseteq \cusdot_n\big\}\,.
\end{equation*}
Define recursively, for each~$j < k$,
\begin{equation}
V_j := \bigcup \big\{ z+\cudot_j : z\in\mathcal{Z}_j\,, z + \cudot_j \subseteq \cusdot_n \setminus (V \cup V_{k-1} \cup \cdot \cup V_{j+1})\big\}\,,
\end{equation}
and note that the estimate~\eqref{e.cube.decomp.bound} holds for every~$j < k$. Using subadditivity,
\begin{align}
\label{e.subadd.means}
\bfA(\cusdot_n) & \leq \frac{|V|}{|\cusdot_n|}\bfA(V) + \sum_{j=-\infty}^{k-1}\frac{|V_j|}{|\cusdot_n|}\bfA(V_j) \notag \\
& \leq \avsum_{z\in\mathcal{Z}_k,z+\cudot_k\subseteq V} \bfA(z+\cudot_k) + C\sum_{j=-\infty}^{k-1}\avsum_{\underset{z\in\mathcal{Z}_j}{z+\cudot_j\subseteq V_j}} d^{\nicefrac{3}{2}}3^{j-n} \bfA(z+\cudot_j)\,.
\end{align}
Let~$l\in\N$ be the minimum integer satisfying~\eqref{e.adapted.scales.normal} so that~$z+\cudot_j \subseteq \cusdot_n$ implies that~$z+\cudot_j \subseteq \cudot_{n+l}$. We will control the boundary layers using~\eqref{e.bfA.crude.moment} in the form
\begin{equation}
\label{e.take.E.bound}
\mathbb{E}[ | |\mathbf{E}_0^{-\nicefrac12}\bfA(z+\cudot_j)\mathbf{E}_0^{-\nicefrac12}| ] \leq C(d)K_{\Psi_{\s}}^7 3^{\gamma(n+l-j)}\,.
\end{equation}
From this it follows that
\begin{align*}
\lefteqn{
\sum_{j=-\infty}^{k-1} \avsum_{\underset{z\in\mathcal{Z}_j}{z+\cudot_j\subseteq V_j}} C(d) 3^{j-n} \mathbb{E}[|\mathbf{E}_0^{-\nicefrac12}\bfA(z+\cudot_j)\mathbf{E}_0^{-\nicefrac12}|]
}\qquad & \\
& \leq \sum_{j=-\infty}^{k-1} C(d)K_{\Psi_{\S}}^9 3^{j-n} 3^{\gamma(l+n-j)} \leq C(d)K_{\Psi_{\S}}^9 3^{\gamma l} 3^{-(1-\gamma)(n-k)} \sum_{j=-\infty}^{k-1} 3^{(1-\gamma)(j-k)} \\
& \leq \frac{C(d)K_{\Psi_{\S}}^7}{1-\gamma} \max\{\Pi_{\m_0}^{\nicefrac{\gamma}{2}},\lambda_{\m_0}^{-\nicefrac{\gamma}{2}}\}3^{-(1-\gamma)(n-k)}\,.
\end{align*}
Taking an expectation of~\eqref{e.subadd.means} and substituting in the above proves~\eqref{e.adapted.mean.by.Ahom}.

To get a bound in the opposite direction we need to partition~$\cudot_m$ into cubes of the form~$y'+\cusdot_n$ for~$y'\in\mathbb{L}_n$, plus a boundary layer. Define the interior
\begin{equation*}
W:= \bigcup\big\{ y' + \cusdot_n : y'\in\mathbb{L}_n\,, y'+\cusdot_n \subseteq \cudot_m\big\}\,,
\end{equation*}
and define recursively
\begin{equation*}
W_j := \bigcup \big\{ z + \cudot_j: z \in \mathcal{Z}_j\,, z + \cudot_j \subseteq \cudot_m \setminus (W \cup W_{j_{\max}} \cup \cdots \cup W_{j+1}) \big\}\,.
\end{equation*}
Here~$j_{\max}$ is the largest~$j$ such that~$W_j$ is non-empty. Since we only need to worry about the spatial direction this satisfies~$3^{j_{\max}} \leq \Pi_{\m_0}^{\nicefrac12}3^n$.

From the definitions each~$W_j$ is at least distance~$\sqrt{d}3^{j+1}$ from the spatial boundary of~$W \cup \partial \cudot_m$. The perimeter of~$W$ is bounded by a constant (depending only on~$d$) times the perimeter of~$\cudot_m$, so we have the bound
\begin{equation}
\frac{|W_j|}{|\cudot_m|} \leq C(d)3^{j-m}\,.
\end{equation}
Subadditivity then gives
\begin{align*}
\bfA(\cudot_m) & \leq \frac{|W|}{|\cudot_m|}\bfA(W) + \sum_{j=-\infty}^{n_0} \frac{|W_j|}{|\cudot_m|}\bfA(W_j) \\
& \leq \avsum_{\underset{y'+\cusdot_n\subseteq \cudot_m}{y'\in \mathbb{L}_n}}\bfA(y' + \cusdot_n) + \sum_{j=-\infty}^{n_0}\avsum_{\underset{z+\cudot_j\subseteq W_j}{z\in \mathcal{Z}_j}} C(d)3^{j-m} \bfA(z + \cudot_j)\,.
\end{align*}
Using again~\eqref{e.take.E.bound} but this time comparing scale~$j$ to scale~$m$
\begin{align*}
\lefteqn{
\sum_{j=-\infty}^{n_0} \avsum_{\underset{z+\cudot_j\subseteq W_j}{z\in \mathcal{Z}_j}} 3^{j-m} \mathbb{E}[|\mathbf{E}_0^{-\nicefrac12}\bfA(z + \cudot_j)\mathbf{E}_0^{-\nicefrac12}|]
} \qquad & \\
& \leq \sum_{j=-\infty}^{j_{\max}} C(d)K_{\Psi_{\S}}^9 3^{j-m}3^{\gamma(m-j)}\big) \leq \frac{C(d)K_{\Psi_{\S}}^9}{1-\gamma} 3^{-(1-\gamma)(m-j_{\max})} \\
& \leq \frac{C(d)K_{\Psi_{\S}}^9}{1-\gamma} 3^{-(1-\gamma)(m-n)} \Pi_{\m_0}^{\frac{1-\gamma}{2}}\,,
\end{align*}
concluding the proof as before.
\end{proof}

\subsection{Parabolic rescaling}
\label{ss.rescaling}

In this section we describe the natural rescaling of the parabolic problem and how it fits into the coarse-graining framework. First, we state a simple lemma describing the effect of a change of variables at the level of the coarse-grained matrices. Fix a positive-definite, symmetric matrix~$\q_0$ and constant~$\lambda_r$ and define the adapted parabolic cylinders as in Section~\ref{ss.subadditivity}. If~$\partial_t u = \nabla \cdot \a\nabla u$ in~$\cusdot_n$ then
\begin{equation}
\label{e.transform.eq}
\left.
\begin{aligned}
& \tilde{u}(s,y) \coloneqq u(\lambda_r^{-1} s, \q_0(y)) \\
& \tilde{\a}(s,y) \coloneqq (\lambda_r^{\nicefrac12}\q_0)^{-1}\a(\lambda_r^{-1} s, \q_0(y))(\lambda_r^{\nicefrac12}\q_0)^{-1} \\
\end{aligned}
\right\}
\implies \partial_s \tilde{u} - \nabla \cdot \tilde{\a} \nabla \tilde{u} = 0 \quad \mbox{ in } \cudot_n\,.
\end{equation}
The symmetric and skew-symmetric parts of~$\tilde{\a}$ are respectively
\begin{equation}
\tilde{\s}(y,s) \coloneqq (\lambda_r^{\nicefrac12}\q_0)^{-1}\s(\lambda_r^{-1} s, \q_0(y))(\lambda_r^{\nicefrac12}\q_0)^{-1}\,,
\end{equation}
and
\begin{equation}
\tilde{\k}(y,s) \coloneqq (\lambda_r^{\nicefrac12}\q_0)^{-1}\k(\lambda_r^{-1} s, \q_0(y))(\lambda_r^{\nicefrac12}\q_0)^{-1}\,.
\end{equation}
We then let~$J_{\a}(I\times U,p,q)$ be the quantity defined in~\eqref{e.J.def}, but with explicit reference to the coefficient field, and similarly for~$\b_{\a}(I\times U)$ and~$\s_{*,\a}(I\times U)$. The following lemma states this change of variables at the level of the coarse-grained matrices.

\begin{lemma}
\label{l.change.variables}
Suppose that~$z+\cusdot_k$ is the image of~$y + \cudot_k$ under the transformation~$(x,t) \mapsto (\q_0(x), \lambda_r^{-1}t)$. Then
\begin{equation}
J_{\tilde{\a}}(y+\cudot_k,p,q) = \frac{1}{\lambda_r}J_{\a}(z+\cusdot_k,\q_0^{-1}p, \lambda_r \q_0 q)
\end{equation}
and in particular
\begin{equation}
\b_{\tilde{\a}}(y+\cudot_k) = \lambda_r^{-1}\q_0^{-1} \b_{\a}(z+\cusdot_k)\q_0^{-1} \quad \mbox{and} \quad \s_{*,\tilde{\a}}^{-1}(\cudot_k) = \lambda_r \q_0 \s_{*,\a}^{-1}(\cusdot_k) \q_0\,.
\end{equation}
\end{lemma}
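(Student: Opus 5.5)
The plan is a direct change of variables in the variational definition~\eqref{e.J.def}. Write $\Phi(s,y) := (\lambda_r^{-1}s, \q_0 y)$, so that $\Phi(y+\cudot_k) = z+\cusdot_k$. The first step is to show that $u \mapsto \tilde u := u\circ\Phi$ is a bijection from $\A_{\a}(z+\cusdot_k)$ onto $\A_{\tilde\a}(y+\cudot_k)$. This is the weak-form version of~\eqref{e.transform.eq}. Indeed, $\nabla\tilde u(s,y) = \q_0 (\nabla u)(\Phi(s,y))$ because $\q_0$ is symmetric, and $\partial_s\tilde u = \lambda_r^{-1}(\partial_t u)\circ\Phi$. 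Testing against $\varphi\circ\Phi$ shows that $\tilde u$ solves the equation with $\tilde\a = \lambda_r^{-1}\q_0^{-1}\a\q_0^{-1}$.

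The remaining conditions in~\eqref{e.solution.space} also transfer. The zero-mean condition is preserved since $\Phi$ is affine, and $\tilde\s = \lambda_r^{-1}\q_0^{-1}\s\q_0^{-1}$ gives the pointwise identity
\[
\nabla\tilde u\cdot\tilde\s\nabla\tilde u = \lambda_r^{-1}\,(\nabla u\cdot\s\nabla u)\circ\Phi .
\]
So the energy and the weighted spaces $W^{\pm1}_{\s}$ correspond, and the inverse map is handled symmetrically.

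The second step is to compute the integrand in~\eqref{e.J.def}. Averages are invariant under the affine map, so $\fint_{y+\cudot_k} F\circ\Phi = \fint_{z+\cusdot_k} F$. The three terms transform as follows:
\[
p\cdot\tilde\a\nabla\tilde u = \lambda_r^{-1}\,(\q_0^{-1}p)\cdot\a\nabla u,
\qquad
q\cdot\nabla\tilde u = (\q_0 q)\cdot\nabla u = \lambda_r^{-1}(\lambda_r\q_0 q)\cdot\nabla u,
\]
and the quadratic term picks up the factor $\lambda_r^{-1}$ shown above. Hence the functional for $(\tilde\a, p, q)$ on $y+\cudot_k$ equals $\lambda_r^{-1}$ times the functional for $(\a,\q_0^{-1}p,\lambda_r\q_0 q)$ on $z+\cusdot_k$. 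Taking the supremum over the bijectively corresponding spaces gives the identity for $J$.

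The third step reads off the matrices from the quadratic representation~\eqref{e.J.mat}, or more cleanly via~\eqref{e.Jaas.matform}:
\[
J(\cdot,p,q) = \tfrac12 \begin{pmatrix}-p\\ q\end{pmatrix}\cdot\bfA\begin{pmatrix}-p\\ q\end{pmatrix} - p\cdot q .
\]
The cross term satisfies $\lambda_r^{-1}(\q_0^{-1}p)\cdot(\lambda_r\q_0 q) = p\cdot q$, so it matches. Comparing quadratic forms then gives
\[
\bfA_{\tilde\a}(y+\cudot_k) = \mathbf{D}\,\bfA_{\a}(z+\cusdot_k)\,\mathbf{D}, \qquad \mathbf{D} = \mathrm{diag}\bigl(\lambda_r^{-1/2}\q_0^{-1},\ \lambda_r^{1/2}\q_0\bigr),
\]
using that a symmetric matrix is determined by its quadratic form. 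The $(1,1)$ block gives $\b_{\tilde\a} = \lambda_r^{-1}\q_0^{-1}\b_{\a}\q_0^{-1}$, and the $(2,2)$ block gives $\s_{*,\tilde\a}^{-1} = \lambda_r\q_0\s_{*,\a}^{-1}\q_0$. The same block comparison also yields the transformation rule for $\k$.

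The only step requiring care is the first: verifying that the solution spaces, including the requirement $\partial_t u\in W^{-1}_{\s}$, correspond exactly under $\Phi$. This is routine, since the weighted norms scale by constant factors, and everything else is algebra.
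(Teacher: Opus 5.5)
Your proposal is correct and follows the paper's proof. The paper likewise substitutes $\tilde u = u\circ\Phi$ into~\eqref{e.J.def} and tracks the factors $\lambda_r^{-1}$, $\q_0^{-1}p$ and $\lambda_r\q_0 q$ term by term, taking the correspondence of solution spaces for granted where you justify it; the only difference is that the paper reads off $\b$ and $\s_*^{-1}$ by setting $q=0$ and then $p=0$ in~\eqref{e.J.mat}, whereas your comparison via~\eqref{e.Jaas.matform} gives the full conjugation $\bfA_{\tilde\a} = \mathbf{D}\,\bfA_{\a}\mathbf{D}$ and hence the rule for $\k$ as well.
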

\begin{proof}
Identifying every solution~$u\in\mathcal{A}(z+\cusdot_k)$ with its transformation~$\tilde{u}$ as in~\eqref{e.transform.eq}
\begin{align*}
J_{\tilde{\a}}(\cudot_k,p,q) & = \sup_{\partial_s \tilde{u} = \nabla \cdot \tilde{\a}\nabla \tilde{u}} \fint_{\cudot_k} \biggl( -\frac{1}{2}\nabla \tilde{u}\cdot \tilde{\s}\nabla \tilde{u} - p \cdot \tilde{\a}\nabla \tilde{u} + q \cdot \nabla \tilde{u} \biggr) \\
& = \frac{1}{\lambda_r}\sup_{\partial_t u = \nabla \cdot \a \nabla u} \fint_{\cusdot_k} \biggl( -\frac{1}{2}\nabla u \cdot \s \nabla u - \q_0^{-1}p \cdot \a \nabla u + \lambda_r\q_0 q \cdot \nabla u \biggr) \\
& = \frac{1}{\lambda_r}J_{\a}(\cusdot_k, \q_0^{-1}p, \lambda_r\q_0 q)\,.
\end{align*}
It follows that for all~$p\in \mathbb{R}^d$,
\begin{equation}
\frac{1}{2}p \cdot \b_{\tilde{\a}}(\cudot_k)p = J_{\tilde{\a}}(\cudot_k,p,0) = \frac{1}{\lambda_r}J_{\a}(\cusdot_k,\q_0^{-1}p,0) = \frac{1}{2\lambda_r}p \cdot \q_0^{-1} \b_{\a}(\cusdot_k)\q_0^{-1} p\,,
\end{equation}
so that~$\b_{\tilde{\a}}(\cudot_k) = \lambda_r^{-1}\q_0^{-1} \b_{\a}(\cusdot_k)\q_0^{-1}$. By setting~$p=0$ we obtain $\s_{*,\tilde{\a}}^{-1}(\cudot_k) = \lambda_r \q_0 \s_{*,\a}^{-1}(\cusdot_k) \q_0$.
\end{proof}

For a simple application of Lemma~\ref{l.change.variables}, suppose that~$\a$ is a uniformly elliptic field satisfying~\eqref{e.uniform.ellipticity} with constants~$0<\lambda \leq \Lambda < \infty$, and with finite range of dependence~$L$ in space and~$T$ in time. This suggests that we work in the dimensionless variables
\begin{equation}
t' = \frac{\lambda t}{L^2} \quad \mbox{and} \quad x' = \frac{x}{L}\,,
\end{equation}
because
\begin{equation}
\left.
\begin{aligned}
\partial_t \tilde{u} = \nabla \cdot \tilde{\a} \nabla \tilde{u} \quad \mbox{ in } \quad \cudot_0 \\
\tilde{u}(t',x') = u(\nicefrac{L^2 t'}{\lambda}, Lx') \\
\tilde{\a}(t',x') = \lambda^{-1}\a(\nicefrac{L^2 t'}{\lambda}, Lx') \\
\end{aligned}
\right\}
\implies
\partial_t u = \nabla \cdot \a \nabla u \mbox{ in } \left(-\frac{L^2}{2\lambda},\frac{L^2}{2\lambda}\right) \times \left(-\frac{L}{2},\frac{L}{2}\right)^d\,.
\end{equation}
The new coefficient field~$\tilde{\a}$ has uniform ellipticity lower bound~$1$, upper ellipticity bound~$\Lambda/\lambda$, range of dependence 1 in space, and range of dependence $\lambda T/L^2$ in time. This reduces the problem to the two dimensionless parameters~$\Lambda/\lambda$ and~$\lambda T / L^2$, and Lemma~\ref{l.change.variables} states that we can recover the coarse-grained matrices for~$\a$ in diffusively scaled domains from the coarse-grained matrices for~$\tilde{\a}$ in domains of the form~$z+\cudot_n$ with~$z\in\mathcal{Z}_n$. We state formally in the next proposition the constants for which the coefficient field~$\tilde{\a}$ satisfies the assumptions~\ref{a.ellipticity} and~\ref{a.CFS}. It follows that our main theorems, such as Theorem~\ref{t.theta.rate}, apply to~$\tilde{\a}$ with these parameters, and apply to~$\a$ after a change of variables.

\begin{proposition}
\label{p.rescaling}
Suppose that~$\a$ is a coefficient field with law~$\mathbb{P}$ satisfying~\ref{a.stationarity}, the uniform ellipticity condition~\eqref{e.uniform.ellipticity} with constants~$0 < \lambda \leq \Lambda < \infty$, and with finite range of dependence~$L$ in time and~$T$ in space: that is, given Borel subsets $U,V \subset \mathbb{R}^d$ and~$I,J\subset \mathbb{R}$
\begin{equation*}
\dist(U,V) \geq L \quad \mbox{or} \quad \dist(I,J) \geq T \implies \mathcal{F}(I\times U) \mbox{ and } \mathcal{F}(J\times V) \mbox{ are } \mathbb{P}\mbox{-independent}.
\end{equation*}
Assume without loss of generality that~$L,T\in\mathbb{N}$ and there exists~$k\in\mathbb{Z}$ such that~$\lambda = 3^{2k}$. If~$\tilde{\a}(t,x) = \lambda^{-1}\a(\nicefrac{L^2 t}{\lambda}, Lx)$ then for any~$n_0 \in \mathbb{N}$ satisfying
\begin{equation}
\label{e.first.scale.sep}
n_0 \geq \frac{1}{2}\log_3\left(\frac{3\lambda T}{L^2}\right)
\end{equation}
the pushforward measure~$\P_{n_0}$ of~$\tilde{\a}$ satisfies~\ref{a.stationarity}, satisfies the uniform ellipticity condition~\eqref{e.uniform.ellipticity} with lower ellipticity constant~$1$ and upper ellipticity constant~$\Lambda/\lambda$, and satisfies~\ref{a.CFS} with parameters~$\beta = 0$, $\nu = \frac{d+2}{2}$, and constant~$K_{\Psi}$ independent of~$L,T,\lambda$ and~$\Lambda$.
\end{proposition}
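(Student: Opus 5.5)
We verify the three properties for the field $D_{n_0}\tilde{\a}$, $(D_{n_0}\tilde{\a})(t,x) = \lambda^{-1}\a(\nicefrac{L^2 3^{2n_0}t}{\lambda}, L3^{n_0}x)$, in increasing order of difficulty.

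\textbf{Stationarity.} A translation of $D_{n_0}\tilde{\a}$ by $(s,y)\in\Z\times\Zd$ is a translation of $\a$ by $(L^2 3^{2n_0}s/\lambda,\, L3^{n_0}y)$. Since $\lambda = 3^{2k}$ and $L\in\N$, the spatial shift lies in $\Zd$. The temporal shift is $L^2 3^{2(n_0-k)}s$. This is an integer once $n_0\geq k$, which we may assume; for $n_0<k$ one instead uses that $\a$ may be taken stationary under the corresponding shifts, or enlarges $n_0$. Hence~\ref{a.stationarity} follows from stationarity of $\a$.

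\textbf{Ellipticity.} If $\s^{-1}\leq\lambda^{-1}\Id$ and $\s+\k^t\s^{-1}\k\leq\Lambda\Id$, then the rescaled field has symmetric part $\lambda^{-1}\s$ and skew part $\lambda^{-1}\k$. It therefore satisfies $\tilde{\s}^{-1}\leq\Id$ and $\tilde{\s}+\tilde{\k}^t\tilde{\s}^{-1}\tilde{\k}\leq(\Lambda/\lambda)\Id$. Dilation does not change these pointwise bounds, so~\eqref{e.uniform.ellipticity} holds with constants $1$ and $\Lambda/\lambda$.

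\textbf{Concentration ($\CFS$), the main step.} The range of dependence of $D_{n_0}\tilde{\a}$ is $3^{-n_0}$ in space and $\lambda T/(L^2 3^{2n_0})\leq \nicefrac13$ in time, by~\eqref{e.first.scale.sep}. Thus, after dilation, the field has unit (in fact at most one) range of dependence in both space and time.

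Now take $\beta=0$ and a family $\{X_z\}_{z\in\mathcal{Z}_n\cap\cudot_m}$ as in~\eqref{e.CFS.ass}, each $X_z$ being $\mathcal{F}(z+\cudot_n)$-measurable with $n\geq 1$. Since $n\geq1$, two cubes $z+\cudot_n$ and $z'+\cudot_n$ whose $\ell^\infty$ lattice distance in $\mathcal{Z}_n$ is at least $2$ are separated by at least $3^n\geq 1$ in space or $3^{2n}\geq 1$ in time. So, by a colouring argument, the family splits into $3^{d+1}$ subfamilies, each consisting of mutually independent, centred random variables bounded by $1$.

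To each subfamily, of size $\gtrsim 3^{(d+2)(m-n)}$, apply Hoeffding's inequality:
\begin{equation*}
\P\biggl[\,\Bigl|\avsum X_z\Bigr|\geq t3^{-\frac{d+2}{2}(m-n)}\biggr]\leq 2\exp(-ct^2)\,.
\end{equation*}
A union bound over the $3^{d+1}$ colour classes then yields~\eqref{e.CFS} with $\nu=\frac{d+2}{2}$ and $\Psi(t)=c\exp(ct^2)$, truncated below at $1$. The constant $c$ depends only on $d$, and $\Psi$ satisfies~\eqref{e.Psi.growth} with some $K_\Psi(d)$.

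The Malliavin-derivative condition in~\eqref{e.CFS.ass} is not needed for finite range of dependence. The obstacle I expect is purely bookkeeping: checking that the temporal dependence range $\lambda T/(L^2 3^{2n_0})$ is at most the time side $3^{2n}$ of the cubes for all $n\geq 1$. This is exactly what~\eqref{e.first.scale.sep} guarantees, with room to spare, and the same check shows that $K_\Psi$ is independent of $L$, $T$, $\lambda$ and $\Lambda$.
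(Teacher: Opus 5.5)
Your proposal is correct and follows the same route as the paper. Stationarity comes from integrality of the rescaled time shift $3^{2(n_0-k)}L^2$, and ellipticity from the direct computation $\tilde{\s}=\lambda^{-1}\s$, $\tilde{\k}=\lambda^{-1}\k$. Assumption~\ref{a.CFS} follows because, by~\eqref{e.first.scale.sep}, the dilated field has range of dependence at most $1$ in both space and time. The only difference is that you write out the colouring-plus-Hoeffding argument explicitly, whereas the paper imports it from~\cite[Section 3.2]{AK.Book} with $d$ replaced by $d+2$ in the cube count.
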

\begin{proof}
Suppose that~$\a$ is a coefficient field with law~$\mathbb{P}$ satisfying~\ref{a.stationarity}, the uniform ellipticity condition~\eqref{e.uniform.ellipticity} with constants~$0 < \lambda \leq \Lambda < \infty$, and with finite range of dependence~$L$ in time and~$T$ in space. Define~$\tilde{\a}(t,x) = \lambda^{-1}\a(\nicefrac{L^2t}{\lambda},Lx)$. Since~$\lambda = 3^{2k}$ for some~$k\in\mathbb{Z}$, if~$n_0\in\mathbb{N}$ satisfies~\eqref{e.first.scale.sep} then~$3^{2n_0}\frac{L^2}{\lambda}$ is an integer and it follows from this and~$\mathbb{Z}\times \mathbb{Z}^d$ stationarity of~$\a$ that~$D_{n_0}\tilde{\a}$ is~$\mathbb{Z}\times \mathbb{Z}^d$ stationary. Similarly, the uniform ellipticity bound for~$D_{n_0}\tilde{\a}$ follows immediately from the ellipticity bounds for~$\a$.

Choosing~$n_0$ to satisfy~\eqref{e.first.scale.sep} implies that the field~$D_{n_0}\tilde{\a}$ has space-time range of dependence 1. It is proved in~\cite[Section 3.2]{AK.Book} that a time-independent field with range of dependence 1 satisfies~\ref{a.CFS} with parameters~$\beta = 0$,~$\nu = \nicefrac{d}{2}$ and function~$\Psi(\cdot) = \Gamma_2(c\cdot)$, where $\Gamma_2(t) = e^{\nicefrac{t^2}{2}} - 1$. In the time-dependent case the exact same proof applies, up to the averaging factor given by the number of cubes in the family~$\{X_z : z\in\mathcal{Z}_n \cap \cudot_m\}$. Tracking this constant through the proof (effectively replacing~$d$ with~$d+2$) in~\cite[Section 3.2.1]{AK.Book} we obtain that~$D_{n_0}\tilde{\a}$ satisfies~\ref{a.CFS} with the stated parameters.
\end{proof}

\subsection{Function spaces}
\label{ss.besov}
For each~$s\in (0,1), p\in [1,\infty), q\in [1,\infty)$ and~$n\in\N$, we define a volume-normalized Besov seminorm in the parabolic cube~$\cudot_n$
\begin{equation}
\label{e.Besov.def}
[g]_{\underline{B}^s_{p,q}(\cudot_n)} := \biggl( \sum_{k=-\infty}^n \bigl( 3^{-spk}\avsum_{z\in \mathcal{Z}_{k-1}, z+\cudot_k \subseteq \cudot_n} \norm{g-(g)_{z+\cudot_k}}_{\L^p(z+\cudot_k)}^p \bigr)^{\nicefrac{q}{p}} \biggr)^{\nicefrac{1}{q}}\,.
\end{equation}
For every~$z\in\mathcal{Z}_{k-1}$ we integrate over the parabolic cube~$z+\cudot_{k}$, so each cube will overlap with~$3^{d+1}$ neighbouring cubes. This allows the semi-norm to detect discontinuity across the cubes, which would otherwise be an artefact of the cube decomposition. If~$s\in [0,1],p\in [1,\infty)$ then we define the~$q=\infty$ Besov seminorm by
\begin{equation}
\label{e.Besov.infty.def}
[g]_{\underline{B}^s_{p,\infty}(\cudot_n)} := \sup_{k\in (-\infty,n]\cap\Z} 3^{-sk} \biggl( \avsum_{z\in\mathcal{Z}_{k-1}, z+\cudot_k \subseteq \cudot_n} \norm{g-(g)_{z+\cudot_k}}_{\L^p(z+\cudot_k)}^p \biggr)^{\nicefrac{1}{p}}\,.
\end{equation}
The corresponding Besov norms are defined by
\begin{equation}
\label{e.Besov.norm.def}
\norm{g}_{\underline{B}_{p,q}^s(\cudot_n)}: = 3^{-sn}\|g\|_{\L^p(\cudot_n)} + [g]_{\underline{B}^s_{p,q}(\cudot_n)}\,,
\end{equation}
and the Banach space~$B_{p,q}^s(\cudot_n)$ is defined to be the closure of~$C^\infty(\cudot_n)$ with respect to~$\|\cdot\|_{\underline{B}_{p,q}^s(\cudot_n)}$. We use the Besov terminology because the three parameters~$p,q$ and~$s$ are respectively an integrability parameter, a scale parameter, and a regularity parameter. In the case~$q = p \in [1,\infty)$ and~$s\in (0,1)$ we have by Proposition~\ref{p.Besov.equiv}
\begin{equation*}
B_{p,p}^s(\cudot_n) = L^p(I_n; W^{s,p}(\cu_n)) \cap W^{\nicefrac{s}{2},p}(I_n;L^p(\cu_n))\,,
\end{equation*}
with an equivalence of norms. In particular, in the case~$p=2$ we obtain the spaces~$H^{s,\nicefrac{s}{2}}(\cudot_n)$ as defined, for example, in~\cite[Chapter 4, Section 2]{LMV2}. Another similar approach to defining Besov norms on finite domains can be found in~\cite[Section 1.10.3]{Triebel}. We also note that the semi-norm in~\eqref{e.Besov.def} is equivalent, for~$q=p\in[1,\infty)$ and~$s\in (0,1)$, to the integral
\begin{equation}
\label{e.fractional.integral}
\biggl( \fint_{\cudot_n} \int_{\cudot_n} \frac{|g(t,x) - g(s,y)|^p}{( |x-y| + |t-s|^{\nicefrac12} )^{d+2+sp} } \biggr)^{\nicefrac{1}{p}}\,,
\end{equation}
which is obtained by taking~\cite[Lemma A.4]{AK.HC} and replacing the partition of unity with a space-time, parabolically scaled partition of unity.

For~$s\in (0,1]$,~$p\in [1,\infty)$,~$q\in [1,\infty]$, and~$p',q'$ denoting the respective H\"older conjugates, define 
\begin{align}
\|f\|_{\underline{\hat{B}}_{p,q}^{-s}(\cudot_n)} := \sup \bigg\{ \fint_{\cudot_n} fg : g\in C^\infty(\cudot_n)\,, \|g\|_{\underline{B}_{p',q'}^s(\cudot_n)} \leq 1 \bigg\} \label{e.Besov.dual.hat.def}\,, \\
\|f\|_{\underline{B}_{p,q}^{-s}(\cudot_n)} := \sup \bigg\{ \fint_{\cudot_n} fg : g\in C_c^\infty(\cudot_n) \,, \|g\|_{\underline{B}_{p',q'}^s(\cudot_n)} \leq 1 \bigg\} \label{e.Besov.dual.def}\,,
\end{align}
and by Lemma~\ref{l.bound.dual.seminorm},
\begin{equation}
[f]_{\underline{B}_{p,q}^{-s}(\cudot_n)}  \leq [f]_{\mathring{\underline{B}}^{-s}_{p,q}(\cudot_n)} : = 3^{d+2+s} \biggl( \sum_{k=-\infty}^n \bigl( 3^{spk}\avsum_{z\in\mathcal{Z}_{k-1}, z+\cudot_k \subseteq \cudot_n} |(f)_{z+\cudot_k}|^p \bigr)^{\nicefrac{q}{p}}\biggr)^{\nicefrac{1}{q}} \label{e.Besov.dot.def}\,.
\end{equation}
These spaces appear naturally, for example in the parabolic multiscale Poincar\'e inequality (Lemma \ref{l.multiscale.poincare}), which states that if~$\partial_t u = \nabla \cdot \g$ in~$\cudot_{n+1}$ then
\begin{equation*}
\| u- (u)_{\cudot_n}\|_{\L^2(\cudot_n)} \leq C(d)\bigl([\nabla u]_{\Bring_{2,1}^{-1}(\cudot_{n+1})} + [\g]_{\Bring_{2,1}^{-1}(\cudot_{n+1})}\bigr)\,.
\end{equation*}
Since spatial averages of solutions are controlled by the coarse-graining inequalities of Section~\ref{ss.bfA.def} we will obtain good control of solutions in these spaces.

The coarse-grained ellipticity constants represent the effective diffusivity at a given scale. Similarly to~\eqref{e.lambda.infty.def}, we define, for~$n,m\in\mathbb{Z}$,~$s\in [0,1]$ and~$q \in [1,\infty)$ such that~$n\leq m$, the quantities
\begin{equation}
\label{e.coarse.grained.ellipticity}
\left\{
\begin{aligned}
& \Lambda_{s,q}(\cudot_n) \coloneqq \biggl(\sum_{k=-\infty}^{n} 3^{sq(k-n)} \max_{z\in\mathcal{Z}_{k} \cap \cudot_n} |\b(z+\cudot_k)|^{\nicefrac{q}{2}} \biggr)^{\nicefrac{2}{q}}\,,\\
& \lambda_{s,q}(\cudot_n) \coloneqq \biggl(\sum_{k=-\infty}^n 3^{sq(k-n)} \max_{z\in\mathcal{Z}_{k} \cap \cudot_n} |\s_*^{-1}(z+\cudot_k)|^{\nicefrac{q}{2}} \biggr)^{-\nicefrac{2}{q}}\,.
\end{aligned}
\right.
\end{equation}
The coarse-grained ellipticity assumption~\ref{a.ellipticity.dagger} implies finiteness of the coarse-grained ellipticity constants for~$s > \nicefrac{\gamma}{2}$ because
\begin{align}
\label{e.finite.E}
\sup_{z\in\mathcal{Z}_{k-1} \cap \cudot_n} |\b(z+\cudot_k)| \leq \biggl( \frac{\S \vee 3^n}{3^{k}}\biggr)^{\gamma}|\b_0| \qquad \mbox{and} \qquad \sup_{z\in\mathcal{Z}_{k-1} \cap \cudot_n} |\s_*^{-1}(z+\cudot_k)| \leq \biggl( \frac{\S \vee 3^n}{3^k}\biggr)^{\gamma} |\s_{*,0}^{-1}|\,,
\end{align}
which implies that
\begin{align}
\label{e.bounded.cg}
\Lambda_{s,q}(\cudot_n) \leq C(2s-\gamma,q) \biggl( \frac{\S \vee 3^n}{3^n}\biggr)^{\gamma}|\mathbf{b}_0|\,, \quad \lambda_{s,q}^{-1}(\cudot_n) \leq C(2s-\gamma,q) \biggl( \frac{\S \vee 3^n}{3^n}\biggr)^{\gamma}|\mathbf{s}_{*,0}^{-1}|
\end{align}

%We will use the coarse-grained ellipticity constants defined in adapted parabolic cylinders, in which case the transformation in~\eqref{e.transformers} means we should consider
%\begin{equation}
%\label{e.adapt.cg.ellipticity}
%\left\{
%\begin{aligned}
%& \Lambda_{s,\infty}(\cusdot_n) := \sup_{k\in\Z, k\leq n} 3^{2s(k-m)} \max_{z\in\mathbb{L}_{k} \cap \cusdot_n} |\m_0^{-\nicefrac12}\b(z+\cudot_k)\m_0^{-\nicefrac12}| \,, \\
%& \lambda_{s,\infty}^{-1}(\cusdot_n) := \sup_{k\in\Z, k\leq n} 3^{2s(k-m)} \max_{z\in\mathbb{L}_{k} \cap \cusdot_n} |\m_0^{\nicefrac12}\s_*^{-1}(z+\cudot_k)\m_0^{\nicefrac12}| \,, \\
%& \Lambda_{s,q}(\cusdot_n) : = \biggl(\sum_{k=-\infty}^{n} 3^{sq(k-n)} \max_{z\in\mathbb{L}_{k} \cap \cusdot_n} |\m_0^{-\nicefrac12}\b(z+\cusdot_k)\m_0^{-\nicefrac12}|^{\nicefrac{q}{2}} \biggr)^{\nicefrac{1}{q}}\,,\\
%& \lambda_{s,q}^{-1}(\cusdot_n) : = \biggl(\sum_{k=-\infty}^n 3^{sq(k-n)} \max_{z\in\mathbb{L}_{k} \cap \cusdot_n} |\m_0^{\nicefrac12}\s_*^{-1}(z+\cudot_k)\m_0^{\nicefrac12}|^{\nicefrac{q}{2}} \biggr)^{\nicefrac{1}{q}}\,.
%\end{aligned}
%\right.
%\end{equation}

We next state some functional inequalities which we will use repeatedly throughout the paper.

\begin{lemma}
\label{l.mathringB.bounds}
If~$s\in [0,1]$ and~$u\in\mathcal{A}(\cudot_n)$ then
\begin{align}
\label{e.mathring.B}
\left\{
\begin{aligned}
& [\nabla u ]_{\Bring^{-s}_{2,1}(\cudot_{n})}  \leq C(d)3^{sn}\lambda_{s,1}^{-\nicefrac12}(\cudot_{n}) \|\s^{\nicefrac12}\nabla u\|_{\L^2(\cudot_{n})}\,, \\
& [ \a\nabla u ]_{\Bring^{-s}_{2,1}(\cudot_{n})}  \leq C(d) 3^{sn} \Lambda_{s,1}^{\nicefrac12}(\cudot_{n}) \|\s^{\nicefrac12}\nabla u\|_{\L^2(\cudot_{n})}
\end{aligned}
\right.
\end{align}
\end{lemma}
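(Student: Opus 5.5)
The plan is to unfold the definition of~$[\cdot]_{\Bring^{-s}_{2,1}}$ in~\eqref{e.Besov.dot.def} and bound each averaged gradient and flux on a subcube using the coarse-graining inequalities of Lemma~\ref{l.J.basicprops}. By definition,
\begin{equation*}
[\nabla u]_{\Bring^{-s}_{2,1}(\cudot_n)} = 3^{d+2+s}\sum_{k=-\infty}^n 3^{sk}\biggl(\avsum_{z\in\mathcal{Z}_{k-1},\, z+\cudot_k\subseteq\cudot_n} |(\nabla u)_{z+\cudot_k}|^2\biggr)^{\nicefrac12}\,,
\end{equation*}
and the flux term has the same form with~$\nabla u$ replaced by~$\a\nabla u$.

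The first step is to note that the restriction of~$u$ to~$z+\cudot_k$, minus its mean, belongs to~$\mathcal{A}(z+\cudot_k)$: it still solves the equation, and~$\partial_t u \in W^{-1}_{\s}$ restricts. This lets me apply~\eqref{e.energymaps} and~\eqref{e.energymaps.flux} on each subcube:
\begin{equation*}
|(\nabla u)_{z+\cudot_k}|^2 \leq |\s_*^{-1}(z+\cudot_k)|\,\fint_{z+\cudot_k}\nabla u\cdot\s\nabla u\,,\qquad |(\a\nabla u)_{z+\cudot_k}|^2\leq |\b(z+\cudot_k)|\,\fint_{z+\cudot_k}\nabla u\cdot\s\nabla u\,.
\end{equation*}
The cubes~$z+\cudot_k$ with~$z\in\mathcal{Z}_{k-1}$ are not elements of~$\mathcal{Z}_k$. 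I handle this either by subadditivity, covering~$z+\cudot_k$ by~$3^{d+2}$ cubes~$z'+\cudot_{k-1}$ with~$z'\in\mathcal{Z}_{k-1}$, or simply by accepting a reindexing of~$\mathcal{Z}_k$ versus~$\mathcal{Z}_{k-1}$ in the max. This costs only a factor~$C(d)$ and a shift of~$k$ by one, which turns~$3^{sk}$ into~$3^{s(k-1)}$ times a constant.

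The second step is to pull out the maximum and sum. Each point of~$\cudot_n$ lies in at most~$3^{d+2}$ of the overlapping cubes, and the normalized count of cubes is comparable to~$3^{(d+2)(n-k)}$. Hence
\begin{equation*}
\avsum_{z} \fint_{z+\cudot_k}\nabla u\cdot\s\nabla u \leq C(d)\|\s^{\nicefrac12}\nabla u\|_{\L^2(\cudot_n)}^2\,.
\end{equation*}
This gives
\begin{equation*}
[\nabla u]_{\Bring^{-s}_{2,1}(\cudot_n)}\leq C(d)\sum_{k\leq n}3^{sk}\max_{z}|\s_*^{-1}(z+\cudot_k)|^{\nicefrac12}\,\|\s^{\nicefrac12}\nabla u\|_{\L^2(\cudot_n)}\,.
\end{equation*}
Writing~$3^{sk}=3^{sn}3^{s(k-n)}$ and comparing with~\eqref{e.coarse.grained.ellipticity} at~$q=1$, the sum is exactly~$3^{sn}\lambda_{s,1}^{-\nicefrac12}(\cudot_n)$. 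The flux estimate is identical, with~$\b$ in place of~$\s_*^{-1}$, giving~$\Lambda_{s,1}^{\nicefrac12}(\cudot_n)$.

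The main obstacle is the bookkeeping in the first step, namely matching the overlapping~$\mathcal{Z}_{k-1}$-indexed cubes to the~$\mathcal{Z}_k$ grid in the definition of~$\Lambda_{s,1}$ and~$\lambda_{s,1}$. I will use subadditivity of~$\b$ and~$\s_*^{-1}$ from Lemma~\ref{l.J.furtherprops} to dominate the coarse-grained matrix on an overlapping cube by an average over grid cubes. A cube~$z+\cudot_k$ with~$z\in\mathcal{Z}_{k-1}$ is not a union of~$\mathcal{Z}_k$ cubes, so I would bound by scale~$k-1$ cubes instead. This changes the weight only by~$3^{s}\leq 3$, which is absorbed into~$C(d)$.
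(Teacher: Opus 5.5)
Your proposal is correct and follows the same route as the paper. The paper's proof just invokes the argument of \cite[Lemma 2.2]{AK.HC}: apply the coarse-graining inequalities \eqref{e.energymaps} and \eqref{e.energymaps.flux} on each subcube, then sum using the bounded overlap and identify the weighted sum with $\lambda_{s,1}^{-\nicefrac12}$ and $\Lambda_{s,1}^{\nicefrac12}$. Your extra step uses subadditivity of $\b$ and $\s_*^{-1}$ to pass from the overlapping cubes $z+\cudot_k$, $z\in\mathcal{Z}_{k-1}$, to the scale-$(k-1)$ grid cubes in the definitions of $\Lambda_{s,1}$ and $\lambda_{s,1}$, at a cost of $3^{d+2}$ and $3^s$. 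This is a valid way to handle an indexing mismatch that the paper leaves implicit.
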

\begin{proof}
We obtain~\eqref{e.mathring.B} as in~\cite[Lemma 2.2]{AK.HC}, using the parabolic coarse-graining inequalities~\eqref{e.energymaps} and~\eqref{e.energymaps.flux}.
\end{proof}

\begin{lemma}[Coarse-grained Poincar\'e inequality]
\label{l.Besov.regularity.solns}
For every~$n\in\mathbb{Z}, u\in\A(\cudot_{n+1})$ and~$s\in [0,1]$
\begin{align}
\label{e.cg.poincare}
\|u -(u)_{\cudot_n}\|_{\underline{B}_{2,\infty}^s(\cudot_n)} & \leq C(d) \bigl( [\nabla u]_{\Bring_{2,1}^{s-1}(\cudot_{n+1})} + [\a \nabla u]_{\Bring_{2,1}^{s-1}(\cudot_{n+1})} \bigr) \nonumber \\
& \leq C(d)3^{(1-s)n}\bigl( \Lambda_{s,1}^{\nicefrac12}(\cudot_{n+1}) + \lambda_{s,1}^{-\nicefrac12}(\cudot_{n+1}) \bigr)\norm{\s^{\nicefrac12}\nabla u}_{\L^2(\cudot_{n+1})}\,.
\end{align}
\end{lemma}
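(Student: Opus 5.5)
The plan is to obtain the first inequality in~\eqref{e.cg.poincare} by combining a parabolic multiscale Poincar\'e inequality, applied cube by cube at every scale, with the fact that $u$ solves $\partial_t u = \nabla\cdot \g$ where $\g := \a\nabla u$. The second inequality then follows by inserting Lemma~\ref{l.mathringB.bounds} with $s$ replaced by $1-s$. We also have to reconcile the two weightings: Lemma~\ref{l.mathringB.bounds} is stated with the index $s$, while~\eqref{e.cg.poincare} uses $\Lambda_{s,1}$, $\lambda_{s,1}$ together with the exponent $3^{(1-s)n}$. We will apply Lemma~\ref{l.mathringB.bounds} with $1-s$ in place of $s$ and use monotonicity of $\Lambda_{\cdot,1}$ and $\lambda_{\cdot,1}^{-1}$ in the weight exponent. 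Since $3^{q(k-n)}\le 3^{s'q(k-n)}$ for $k\le n$ and $s'\le 1$, these quantities are controlled by those with the stated index; this is only bookkeeping.

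\textbf{Step 1: the basic Poincar\'e bound on a single cube.} For $\partial_t u=\nabla\cdot\g$ in $z+\cudot_{k+1}$ we want
\begin{equation*}
\|u-(u)_{z+\cudot_k}\|_{\L^2(z+\cudot_k)} \le C(d)\bigl([\nabla u]_{\Bring^{-1}_{2,1}(z+\cudot_{k+1})}+[\g]_{\Bring^{-1}_{2,1}(z+\cudot_{k+1})}\bigr).
\end{equation*}
This is exactly the multiscale Poincar\'e inequality quoted in Section~\ref{ss.besov}. We prove it by telescoping averages of $u$ over the triadic subcubes $z'+\cudot_j$, $j\le k$.

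The spatial oscillation between a cube and its children is controlled by $3^{j}|(\nabla u)_{z'+\cudot_{j}}|$, since space-time averages of $\nabla u$ give the affine part.

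The temporal oscillation is the key point. We test the equation against a smooth bump $\psi$ in space supported in $z'+\cu_j$. This gives
\begin{equation*}
\tfrac{d}{dt}\int u\psi = -\int \g\cdot\nabla\psi,
\end{equation*}
so differences of time-slice averages over a time window of length $3^{2j}$ are bounded by $3^{2j}\cdot 3^{-j}|(\g)|$ over slightly larger cubes.

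Summing these contributions over $j$ in $\ell^2$ over cubes and $\ell^1$ over scales produces the $\Bring^{-1}_{2,1}$ norms. Using bumps instead of sharp indicators is the reason the right-hand side lives on $\cudot_{k+1}$.

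\textbf{Step 2: upgrade to the Besov norm.} For $s\in[0,1]$ and each scale $k\le n$, apply Step~1 on each $z+\cudot_k$ with $z\in\mathcal{Z}_{k-1}$, square, and average over $z$. Then multiply by $3^{-sk}$. The dual norms $[\cdot]_{\Bring^{-1}_{2,1}}$ on small cubes are summed only over scales $j\le k$ with weights $3^{j}$. Using
\begin{equation*}
3^{-sk}3^{j}=3^{(1-s)j}\,3^{-s(k-j)}\le 3^{(1-s)j},
\end{equation*}
and Minkowski's inequality to exchange the $\ell^2$ average over cubes with the $\ell^1$ sum over $j$, we bound $\sup_k$ by $[\cdot]_{\Bring^{s-1}_{2,1}(\cudot_{n+1})}$.

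The $L^2$ term $3^{-sn}\|u-(u)_{\cudot_n}\|_{\L^2(\cudot_n)}$ is handled the same way with $k=n$.

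\textbf{Main obstacle.} The main obstacle is Step~1's time-direction control. Here $u$ need not be smooth and $\partial_t u$ lies only in $W^{-1}_\s$, so the testing must be justified using~\eqref{e.continuous.in.time}, for example by mollifying in time, and the overlap bookkeeping must keep constants depending only on $d$.
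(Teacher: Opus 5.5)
Your treatment of the first inequality in~\eqref{e.cg.poincare} is the paper's argument. You apply the parabolic multiscale Poincar\'e inequality with $\g=\a\nabla u$ on each $z+\cudot_k$, $z\in\mathcal{Z}_{k-1}$, whose triple $z+\cudot_{k+1}$ lies in $\cudot_{n+1}$. You then square and average, multiply by $3^{-sk}$, and use Minkowski together with $3^{-sk}3^{j}\le 3\cdot 3^{(1-s)j}$ for $j\le k+1$. This is precisely \cite[Lemma 2.3]{AK.HC} with Lemma~\ref{l.multiscale.poincare} substituted in. Step~1 is Lemma~\ref{l.multiscale.poincare} itself, which the paper takes from \cite{ABM}, so you can cite it rather than re-derive it. Your sketch with sharp cube averages would not be rigorous as written: a parent-to-child increment of averages is not controlled by the same-scale average of $\nabla u$ alone, and the smooth-kernel argument of \cite{ABM} is what makes it work.

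The genuine problem is the index reconciliation in the second inequality, which is not bookkeeping. Because $k-n\le 0$, the weights $3^{tq(k-n)}$ decrease in $t$, so $t\mapsto \Lambda_{t,1}(\cudot_{n+1})$ and $t\mapsto\lambda_{t,1}^{-1}(\cudot_{n+1})$ are nonincreasing. Lemma~\ref{l.mathringB.bounds} with $1-s$ in place of $s$ gives the bound with $\Lambda_{1-s,1}^{\nicefrac12}+\lambda_{1-s,1}^{-\nicefrac12}$. The comparison $\Lambda_{1-s,1}\le\Lambda_{s,1}$ holds only for $s\le\nicefrac12$; for $s\in(\nicefrac12,1]$ it runs the other way. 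At $s=1$, for instance, $\Lambda_{0,1}$ is an undiscounted sum over all $k\le n+1$. It is infinite for every uniformly elliptic field, since each term is at least $\lambda^{\nicefrac12}$, whereas $\Lambda_{1,1}$ is finite. Your inequality $3^{q(k-n)}\le 3^{s'q(k-n)}$ compares the index $1$ with $s'$ and says nothing about $1-s$ versus $s$.

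What your argument actually proves is the second line of~\eqref{e.cg.poincare} with $\Lambda_{1-s,1}$ and $\lambda_{1-s,1}$. The same is true of the paper's one-line proof, ``follows directly from Lemma~\ref{l.mathringB.bounds}''. This form is consistent with the prefactor $3^{(1-s)n}$ and with the form $\Lambda_{1-s,1},\lambda_{1-s,1}$ that appears in the proof of Proposition~\ref{p.coarse.caccio}. The index $s$ in the statement should be read as $1-s$, or the claim restricted to $s\le\nicefrac12$. State and prove that version instead of asserting the literal one.
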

\begin{proof}
The proof of the first inequality is exactly as in \cite[Lemma 2.3]{AK.HC}, substituting in our parabolic multiscale Poincar\'e inequality from Lemma~\ref{l.multiscale.poincare} with~$\g=\a\nabla u$. The second inequality then follows directly from Lemma~\ref{l.mathringB.bounds}.
\end{proof}

Our next lemma uses approximation to pass to a limit provided that certain Besov norms are finite. By Lemma~\ref{l.mathringB.bounds} this follows from finiteness of the coarse-grained ellipticity constants. Since~$\gamma< 1$ we may take~$s = \frac{1+\gamma}{4} \in ( \nicefrac{\gamma}{2},\nicefrac12)$ and note that the conditions of Lemma~\ref{l.testing.with.u} are satisfied by our remark below~\eqref{e.coarse.grained.ellipticity}, since the random minimal scale~$\S$ is almost surely finite.
\begin{lemma}
\label{l.testing.with.u}
Let~$n\in\Z, s\in (0,1), \epsilon \in (0,1-s)$ and suppose~$u\in\A(\cudot_{n+1})$ such that
\begin{equation*}
[u]_{\underline{B}^{s+\epsilon}_{2,\infty}(\cudot_n)} + [\a\nabla u]_{\underline{B}_{2,1}^{-s}(\cudot_n)} < \infty\,.
\end{equation*}
Then for every~$\varphi\in C_c^\infty(\cudot_n)$,
\begin{equation}
\label{e.integrate.by.parts}
\fint_{\cudot_n} \varphi \nabla u \cdot \s \nabla u + \fint_{\cudot_n} u\nabla \varphi \cdot \a \nabla u = \frac{1}{2}\fint_{\cudot_n} u^2 \partial_t \varphi\,.
\end{equation}
\end{lemma}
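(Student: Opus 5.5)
Formally, the identity \eqref{e.integrate.by.parts} is what one gets by testing the equation $\partial_t u = \nabla\cdot\a\nabla u$ against $\varphi u$. Indeed, $\partial_t u\, u\varphi = \tfrac12\partial_t(u^2)\varphi$ integrates to $-\tfrac12\fint u^2\partial_t\varphi$. Also $\a\nabla u\cdot\nabla(\varphi u) = \varphi\nabla u\cdot\a\nabla u + u\nabla\varphi\cdot\a\nabla u$, and $\nabla u\cdot\a\nabla u=\nabla u\cdot\s\nabla u$ because $\k$ is skew. The difficulty is that $\varphi u$ is not an admissible test function: $u$ lies in $W^1_\s$ with $\partial_t u\in W^{-1}_\s$, but the products $u\,\a\nabla u$ and $u^2$ need not be integrable a priori, since $\s$ may be unbounded and $\s^{-1}$ only $L^1$. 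So the plan is to regularize $u$, test with a smoothed version, and pass to the limit using the two finite Besov norms in the hypothesis as a duality pair.

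\textbf{Step 1 (mollified testing).} Fix $\varphi\in C_c^\infty(\cudot_n)$ and let $u_\delta := u*\eta_\delta$ be a space-time mollification, defined on a neighbourhood of $\supp\varphi$ for small $\delta$. Then $\psi_\delta := \varphi u_\delta \in C_c^\infty(\cudot_n)$ is a legitimate test function in the weak formulation of $\partial_t u = \nabla\cdot\a\nabla u$. This gives
\begin{equation*}
-\fint u\,\partial_t(\varphi u_\delta) + \fint \a\nabla u\cdot\bigl(\varphi\nabla u_\delta + u_\delta\nabla\varphi\bigr) = 0 .
\end{equation*}

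\textbf{Step 2 (time-derivative term).} I would handle this term by symmetrizing in the standard Steklov-type way. Using a symmetric mollifier, write $\fint u\,\partial_t(\varphi u_\delta)=\fint u u_\delta\partial_t\varphi + \fint \varphi u\,\partial_t u_\delta$. Then I would show that the commutator $\fint \varphi u\,\partial_t u_\delta + \fint\varphi u_\delta\partial_t u$ tends to zero. Alternatively, one can use the continuity $u\in C(I;L^2)$ from \eqref{e.continuous.in.time} applied locally, together with $u\in L^2$ (which follows from the Besov bound $[u]_{\underline{B}^{s+\epsilon}_{2,\infty}}<\infty$ and the $L^2$ control). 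Either way this yields $\fint u u_\delta\partial_t\varphi - \tfrac12\fint u_\delta^2\partial_t\varphi \to \tfrac12\fint u^2\partial_t\varphi$ in the limit.

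\textbf{Step 3 (flux terms, the main obstacle).} The remaining terms have the form $\fint \a\nabla u\cdot \nabla(\varphi u_\delta)$, and the issue is their convergence to $\fint \a\nabla u\cdot\nabla(\varphi u)$. Here I would use duality: $\a\nabla u\in \underline{B}^{-s}_{2,1}(\cudot_n)$ is paired with $\nabla(\varphi u_\delta)$. One cannot simply claim that $\nabla(\varphi u_\delta)$ converges in $\underline{B}^{s}_{2,\infty}$, since the gradient costs one derivative. Instead I would split the term as $\varphi\nabla u_\delta$ plus $u_\delta\nabla\varphi$.

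\emph{The $u_\delta\nabla\varphi$ part.} This is paired with $\a\nabla u\in B^{-s}_{2,1}$, and $u_\delta\nabla\varphi\to u\nabla\varphi$ in $B^s_{2,\infty}$. The convergence holds because the extra $\epsilon$ of regularity ($u\in B^{s+\epsilon}_{2,\infty}$) gives strong convergence of mollifications in $B^{s}_{2,\infty}$, which fails at the endpoint. This is exactly why $\epsilon>0$ is assumed.

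\emph{The $\varphi\nabla u_\delta$ part.} Write $\varphi\,\a\nabla u\cdot\nabla u_\delta$. Since $\s^{\nicefrac12}\nabla u\in L^2$ and $\s^{-\nicefrac12}\k\s^{-\nicefrac12}\in L^\infty_{\mathrm{loc}}$, this converges to $\varphi\nabla u\cdot\s\nabla u$ provided $\s^{\nicefrac12}\nabla u_\delta\to \s^{\nicefrac12}\nabla u$ in $L^2$. That convergence is not automatic for mollification with a degenerate weight. To avoid the issue I would instead use the density definition of $W^1_\s$: choose smooth $u_j\to u$ in $W^1_\s$, and rerun Steps 1–2 with $u_j$ in place of $u_\delta$. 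The time-derivative part still works because $u_j\to u$ in $L^2$, and it is paired with $\partial_t u\in W^{-1}_\s$ through $\varphi u_j\in W^1_{\s,0}$. The Besov hypotheses are then used only to ensure that the limit expression $\fint u\nabla\varphi\cdot\a\nabla u$ is well defined and is the limit of $\fint u_j\nabla\varphi\cdot\a\nabla u$. For that, I would choose the approximants $u_j$ to also converge in $B^s_{2,\infty}$, by combining mollification with the $B^{s+\epsilon}$ bound; reconciling these two approximation requirements is where I expect the real work to lie.

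Once the limits are taken, the three pieces combine to give \eqref{e.integrate.by.parts}.
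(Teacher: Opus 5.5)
There is a genuine gap. You identify the obstacles correctly, but neither of your approximation schemes gets past them, and the step you defer as ``the real work'' is the entire content of the lemma.

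Mollification fails on the energy term, as you note: $\s^{\nicefrac12}\nabla u_\delta\to\s^{\nicefrac12}\nabla u$ in $L^2$ is false in general for a degenerate weight. Switching to an arbitrary smooth sequence $u_j\to u$ in $W^1_\s$ repairs that term but breaks the other two.

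\emph{The time term.} $L^2$ convergence of $u_j$ is not enough here. The pairing $\langle\partial_t u,\varphi u_j\rangle$ is continuous only under $W^1_\s$-convergence of $\varphi u_j$. That requires $\s^{\nicefrac12}(u_j-u)\nabla\varphi\to 0$ in $L^2$, a weighted bound you do not have; indeed $\varphi u$ need not lie in $W^1_\s$ at all, since $u\in L^2$ and $\s\in L^1$ do not make $\s u^2$ integrable. If instead you rewrite the term as $-\fint u\,\partial_t(\varphi u_j)$, you produce $\fint \varphi u\,\partial_t u_j$, and a density sequence gives no control over it. Your alternative via \eqref{e.continuous.in.time} hits the same wall: applying it locally requires a spatial cutoff of $u$ to lie in $W^1_{\s,0}$.

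\emph{The flux term.} For $\fint u_j\nabla\varphi\cdot\a\nabla u$ you need $u_j\nabla\varphi\to u\nabla\varphi$ in $\underline{B}^s_{2,\infty}$. That means a uniform $\underline{B}^{s+\epsilon}_{2,\infty}$ bound on the $u_j$, which nothing in the construction provides.

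\textbf{The missing idea: truncate instead of smooth.} The paper uses $u_k:=(u\wedge k)\vee(-k)$, and this nonlinear pointwise approximation handles all three terms at once.
\begin{itemize}
\item \emph{Admissibility.} Since $u_k$ is bounded and $\s\in L^1_{\mathrm{loc}}$, $\varphi u_k$ is an admissible test function. Also $\fint u_k\nabla\varphi\cdot\a\nabla u$ is a genuine integral, because $\a\nabla u\in L^1$.
\item \emph{Energy term.} Since $\nabla u_k=\indc_{\{|u|<k\}}\nabla u$, this term equals $\fint\varphi\,\indc_{\{|u|<k\}}\nabla u\cdot\s\nabla u$. It converges by dominated convergence, with no interaction between the weight and the approximation.
\item \emph{Flux term.} The maps $t\mapsto (t\wedge k)\vee(-k)$ and $t\mapsto t-(t\wedge k)\vee(-k)$ are $1$-Lipschitz, so $[u-u_k]_{\underline{B}^{s+\epsilon}_{2,\infty}(\cudot_n)}\le [u]_{\underline{B}^{s+\epsilon}_{2,\infty}(\cudot_n)}$ uniformly in $k$. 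Together with $u_k\to u$ in $L^2$, this gives $u_k\to u$ in $\underline{B}^{s}_{2,\infty}$: small scales are controlled by the $3^{\epsilon j}$ gain, large scales by $L^2$. Then $u_k\nabla\varphi\to u\nabla\varphi$ in $\underline{B}^s_{2,\infty}$ by Lemma~\ref{l.multiply.in.besov.norm}, so the pairing with $\a\nabla u\in\underline{B}^{-s}_{2,1}$ passes to the limit. Your explanation of why $\epsilon>0$ is needed is exactly right at this point.
\item \emph{Time term.} The identity $u\,\partial_t u_k=u_k\,\partial_t u_k$ turns $-\fint\varphi u_k\,\partial_t u$ into $\fint u_k u\,\partial_t\varphi-\tfrac12\fint u_k^2\,\partial_t\varphi$. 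This converges to $\tfrac12\fint u^2\,\partial_t\varphi$ using only $L^2$ convergence.
\end{itemize}
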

\begin{proof}
Assume~$u$ is as in the statement and without loss of generality assume that~$(u)_{\cudot_n}=0$. For~$k\in \N, k \geq 10$, let~$u_k:= (u\wedge k) \vee (-k)$ and fix any~$\varphi \in C_c^\infty(\cudot_n)$. Then since~$u_k\varphi \in W^1_{\s}(\cudot_n)$ we can test the equation for~$u$ to obtain
\begin{equation*}
\fint_{\cudot_n} \varphi \nabla u_k \cdot \a \nabla u + \fint_{\cudot_n} u_k\nabla \varphi \cdot \a \nabla u = -\fint_{\cudot_n}\varphi u_k \partial_t u\,.
\end{equation*}
By the same proof as in~\cite[Lemma 2.4]{AK.HC}, using also Lemma~\ref{l.multiply.in.besov.norm}, the terms on the left-hand side converge as~$k\to\infty$ to the respective terms with~$u$ instead of~$u_k$. For the term on the right we use that~$u\partial_t u_k = u_k \partial_t u_k$ so
\begin{align*}
-\fint_{\cudot_n} \varphi u_k \partial_t u & = \fint_{\cusdot_n} u_k u \partial_t \varphi + \fint_{\cudot_n}  \varphi u \partial_t u_k = \fint_{\cudot_n} u_k u \partial_t \varphi + \fint_{\cudot_n} \varphi u_k \partial_t u_k \\
& = \fint_{\cudot_n} u_k u \partial_t \varphi + \frac{1}{2}\fint_{\cudot_n} \varphi \partial_t (u_k^2)  = \fint_{\cudot_n} u_k u \partial_t \varphi - \frac{1}{2}\fint_{\cudot_n} u_k^2 \partial_t \varphi\,.
\end{align*}
Since~$\partial_t \varphi \in L^\infty(\cudot_n)$ and~$u_k \to u$ in~$L^2(\cudot_n)$ we can send~$k\to\infty$ and replace~$u_k$ with~$u$ in the last expression.
\end{proof}

\smallskip

All of the functional inequalities and definitions in this section can be transformed to the adapted cubes defined in Section~\ref{ss.subadditivity} by applying the transformation~\ref{e.transformers}. We make all of the analogous definitions with the natural substitutions~$\cudot_n \to \cusdot_n$ and~$\mathcal{Z}_n \to \mathbb{L}_n$. For example, the coarse-grained Poincar\'e inequality in adapted cubes states that for every~$n\in\mathbb{Z}, u\in\A(\cusdot_{n})$ and~$s\in [0,1]$
\begin{align}
\label{e.adapted.bound}
\left\{
\begin{aligned}
& \| \q_0 \nabla u\|_{\underline{\hat{B}}^{-s}_{2,1}(\cusdot_n)} \leq C(d)3^{sn} \lambda_{s,1}^{-\nicefrac12}(\cusdot_{n}) \lambda_{\m_0}^{-\nicefrac12} \norm{\s^{\nicefrac12}\nabla u}_{\L^2(\cusdot_{n})}\,, \\
& \|\q_0^{-1} \a\nabla u\|_{\underline{\hat{B}}^{-s}_{2,1}(\cusdot_n)} \leq C(d)3^{sn} \Lambda_{s,1}^{\nicefrac12}(\cusdot_{n}) \lambda_{\m_0}^{-\nicefrac12} \norm{\s^{\nicefrac12}\nabla u}_{\L^2(\cusdot_{n})}\,,
\end{aligned}\right.
\end{align}
with
\begin{equation}
\label{e.adapted.Lambda}
\Lambda_{s,q}(\cusdot_n) \coloneqq \biggl(\sum_{k=-\infty}^{n} 3^{sq(k-n)} \max_{z\in\mathbb{L}_{k} \cap \cusdot_n} |\m_0^{-\nicefrac12}\b(z+\cusdot_k)\m_0^{-\nicefrac12}|^{\nicefrac{q}{2}} \biggr)^{\nicefrac{1}{q}}\,,
\end{equation}
and
\begin{equation}
\label{e.adapted.lambda}
\lambda_{s,q}(\cusdot_n) \coloneqq \biggl(\sum_{k=-\infty}^{n} 3^{sq(k-n)} \max_{z\in\mathbb{L}_{k} \cap \cusdot_n} |\m_0^{\nicefrac12}\s_*^{-1}(z+\cusdot_k)\m_0^{\nicefrac12}|^{\nicefrac{q}{2}} \biggr)^{-\nicefrac{1}{q}}\,.
\end{equation}

We have defined the coarse-grained ellipticity in the adapted cubes such that they are dimensionless constants. Finally, we note that the key lemma~\cite[Lemma 2.16]{AK.HC}, estimating the gradient and fluxes of solutions in negative regularity norms, holds with the obvious modifications, because the properties of the coarse-grained matrices established in Section~\ref{ss.bfA.def} are exactly analogous to those in the elliptic case.

\section{Renormalization in high contrast}
\label{s.hc}

\subsection{Renormalization strategy}

In Section~\ref{s.coarse.graining} we saw that the parabolic coarse-grained diffusion matrices can be defined in exact analogy to the elliptic coarse-grained matrices, and that the relevant coarse-graining inequalities, adapted geometry, and parabolic function spaces can be developed along the same lines. The consequence of this is that by inserting our parabolic machinery into the proof of high contrast elliptic homogenization we obtain a proof of high contrast parabolic homogenization. To be precise, in this section we estimate the length scale at which a space-time coefficient field~$\a(\cdot,\cdot)$, with possibly large ellipticity ratio~$\Theta$, has homogenized to a low contrast problem.

Recall that the parameters~$\Theta$,~$\lambda_0$ and~$\Lambda_0$ are given by the ellipticity assumption~\ref{a.ellipticity.dagger}, and satisfy~$1 \leq \Theta \leq \Lambda_0/\lambda_0$. At each scale~$3^m$ we have an analogous ellipticity, defined in~\eqref{e.Theta.n.def} by
\begin{equation*}
\Theta_m : = \min_{\h_0\in\mathbb{R}^{d\times d}_{\mathrm{skew}}} |(\shom_*^{-\nicefrac12}\b_{\h_0}\shom_*^{-\nicefrac12})(\cudot_m)|\,,
\end{equation*}
which is monotone decreasing in~$m$ and satisfies the crude bound
\begin{equation*}
\Theta_m \leq (1+3^{3-m}K_{\Psi_{\S}}^2)^2\Theta
\end{equation*}
by~\eqref{e.bfAhom.by.E0}. Our main theorem is a bound on~$\Theta_m$, which depends on the quantity
\begin{equation}
\Pi_{\mathrm{par}} \coloneqq \max\left\{\frac{\Lambda_0}{\lambda_0},\lambda_0^{-1},\lambda_0\right\}\,.
\end{equation}

\begin{theorem}
\label{t.highcontrast}
There exists a constant~$C(d)<\infty$ such that if~$\alpha  = (\mathrm{min}\{\nu,1\}-\gamma)(1-\beta), \sigma \in (0,\frac{1}{2}\Theta]$ and~$m\in \N$ satisfy
\begin{equation}
\label{e.highcontrast.scales}
m \geq \frac{C}{\alpha \sigma^2}\biggl( \log(K_{\Psi}\Pi_{\mathrm{par}}) + \frac{1}{\alpha}\log\bigl( \frac{K_{\Psi_{\S}}\Pi_{\mathrm{par}}}{\alpha \sigma}\bigr) \biggr)\log(1+\Theta)\,,
\end{equation}
then the renormalized ellipticity ratio satisfies
\begin{equation}
\label{e.make.Theta.small}
\Theta_m - 1 \leq \sigma\,.
\end{equation}
\end{theorem}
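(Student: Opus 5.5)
The proof will follow the high-contrast renormalization argument of~\cite[Section 3]{AK.HC}, replacing each elliptic ingredient with its parabolic counterpart from Section~\ref{s.coarse.graining}. The core is a one-step contraction: there are constants $C(d)$ and a scale separation $h$ such that, under~\ref{a.stationarity},~\ref{a.ellipticity.dagger} and~\ref{a.CFS} with ellipticity ratio $\Theta$, if $\Theta_m - 1 > \sigma$ then
\[
\log \Theta_{m+h} \leq \Bigl(1 - c\,\sigma^2/\log(1+\Theta)\Bigr) \log \Theta_m ,
\]
or at least $\Theta_{m+h}$ drops by a definite factor. The scale gap $h$ will be of order
\[
\alpha^{-1}\log\bigl(K_\Psi\Pi_{\mathrm{par}}\bigr) + \alpha^{-2}\log\bigl(K_{\Psi_\S}\Pi_{\mathrm{par}}/(\alpha\sigma)\bigr).
\]
Iterating $O(\sigma^{-2}\log(1+\Theta))$ times, using monotonicity of $n\mapsto\Theta_n$, then gives~\eqref{e.highcontrast.scales}. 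Between steps, Proposition~\ref{p.renormalization.P} renormalizes the assumptions so that $\bfE$ is replaced by $(1+\delta)\bfAhom(\cudot_{n_0-l_0})$. The remarks after~\eqref{e.new.K.Psi.S} show that this changes $\Pi$ by a bounded factor and $K_{\Psi_{\S}}$ to $\max\{K_{\Psi_\S},K_\Psi^{\lceil 1/\mu\rceil}\}$. This accounts for the $\alpha^{-1}$ inside the logarithm.

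For the one-step estimate, I first centre by subtracting the minimizing skew matrix $\h_0$ in~\eqref{e.Theta.def}, using~\eqref{e.Ak.formula} and~\eqref{e.b.centered.def}. I then pass to the adapted cylinders $\cusdot_n$ of Section~\ref{ss.subadditivity}, where $\m_0$ is the geometric mean of $\b_0$ and $\s_{*,0}$. After the change of variables of Lemma~\ref{l.change.variables}, the two extreme eigenvalues become $\Theta^{\pm 1/2}$ and $\lambda_{\m_0}$ enters only through the time rescaling. The key quantity is the subadditive defect $\bfAhom(\cusdot_n)-\bfAhom(\cusdot_{n+h})$. Summing over scales, monotonicity and the bounds~\eqref{e.bfAhom.by.E0} give a pigeonhole: among $\sim h\,\log(1+\Theta)/\sigma^{2}$ consecutive scales there is one where this defect is smaller than $\sigma^2$ times $\bfAhom$.

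At such a scale, the second variation~\eqref{e.quadresp} together with concentration (Lemma~\ref{l.CFS.adapted}) shows that the maximizers $v(\cdot,\cusdot_{n+h},p,q)$ are close in energy to the local maximizers on the subcubes $z+\cusdot_n$. The coarse-grained Poincar\'e inequality~\eqref{e.adapted.bound} and Lemma~\ref{l.Besov.regularity.solns}, with the ellipticity bounds of Lemma~\ref{l.adapted.cylinders}, then show that $v$ is close in $L^2$ to an affine function in $x$ plus a function of $t$. Comparing averages of gradients and fluxes via~\eqref{e.all.averages} forces $\bfAhom(\cusdot_{n+h})$ to be close to $\bfAhom_*(\cusdot_{n+h})$. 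This is a quantitative reduction of $\Theta$, obtained exactly as in the elliptic ``flatness implies contraction'' step.

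The main obstacle is this last step: turning a small subadditive defect into a \emph{multiplicative} decrease of $\Theta$ with constants depending only polynomially on $\Pi_{\mathrm{par}}$. In parabolic geometry the time direction carries a factor $\lambda_{\m_0}^{\pm1}$, visible in Lemmas~\ref{l.adapted.cylinders}--\ref{l.adapted.means}. These factors must be absorbed into $h$ logarithmically, which is why $\Pi_{\mathrm{par}}$ includes $\lambda_0^{\pm1}$. I would handle this by rescaling time with $\lambda_r$ at each step, so that the adapted cylinders are genuinely parabolic. I would also check that the boundary-layer errors $3^{-(1-\gamma)h}\max\{\Pi_{\m_0}^{\gamma/2},\lambda_{\m_0}^{-\gamma/2}\}$ are at most $\sigma^2$ once $h\gtrsim\alpha^{-1}\log(\Pi_{\mathrm{par}}/\sigma)$. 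Lemma~\ref{l.bfE.bounds} is used throughout to convert closeness of $\s$ and $\s_*$ into closeness of the full $\bfA$ and $\bfA_*$.
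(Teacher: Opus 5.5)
\textbf{There is a genuine gap, and it is in the iteration, which is the real content of this theorem.}

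Your ``core one-step contraction'' says that whenever $\Theta_m-1>\sigma$, moving up $h$ scales gives $\log\Theta_{m+h}\le(1-c\sigma^2/\log(1+\Theta))\log\Theta_m$, or a definite drop of $\Theta$. The flatness argument you sketch does not deliver this, and you give no other reason for it.

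\begin{itemize}
\item The div-curl step only applies on a window of scales where $n\mapsto\bfAhom(\cudot_n)$ is nearly constant. On a window where it is not, that argument says nothing about $\Theta$, since $\bfAhom$ can decrease in directions that do not realize the maximum defining $\Theta_m$.
\item Even on a flat window the output is only relative. With flatness $\delta\sigma^2$, the error in Lemma~\ref{l.Jtilde.energy.bound} is $C\delta^{1/2}\sigma\Theta^{1/2}\Theta_m^{1/2}$. This yields $\Theta_m-1\le\sigma\Theta$, where $\Theta$ is the contrast of the current $\bfE$ after renormalizing by Proposition~\ref{p.renormalization.P}. It does not yield $\Theta_m-1\le\sigma$.
\item Suppose you find each successive flat window by rerunning your pigeonhole with the budget $\log(1+\Theta_{\mathrm{current}})$ supplied by \eqref{e.bfAhom.by.E0}. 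Then the total number of scales picks up an extra factor of order $\log(1+\Theta)/|\log\sigma|$ compared with \eqref{e.highcontrast.scales}.
\end{itemize}

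\textbf{The missing idea} is the dichotomy of Lemma~\ref{l.pigeon} and Proposition~\ref{p.renormalize}, combined with a monotone quantity whose budget is spent only once. Consider each block of $N\simeq l\,|\log\sigma|/(\delta\sigma^2)$ scales. Either some window $[m-l,m]$ is flat, or none is.
\begin{itemize}
\item If a window is flat, renormalizing and applying Proposition~\ref{p.renormalize.reduce} gives $\Theta_m-1\le\sigma\Theta_0$. For $\sigma\le\frac12$ this implies $\Theta_m-(1+2\sigma)\le\sigma(\Theta_0-(1+2\sigma))$.
\item If no window is flat, then $(\det\bfAhom(\cudot_m))^{1/d}$ drops by the factor $\sigma$.
\end{itemize}
Both $m\mapsto\Theta_m$ and $m\mapsto\det\bfAhom(\cudot_m)$ are nonincreasing. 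Hence the product $(\det\bfAhom(\cudot_m))^{1/d}(\Theta_m-(1+2\sigma))$ contracts by $\sigma$ over every block. Moreover $\det\bfAhom(\cudot_m)=\det\shom(\cudot_m)/\det\shom_*(\cudot_m)\ge1$, while $(\det\bfAhom(\cudot_0))^{1/d}\le\Theta_0$. So only $O(\log(1+\Theta)/|\log\sigma|)$ blocks are needed.

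This is what produces the single factor $\sigma^{-2}\log(1+\Theta)$ in \eqref{e.highcontrast.scales}. Your count of $O(\sigma^{-2}\log(1+\Theta))$ steps of length $h$ is compatible with it only through this two-budget accounting, not through a per-step contraction of $\log\Theta$.

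\textbf{A smaller point about the flat-window step.} Use the parabolic multiscale Poincar\'e inequality (Lemma~\ref{l.multiscale.poincare}) to control $v-\ell_{P'}$ itself in $L^2$. Approximating $v$ only by an affine function ``plus a function of $t$'' would not control the term $\tfrac14\fint(v-\ell_{P'})^2\partial_t\varphi$. That term arises from the time integration by parts in Lemma~\ref{l.testing.with.u}.
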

If we ignore all parameters in the above theorem except for ellipticity, taking, for example,~$\sigma = \nicefrac{1}{100}$, then the theorem states that
\begin{equation*}
m\geq C\log^2(1+\Pi_{\mathrm{par}}) \implies \Theta_m - 1 \leq \nicefrac{1}{100}\,.
\end{equation*}
In other words, by length scale~$\exp(C\log^2(1+\Pipar) )$ the problem has homogenized to a low contrast problem.

The proof of Theorem~\ref{t.highcontrast} is an iteration procedure. The main step is finding a length scale such that zooming out to that scale reduces the ellipticity by a constant factor.

\begin{proposition}
\label{p.renormalize}
There exists a constant~$C(d)<\infty$ such that if $\alpha = (\mathrm{min}\{\nu,1\}-\gamma)(1-\beta), \sigma \in (0,\nicefrac12]$ and~$m\in \N$ satisfy
\begin{equation}
\label{e.m.is.verylarge}
m \geq \frac{C}{\alpha \sigma^2}\biggl( \log(K_{\Psi}\Pi_{\mathrm{par}}) + \frac{1}{\alpha}\log\bigl( \frac{K_{\Psi_{\S}}\Pi_{\mathrm{par}}}{\alpha \sigma}\bigr) \biggr)\,,
\end{equation}
then we have either
\begin{equation}
\label{e.improve}
\Theta_m - 1 \leq \sigma \Theta_0 \quad \mbox{ or } \quad \bigl(\det\bfAhom(\cudot_{m})\bigr)^{\frac{1}{d}} \leq \sigma \det \bigl( \bfAhom(\cudot_{0}) \bigr)^{\frac{1}{d}}\,.
\end{equation}
\end{proposition}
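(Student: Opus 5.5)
The plan is to follow the elliptic proof of the corresponding renormalization step in~\cite[Section 3]{AK.HC}, inserting the parabolic coarse-graining tools of Section~\ref{s.coarse.graining}. The argument is a dichotomy driven by the monotone, bounded quantity $n\mapsto \log\det\bfAhom(\cudot_n)$.

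\textbf{Step 1 (pigeonhole in scales).} By subadditivity~\eqref{e.subadditivity}, $\bfAhom(\cudot_{n+1})\le\bfAhom(\cudot_n)$. Hence $n\mapsto\log\det\bfAhom(\cudot_n)$ is nonincreasing. Assume the second alternative of~\eqref{e.improve} fails, so that $\log\det\bfAhom(\cudot_0)-\log\det\bfAhom(\cudot_m)\le d\log\sigma^{-1}$. Split $[0,m]$ into roughly $m/h$ blocks of length $h$. Some block $[n,n+h]$ then has a determinant drop of at most $C d h\log(\sigma^{-1})/m$. Using $\bfAhom(\cudot_{n+h})\le\bfAhom(\cudot_n)$ and $\log\det$, this forces
\begin{equation*}
\bigl|\bfAhom(\cudot_{n+h})^{-\nicefrac12}\bfAhom(\cudot_n)\bfAhom(\cudot_{n+h})^{-\nicefrac12}-\Itwod\bigr|\le \eta\,,
\end{equation*}
with $\eta$ small once $m$ is large as in~\eqref{e.m.is.verylarge}. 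Choose $h\simeq \alpha^{-1}\log(K_{\Psi_\S}K_\Psi\Pipar/(\alpha\sigma))$, which accounts for the $\sigma^{-2}$ and $\alpha^{-1}$ factors.

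\textbf{Step 2 (small additivity defect implies small $\Theta$).} On this block I renormalize with Proposition~\ref{p.renormalization.P}, taking $\mathbf E_0 \approx \bfAhom(\cudot_n)$, centered by $\h_0$ so that the adapted geometry of Section~\ref{ss.subadditivity} applies. I then show that the maximizers $v(\cdot,\cdot,\cudot_{n+h},P)$ of~\eqref{e.bfJ.var} have gradient and flux close to affine on subcubes. By the second variation~\eqref{e.quadresp}, the expected energy gap between $\cudot_{n+h}$ and its subcubes $z+\cudot_{n}$ is bounded by $\eta$ times $P\cdot\bfAhom P$. Combining this with concentration (Lemma~\ref{l.CFS.adapted}, giving $3^{-\alpha h}$ fluctuations) and the coarse-grained Poincar\'e inequality in adapted cubes, Lemma~\ref{l.Besov.regularity.solns} with~\eqref{e.adapted.bound}, I deduce that the spatial averages of $X(v,v^*)$ over subcubes are nearly constant. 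The formulas~\eqref{e.all.averages} and~\eqref{e.energymaps.double} then give $\bfAhom(\cudot_{n+h})\le(1+C\eta+C3^{-\alpha h}\Pipar^{C})\bfAhom_*(\cudot_{n+h})$. Lemma~\ref{l.bfE.bounds} converts this to $\Theta_{n+h}-1\le\sigma\Theta_0$, and monotonicity of $\Theta_n$ gives $\Theta_m\le\Theta_{n+h}$.

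\textbf{Main obstacle.} The key is the quantitative conversion in Step 2: a small additivity defect must yield a small ratio between $\bfAhom$ and $\bfAhom_*$, with only polynomial dependence on $\Pipar$. In the parabolic case the adapted cylinders must respect the time scaling $\lambda_{\m_0}$. This is why the losses $\max\{\Pi_{\m_0},\lambda_{\m_0}^{-1}\}$ from Lemmas~\ref{l.adapted.cylinders}--\ref{l.adapted.means} must be absorbed by the choice of $h$, using $\Pipar\ge\max\{\lambda_0,\lambda_0^{-1}\}$. My first attempt would be to use the multiscale Poincar\'e inequality (Lemma~\ref{l.multiscale.poincare}) with $\g=\a\nabla v$ to control $v$ minus an affine function in $L^2$. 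I would then test the equation against $v$ via Lemma~\ref{l.testing.with.u} to close a Caccioppoli-type estimate, exactly as in~\cite{AK.HC}.
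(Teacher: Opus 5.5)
Your overall architecture matches the paper's: pigeonholing on the monotone quantity $\log\det\bfAhom(\cudot_n)$, renormalizing with Proposition~\ref{p.renormalization.P}, then a one-step reduction on the stable block (Lemma~\ref{l.pigeon} plus Proposition~\ref{p.renormalize.reduce}). Step~1 is fine. The gap is in Step~2, where you assert the contrast-free bound $\bfAhom(\cudot_{n+h})\le(1+C\eta+C3^{-\alpha h}\Pipar^{C})\bfAhom_*(\cudot_{n+h})$.

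The tools you cite cannot produce an inequality in this direction. \eqref{e.energymaps.double} is a \emph{lower} bound for the energy in terms of the averages of $X(v,v^*)$. Lemma~\ref{l.bfE.bounds} goes from $\tilde\Theta$ to the double-variable ratio, not conversely, though that converse is elementary. What you need is an \emph{upper} bound: the energy of the maximizer is close to the product of its mean gradient and mean flux. That is Lemma~\ref{l.Jtilde.energy.bound}, fed into Lemma~\ref{l.Jcontrols.Theta}.

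The computation you allude to, testing with $\varphi(v_n-\ell_{P'})$ via Lemma~\ref{l.testing.with.u}, is the right one. But it is a div-curl estimate, not a Caccioppoli bound. Its output is the product of the negative Besov norms of $\m_0^{\nicefrac12}(\nabla v_n-P')$ and $\m_0^{-\nicefrac12}(\a\nabla v_n-Q')$. These carry the coarse-grained ellipticity relative to $\mathbf{M}_0$ and the energy of $v_n$, both controlled only by the contrast. What one actually obtains is
\[
\Theta_m-1\le C\delta^{\nicefrac12}\sigma\,\Theta^{\nicefrac12}\Theta_m^{\nicefrac12}\,,
\]
where $\delta\sigma^2$ is the pigeonhole precision. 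So the error is a square root of the additivity defect times the contrast, not $C\eta$. The conclusion $\Theta_m-1\le\sigma\Theta$ then comes from absorption, $\Theta_m-1\le\frac{\sigma}{4}\Theta+\frac{\sigma}{4}\Theta_m$. This is why the proposition is relative ($\sigma\Theta_0$) rather than absolute.

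This matters for the scale count, not just bookkeeping. The factor $\sigma^{-2}$ in \eqref{e.m.is.verylarge} is exactly this square-root loss, via precision $\delta\sigma^2$ in Lemma~\ref{l.pigeon}; your choice of $h$ does not account for it. Suppose the main error carried a prefactor $\Pipar^{C}$, as naive constant tracking gives. Then you would need pigeonhole precision of order $\sigma^2\Pipar^{-C}$, hence a number of scales polynomial in $\Pipar$, contradicting \eqref{e.m.is.verylarge}.

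To make the prefactor of $\Theta^{\nicefrac12}\Theta_m^{\nicefrac12}$ depend only on $d$, the paper uses normalizations your sketch omits:
\begin{itemize}
\item the centering $\khom_m=\khom_m^t$;
\item adapted cylinders built from the geometric mean $\m_0=\b_0\#\s_{*,0}$, controlled by Lemma~\ref{l.centering.ok};
\item test vectors $p'=\thom_m^{-\nicefrac12}e'$ and $q'=\thom_m^{\nicefrac12}e'$, with $\thom=\bhom_{\hhom}\#\shom_*$.
\end{itemize}
Powers of $\Pipar$ may appear only in the boundary-layer and fluctuation terms. Those decay exponentially in the mesoscale separation $l$ and are absorbed by the choice of $l$ in \eqref{e.sizeof.l}.
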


The proof of Proposition~\ref{p.renormalize}, and consequently Theorem~\ref{t.highcontrast}, relies on first finding a range of scales over which the coarse-grained matrices do not change much, and then showing that on these scales the problem must already have homogenized to a desired degree. We state this in the following lemma and proposition.

\begin{lemma}[Pigeonhole lemma]
\label{l.pigeon}
For every~$\delta_1,\sigma \in (0,\nicefrac12]$ and~$l,N\in\N$ satisfying,
\begin{equation}
\label{e.pigeonhole.scale}
N \geq \bigg\lceil \frac{2|\log \sigma|}{\delta_1}\bigg\rceil l\,,
\end{equation}
for every~$m_1 \in\N$ either
\begin{itemize}
\item[•]
$\bfAhom(\cudot_{m-l}) \leq (1+\delta_1) \bfAhom(\cudot_{m}) \quad \mbox{ for some } m\in [m_1 + l, m_1 + N]$
\end{itemize}
or
\begin{itemize}
\item[•]
$\bigl( \det\bfAhom(\cudot_{m_1 + N})\bigr)^{\frac{1}{d}} \leq \sigma \bigl(\det \bfAhom(\cudot_{m_1}) \bigr)^{\frac{1}{d}}$.
\end{itemize}
\end{lemma}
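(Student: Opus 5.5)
The plan is to argue by contraposition. Assume the first alternative fails for every $m\in[m_1+l,m_1+N]$, and show that $\log\det\bfAhom$ must then decrease by a fixed amount over each block of $l$ scales. Summing these decrements over $[m_1,m_1+N]$ should produce the second alternative. Only two facts are needed: monotonicity of the means, and a determinant inequality for ordered matrices.

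\emph{Step 1 (monotonicity).} By the subadditivity~\eqref{e.subadditivity} applied to the partition of $\cudot_m$ into the $3^{d+2}$ translates $z+\cudot_{m-1}$, $z\in\mathcal{Z}_{m-1}\cap\cudot_m$, followed by expectation and stationarity~\ref{a.stationarity}, we get $\bfAhom(\cudot_m)\le\bfAhom(\cudot_{m-1})$. Iterating gives $\bfAhom(\cudot_m)\le\bfAhom(\cudot_{m-l})$ for every $m$.

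\emph{Step 2 (one-step determinant drop).} Fix $m$ and set $\mathbf{D}:=\bfAhom(\cudot_m)^{-\nicefrac12}\bfAhom(\cudot_{m-l})\bfAhom(\cudot_m)^{-\nicefrac12}$. By Step~1, $\mathbf{D}\ge\Itwod$, so every eigenvalue of $\mathbf{D}$ is at least $1$. If the first alternative fails at $m$, then $\bfAhom(\cudot_{m-l})\not\le(1+\delta_1)\bfAhom(\cudot_m)$, so the largest eigenvalue of $\mathbf{D}$ exceeds $1+\delta_1$. Hence
\begin{equation*}
\log\det\bfAhom(\cudot_{m-l})-\log\det\bfAhom(\cudot_m)=\log\det\mathbf{D}>\log(1+\delta_1)\ge\tfrac{\delta_1}{2}
\qquad\mbox{for } \delta_1\le\tfrac12 .
\end{equation*}

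\emph{Step 3 (telescoping).} Put $K:=\lceil 2|\log\sigma|/\delta_1\rceil$ and apply Step~2 at $m=m_1+jl$ for $j=1,\dots,K$. These scales lie in $[m_1+l,m_1+N]$ because $Kl\le N$ by~\eqref{e.pigeonhole.scale}. Summing the telescoping differences and using Step~1 once more to pass from $m_1+Kl$ to $m_1+N$, we obtain
\begin{equation*}
\log\det\bfAhom(\cudot_{m_1+N})\le\log\det\bfAhom(\cudot_{m_1})-\tfrac{K\delta_1}{2}\le\log\det\bfAhom(\cudot_{m_1})-|\log\sigma| .
\end{equation*}
Dividing by $d$ gives the second alternative only with the factor $\sigma^{\nicefrac1d}$, not the stated factor $\sigma$.

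\emph{Main obstacle.} Recovering the full factor $\sigma$ under the stated hypothesis on $N$ requires that $\log\det$ drop by $d\,\delta_1/2$ per block rather than $\delta_1/2$. Step~2 alone does not give this, since a single eigenvalue above $1+\delta_1$ can coexist with all the other eigenvalues equal to $1$. I would first try to improve Step~2 using the block structure~\eqref{e.bfAhom.mat}. After conjugating by $\mathbf{G}_{\khom(\cudot_m)}$ (which changes neither eigenvalues nor determinants, by~\eqref{e.eigen.unchanged}), the determinant factors as $\det\bfAhom=\det\shom/\det\shom_*$. The hope is to combine this with the monotonicity~\eqref{e.monotone.s} of $\shom$ and $\shom_*$ separately, and thereby spread the gain over all $d$ directions. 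If that fails, the fallback is to sum over every $m\in[m_1+l,m_1+N]$ rather than only the multiples of $l$; but a residue-class telescoping yields the same exponent, so this alone does not close the gap. Unless one of these improvements works, the honest conclusion of the plan is the statement with $\sigma$ replaced by $\sigma^{\nicefrac1d}$ in the second alternative. I flag this $d$-dependence as the step I expect to fail.
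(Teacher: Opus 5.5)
Your Steps~1--3 are correct, and they are the intended argument. The paper gives no separate proof and defers to \cite[Lemma 3.4]{AK.HC}, which is this same telescoping of $\log\det\bfAhom$ along the scales $m_1+jl$, using only $\bfAhom(\cudot_m)\le\bfAhom(\cudot_{m-l})$. Under~\eqref{e.pigeonhole.scale} this proves $\det\bfAhom(\cudot_{m_1+N})\le\sigma\det\bfAhom(\cudot_{m_1})$, and the obstacle you flag is genuine.

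Do not pursue the block-structure improvement. It cannot work, because the whole decrease can sit in a single eigendirection while every structural property stays intact: monotonicity, $\shom_*\le\shom$, the block form~\eqref{e.bfAhom.mat}, and the separate monotonicity~\eqref{e.monotone.s}. For example, take
\begin{itemize}
\item[] $\khom(\cudot_k)=0$, $\shom_*(\cudot_k)=\Id$, and $\shom(\cudot_k)=\mathrm{diag}(a_k,1,\dots,1)$;
\item[] $a_{k-1}=\rho\,a_k$ with $\rho^l=(1+\delta_1)(1+\eta)$ for a small $\eta>0$;
\item[] $a_{m_1}$ so large that $a_k\ge 1$ on the whole range.
\end{itemize}
The first alternative fails at every $m$, since $a_{m-l}>(1+\delta_1)a_m$. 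But for $N=Kl$ with $K=\lceil 2|\log\sigma|/\delta_1\rceil$ we have
\[
\biggl(\frac{\det\bfAhom(\cudot_{m_1+N})}{\det\bfAhom(\cudot_{m_1})}\biggr)^{\frac1d}=\bigl((1+\delta_1)(1+\eta)\bigr)^{-\frac{K}{d}}\,.
\]
As $\eta\downarrow 0$ this tends to $(1+\delta_1)^{-K/d}$, which is strictly larger than $\sigma$ whenever $d\ge3$. Indeed,
\[
K\log(1+\delta_1)<2|\log\sigma|+\delta_1\le d|\log\sigma|
\]
for all $\delta_1,\sigma\in(0,\nicefrac12]$. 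So the printed second alternative does not follow from the properties the argument uses. This matches your residue-class observation that summing over all $m$ gains nothing.

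The repair is a factor $d$ in the scale condition: $N\ge\lceil 2d|\log\sigma|/\delta_1\rceil\,l$. Equivalently, drop the $d$-th roots in the second alternative, which is exactly what you proved, and apply that version with $\sigma^d$ in place of $\sigma$. With $K:=\lceil 2d|\log\sigma|/\delta_1\rceil$, your Step~3 gives
\[
\log\det\bfAhom(\cudot_{m_1+N})\le\log\det\bfAhom(\cudot_{m_1})-d|\log\sigma|\,,
\]
which is precisely the stated second alternative. This only multiplies $N$ by $d$, and that factor is absorbed into $C(d)$ in~\eqref{e.m.is.verylarge}. So Proposition~\ref{p.renormalize} and Theorem~\ref{t.highcontrast} are unaffected.
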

\begin{proof}
Exactly as in~\cite[Lemma 3.4]{AK.HC}.
\end{proof}

\begin{proposition}
\label{p.renormalize.reduce}
There exists a constant~$\delta_0(d)>0$ such that if~$\delta,\sigma \in (0,\nicefrac12]$,~$l,m\in \N$,~$m\geq 100l$,
\begin{equation}
\label{e.fix.scale.separation}
\max\biggl\{ \frac{C(d)K_{\Psi_{\S}}^{4d+14}\Pi_{\mathrm{par}}}{(1-\gamma)^2}3^{-(1-\gamma)l}, C(d)K_{\Psi_{\S}}^{18} \Pi_{\mathrm{par}}^{d+2}3^{-\frac{1}{2}(\nu-\gamma)l}, \frac{C(d)K_{\Psi_{\S}}^9\Pi_{\mathrm{par}}}{1-\gamma}3^{-\frac{1}{4}(1-\gamma)(1-\beta)l}\biggr\} \leq \delta \sigma^2\,,
\end{equation}
the matrix~$\bfE$ in~\ref{a.ellipticity.dagger} satisfies
\begin{equation*}
\bfAhom(\cudot_0) \leq \bfE \quad \mbox{and} \quad |\bfE^{\nicefrac12}\bfAhom^{-1}(\cudot_m)\bfE^{\nicefrac12} - \Itwod | \leq \delta \sigma^2\,,
\end{equation*}
and~$\delta \leq \delta_0$, then
\begin{equation*}
\Theta_m - 1 \leq \sigma \Theta\,.
\end{equation*}
\end{proposition}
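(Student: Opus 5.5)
This is the parabolic analogue of the key step in the high-contrast argument of~\cite{AK.HC} (their Proposition 3.3), and I will follow that scheme. The plan is to show that when the means $\bfAhom(\cudot_k)$ barely change between scales $0$ and $m$, the field already homogenizes to a low-contrast problem by scale $m$. The proof is by contradiction. If $\Theta_m - 1 > \sigma\Theta$, then at scale $m$ there is a direction $e$ in which the upper and lower coarse-grained matrices are separated by a factor comparable to $\sigma\Theta$. We then build, from the maximizers $v(\cdot,\cdot,\cudot_m,p,q)$ and $v^*$ in~\eqref{e.bfJ.var}, a competitor that lowers $\bfJ$. This contradicts the assumption that $\bfAhom^{-1}(\cudot_m)$ is within $\delta\sigma^2$ of $\bfE^{-1}$.

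\textbf{Step 1: centring and geometry.} First subtract the optimal skew matrix $\h_0$, using~\eqref{e.Ak.formula} and the invariance~\eqref{e.eigen.unchanged}. Then define $\m_0$, $\q_0$, $\lambda_r$ and the adapted cylinders $\cusdot_n$ of Section~\ref{ss.subadditivity}, so that in these coordinates $\bfE$ is comparable to the block diagonal matrix $\mathbf{M}_0$ up to factors of $\Theta^{1/2}$.

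\textbf{Step 2: all coarse-grained matrices are close to $\bfE$.} Using Lemmas~\ref{l.adapted.cylinders}, \ref{l.CFS.adapted} and~\ref{l.adapted.means}, with scale separation $l$ chosen via~\eqref{e.fix.scale.separation}, we show that for all $j\in[l,m-l]$ the quantity $\avsum_{z\in\mathbb{L}_j\cap\cusdot_m}\bfA(z+\cusdot_j)$ is within $C\delta\sigma^2$ of $\bfE$ in expectation. This uses $\bfAhom(\cudot_0)\le\bfE$, the closeness of $\bfAhom(\cudot_m)$ to $\bfE$, monotonicity, and subadditivity. The three terms in the max in~\eqref{e.fix.scale.separation} correspond exactly to the boundary-layer error, the CFS fluctuation error, and the error from comparing means in adapted versus standard cubes.

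\textbf{Step 3: the variational argument.} Take the double-variable maximizer on $\cusdot_m$ with $P$ chosen along the bad direction. By the second variation~\eqref{e.quadresp} and Step 2, its gradient and flux averages on mesoscopic subcylinders $z+\cusdot_j$ deviate from the affine profile by at most $C(\delta\sigma^2)^{1/2}$ in energy. The coarse-grained Poincar\'e inequality (Lemma~\ref{l.Besov.regularity.solns}, transported to adapted cylinders as in~\eqref{e.adapted.bound}), combined with Lemma~\ref{l.mathringB.bounds}, then shows that $v$ is $L^2$-close to an affine function in space. The parabolic multiscale Poincar\'e inequality controls the time oscillation through the flux. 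With Caccioppoli-type control from Lemma~\ref{l.testing.with.u}, this yields the estimate
\[
\bigl|(\nabla v)_{\cusdot_m}\cdot(\a\nabla v)_{\cusdot_m}-(\nabla v\cdot\a\nabla v)_{\cusdot_m}\bigr|\le C\delta^{1/2}\sigma\,\Theta\,\|\s^{1/2}\nabla v\|^2 .
\]
Feeding this into the formula~\eqref{e.all.averages} gives $p\cdot\s(\cusdot_m)p \le (1+C\delta^{1/2}\sigma)\,p\cdot\s_*(\cusdot_m)p$. With Lemma~\ref{l.bfE.bounds}, this converts to $\Theta_m-1\le C\delta^{1/2}\sigma\Theta$, and choosing $\delta_0$ small gives the claim.

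The main obstacle is Step 3. The hard part is controlling the time derivative in the ``average of product equals product of averages'' estimate. In the parabolic setting the cross term $\fint u\,\partial_t\varphi$ from~\eqref{e.integrate.by.parts} must be bounded using the $B^{s}_{2,\infty}$ regularity in time from the multiscale Poincar\'e inequality. This must be done while keeping the dependence on $\lambda_{\m_0}$ only polynomial in $\Pi_{\mathrm{par}}$; that dependence is what forces the factors $\Pi_{\mathrm{par}}$ and $\Pi_{\mathrm{par}}^{d+2}$ in~\eqref{e.fix.scale.separation}. My first attempt would be to test the equation with $v\varphi$, where $\varphi$ is a space-time cutoff on $\cusdot_{m-1}$, and to estimate the boundary-layer contribution by Lemma~\ref{l.adapted.cylinders}.
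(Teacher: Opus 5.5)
Your outline has the right local ingredients: a cutoff, integration by parts via Lemma~\ref{l.testing.with.u}, multiscale Poincar\'e for the $\partial_t\varphi$ term, and negative-norm bounds from the additivity defect. But both steps that connect this div--curl estimate to the statement fail as written.

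\textbf{First gap: the reduction to $\Theta_m$.} Your Step~3 gives a pathwise comparison $p\cdot\s(\cusdot_m)p\le(1+C\delta^{1/2}\sigma)\,p\cdot\s_*(\cusdot_m)p$ and then appeals to Lemma~\ref{l.bfE.bounds}. However, $\Theta_m$ is an annealed quantity, built from $\mathbb{E}[\b(\cudot_m)]$, $\mathbb{E}[\s_*^{-1}(\cudot_m)]$ and $\mathbb{E}[\s_*^{-1}\k(\cudot_m)]$, so it also registers the fluctuations of the coarse-grained matrices. If $\s=\s_*$ held in every realization but $\s_*(\cudot_m)$ or $\k(\cudot_m)$ were random, you would still have $\Theta_m>1$. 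Lemma~\ref{l.bfE.bounds} is purely algebraic. Applied realization by realization it gives at best $\bfA\le(1+C\epsilon)\bfA_*$, hence $\bfAhom(\cudot_m)\le(1+C\epsilon)\mathbb{E}[\bfA_*]$. But $\Theta_m$ requires a comparison with $\mathbb{E}[\bfA_*^{-1}]^{-1}\le\mathbb{E}[\bfA_*]$, which is the wrong direction. The missing idea is Lemma~\ref{l.Jcontrols.Theta}:
\begin{itemize}
\item After centering so that $\hhom(\cudot_m)=0$, set $p'=\thom_m^{-\nicefrac12}e'$ and $q'=\thom_m^{\nicefrac12}e'$, with $\thom_m=\bhom_{\hhom}\#\shom_*$ at scale $m$.
\item Then $\Theta_m-1\le\mathbb{E}[\tilde{J}(\cudot_m,p',q')+\tilde{J}^*(\cudot_m,p',q')]$, where $\tilde{J}$ subtracts $\frac12P'\cdot Q'$, the product of the \emph{expected} gradient and flux averages.
\item The div--curl step is then applied to $\mathbb{E}\bigl[\fint\frac12\varphi(\nabla v_n-P')\cdot(\a\nabla v_n-Q')\bigr]$ with deterministic $P',Q'$.
\end{itemize}
Subtracting deterministic constants is what captures the fluctuation part. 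It is also why only bounds in expectation are needed, which is exactly what the CFS-based negative-norm estimates provide.

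\textbf{Second gap: the cutoff.} You cut off on $\cusdot_{m-1}$ and propose to bound the remaining layer with Lemma~\ref{l.adapted.cylinders}. That layer has volume fraction of order one. The lemma bounds coarse-grained matrices, with growth factors $3^{\gamma(\cdot)}$, and says nothing about where the energy of the particular maximizer $v$ sits. So no smallness comes out. In the paper, $\varphi$ is normalized by $(\varphi)_{\cusdot_n}=1$. The terms $\mathbb{E}[J(\cusdot_n)-\fint\frac12\varphi\nabla v_n\cdot\s\nabla v_n]$, $\mathbb{E}[((\varphi-1)\nabla v_n)_{\cusdot_n}]$ and $\mathbb{E}[((\varphi-1)\a\nabla v_n)_{\cusdot_n}]$ are controlled by stationarity and the additivity defect $\bar\tau_{n,k}=\mathbb{E}[J(\cusdot_k)-J(\cusdot_n)]$. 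By the second variation, $v_n$ is close in energy to the local maximizers $v_{z,k}$, whose statistics are stationary. Hence the expected energy is equidistributed, and reweighting by $\varphi$ costs little on average. This, too, is an expectation statement with no pathwise analogue.

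Relatedly, the argument must be run at $n=m-l$, not at $m$. The set $\cusdot_m$ need not be contained in $\cudot_m$, and the hypothesis only controls the means between $\cudot_0$ and $\cudot_m$. The paper passes from $\cudot_m$ to $\cusdot_{m-l}$ via Lemma~\ref{l.adapted.means}, at cost $3^{-(1-\gamma)l}$.

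Two smaller points:
\begin{itemize}
\item The ``contradiction / competitor'' framing plays no role. The argument is direct: it yields $\Theta_m-1\le C\delta^{1/2}\sigma\Theta^{1/2}\Theta_m^{1/2}$, and one solves for $\Theta_m$.
\item The $\partial_t\varphi$ term you identify as the main obstacle is routine by comparison. It is bounded by $\lambda_r^{-1}\|\partial_t\varphi\|_{L^\infty}$ times the multiscale Poincar\'e bound.
\end{itemize}
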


The statements of these propositions are nearly the same as in the elliptic case, although the proof of Proposition~\ref{p.renormalize.reduce} has to be modified using Lemma~\ref{l.Jtilde.energy.bound}. For completeness we outline how these propositions imply Theorem~\ref{t.highcontrast}, but since the arguments are nearly the same as in the elliptic case the reader is referred to~\cite{AK.HC} for the technical details.

First, applying Lemma~\ref{l.pigeon} with suitable scale separation parameters~$l,N$ and small enough~$\delta$, we obtain either a range of scales~$k\in [m-l,m]$ such that~$\bfAhom(\cudot_k)$ doesn't change much (because~$\bfAhom(\cudot_{m-l}) \leq (1+\delta)\bfAhom(\cudot_m)$), or a contraction of the determinant,
\begin{equation*}
\det\bfAhom(\cudot_{m_1 + N}) \leq \sigma^d \det \bfAhom(\cudot_{m_1})\,.
\end{equation*}
In the latter case we obtain the second option in Proposition~\ref{p.renormalize} (noting the monotonicity of~$\bfAhom(\cudot_k)$). In the former case we apply the renormalization in Proposition~\ref{p.renormalization.P} (with appropriate parameters) to zoom out~$n_0$ scales, which implies that the renormalized measure~$\mathbb{P}_{n_0}$ satisfies the assumptions of Proposition~\ref{p.renormalize.reduce}. Thus for~$m$ large enough ($m$ from Proposition~\ref{p.renormalize.reduce} plus the scales~$n_0$ that we zoomed out) we get~$\Theta_m - 1 \leq \sigma\Theta_0$, which is the first option in Proposition~\ref{p.renormalize}.

Once we obtain Proposition~\ref{p.renormalize}, the proof of Theorem~\ref{t.highcontrast} is an iteration. For~$\sigma \leq \nicefrac12$
\begin{equation*}
\Theta_m - 1 \leq \sigma \Theta_0 \implies \Theta_m - (1+2\sigma) \leq \sigma(\Theta_0 - (1+2\sigma))\,,
\end{equation*}
so for~$m$ large enough,~\eqref{e.improve} and the monotonicity of~$\Theta_m$ and~$\det \bfAhom(\cudot_m)$ imply that
\begin{equation*}
\bigl(\det \bfAhom(\cudot_m) \bigr)^{\frac{1}{d}}(\Theta_m - (1+2\sigma) ) \leq \sigma\bigl(\det \bfAhom(\cudot_0) \bigr)^{\frac{1}{d}}(\Theta_0 - (1+2\sigma) )\,.
\end{equation*}
Since we get a contraction by a factor of~$\sigma$ each time, iterating this inequality~$\log(1+\Theta)$ many times (with the help of the renormalization in Proposition~\ref{p.renormalization.P}) and noting that~$\det \bfAhom(\cudot_m) = \det (\shom(\cudot_m)\shom_*^{-1}(\cudot_m)) \geq 1$ gives Theorem~\ref{t.highcontrast}. This reasoning is illustrated as follows:
\begin{equation*}
\left.
\begin{gathered}
\mbox{Lemma~\ref{l.pigeon}} \\
+ \\
\mbox{Proposition~\ref{p.renormalize.reduce}}
\end{gathered}
\right\}
\implies
\mbox{Proposition~\ref{p.renormalize}}
\implies
\mbox{Theorem~\ref{t.highcontrast}}\,.
\end{equation*}

\subsection{One renormalization step}
\label{ss.one.renorm.step}
In this subsection we present the proof of Proposition~\ref{p.renormalize.reduce}. Once we set up the argument and use the parabolic framework in the proof of Lemma~\ref{l.Jtilde.energy.bound}, the rest of the argument follows~\cite[Section 3]{AK.HC}. Throughout this section we fix the following parameters:

\begin{itemize}
\item $\delta_0\in (0,\nicefrac12]$ is a constant depending only on~$d$, to be selected at the end of the proof;
\item $\sigma \in (0,\nicefrac12]$ and~$\delta \in (0,\delta_0]$ are given constants;
\item We fix an integer~$l$ representing a mesoscopic scale, such that
\begin{equation}
\label{e.sizeof.l}
\biggl\{ \frac{C(d)K_{\Psi_{\S}}^{4d+14}\Pipar}{(1-\gamma)^2}3^{-(1-\gamma)l}, C(d)K_{\Psi}^{18} \Pipar^{d+2}3^{-\frac{1}{2}(\nu-\gamma)l}, \frac{C(d)K_{\Psi_{\S}}^9 \Pipar^3 }{1-\gamma}3^{-\frac{1}{4}(1-\gamma)(1-\beta)l}\biggr\} \leq \delta \sigma^2
\end{equation}
and note that by taking the constants large enough, with reference to Section~\ref{ss.subadditivity}, we have~$l\geq C(d)\log(1+ \lambda_0)$ so that~$\mathbb{L}_l \subseteq \mathbb{Z}^{d+1}$ and the stationarity assumption is valid in the adapted parabolic cubes.
\item Suppose that~$m\in\N$ with~$m\geq 100 l$ such that
\begin{equation}
\label{e.pigeonholing.assumption}
\bfAhom(\cudot_0) \leq \mathbf{E}_0 \quad \mbox{and} \quad |\bfAhom^{-\nicefrac12}(\cudot_m)\mathbf{E}_0\bfAhom^{-\nicefrac12}(\cudot_m) - \Itwod| \leq \delta \sigma^2\,.
\end{equation}
\end{itemize}
To simplify the presentation, throughout this section we work with the following notation and assumptions:
\begin{itemize}
\item For every~$j\in\N$ we define~$\bfAhom_j := \bfAhom(\cudot_j)\,, \shom_j := \shom(\cudot_j)\,, \shom_{*,j} := \shom_{*}(\cudot_j)\,,$ and~$\khom_j := \khom(\cudot_j)$. Similarly we define~$\bhom_j := \shom_j + \khom_j^t\shom_{*,j}^{-1}\khom_j$, and, given a constant matrix~$\h$ we set~$\bhom_{\h,j}:= \shom_j + (\khom_j - \h)^t \shom_{*,j}^{-1}(\khom_j - \h)$.
\item The coefficient is ``centered'' so that the anti-symmetric part of a certain annealed coarse-grained matrix vanishes. By subtracting the matrix~$\frac{1}{2}(\khom_m - \khom_m^t)$ from the coefficient field and recentering both~$\mathbf{E}_0$ and~$\bfA(I\times U)$ accordingly (as in section~\ref{ss.bfA.def}), we may assume that
\begin{equation}
\label{e.centering}
\khom_m = \khom_m^t\,.
\end{equation}
Note that the definitions of~$\Theta$ and~$\Pi$ are independent of the centering, so these quantities remain unchanged.
\item[•] Under the centering in~\eqref{e.centering} we now define the adapted cubes by taking~$\m_0$ as in~\eqref{e.m0.def} to be
\begin{equation*}
\m_0 := (\s_0 + \k_0^t\s_{*,0}^{-1}\k_0 ) \# \s_{*,0}\,.
\end{equation*}
\end{itemize}

Our first lemma is a technical statement controlling the effect of the centering in~\eqref{e.centering}.
\begin{lemma}
\label{l.centering.ok}
Under the choice of centering in~\eqref{e.centering} we have
\begin{equation}
\label{e.m0.controlled}
\lambda \Id \leq \m_0 \leq \sqrt{8d}\Theta_m^{\nicefrac12} \Lambda \Id\,,
\end{equation}
and
\begin{equation}
\label{e.centering.controls.ratio}
|\m_0^{-\nicefrac12}\b_0\m_0^{-\nicefrac12}| + |\s_{*,0}^{-\nicefrac12}\m_0\s_{*,0}^{-\nicefrac12}| \leq \sqrt{8d} \Theta_m^{\nicefrac12}.
\end{equation}
\end{lemma}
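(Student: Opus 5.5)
The plan is to reduce everything to a single estimate on the normalized matrix $\s_{*,0}^{-\nicefrac12}\b_0\s_{*,0}^{-\nicefrac12}$, using the explicit structure of the geometric mean. By the definition in Appendix~\ref{aa.matrices}, $\m_0=\b_0\#\s_{*,0}$ is the unique positive definite solution of $\m_0\s_{*,0}^{-1}\m_0=\b_0$, and
\begin{equation*}
\s_{*,0}^{-\nicefrac12}\m_0\s_{*,0}^{-\nicefrac12}=\bigl(\s_{*,0}^{-\nicefrac12}\b_0\s_{*,0}^{-\nicefrac12}\bigr)^{\nicefrac12}.
\end{equation*}
Moreover $\m_0^{-\nicefrac12}\b_0\m_0^{-\nicefrac12}=\m_0^{\nicefrac12}\s_{*,0}^{-1}\m_0^{\nicefrac12}$, which has the same nonzero spectrum as $\s_{*,0}^{-\nicefrac12}\m_0\s_{*,0}^{-\nicefrac12}$. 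Hence both terms in~\eqref{e.centering.controls.ratio} equal $|\s_{*,0}^{-\nicefrac12}\b_0\s_{*,0}^{-\nicefrac12}|^{\nicefrac12}$. It therefore suffices to prove
\begin{equation*}
|\s_{*,0}^{-\nicefrac12}\b_0\s_{*,0}^{-\nicefrac12}|\le 2d\,\Theta_m .
\end{equation*}
This gives~\eqref{e.centering.controls.ratio} since $2\sqrt{2d}\le\sqrt{8d}$.

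First I would pass from $\mathbf{E}_0$ to scale $m$. Assumption~\eqref{e.pigeonholing.assumption} gives $(1-\delta\sigma^2)\bfAhom_m\le\mathbf{E}_0\le(1+\delta\sigma^2)\bfAhom_m$. Conjugating with $\mathbf{G}_{\khom_m}$ (or $\mathbf{G}_{\k_0}$) and reading off diagonal blocks, as in the derivation of~\eqref{e.monotone.s}, yields $\s_{*,0}^{-1}\simeq\shom_{*,m}^{-1}$ and $\b_0\simeq\bhom_m$ up to factors $(1\pm\delta\sigma^2)^{\pm1}\le 2$. So up to an absolute constant it is enough to bound $|\shom_{*,m}^{-\nicefrac12}\bhom_m\shom_{*,m}^{-\nicefrac12}|$.

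Next I use the centering~\eqref{e.centering}. Set $S:=\shom_{*,m}^{-\nicefrac12}\shom_m\shom_{*,m}^{-\nicefrac12}$ and $K:=\shom_{*,m}^{-\nicefrac12}\khom_m\shom_{*,m}^{-\nicefrac12}$, which is symmetric by the centering. For the minimizer $\h_0$ in~\eqref{e.Theta.n.def}, set $H:=\shom_{*,m}^{-\nicefrac12}\h_0\shom_{*,m}^{-\nicefrac12}$, which is skew. Then
\begin{equation*}
\Theta_m=|S+(K-H)^t(K-H)|,
\end{equation*}
so $|S|\le\Theta_m$ and $|K-H|^2\le\Theta_m$. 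Since $K$ is symmetric and $H$ is skew, $\mathrm{tr}(KH)=0$. Therefore
\begin{equation*}
|K|^2\le\mathrm{tr}(K^2)\le\mathrm{tr}\bigl((K-H)^t(K-H)\bigr)\le d\,|K-H|^2\le d\,\Theta_m .
\end{equation*}
It follows that $|\shom_{*,m}^{-\nicefrac12}\bhom_m\shom_{*,m}^{-\nicefrac12}|=|S+K^2|\le(1+d)\Theta_m\le 2d\Theta_m$, which is the required bound. This trace step is where the centering is used, and I regard it as the main point. The remaining care is only in bookkeeping the $(1\pm\delta\sigma^2)$ factors so they fit inside $\sqrt{8d}$.

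Finally I would derive~\eqref{e.m0.controlled}. For the lower bound, $\b_0\ge\s_{*,0}$ follows from $\bhom_m\ge\shom_m\ge\shom_{*,m}$ together with the comparison above, up to the harmless $1\pm\delta\sigma^2$ slack. Monotonicity of $\#$ in each argument then gives $\m_0\ge\s_{*,0}\#\s_{*,0}=\s_{*,0}\ge\lambda\Id$, with $\lambda=\lambda_0=|\s_{*,0}^{-1}|^{-1}$. For the upper bound, write
\begin{equation*}
\m_0=\s_{*,0}^{\nicefrac12}\bigl(\s_{*,0}^{-\nicefrac12}\m_0\s_{*,0}^{-\nicefrac12}\bigr)\s_{*,0}^{\nicefrac12}\le\sqrt{2d\Theta_m}\,\s_{*,0}.
\end{equation*}
Then $\s_{*,0}\le\s_0\le\Lambda\Id$ bounds this by $\sqrt{8d}\,\Theta_m^{\nicefrac12}\Lambda\Id$, since $\s_0$ is the $\h$-independent part of every matrix in the minimum defining $\Lambda_0$ in~\eqref{e.lambdas.def}.
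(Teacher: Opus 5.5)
Your proposal is correct and follows essentially the paper's route: both quantities in~\eqref{e.centering.controls.ratio} reduce to $|\s_{*,0}^{-\nicefrac12}\b_0\s_{*,0}^{-\nicefrac12}|^{\nicefrac12}$ through the Riccati characterization of $\#$, and $\bfE$ is compared blockwise with $\bfAhom_m$. The centering enters through the Frobenius orthogonality of symmetric and skew-symmetric matrices; your $\mathrm{tr}(KH)=0$ is the paper's first-variation step, applied only to the $K$-part, and it yields the sum bound with the stated constant provided you use $\delta\sigma^2\le\nicefrac18$ and $d\ge2$ rather than crude factors of~$2$.

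Two small points need tightening. First, since $\bfAhom_m\le\bfAhom_0\le\bfE$ holds exactly by hypothesis, the chain $\b_0\ge\bhom_m\ge\shom_{*,m}\ge\s_{*,0}$ needs no slack; with the $1-\delta\sigma^2$ slack you would only get $\m_0\ge(1-\delta\sigma^2)\lambda\Id$. Second, the inequality $\s_{*,0}\le\s_0$ fails for a general $\bfE$, so it needs a justification: read off the top-left block of $\mathbf{G}_{\k_0}^t\bfAhom_m\mathbf{G}_{\k_0}\le\mathbf{G}_{\k_0}^t\bfE\mathbf{G}_{\k_0}=\mathrm{diag}(\s_0,\s_{*,0}^{-1})$, which gives $\s_0\ge\shom_m\ge\shom_{*,m}\ge\s_{*,0}$.
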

\begin{proof}
We first establish a series of facts that will be needed. We have by the pigeonholing~\eqref{e.pigeonholing.assumption} that
\begin{equation*}
\bfAhom_m \leq \bfAhom_0 \leq \bfE \leq 2\bfAhom_m\,.
\end{equation*}
By conjugation by~$\mathbf{G}_{\h}$, as in Section~\ref{ss.bfA.def}, we then have that for any~$\h\in\R^{d\times d}$,
\begin{align*}
\shom_m + (\khom_m-\h)^t\shom_{*,m}^{-1}(\khom_m-\h) & \leq \s_0 + (\k_0 - \h)^t \s_{*,0}^{-1}(\k_0 - \h) \nonumber \\
& \leq 2(\shom_m + (\khom_m-\h)^t\shom_{*,m}^{-1}(\khom_m-\h))\,,
\end{align*}
and
\begin{equation*}
\shom_{*,m}^{-1} \leq \s_{*,0}^{-1} \leq 2\shom_{*,m}^{-1}\,.
\end{equation*}
In particular, for~$\h=0$ we get
\begin{equation*}
\b_0 \leq 2(\shom_m + \khom_m^t\shom_{*,m}^{-1}\khom_m)\,.
\end{equation*}
Suppose generally that~$\tilde{\s}$ is any symmetric matrix and~$\tilde{\k}$ is a skew-symmetric matrix that minimizes
\begin{equation*}
\tr \bigl( \s_{*,0}^{-\nicefrac12}(\tilde{\s}+\tilde{\k})^t\s_{*,0}^{-1}(\tilde{\s}+\tilde{\k})\s_{*,0}^{-\nicefrac12} \bigr)\,.
\end{equation*}
By the first variation, for any skew-symmetric matrix~$\tilde{\h}$ we have
\begin{equation*}
\tr \bigl( \s_{*,0}^{-1}(\tilde{\s}+\tilde{\k})^t\s_{*,0}^{-1}\tilde{\h} \bigr) = 0\,,
\end{equation*}
from which it follows that~$\s_{*,0}^{-1}(\tilde{\s}+\tilde{\k})^t\s_{*,0}^{-1}$ is symmetric, and therefore that~$\tilde{\k}=0$. Similarly, if we fix a skew-symmetric~$\tilde{\k}$ then
\begin{equation*}
\min_{\tilde{\s}\in \R^{d\times d}_{\mathrm{sym}}} \tr \bigl( \s_{*,0}^{-\nicefrac12}(\tilde{\s}+\tilde{\k})^t\s_{*,0}^{-1}(\tilde{\s}+\tilde{\k})\s_{*,0}^{-\nicefrac12} \bigr) = \tr \bigl( \s_{*,0}^{-\nicefrac12}\tilde{\k}^t\s_{*,0}^{-1}\tilde{\k}\s_{*,0}^{-\nicefrac12} \bigr)\,.
\end{equation*}
We now turn to proving~\eqref{e.m0.controlled}; the lower bound always holds so we only need to prove the upper bound. We have
\begin{align*}
|\s_{*,0}^{-\nicefrac12}\b_0\s_{*,0}^{-\nicefrac12}| & \leq 2|\s_{*,0}^{-\nicefrac12}(\shom_m + \khom_m^t\shom_{*,m}^{-1}\khom_m)\s_{*,0}^{-\nicefrac12}| \leq 4\tr \bigl(\s_{*,0}^{-\nicefrac12}(\shom_m + \khom_m^t\s_{*,0}^{-1}\khom_m)\s_{*,0}^{-\nicefrac12} \bigr) \\
& = 2\inf_{\h\in\R^{d\times d}_{\mathrm{skew}}} \tr \bigl(\s_{*,0}^{-\nicefrac12}(\shom_m + (\khom_m-\h)^t\shom_{*,0}^{-1}(\khom_m-\h))\s_{*,0}^{-\nicefrac12} \bigr) \\
& \leq 2d \inf_{\h\in\R^{d\times d}_{\mathrm{skew}}} |\s_{*,0}^{-\nicefrac12}(\shom_m + (\khom_m-\h)^t\shom_{*,0}^{-1}(\khom_m-\h))\s_{*,0}^{-\nicefrac12}| \\
& \leq 8d \inf_{\h\in\R^{d\times d}_{\mathrm{skew}}} |\s_{*,m}^{-\nicefrac12}(\shom_m + (\khom_m-\h)^t\shom_{*,m}^{-1}(\khom_m-\h))\s_{*,m}^{-\nicefrac12}| \\
& = 8d \Theta_m\,.
\end{align*}
We then crudedly bound~$\s_{*,0} \leq \b_{\h_0} \leq \Lambda\Id$ to obtain
\begin{equation*}
\m_0 = \s_{*,0}^{\nicefrac12}(\s_{*,0}^{-\nicefrac12}\b_0\s_{*,0}^{-\nicefrac12})^{\nicefrac12}\s_{*,0}^{\nicefrac12} \leq |\s_{*,0}||\s_{*,0}^{-\nicefrac12}\b_0\s_{*,0}^{-\nicefrac12}|^{\nicefrac12} \leq \Lambda \sqrt{8d}\Theta_m^{\nicefrac12}\,.
\end{equation*}
To prove~\eqref{e.centering.controls.ratio} note that~$|\m_0^{-\nicefrac12}\b_0\m_0^{-\nicefrac12}| = |\m_0^{\nicefrac12}\s_{*,0}^{-1}\m_0^{\nicefrac12}| = |\s_{*,0}^{-\nicefrac12}\m_0\s_{*,0}^{-\nicefrac12}|$, and that by the definition of~$\m_0$, and the above,
\begin{equation*}
|\s_{*,0}^{-\nicefrac12}\m_0\s_{*,0}^{-\nicefrac12}| = |\s_{*,0}^{-\nicefrac12}\b_0\s_{*,0}^{-\nicefrac12}|^{\nicefrac12} \leq \sqrt{8d}\Theta_m^{\nicefrac12} \,.
\end{equation*}
\end{proof}

Let the antisymmetric part of~$\khom(I\times U)$ be denoted
\begin{equation}
\label{e.hhom.def}
\hhom(I\times U) := \frac{1}{2}(\khom - \khom^t)(I\times U)\,,
\end{equation}
and define
\begin{equation}
\label{e.thom.def}
\thom(I\times U) := \bhom_{\hhom(I\times U)} \# \shom_*(I\times U)\,.
\end{equation}
We define a variant of~$J$ by
\begin{equation}
\label{e.Jminusmeans}
\tilde{J}(I\times U,p,q) := J(I\times U,p,q) - \frac{1}{2}\E\biggl[ \fint_I\fint_U \nabla v(\cdot,\cdot,I\times U,p,q) \biggr]\E\biggl[ \fint_I\fint_U \a\nabla v(\cdot,\cdot,I\times U,p,q) \biggr]\,,
\end{equation}
With this notation we can now state Lemma~\ref{l.Jcontrols.Theta}; this lemma is valid if we replace~$\thom(I\times U)$ with any positive-definite symmetric matrix, but we will use~$\thom(I\times U)$ later.

\begin{lemma}
\label{l.Jcontrols.Theta}
For every bounded Lipschitz domain~$U\subseteq \Rd$, and interval~$I\subseteq \R$, if
\begin{align*}
p = \thom^{-\nicefrac12}(I\times U)e\,, \quad q = \thom(I\times U)p-\hhom(I\times U)p\,, \quad \mbox{and} \quad  q' = \thom(I\times U)p +\hhom(I\times U)p
\end{align*}
then
\begin{align}
\label{e.Jcontrols.Theta}
|(\shom_*^{-\nicefrac12}\bhom_{\hhom(I\times U)}\shom_*^{-\nicefrac12})(I\times U)-\Id|  \leq \sup_{|e|=1}\biggl(\E \bigl[ \tilde{J}(I\times U,p,q)\bigr]  + \E \bigl[ \tilde{J}^*(I\times U,p,q')\bigr] \biggr)
\end{align}
\end{lemma}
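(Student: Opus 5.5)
The plan is to compute both expectations on the right side of~\eqref{e.Jcontrols.Theta} in closed form using only the annealed matrices~\eqref{e.bfAhom.mat}. Then I would check, by matrix algebra, that the sum dominates the quadratic form of~$(\shom_*^{-\nicefrac12}\bhom_{\hhom}\shom_*^{-\nicefrac12})-\Id$ in a suitable direction. No parabolic input is needed beyond Lemmas~\ref{l.J.basicprops} and~\ref{l.J.further.props}, and I would follow the corresponding lemma in~\cite[Section 3]{AK.HC}.

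\emph{Step 1: expectation of~$\tilde J$.} Write~$X=(-p,q)$ and~$\mathbf{S}:=\shom_*^{-1}(I\times U)$, suppressing~$I\times U$. By~\eqref{e.Jaas.matform} and linearity of expectation, $\E[J]=\tfrac12 X\cdot\bfAhom X-p\cdot q$. By~\eqref{e.all.averages} and the definitions~\eqref{e.all.homs.def}, the means of gradient and flux are the two blocks of~$(\mathbf{R}\bfAhom+\Itwod)X$. Here one uses~$\E[\k^t\s_*^{-1}]=\khom^t\mathbf{S}$, which is the transpose of the definition of~$\khom$. Set
\[
A_2=\mathbf{S}(q+\khom p)\,, \qquad A_1=-(\shom p+\khom^tA_2)\,.
\]
Then the gradient mean is~$-p+A_2$ and the flux mean is~$q+A_1$, and a direct expansion gives
\[
\E[\tilde J(p,q)]=\tfrac12\bigl(p\cdot\shom A_2+A_2\cdot\khom A_2-p\cdot q\bigr)\,.
\]
The same computation for~$J^*$, using~\eqref{e.J.mat.star}, amounts to replacing~$\khom$ by~$-\khom$ and~$q$ by~$q'$.

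\emph{Step 2: insert the choices.} Decompose~$\khom=\hhom+\mathbf{K}$, where~$\mathbf{K}$ is the symmetric part. Then~$q+\khom p=(\thom+\mathbf{K})p$ and~$q'-\khom p=(\thom-\mathbf{K})p$, so the~$\hhom$ contributions cancel; this is exactly why~$q$ and~$q'$ are shifted by~$\mp\hhom p$. Moreover~$\bhom_{\hhom}=\shom+\mathbf{K}\mathbf{S}\mathbf{K}$. Since~$\thom$ is the geometric mean of~$\bhom_{\hhom}$ and~$\shom_*$, we have~$\thom\mathbf{S}\thom=\bhom_{\hhom}$. Summing the two expectations, the terms linear in~$\mathbf{K}$ coming from~$p\cdot\shom A_2$ cancel. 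The terms~$A_2\cdot\mathbf{K}A_2$ contribute pieces even in~$\mathbf{K}$ that combine with~$\shom$ into~$\bhom_{\hhom}$. With~$p=\thom^{-\nicefrac12}e$, the leading expression should become
\[
e\cdot\bigl(\thom^{-\nicefrac12}\bhom_{\hhom}\mathbf{S}\,\thom^{\nicefrac12}-\Id\bigr)e+\text{(terms in }\mathbf{K})\,.
\]
Because~$\thom^{-\nicefrac12}\bhom_{\hhom}\mathbf{S}\thom^{\nicefrac12}$ is similar to~$\shom_*^{-\nicefrac12}\bhom_{\hhom}\shom_*^{-\nicefrac12}$, and~$\Id$ is its smallest possible eigenvalue bound (using~$\shom_*\le\shom\le\bhom_{\hhom}$), taking the supremum over~$|e|=1$ recovers the left side of~\eqref{e.Jcontrols.Theta}.

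\emph{Step 3 (main obstacle).} The delicate point is the residual terms involving~$\mathbf{K}$, which need not vanish under the centering. I would first try to show these residual terms are nonnegative, or can be absorbed, using the ordering~$\bfAhom_*\le\bfAhom$. The bound~\eqref{e.symm.part.k} in expectation, or~\eqref{e.symm.k.controlled} from Lemma~\ref{l.bfE.bounds}, gives~$|\mathbf{S}^{\nicefrac12}\mathbf{K}\mathbf{S}^{\nicefrac12}|\le$ the gap, and this is what controls them. If exact nonnegativity fails, the fallback is to use~$\E[\tilde J]\ge0$, which holds because~$\tilde J$ equals a variance-type quantity via~\eqref{e.Jenergyv} and~\eqref{e.energymaps}. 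I would also use that each separate expectation bounds its own odd part, which would give the inequality possibly with a harmless constant absorbed by redefining normalizations. Careful bookkeeping of these~$\mathbf{K}$ terms is the part I expect to require the most care.
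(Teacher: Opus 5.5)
Your Steps 1 and 2 are correct, but the argument stops at the step that carries the content, and none of the remedies in Step 3 closes it.

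Carry your computation through with $\mathbf{S}:=\shom_*^{-1}$, $\mathbf{K}$ the symmetric part of $\khom$, $\bhom_{\hhom}=\shom+\mathbf{K}\mathbf{S}\mathbf{K}=\thom\mathbf{S}\thom$, and $N:=\thom^{\nicefrac12}\mathbf{S}\thom^{\nicefrac12}$. One gets exactly
\begin{equation*}
\E\bigl[\tilde J(p,q)\bigr]+\E\bigl[\tilde J^*(p,q')\bigr]
= p\cdot\thom\mathbf{S}\bigl(\shom+2\mathbf{K}\mathbf{S}\mathbf{K}\bigr)p-p\cdot\thom p
= e\cdot\bigl(N^2-\Id\bigr)e+p\cdot\thom\mathbf{S}\mathbf{K}\mathbf{S}\mathbf{K}p\,,
\end{equation*}
where $N^2$ is symmetric with the same spectrum as $\shom_*^{-\nicefrac12}\bhom_{\hhom}\shom_*^{-\nicefrac12}$. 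The residual is $e\mapsto e\cdot\thom^{\nicefrac12}\mathbf{S}\mathbf{K}\mathbf{S}\mathbf{K}\thom^{-\nicefrac12}e$.

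Each of your three options for handling it fails:
\begin{itemize}
\item \emph{Nonnegativity.} The residual is not a nonnegative quadratic form in general for $d\ge 2$. Already with $\shom_*=\Id$, the symmetric part of $\thom^{\nicefrac12}\mathbf{K}^2\thom^{-\nicefrac12}$ is indefinite once $\thom$ is badly conditioned and does not commute with $\mathbf{K}^2$. That is precisely the high-contrast situation in which this lemma is used.
\item \emph{Absorption.} Controlling the residual through $|\mathbf{S}^{\nicefrac12}\mathbf{K}\mathbf{S}^{\nicefrac12}|\le$ (gap) only gives a bound quadratic in the gap between $\shom$ and $\shom_*$. This cannot be absorbed into the left side when the gap is large.
\item \emph{The fallback $\E[\tilde J]\ge 0$.} This is false in general. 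Your own Step 1 formula with $\khom=0$ gives $\E[\tilde J(p,q)]=\tfrac12\, p\cdot(\shom\shom_*^{-1}-\Id)q$, which is odd in $q$. $\tilde J$ is an energy minus a product of two different means, not a variance.
\end{itemize}

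The missing idea is to use the supremum over $e$, rather than trying to control the residual for every $e$. Take $e$ to be a unit eigenvector of $N$ for its largest eigenvalue $\mu$. Then $\mathbf{S}\thom p=\mu p$, so the residual equals
\begin{equation*}
(\mathbf{S}\thom p)\cdot\mathbf{K}\mathbf{S}\mathbf{K}p=\mu\,|\mathbf{S}^{\nicefrac12}\mathbf{K}p|^2\ge 0\,.
\end{equation*}
Meanwhile $e\cdot(N^2-\Id)e=\mu^2-1=|\shom_*^{-\nicefrac12}\bhom_{\hhom}\shom_*^{-\nicefrac12}-\Id|$, because $\bhom_{\hhom}\ge\shom\ge\shom_*$ makes that matrix $\ge\Id$. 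This yields the lemma with constant $1$ and needs no bound on $\mathbf{K}$ at all.
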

\begin{proof}
See~\cite[Lemma 3.6]{AK.HC}.
\end{proof}

Our centering assumption states that~$\hhom(\cudot_m) = 0$, so writing~$\thom_m := \thom(\cudot_m)$ if we define
\begin{equation}
\label{e.choice.pq}
e' := \mbox{maximizer in Lemma \ref{l.Jcontrols.Theta}} \,, \quad p' := \thom_m^{-\nicefrac12}e'\,, \quad \mbox{and} \quad q':= \thom_m^{\nicefrac12}e'
\end{equation}
then we have the bound
\begin{equation}
\label{e.Jscontrol.Theta}
\Theta_m - 1 \leq \E[\tilde{J}(\cudot_m,p',q') + \tilde{J}^*(\cudot_m,p',q')]\,.
\end{equation}
This bound can be formulated in terms of the adapted parabolic cubes. First, define~$P',Q',P^*,Q^*\in\Rd$ by
\begin{equation}
\label{e.PQ.choice}
\begin{pmatrix}
P' \\ Q'
\end{pmatrix}
: = \E\biggl[ \fint_{\cudot_m} \begin{pmatrix} \nabla v(\cdot,\cdot,\cudot_m,p',q') \\ \a\nabla v(\cdot,\cdot,\cudot_m,p',q') \end{pmatrix} \biggr]
\quad \mbox{and} \quad
\begin{pmatrix}
P^* \\ Q^*
\end{pmatrix}
: = \E\biggl[ \fint_{\cudot_m} \begin{pmatrix} \nabla v^*(\cdot,\cdot,\cudot_m,p',q') \\ \a^t\nabla v^*(\cdot,\cdot,\cudot_m,p',q') \end{pmatrix} \biggr]\,.
\end{equation}
By Lemma~\ref{l.adapted.means} we have, for~$n\leq m$ such that~$\cusdot_n \subseteq \cudot_m$,
\begin{align}
\label{e.Jbound.to.tildes}
\E\bigl[ & \tilde{J}(\cudot_m,p',q') + \tilde{J}^*(\cudot_m,p',q')\bigr] \nonumber \\
& = \E\bigl[J(\cudot_m,p',q') + J^*(\cudot_m,p',q')\bigr] -\frac{1}{2}P'\cdot Q' - \frac{1}{2}P^* \cdot Q^* \nonumber \\
& \leq \E\bigl[ J(\cusdot_n,p',q') + J^*(\cusdot_n,p',q')\bigr] -\frac{1}{2}P'\cdot Q' - \frac{1}{2}P^* \cdot Q^* \nonumber \\
& \quad \quad \quad + \frac{C(d)K_{\Psi_{\S}}^9}{1-\gamma}\Pi^{\frac{1-\gamma}{2}}\Pi^2 3^{-(1-\gamma)(m-n)}\,.
\end{align}
In the last line we have bounded
\begin{equation}
\label{e.bound.E0}
\begin{pmatrix}
p' \\ q' 
\end{pmatrix}
\cdot
\bfE
\begin{pmatrix}
p' \\ q'
\end{pmatrix}
\leq 2(\Lambda |\thom_m^{-1}| \vee \lambda^{-1}|\thom_m|) \leq 2d\Pi^2\,.
\end{equation}
This uses the choices in~\eqref{e.choice.pq} and the centering in the form of
\begin{align*}
|\shom_{*,m}^{-\nicefrac12}\bhom_m\shom_{*,m}^{-\nicefrac12}\shom_{*,m}^{-\nicefrac12}| & \leq \tr \bigl(\shom_{*,m}^{-\nicefrac12}(\shom_m + \khom_m\shom_{*,m}^{-1}\khom_m) \shom_{*,m}^{-\nicefrac12}\bigr) \\
& = \inf_{\h\in\R^{d\times d}_{\mathrm{skew}}}\tr \bigl(\shom_{*,m}^{-\nicefrac12}(\shom_m + (\khom_m-\h)^t\shom_{*,m}^{-1}(\khom_m-\h)) \shom_{*,m}^{-\nicefrac12}\bigr) \\
& \leq d \inf_{\h\in\R^{d\times d}_{\mathrm{skew}}} |\shom_{*,m}^{-\nicefrac12}(\shom_m + (\khom_m-\h)^t\shom_{*,m}^{-1}(\khom_m-\h)) \shom_{*,m}^{-\nicefrac12}| \\
& = d\Theta_m\,.
\end{align*}
This implies (crudely) that~$\bhom_m \leq d\Theta\shom_{*,m} \leq  \Theta \Lambda \Id$; then~$\thom_m \leq \bhom_m \leq d \Theta \Lambda$, and~$\thom_m^{-1} \leq \shom_{*,m}^{-1} \leq \lambda^{-1}\Id$, which completes the proof of~\eqref{e.bound.E0}.

The last term in~\eqref{e.Jbound.to.tildes} is an error term that will be controlled by~$\delta \sigma^2$, choosing~$n=m-l$ provided that~$l$ satisfies
\begin{equation}
\label{e.mesoscale.cond.three}
\frac{C(d)K_{\Psi_{\S}}^9}{1-\gamma}\Pi^{\frac{5-\gamma}{2}} 3^{-(1-\gamma)(m-n)} \leq \delta\sigma^2\,.
\end{equation}

Our next lemma, and the main part of the proof, is a quantitative div-curl type argument that says exactly that we can control the right-hand side of~\eqref{e.Jbound.to.tildes}. The key part of the lemma adapts naturally to the parabolic setting, using our parabolic functional inequalities; the rest of the lemma, once we input the properties of the parabolic coarse-grained matrices, follows exactly the elliptic case.

\begin{lemma}
\label{l.Jtilde.energy.bound}
There exists a constant~$C(d) < \infty$ such that, for~$n=m-l$, we have the estimate
\begin{align}
\label{e.Jtilde.energy.bound}
\bigl| \mathbb{E}[J(\cusdot_n,p',q') & - \frac{1}{2}P'\cdot Q'\bigr| \leq  C\delta^{\nicefrac12}\sigma\Theta^{\nicefrac12}\Theta_m^{\nicefrac12}\,.
\end{align}
\end{lemma}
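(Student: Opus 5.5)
We follow the elliptic div-curl argument of \cite[Lemma 3.7]{AK.HC}, adapting only the step where an elliptic Poincar\'e/Caccioppoli-type estimate is used. Let $v:=v(\cdot,\cdot,\cusdot_n,p',q')$ be the maximizer on the adapted cube $\cusdot_n$. By \eqref{e.Jenergyv} and the first variation \eqref{e.firstvar} with $w=v$, we have
\begin{equation*}
J(\cusdot_n,p',q') = \fint_{\cusdot_n}\tfrac12\nabla v\cdot\s\nabla v = \tfrac12\fint_{\cusdot_n}\bigl(q'\cdot\nabla v - p'\cdot\a\nabla v\bigr).
\end{equation*}
The quantity $\E[J]-\frac12 P'\cdot Q'$ is therefore a pairing of (gradient, flux) averages against the constant vectors $(q',-p')$, corrected by the product of the means. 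Since $\partial_t v=\nabla\cdot\a\nabla v$ and $\nabla\cdot\nabla v^{\perp}$-type structures are not available, we instead use the energy identity of Lemma~\ref{l.testing.with.u}. We test with a cutoff $\varphi$ adapted to $\cusdot_n$ and write
\begin{equation*}
\fint\varphi\,\nabla v\cdot\s\nabla v = -\fint v\,\nabla\varphi\cdot\a\nabla v + \tfrac12\fint v^2\partial_t\varphi .
\end{equation*}
We then split $\nabla v$ and $\a\nabla v$ into their means $P',Q'$ (up to the error controlled by \eqref{e.Jbound.to.tildes} and Lemma~\ref{l.adapted.means}) plus fluctuations. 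The product of the means reproduces $\frac12 P'\cdot Q'$, because the affine part $P'\cdot x$ satisfies the equation with flux $Q'$ in the weak sense.

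The key steps, in order, are as follows.
\begin{enumerate}
\item Decompose $\cusdot_n$ into the subcubes $z+\cusdot_{n-l'}$, $z\in\mathbb{L}_{n-l'}$. On each subcube, compare $v$ with the local maximizer using the second variation \eqref{e.quadresp}.
\item Use the pigeonholing assumption \eqref{e.pigeonholing.assumption}, namely $|\bfAhom_m^{-1/2}\bfE\bfAhom_m^{-1/2}-\Itwod|\le\delta\sigma^2$, together with subadditivity. This gives $\E[J(\cusdot_n)]-\E[\text{avg of }J(z+\cusdot_{n-l'})]\lesssim \delta\sigma^2\,(p',q')\cdot\bfE(p',q')$. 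Hence $\nabla v$ is close in $\L^2(\s)$ to the patched local maximizers, with error $\delta^{1/2}\sigma$ times the energy scale.
\item Use the concentration estimate Lemma~\ref{l.CFS.adapted} and the scale condition \eqref{e.sizeof.l}. These show that the subcube averages of the gradients and fluxes concentrate around $P',Q'$, up to errors of size $\delta\sigma^2$.
\item Control $v-(P'\cdot x)$ in $\L^2$ by the coarse-grained Poincar\'e inequality \eqref{e.cg.poincare} in adapted form \eqref{e.adapted.bound}, applied to the difference of gradients/fluxes in the negative Besov norms $\underline{\hat B}^{-s}_{2,1}$. The cross term $\fint v\nabla\varphi\cdot\a\nabla v$ and the parabolic term $\fint v^2\partial_t\varphi$ are then bounded by Cauchy--Schwarz. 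The size of the $\Lambda_{s,1},\lambda_{s,1}$ factors in the adapted cubes is controlled by Lemma~\ref{l.centering.ok}, which gives the $\Theta_m^{1/2}$, and by the crude bound on the ellipticity from $\bfE$, which gives the $\Theta^{1/2}$.
\item Multiply the $\delta^{1/2}\sigma$ smallness of the fluctuation by the energy scale. By \eqref{e.bound.E0} and the centering estimate, $p'\cdot\b_0p'$ and $q'\cdot\s_{*,0}^{-1}q'$ are of size $\Theta^{1/2}\Theta_m^{1/2}$ after normalization by $\thom_m$. This yields \eqref{e.Jtilde.energy.bound}.
\end{enumerate}

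The main obstacle is the parabolic term $\frac12\fint v^2\partial_t\varphi$, which has no elliptic counterpart. Since $\partial_t\varphi\sim\lambda_r3^{-2n}$, it requires an $\L^2$ bound on $v-P'\cdot x$ at the parabolic scale, and not merely a spatial one. We plan to obtain this from the parabolic multiscale Poincar\'e inequality (Lemma~\ref{l.multiscale.poincare}) with $\g=\a\nabla v-Q'$, which controls $\L^2$ oscillation in space-time by negative Besov norms of both the gradient and the flux. The rounding of $\lambda_r$ to a power of $9$ and the choice $\mathbb{L}_l\subseteq\Z^{d+1}$ ensure that the time direction introduces no extra boundary layer, so the resulting factor of $\lambda_{\m_0}$ cancels against the time scale of $J_n$.
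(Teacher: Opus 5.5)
Your plan follows the paper's route. The shared ingredients are:
\begin{itemize}
\item a cutoff $\varphi$ supported in $\cusdot_{n-1}$;
\item the identity of Lemma~\ref{l.testing.with.u} applied to $v_n-\ell_{P'}$, which solves $\partial_t(v_n-\ell_{P'})=\nabla\cdot(\a\nabla v_n-Q')$;
\item the parabolic term $\frac14\fint(v_n-\ell_{P'})^2\partial_t\varphi$, controlled by $\lambda_r^{-1}\|\partial_t\varphi\|_{L^\infty}\le C3^{-2n}$ and \eqref{e.parabolic.multiscale.poincare};
\item negative-norm smallness of $\nabla v_n-P'$ and $\a\nabla v_n-Q'$, obtained from additivity defects, the pigeonholing assumption and concentration. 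These are your steps 1--3, which the paper imports as the parabolic analogue of \cite[Lemma 2.16]{AK.HC}.
\end{itemize}

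\textbf{The gap is in step 4, for the cross term.} The term $\fint(v_n-\ell_{P'})\nabla\varphi\cdot(\a\nabla v_n-Q')$ cannot be closed by Cauchy--Schwarz in $\L^2$, for three reasons.
\begin{itemize}
\item For $\a\in\Omega$ the flux is only in $L^1$; see \eqref{e.Omega.imp}.
\item A weighted Cauchy--Schwarz would need an $\s$-weighted $\L^2$ bound on $v_n-\ell_{P'}$, and nothing supplies one.
\item Even for bounded coefficients you would pick up pointwise ellipticity constants. These are not improved by the renormalization of Proposition~\ref{p.renormalization.P}, so they would ruin the $C(d)$ constant that Proposition~\ref{p.renormalize.reduce} needs.
\end{itemize}
Moreover, $\a\nabla v_n-Q'$ is not small in any strong norm, only in $\underline{B}^{-\nicefrac12}_{2,1}$.

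The fix, which is the actual div-curl step in the paper, is to pair $\underline{B}^{\nicefrac12}_{2,\infty}$ against $\underline{B}^{-\nicefrac12}_{2,1}$. The product estimate \eqref{e.div.curl.lemma.eq}, built on the first inequality of \eqref{e.cg.poincare} with $s=\nicefrac12$, bounds $\|\m_0^{\nicefrac12}\nabla\varphi\,(v_n-\ell_{P'})\|_{\underline{B}^{\nicefrac12}_{2,\infty}(\cusdot_n)}$ by $C3^{-n}$ times the $\underline{B}^{-\nicefrac12}_{2,1}$ seminorms of $\mathbf{M}_0^{\nicefrac12}(\nabla v_n-P',\a\nabla v_n-Q')$. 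Both factors of the pairing are then small. Only the parabolic term can be handled with a plain $\L^2$ bound.

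\textbf{Two smaller points.}
\begin{itemize}
\item In step 2 your inequality is in the trivial direction, since subadditivity already gives $\E[J(\cusdot_n)]\le\E[\avsum_z J(z+\cusdot_{n-l'})]$. What the second-variation comparison needs is an upper bound on the nonnegative defect $\frac12(-p',q')\cdot(\bfAhom(\cusdot_{n-l'})-\bfAhom(\cusdot_n))(-p',q')$. That bound follows from the pigeonholing assumption together with Lemma~\ref{l.adapted.means}.
\item You should list explicitly the error terms the decomposition produces, as the paper does: $\E[J(\cusdot_n)-\fint\frac12\varphi\nabla v_n\cdot\s\nabla v_n]$, the $(\varphi-1)$ terms, $\E[(\nabla v_n)_{\cusdot_n}]-P'$, and $\E[(\a\nabla v_n)_{\cusdot_n}]-Q'$. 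These are handled by your patching and by \eqref{e.all.averages} with Lemma~\ref{l.adapted.means}, as in \cite[Lemma 3.7]{AK.HC}.
\end{itemize}
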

\begin{proof}
Fix~$k_0 \geq C(d)\log(1+\lambda)$ so that for all scales above~$k_0$ the adapted parabolic cubes are stationary, as in Section~\ref{ss.subadditivity}. Let~$p',q'$ be as in~\eqref{e.choice.pq} and for any~$n > k+3 > k \geq k_0$, where~$k,n\in\N$, and any~$z\in\mathbb{L}_k$, we denote
\begin{equation*}
v_{z,k} := v(\cdot,\cdot,z+\cusdot_k,p',q')\,, \quad J(z+\cusdot_k) = J(z+\cusdot_k,p',q')\,, \quad \mbox{and} \quad \bar{\tau}_{n,k} := \E[J(\cusdot_k) - J(\cusdot_n)]\,.
\end{equation*}
We also let~$v_n := v(\cdot,\cusdot_n,p',q')$. Fix a nonnegative smooth test function~$\varphi\in C_c^\infty(\cusdot_n)$ with support in~$\cusdot_{n-1}$ such that
\begin{equation}
\label{e.varphi.bounds}
(\varphi)_{\cusdot_n} = 1\,, \quad 0 \leq \varphi \leq 2\,, \quad \lambda_r^{-1} \norm{\partial_t \varphi}_{L^\infty(\cusdot_n)} \leq C3^{-2n}\,,  \mbox{ and }  \norm{\q_0^j \nabla^j \varphi}_{L^\infty(\cusdot_n)} \leq C3^{-jn}\,,  j\in \{1,2\}\,.
\end{equation}
so we have chosen~$\varphi$ such that its oscillations are controlled. By rearranging terms,
\begin{align*}
\E[J(\cusdot_n)] - \frac{1}{2}P'\cdot Q' =
& \E \biggl[ \fint_{\cusdot_n} \frac{1}{2}\varphi (\nabla v_n - P') \cdot (\a\nabla v_n - Q')\biggr] \\
& \quad + \E\biggl[ J(\cusdot_n) - \fint_{\cusdot_n} \frac{1}{2}\varphi \nabla v_n \cdot \s \nabla v_n \biggr] \\
& \quad + \frac{1}{2}Q' \cdot \E[\bigl((\varphi-1)\nabla v_n\bigr)_{\cusdot_n}] + \frac{1}{2}P' \cdot \E[\bigl((\varphi-1)\a \nabla v_n\bigr)_{\cusdot_n}] \\
& \quad + \frac{1}{2}Q' \cdot \bigl( \E[(\nabla v_n)_{\cusdot_n}] - P' \bigr) + \frac{1}{2}P' \cdot \bigl( \E[(\a\nabla v_n)_{\cusdot_n}] - Q' \bigr)\,.
\end{align*}
The last three lines will all be small up to a scale separation and therefore controlled by the right-hand side of~\eqref{e.Jtilde.energy.bound}. The bounds are exactly as in~\cite[Lemma 3.7]{AK.HC} and we do not include the proof here.

For the first line, we let~$\ell_{P'}(x) \coloneqq (v_n)_{\cusdot_n} + P'\cdot x$, and integrate by parts using Lemma~\ref{l.testing.with.u} to obtain
\begin{align*}
\frac{1}{2}\fint_{\cusdot_n} \varphi(\nabla v_n - P') \cdot (\a\nabla v_n - Q') = -\frac{1}{2}\fint_{\cusdot_n} (v_n - \ell_{P'})\nabla \varphi \cdot (\a\nabla v_n - Q') + \frac{1}{4}\fint_{\cusdot_n} (v_n - \ell_{P'})^2\partial_t \varphi\,.
\end{align*}
The first term is bounded using~\eqref{e.div.curl.lemma.eq}, noting that~$\varphi$ is supported in~$\cusdot_{n-1}$,
\begin{align*}
\lefteqn{
\biggl| \fint_{\cusdot_n} (v_n - \ell_{P'})\nabla \varphi \cdot (\a\nabla v_n - Q') \biggr|
} \: & \\
&  \leq \|\m_0^{\nicefrac12}\nabla \varphi (v_n - \ell_{P'})\|_{\underline{B}_{2,\infty}^{\nicefrac12}(\cusdot_n)} [\m_0^{-\nicefrac12}(\a\nabla v_n - Q') ]_{\underline{B}_{2,1}^{-\nicefrac12}(\cusdot_n)} \\
& \leq C3^{-n}\bigl([\m_0^{\nicefrac12}(\nabla v_n - P')]_{\underline{B}_{2,1}^{-\nicefrac12}(\cusdot_n)} + [\m_0^{-\nicefrac12}(\a\nabla v_n - Q')]_{\underline{B}_{2,1}^{-\nicefrac12}(\cusdot_n)}\bigr)[\m_0^{-\nicefrac12}(\a\nabla v_n - Q') ]_{\underline{B}_{2,1}^{-\nicefrac12}(\cusdot_n)}\,.
\end{align*}
Similarly we bound
\begin{align*}
\fint_{\cusdot_n} (v_n - \ell_{P'})^2\partial_t \varphi \leq \|\lambda_r^{-1}\partial_t \varphi\|_{L^\infty(\cusdot_n)} \bigl( [\m_0^{\nicefrac12}(\nabla v_n - P')]_{\underline{B}_{2,1}^{-1}(\cusdot_n)} + [\m_0^{-\nicefrac12}(\a\nabla v_n - Q')]_{\underline{B}_{2,1}^{-1}(\cusdot_n)}\bigr)^2\,,
\end{align*}
using~\eqref{e.parabolic.multiscale.poincare} and again noting that on the left-hand side we have nonzero contributions only from the domain~$\cusdot_{n-1}$. Therefore the first term contributes
\begin{equation*}
\biggl| \E \biggl[ \fint_{\cusdot_n} \frac{1}{2}\varphi (\nabla v_n - P') \cdot (\a\nabla v_n - Q')\biggr] \biggr| \leq C3^{-n} \mathbb{E}\biggl[ \biggl[\mathbf{M}_0^{\nicefrac12}
\begin{pmatrix}
\nabla v(\cusdot_n,p',q') - P' \\ \a\nabla v(\cusdot_n,p',q') - Q'\\
\end{pmatrix}
\biggr]^2_{\Bring_{2,1}^{-\nicefrac12}(\cusdot_{n})}\biggr]\,.
\end{equation*}
The right-hand side is bounded using the parabolic analogue of~\cite[Lemma 2.16]{AK.HC} (see the remark at the end of Section~\ref{ss.besov}), using the scale separation in~\eqref{e.sizeof.l}.

\end{proof}

%Finally, we are able to estimate the remaining term in Lemma~\ref{l.Jtilde.energy.bound}.
%\begin{lemma}
%\label{l.bound.the.rest}
%There exists~$C(d)<\infty$ such that for~$n=m-l$ we have
%\begin{equation}
%\label{e.Hminus.s}
%3^{-n} \mathbb{E}\biggl[ \biggl[\mathbf{M}_0^{\nicefrac12}
%\begin{pmatrix}
%\nabla v(\cusdot_n,p',q') - P \\ \a\nabla v(\cusdot_n,p',q') - Q\\
%\end{pmatrix}
%\biggr]^2_{\Bring_{2,1}^{-\nicefrac12}(\cusdot_{n})}\biggr] \leq C\delta \sigma^2\Theta^{\nicefrac12}\Theta_m^{\nicefrac12}\,.
%\end{equation}
%\end{lemma}
%\begin{proof}
%The proof is as in the elliptic case using~\cite[Lemma 2.14]{AK.HC} (which generalizes immediately to the parabolic case) provided we have a scale separation such that
%\begin{equation}
%\label{e.mesoscale.cond.five}
%\frac{\Pi^4K_{\Psi_{\S}}^{4d+15}}{(1-\gamma)^5}3^{-(m-l)} \leq \delta \sigma^2 \quad \mbox{and} \quad \frac{3^{-(1-\gamma)l}}{1-\gamma} \leq \delta \sigma^2.
%\end{equation}
%\end{proof}

\begin{proof}[{Proof of Proposition~\ref{p.renormalize.reduce}}]
This is exactly as in~\cite[Section 3.2]{AK.HC}, using the results in this section as replacements for the necessary inequalities, but we sketch the argument here. With the choice of parameters~$\delta,\sigma,l,m$ as in the proposition statement, taking~$n=m-l$ and combining~\eqref{e.Jscontrol.Theta},~\eqref{e.Jbound.to.tildes}, and Lemma~\ref{l.Jtilde.energy.bound}, we get
\begin{align*}
\Theta_m - 1 & \leq \mathbb{E}[ \tilde{J}(\cudot_m,p',q') + \tilde{J}^*(\cudot_m,p',q')]  \leq C(d)\delta^{\nicefrac12}\sigma \Theta^{\nicefrac12}\Theta_m^{\nicefrac12}\,.
\end{align*}
Hence if~$\delta$ is small enough, depending only on~$d$, then
\begin{equation*}
\Theta_m - 1\leq \frac{\sigma}{2}\Theta^{\nicefrac12}\Theta_m^{\nicefrac12} \leq \frac{\sigma}{4}\Theta + \frac{\sigma}{4}\Theta_m\,,
\end{equation*}
and solving for~$\Theta_m$ we obtain~$\Theta_m - 1 \leq \sigma \Theta$.
\end{proof}

\section{Renormalization in small contrast}
\label{s.smallcontrast}

Theorem~\ref{t.highcontrast} implies that~$\Theta_n \to 1$, with a quantitative rate. However the main purpose of Section~\ref{s.hc} is to bound the length scale~$3^n$ by which~$\Theta_n - 1$ is small; once we are in the small contrast regime an adaptation of standard homogenization arguments allows us to prove that~$\Theta_n - 1$ decays algebraically in the length scale. Our main theorem is the following:

\begin{theorem}
\label{t.theta.rate}
There exist constants~$C(d)<\infty$ and~$c(d) \in (0,\nicefrac{1}{4}]$ such that
\begin{equation}
\label{e.algebraic.theta}
\Theta_m - 1 \leq 3^{-\kappa (m-m_*)} \quad \mbox{for } m \geq m_*
\end{equation}
where
\begin{align}
\label{e.theta.rate.params}
\left\{
\begin{aligned}
& \alpha = (\min\{\nu,1\}-\gamma)(1-\beta) \\
& \kappa = \min\{c,c\alpha\} \\
& m_* = \biggl \lceil \frac{C}{\kappa} \log\bigl( \frac{K_{\Psi_{\S}}K_{\Psi}\Pi_{\mathrm{par}}}{\kappa}\bigr)\log(1+\Theta)  \bigg \rceil
\end{aligned}
\right.
\end{align}
\end{theorem}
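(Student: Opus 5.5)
The proof splits into two regimes: a high-contrast phase handled by Theorem~\ref{t.highcontrast}, followed by a small-contrast iteration that yields algebraic decay. First apply Theorem~\ref{t.highcontrast} with a small fixed $\sigma = \sigma_0(d)$. This gives a scale $m_1 \leq \frac{C}{\alpha}\log\bigl(\frac{K_{\Psi_\S}K_\Psi\Pipar}{\alpha}\bigr)\log(1+\Theta)$ at which $\Theta_{m_1} - 1 \leq \sigma_0$. Next use Proposition~\ref{p.renormalization.P} to renormalize the measure by $n_0 = m_1 + l_0$ scales, so that the new ellipticity matrix $(1+\delta)\bfAhom(\cudot_{m_1})$ has ratio $\Theta' \leq (1+\delta)^2\Theta_{m_1} \leq 1 + 2\sigma_0$. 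By~\eqref{e.bfAhom.by.E0} the new $\Pi_{\mathrm{par}}$ is comparable to the old one. The monotonicity of $n\mapsto\Theta_n$ lets us discard the scales below $m_*$, so it suffices to prove~\eqref{e.algebraic.theta} for a measure already in small contrast.

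In the small-contrast regime, I would establish a one-step contraction. There is a scale separation $l_1$ with
\[
l_1 \leq \frac{C}{\alpha}\log\Bigl(\frac{K_{\Psi_\S}K_\Psi\Pipar}{\alpha}\Bigr),
\]
such that for every $m$,
\[
\Theta_{m+l_1} - 1 \leq \tfrac12(\Theta_m - 1) + 3^{-c\alpha m}\quad\text{(error from the ellipticity tail $\S$ and CFS fluctuations).}
\]
The approach is to rerun the argument of Proposition~\ref{p.renormalize.reduce}, replacing the pigeonhole input by a direct estimate of $\bfAhom_{m-l}-\bfAhom_m$ in terms of $\Theta_{m-l}-1$. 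In low contrast, Lemma~\ref{l.bfE.bounds} gives
\[
|\bfAhom_*^{-1/2}\bfAhom\bfAhom_*^{-1/2}-\Itwod|\leq 6(\Theta-1),
\]
so $\bfAhom_m$ is squeezed between $\bfAhom_{*,m}$ and $\bfAhom_{m-l}$. With this, Lemma~\ref{l.Jcontrols.Theta} and~\eqref{e.Jbound.to.tildes} reduce $\Theta_m-1$ to the expected energy of $\tilde J$ at scale $m$. That energy is controlled by the div-curl argument of Lemma~\ref{l.Jtilde.energy.bound}, which in low contrast gives a bound of order $(\Theta_{m-l}-1)\cdot(\text{something small})$ plus errors $3^{-c\alpha l}$. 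More precisely, the variance term from~\ref{a.CFS} contributes $3^{-(\nu-\gamma)(1-\beta) l}$, and the boundary-layer terms from Lemma~\ref{l.adapted.means} contribute $3^{-(1-\gamma)l}$.

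The alternative route follows the elliptic case~\cite[Section 4]{AK.HC}. Define $\tau_n = \sum_{z}\mathbb{E}[J(\cudot_{n},\cdot)-J(\cudot_{n+1},\cdot)]$ over a basis of $(p,q)$. The sum $\sum_n \tau_n$ telescopes and is bounded by $C(\Theta_{m_1}-1)$. One then proves
\[
\Theta_m - 1 \leq C\sum_{n\leq m} 3^{-c(m-n)}\tau_n + C3^{-c\alpha m}.
\]
A discrete Grönwall/summation argument then yields geometric decay $3^{-\kappa(m-m_*)}$ with $\kappa=\min\{c,c\alpha\}$. Whichever route is taken, iterating the contraction $O(m/l_1)$ times produces the exponent $\kappa \approx 1/l_1$, matching the form of $m_*$ in~\eqref{e.theta.rate.params}. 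The factor $\log(1+\Theta)$ enters only through the high-contrast first phase.

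The main obstacle is the quantitative div-curl estimate, namely the parabolic version of~\cite[Lemma 2.16]{AK.HC} bounding
\[
3^{-n}\,\mathbb{E}\Bigl[\bigl[\mathbf{M}_0^{1/2}(\nabla v - P',\,\a\nabla v - Q')\bigr]^2_{\Bring^{-1/2}_{2,1}}\Bigr]
\]
by $\tau$-type quantities. The difficulty is the time-derivative term $\frac14\int (v_n-\ell_{P'})^2\partial_t\varphi$, which must be controlled by the parabolic multiscale Poincaré inequality with constants linear in $\Theta-1$. This requires using the coarse-grained ellipticity constants in adapted cubes with $\lambda_{\m_0}$ normalized, so that no stray powers of $\Pipar$ spoil the rate. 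I would attempt this first, tracking the parabolic factor $\lambda_r$ carefully.
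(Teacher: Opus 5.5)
Your first step (Theorem~\ref{t.highcontrast} with $\sigma=\sigma_0$, renormalization via Proposition~\ref{p.renormalization.P}, monotonicity of $\Theta_n$) matches the paper. Your ``alternative route'' through the additivity defects $\tau_n$ is the argument of \cite[Section 4]{AK.HC}, which the paper invokes via Theorem~\ref{t.small.contrast.rate}. The gap is in your primary route, the one-step contraction obtained by rerunning Proposition~\ref{p.renormalize.reduce}; that mechanism cannot give algebraic decay. Its conclusion is $\Theta_m-1\le\sigma\Theta$ from a defect of size $\delta\sigma^2$. This is smallness of order the \emph{square root} of the defect, measured against $\Theta\approx 1$ rather than against $\Theta-1$. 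Your squeeze bound (defect $\lesssim\Theta_{m-l}-1$) is true but holds trivially, and feeding it in gives $\Theta_m-1\lesssim(\Theta_{m-l}-1)^{1/2}$, which is weaker than monotonicity. Moreover, the dominant errors of that argument depend on the separation $l$, not on the absolute scale. These are the boundary layers $\propto 3^{-(1-\gamma)l}$ and CFS fluctuations $\propto 3^{-\frac12(\nu-\gamma)l}$ in \eqref{e.fix.scale.separation} and \eqref{e.mesoscale.cond.three}, with prefactors that are powers of $K_\Psi,K_{\Psi_\S},\Pipar$. So with $l_1$ fixed the iteration stalls at a fixed level, and the error $3^{-c\alpha m}$ in your displayed contraction is not what this argument produces. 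A genuine contraction needs $\Theta_m-1$ bounded \emph{linearly} by a defect that is a difference $F_{m-l}-F_m$ of a monotone quantity, so that it telescopes. The squeeze throws away exactly this information.

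Second, the exponent. Even granting a halving every $l_1\sim\alpha^{-1}\log(K_{\Psi_\S}K_\Psi\Pipar/\alpha)$ scales, you would get $\kappa\approx 1/l_1$, which depends on $K_\Psi,K_{\Psi_\S},\Pipar$. The theorem asserts $\kappa=\min\{c,c\alpha\}$ with $c=c(d)$, and a slower rate cannot be rescued by enlarging $m_*$. In the paper these parameters stay out of $\kappa$ because the small-contrast iteration runs in adapted cylinders with $\m_0=\s_0$. There, Lemma~\ref{l.small.theta} and \eqref{e.bfA.cus.equiv} make all relevant matrices comparable to $\mathbf{M}_0$ up to $C(d)$. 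Hence the multiscale inequality of your route two has constants depending only on $d$ and the exponents, with errors decaying in the absolute scale. The $\Pipar$-dependent separation of Lemma~\ref{l.bfAhoms.equivalent} is paid only once, at entry and at exit, and lands in $m_0$ and hence in $m_*$. So drop route one and carry out route two with this bookkeeping. Then finish as the paper does: $\Theta_{m,\mathrm{new}}=\Theta_{m+n_0+l_0}$, $\kappa_{\mathrm{new}}\ge\kappa/2$, and relabel. Your choice of a small renormalization parameter $\delta$, keeping the new ratio below $1+2\sigma_0$, is the right one for the hypothesis $\Theta\le 2$ of Theorem~\ref{t.small.contrast.rate}. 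Finally, the div-curl estimate you single out as the main obstacle belongs to the high-contrast Lemma~\ref{l.Jtilde.energy.bound}; it is not the crux of this theorem.
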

Theorem~\ref{t.theta.rate} is a combination of Theorem~\ref{t.highcontrast} and an iteration starting from small contrast. Our next theorem formalizes the iteration step, which is independent of the arguments of Section~\ref{s.hc}. Recall that~$\Theta$ is the ellipticity constant from~\ref{a.ellipticity.dagger}, while~$\Theta_0$ is the ellipticity constant of~$\bfAhom(\cudot_0)$. Since we will be using the renormalization as in Proposition~\ref{p.renormalization.P} they will be essentially the same, up to technical details.

\begin{theorem}
\label{t.small.contrast.rate}
There exist constants~$C(d),\sigma_0(d),c(d) \in (0,\infty)$ such that if~$\Theta \leq 2$ then
\begin{equation}
\label{e.small.contrast.rate.one}
\Theta_0 - 1 \leq \sigma_0(d) \implies \Theta_{m} - 1 \leq 3^{-\kappa(m-m_0)} \quad \mbox{for } m\geq m_0\,,
\end{equation}
where
\begin{align}
\label{e.small.c.params}
\left\{
\begin{aligned}
& \kappa = c(d)\min\{1,1-\gamma,(\nu-\gamma)(1-\beta)\}\,, \\
& m_0 = \bigg\lceil \frac{C(d)}{\kappa}\log\biggl(\frac{C(d)\Pi_{\mathrm{par}}K_{\Psi_{\S}}K_{\Psi}}{\kappa} \biggr) \bigg\rceil\,.
\end{aligned}
\right.
\end{align}
\end{theorem}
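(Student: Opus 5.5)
The plan is to follow the small-contrast iteration of~\cite[Section 4]{AK.HC}, with the parabolic coarse-graining inequalities of Section~\ref{s.coarse.graining} substituted for the elliptic ones. The core is a one-step contraction. We aim to show that there exist~$l = l(d,\gamma,\nu,\beta,K_\Psi,K_{\Psi_\S},\Pi_{\mathrm{par}})$ with~$l \lesssim \kappa^{-1}\log(\Pi_{\mathrm{par}}K_{\Psi_\S}K_\Psi/\kappa)$ and a constant~$\tau(d)\in(0,1)$ such that, whenever~$\Theta_{n}-1\le\sigma_0$, we have
\begin{equation*}
\Theta_{n+l}-1 \le \tau(\Theta_n-1) + C\,3^{-c\alpha l}\,.
\end{equation*}
Iterating this, with~$l$ chosen so that the additive error is at most~$(1-\tau)/2$ times the current value of~$\Theta_n - 1$ until that value reaches the scale~$3^{-\kappa (n-m_0)}$, yields~\eqref{e.small.contrast.rate.one}. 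Here~$m_0$ absorbs the initial scales needed so that the adapted lattice is stationary, i.e.~$\mathbb{L}_n\subseteq\Z^{d+1}$ for~$n\ge C\log(1+\lambda_0)$, together with the scale separation needed for the minimal scale~$\S$.

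For the one-step estimate we argue as in Section~\ref{ss.one.renorm.step}, but without the pigeonhole. First, we center so that~$\khom_{n+l}$ is symmetric, and we renormalize via Proposition~\ref{p.renormalization.P} with~$\bfE$ replaced by~$(1+\delta)\bfAhom(\cudot_{n})$. Since~$\Theta\le 2$, the adapted geometry has~$\Pi_{\m_0}\le C(d)$, and all error terms depend only on~$\Pi_{\mathrm{par}}$ through~$\lambda_{\m_0}$. Second, Lemma~\ref{l.Jcontrols.Theta} bounds~$\Theta_{n+l}-1$ by~$\E[\tilde J+\tilde J^*](\cudot_{n+l},p',q')$. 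Third, using~\eqref{e.Jbound.to.tildes} and the div-curl computation of Lemma~\ref{l.Jtilde.energy.bound} (the integration by parts from Lemma~\ref{l.testing.with.u}, the parabolic multiscale Poincar\'e inequality, and the negative Besov bounds~\eqref{e.adapted.bound}), we bound this by a multiscale sum. That sum is controlled by the additivity defect~$\bar\tau$ summed over scales~$k\in[n,n+l]$, plus fluctuation terms from Lemma~\ref{l.CFS.adapted}, which are of order~$3^{-(\nu-\gamma)(1-\beta)l/2}$. The difference from the high-contrast case is that we now exploit the small contrast quantitatively. By Lemma~\ref{l.bfE.bounds} the gap~$\bfAhom-\bfAhom_*$ is~$O(\Theta_n-1)$. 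Summing the monotone defects~$\sum_k \bar\tau_k \le \E[J(\cudot_n)-J(\cudot_{n+l})]\lesssim \Theta_n-\Theta_{n+l}$ then gives
\begin{equation*}
\Theta_{n+l}-1\le C l^{-1}(\Theta_n-\Theta_{n+l}) + (\text{errors})\,,
\end{equation*}
provided we pick the scale with the smallest defect. This implies the contraction with~$\tau=Cl_1/(l_1+C)$ for a block length~$l_1(d)$.

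The main obstacle is the bound of the flux and gradient fluctuations in~$\Bring^{-1/2}_{2,1}$ norms linearly in the defects, i.e.\ the parabolic analogue of~\cite[Lemma 2.16]{AK.HC}. The point is that~$\E[\,|(\nabla v_{z,k})-P'|^2]$ must be controlled by~$\bar\tau$ and by the variance term coming from~\ref{a.CFS}. The new ingredient here is the time-derivative term~$\tfrac14\fint (v_n-\ell_{P'})^2\partial_t\varphi$. We would handle it exactly as in Lemma~\ref{l.Jtilde.energy.bound}, via the~$\Bring^{-1}_{2,1}$ multiscale Poincar\'e inequality and the scaling~$\lambda_r^{-1}|\partial_t\varphi|\le C3^{-2n}$; this costs a factor of~$\lambda_{\m_0}^{\pm1}$, which is absorbed into~$\Pi_{\mathrm{par}}$ in~$m_0$. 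Finally, the exponent~$\kappa$ comes from balancing~$3^{-c\alpha l}$ against~$\tau^{l/l_1}$, which produces~$\kappa=c\min\{1,1-\gamma,(\nu-\gamma)(1-\beta)\}$.
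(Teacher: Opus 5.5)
Your high-level plan, running the small-contrast iteration of \cite[Section~4]{AK.HC} with the parabolic coarse-graining inputs, is the paper's. However, the iteration you actually describe cannot produce the rate $3^{-\kappa(m-m_0)}$.

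You renormalize at scale $3^n$ in every step, so the base scale is reset each time. As a result, the additive error in your one-step bound $\Theta_{n+l}-1\le\tau(\Theta_n-1)+C3^{-c\alpha l}$ depends only on the step length $l$. With $l$ fixed, the recursion stalls at the floor $C3^{-c\alpha l}/(1-\tau)$. If instead you let $l$ grow so that the error stays below $\frac{1-\tau}{2}(\Theta_n-1)$, each contraction by a fixed factor costs $l\sim(c\alpha)^{-1}\log\frac{1}{\Theta_n-1}$ scales, and summing these costs gives only $\Theta_m-1\lesssim\exp(-c\sqrt{\alpha m})$. Taking $\delta\sim\Theta_n-1$ in Proposition~\ref{p.renormalization.P} to make the errors relative has the same cost, through $l_0\gtrsim\log\delta^{-1}$. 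Your closing step, balancing $3^{-c\alpha l}$ against $\tau^{l/l_1}$, tacitly treats the error as decaying in the total distance from the base scale, which your one-step estimate does not give.

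What is needed is an estimate with the base scale held fixed. Under $\Theta\le2$ no renormalization is required, since Lemma~\ref{l.small.theta} already makes $\bfE$, $\mathbf{M}_0$ and $\bfAhom(\cudot_n)$ comparable up to $C(d)$ for all $n\ge C\log(CK_{\Psi_{\S}})$. You then bound the current quantity by a weighted multiscale sum $\sum_k 3^{-c(n-k)}\bar\tau_k$ over all scales above the base. The fluctuation and boundary-layer errors come from Lemmas~\ref{l.CFS.adapted} and~\ref{l.adapted.means}, with subscales proportional to $n$, and have size $C(\Pipar,K_\Psi,K_{\Psi_{\S}})\,3^{-c\alpha(n-m_0)}$, decaying in the absolute scale. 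A block length $l_1(d)$ with a $d$-dependent contraction factor then yields the algebraic rate, and $m_0$ absorbs the constants.

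The telescoping step is also wrong as stated. For fixed $(p',q')$, or summed over a basis, the defects telescope to the decrease of a \emph{linear} quantity, namely $\frac12 X\cdot\bigl(\bfAhom(\cusdot_n)-\bfAhom(\cusdot_{n+l})\bigr)X$ with $X=(-p',q')$. This can be positive while the spectral ratio does not move, e.g.\ if $\bhom$ decreases only in directions not realizing the norm in~\eqref{e.Theta.n.def}. So $\sum_k\bar\tau_k\lesssim\Theta_n-\Theta_{n+l}$ fails.

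What is true, by Lemma~\ref{l.bfE.bounds}, is $\sum_k\bar\tau_k\le C(d)(\Theta_n-1)$. That gives a contraction only through a pigeonhole over windows, not over single scales. The div-curl bound involves a weighted sum of defects over many scales, so one smallest defect is not enough. You need $l_1$ consecutive scales with total defect at most $C\frac{l_1}{l}(\Theta_n-1)$, plus control of the tail $3^{-cl_1}\sum_k\bar\tau_k$.

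The paper instead works with the trace quantity $\hat{\Theta}_n$ of~\eqref{e.def.hat.Theta} in adapted cylinders. It does so precisely because additivity defects are controlled by decrements of a trace (linear) quantity rather than of a spectral norm. It then transfers between $\Theta$ and $\hat{\Theta}$ at the start and end via Lemma~\ref{l.bfAhoms.equivalent}. Your sketch needs that transfer as well, since your defects live in $\cusdot_k$ while $\Theta_n$ is defined in $\cudot_n$.

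Two minor points. First, $\Pi_{\m_0}\le C(d)$ is false, since $\Theta\le2$ controls the gap between $\b_0$ and $\s_{*,0}$, not the anisotropy of $\m_0=\s_0$; this is harmless, because $\Pi_{\m_0}\le\Pipar$ only enters $m_0$. Second, the contraction factor should be $C/(l_1+C)$, not $Cl_1/(l_1+C)$.
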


\begin{proof}[{Proof of Theorem~\ref{t.theta.rate} using Theorem~\ref{t.small.contrast.rate}}]
By Theorem~\ref{t.highcontrast} there exists a scale~$n_0$ satisfying
\begin{equation*}
n_0 \leq \frac{C(d)}{\alpha}\biggl( \log(K_{\Psi}\Pi_{\mathrm{par}}) + \frac{1}{\alpha}\log\bigl( \frac{K_{\Psi_{\S}}\Pi_{\mathrm{par}}}{\alpha}\bigr) \biggr)\log(1+\Theta)\,,
\end{equation*}
such that if~$\sigma_0(d)$ is the constant in Theorem~\ref{t.small.contrast.rate} then
\begin{equation*}
\Theta_{n_0} - 1 \leq \sigma_0\,.
\end{equation*}
For~$l_0$ given as in~\eqref{e.l0.condition} we may assume generally that~$n_0 \geq 3l_0$. We then apply Proposition~\ref{p.renormalization.P} to~$n_0+l_0$, choosing the parameters~$\rho = \frac{\min\{\nu,1\}+\gamma}{2}$ and~$\delta = 1$, to obtain that the pushforward measure~$\mathbb{P}_{n_0+l_0}$, defined in~\eqref{e.Pn0}, satisfies the assumptions~\ref{a.stationarity},~\ref{a.ellipticity.dagger}, and~\ref{a.CFS} with the new parameters
\begin{align*}
\left\{
\begin{aligned}
& \mathbf{E}_{\mathrm{new}}  = 2\bfAhom(\cudot_{n_0}) \\
& \gamma_{\mathrm{new}}  = \rho \\
& K_{\Psi,\mathrm{new}} = K_{\Psi} \\
& K_{\Psi_{\S},\mathrm{new}} = \max\{K_{\Psi_{\S}},K_{\Psi}^{\lceil \nicefrac{1}{\mu}\rceil}\}
\end{aligned}
\right.
\end{align*}
Applying Theorem~\ref{t.small.contrast.rate} to~$\mathbb{P}_{n_0+l_0}$ (with~$m_0$ as in that statement) gives
\begin{equation*}
\Theta_{m,\mathrm{new}} - 1 \leq 3^{-\kappa_{\mathrm{new}} (m-m_0)}
\end{equation*}
with~$\kappa_{\mathrm{new}}$ given as in~\eqref{e.small.c.params} with~$\gamma_{\mathrm{new}}$ in place of~$\gamma$. In view of the identity~$\Theta_{m,\mathrm{new}} = \Theta_{m+n_0+l_0}$ and the estimate
\begin{equation*}
\kappa_{\mathrm{new}} \geq \frac{\kappa}{2}\,,
\end{equation*}
we conclude the proof after a relabelling of parameters.
\end{proof}

Combining quantitative convergence of the means with quantitative ergodicity, in the form of~\ref{a.CFS}, leads to a quenched convergence result. We omit the proof since it is identical to the proof in~\cite[Section 4.2]{AK.HC}, once we are able to take Theorem~\ref{t.small.contrast.rate} as an input. The homogenized matrix~$\bfAhom$ is defined in~\eqref{e.bfAhom.def}.

\begin{theorem}
\label{t.quenched}
Suppose that~$\Theta_m - 1 \leq \Upsilon 3^{-\kappa m}$. Then for each~$\delta > 0, \gamma' \in (\gamma,1)$ there exists a random variable~$\mathcal{Y}_{\delta,\gamma'}$ satisfying
\begin{equation}
\label{e.Y.integrability}
\mathcal{Y}_{\delta,\gamma'}^{(\nu-\gamma)(1-\beta)} = \mathcal{O}_{\Psi}\biggl( C K_{\Psi}^{4 + \frac{4d^2}{(\gamma' - \gamma)^2}} \biggl(\frac{\Upsilon}{\kappa \delta} \biggr)^{\nicefrac{d}{\kappa}}\biggr)\,,
\end{equation}
such that for~$\theta: = \frac{1}{8}\min\{\kappa,\gamma' - \gamma\}$ and every~$m\in \N$,
\begin{equation}
3^m \geq \mathcal{Y}_{\delta,\gamma'} \vee \mathcal{S} \implies \bfA(z+\cudot_k) \leq \biggl( 1 + \delta 3^{\gamma'(m-k)} \biggl( \frac{\mathcal{Y}_{\delta,\gamma'}\vee \mathcal{S}}{3^m}\biggr)^{\theta} \biggr) \bfAhom\,, \quad \forall k\in \Z \cap (-\infty,m], z\in \mathcal{Z}_k \cap \cudot_m\,.
\end{equation}
\end{theorem}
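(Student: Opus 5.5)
The plan is to follow the elliptic argument of~\cite[Section 4.2]{AK.HC}: the convergence of the means, as assumed in the hypothesis $\Theta_m - 1 \leq \Upsilon 3^{-\kappa m}$, is converted into quenched bounds by using the concentration estimate~\eqref{e.bfA.CFS.upper} at a mesoscale and summing over scales with a union bound.

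\emph{Step 1: deterministic bound on the means.} First I convert the decay of $\Theta_m-1$ into a bound on $\bfAhom(\cudot_n)$ relative to the limit $\bfAhom$. By Lemma~\ref{l.bfE.bounds}, applied to $\mathbf{E}_1=\bfAhom(\cudot_n)$ and $\mathbf{E}_{*,1}=\bfAhom_*(\cudot_n)$ after centering with $\h_0$, we get $|\bfAhom_*^{-1/2}(\cudot_n)\bfAhom(\cudot_n)\bfAhom_*^{-1/2}(\cudot_n)-\Itwod|\le 6(\Theta_n-1)$. By monotonicity, $\bfAhom_*(\cudot_n)\le\bfAhom\le\bfAhom(\cudot_n)$. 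Together these give
\begin{equation*}
\bfAhom(\cudot_n)\le(1+C\Upsilon3^{-\kappa n})\bfAhom \quad \mbox{for all } n .
\end{equation*}

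\emph{Step 2: concentration at each scale pair.} Fix scales $k\le m$ and a mesoscale $n\in(\beta k,k)$. I choose $k-n$ proportional to $k$, say $n=\lceil (1+\beta)k/2\rceil$. By subadditivity~\eqref{e.subadditivity},
\begin{equation*}
\bfA(z+\cudot_k)\le \avsum_{z'\in\mathcal{Z}_n\cap(z+\cudot_k)} \bfA(z'+\cudot_n).
\end{equation*}
I then write $\bfA(z'+\cudot_n)=\bfAhom(\cudot_n)+(\bfA(z'+\cudot_n)-\bfAhom(\cudot_n))$. The first term is handled by Step~1. The fluctuation is controlled by~\eqref{e.bfA.CFS.upper} on the event $\{\S\le3^m\}$, which gives a bound $\O_\Psi(3^{\gamma(m-n)}3^{-\nu(k-n)})$ relative to $\bfE$. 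Since $\bfE\le C\bfAhom$ in small contrast by~\eqref{e.bfAhom.by.E0}, this is also a bound relative to $\bfAhom$. For $k$ far below $m$ the factor $3^{\gamma(m-n)}$ is too large. There I do not use concentration and instead use the crude deterministic bound~\eqref{e.ellipticity.mesogrid}, which is where the allowance $\delta3^{\gamma'(m-k)}$ with $\gamma'>\gamma$ is spent: the surplus $3^{(\gamma'-\gamma)(m-k)}$ absorbs all prefactors once $m-k$ exceeds a threshold proportional to $\theta^{-1}$ times the relevant logarithm.

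\emph{Step 3: union bound and the minimal scale.} I define $\mathcal{Y}_{\delta,\gamma'}$ as the smallest scale $3^{m_1}$ such that, for every $m\ge m_1$ and every $k$ in the intermediate range, the fluctuation term is at most $\tfrac{\delta}{2}3^{\gamma'(m-k)}3^{-\theta(m-m_1)}$ for all $z\in\mathcal{Z}_k\cap\cudot_m$. A union bound over the $3^{(d+2)(m-k)}$ cubes $z$ and over $k$ and $m$ gives the tail estimate. The growth condition~\eqref{e.Psi.growth} lets me absorb the polynomial count into a loss of powers of $K_\Psi$; this produces the factor $K_\Psi^{4+4d^2/(\gamma'-\gamma)^2}$, because the number of absorptions scales like $d/(\gamma'-\gamma)$ squared. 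Below the deterministic scale where $C\Upsilon3^{-\kappa n}\le\delta/2$ fails, I absorb that range into the definition of $\mathcal{Y}$. This gives the $(\Upsilon/(\kappa\delta))^{d/\kappa}$ factor after raising to the power $(\nu-\gamma)(1-\beta)$ dictated by the CFS decay rate $3^{-\nu(k-n)}$ with $k-n\approx(1-\beta)k/2$.

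\emph{Main obstacle.} The delicate step is the exponent bookkeeping in Step~3. I have to choose $\theta=\tfrac18\min\{\kappa,\gamma'-\gamma\}$ so that both the mean error $3^{-\kappa n}$ and the fluctuation error are dominated by $(\mathcal{Y}/3^m)^\theta$ uniformly in $k$, while the union-bound sum over $(k,m)$ still converges. The parabolic setting only changes the cube counts from $d$ to $d+2$ and requires the lattice $\mathcal{Z}_k$, so I expect the computations in~\cite[Section 4.2]{AK.HC} to carry over verbatim with this substitution.
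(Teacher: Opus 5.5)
Your architecture is the right one, and it is the elliptic argument the paper defers to. Step~1 correctly gives $\bfAhom(\cudot_n)\le(1+6\Upsilon 3^{-\kappa n})\bfAhom$. Subadditivity down to a mesoscale, \eqref{e.bfA.CFS.upper} for the fluctuations, the crude bound from~\ref{a.ellipticity.dagger} for $k\ll m$, and a union bound are the correct ingredients. However, the exponent bookkeeping fails because of your mesoscale $n=\lceil(1+\beta)k/2\rceil$. With it, $k-n\approx\frac12(1-\beta)k$, so \eqref{e.bfA.CFS.upper} only gives fluctuations of size $\O_\Psi\bigl(3^{\gamma(m-k)}3^{-\mu k/2}\bigr)$, where $\mu=(\nu-\gamma)(1-\beta)$. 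In the tail estimate for $\{\mathcal{Y}_{\delta,\gamma'}>3^{m_1}\}$, the single term $k=m=m_1$ already contributes $1/\Psi(c\delta 3^{\mu m_1/2})$. So your argument yields only $\mathcal{Y}_{\delta,\gamma'}^{\mu/2}=\O_\Psi(\cdots)$, not~\eqref{e.Y.integrability}. This is a real loss, not a change of constants: for $\Psi=\Gamma_2$ it gives exponential rather than Gaussian tails for $\mathcal{Y}_{\delta,\gamma'}^{\mu}$. Your closing claim that the exponent $(\nu-\gamma)(1-\beta)$ comes from $k-n\approx(1-\beta)k/2$ is internally inconsistent.

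The missing idea is to decouple the mean error from the fluctuation, using that $j\mapsto\bfAhom(\cudot_j)$ is nonincreasing.
\begin{itemize}
\item Fix $m_1$ in the tail estimate. Let $n_*$ be the smallest integer with $6\Upsilon 3^{-\kappa n_*}\le\frac{\delta}{2}3^{-\theta(m-m_1)}$. This is deterministic, with $n_*\approx\kappa^{-1}\log_3(12\Upsilon/\delta)+\frac{\theta}{\kappa}(m-m_1)$.
\item Place the mesoscale as low as~\ref{a.CFS} allows: $n=\max\{\lfloor\beta k\rfloor+1,n_*\}$.
\item Then $\bfAhom(\cudot_n)\le\bfAhom(\cudot_{n_*})\le(1+6\Upsilon 3^{-\kappa n_*})\bfAhom$, however far $n$ exceeds $n_*$.
\item At the same time $k-n\ge(1-\beta)k-n_*-1$. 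So the fluctuation decays like $3^{-\mu k}$, up to the deterministic factor $3^{(\nu-\gamma)(n_*+1)}$.
\item At $m=m_1$ this factor is $\lesssim(12\Upsilon/\delta)^{(\nu-\gamma)/\kappa}$. Since $\nu-\gamma\le d$, this is the source of the $(\Upsilon/(\kappa\delta))^{d/\kappa}$ in~\eqref{e.Y.integrability}. It is the same mechanism behind the factor $\Psi(t^{\mu})$ in $\Psi_{\S'}$ in Lemma~\ref{l.renormalize.ellipticity}.
\end{itemize}
Relatedly, the crude-bound region is not a fixed threshold in $m-k$. You need $(\gamma'-\gamma)(m-k)\ge\log_3(C/\delta)+\theta(m-m_1)$, so concentration must handle every $k\gtrsim m-\frac18(m-m_1)$. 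The double sum over $(k,m)$ then converges only because $\theta<\mu$. This holds for the exponent of Theorem~\ref{t.theta.rate}, since $\theta\le\kappa/8\le\alpha/32\le\mu/32$. You should verify this explicitly rather than leave it to the phrase ``still converges''.
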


In the rest of this section we define the homogenized matrix~$\bfAhom$, introduce the relevant adapted cubes for the proofs, and prove a straightforward but useful result comparing the ellipticity ratios in normal and adapted domains.

\subsection{Homogenized matrix and adapted domains}
\label{ss.homogenized.mat}

For any~$e\in\R^d$ the sequences~$n\mapsto e\cdot \shom(\cudot_n) e$ and~$n\mapsto e \cdot \shom_*^{-1}(\cudot_n)e$ are non-increasing and bounded, by~\eqref{e.monotone.s}. Since the matrices are symmetric we therefore obtain the qualitative limits~$\shom(\cudot_n) \to \shom$ and~$\shom_*(\cudot_n) \to \shom_*$. The definition of~$\Theta_n$ implies that for any~$n\in\N$ we have~$\shom(\cudot_n) \leq \Theta_n \shom_*(\cudot_n)$ so it follows that
\begin{equation*}
\shom_* \leq \shom \leq \shom(\cudot_n) \leq \Theta_n \shom_*(\cudot_n) \leq \Theta_n \shom_*\,,
\end{equation*}
and the limit~$\Theta_n \to 1$ gives~$\shom = \shom_*$. Taking~$n\leq m$, conjugating with~$\mathbf{G}_{\khom(\cudot_n)}$ and using~$\bfAhom(\cudot_m) \leq \bfAhom(\cudot_n)$ we obtain
\begin{equation*}
\shom_*(\cudot_m) \leq \shom(\cudot_m) + (\khom(\cudot_m) - \khom(\cudot_n))^t\shom_*^{-1}(\cudot_m)(\khom(\cudot_m)-\khom(\cudot_n)) \leq \shom(\cudot_n) \leq \Theta_n\shom_*(\cudot_n) \leq \Theta_n \shom_*(\cudot_m)\,,
\end{equation*}
from which we obtain
\begin{equation}
\label{e.control.kdiff.theta}
(\khom(\cudot_m) - \khom(\cudot_n))^t\shom_*^{-1}(\cudot_m)(\khom(\cudot_m)-\khom(\cudot_n)) \leq (\Theta_n - 1)\shom_*(\cudot_m) \leq (\Theta_n -1)\shom\,.
\end{equation}
Then~$\Theta_n \to 1$ implies that the sequence~$\khom(\cudot_n)$ is Cauchy and converges to a limit~$\khom$. By~\eqref{e.symm.k.controlled} we have
\begin{equation*}
|\shom^{-\nicefrac12} (\khom(\cudot_n) + \khom^t(\cudot_n))\shom^{-\nicefrac12} | \leq \Theta_n - 1\,,
\end{equation*}
so we see by sending~$n\to\infty$ that~$\khom$ is skew-symmetric. We then define the homogenized matrix~$\bfAhom$ by
\begin{equation}
\label{e.bfAhom.def}
\bfAhom := \begin{pmatrix}
\shom + \khom^t\shom^{-1}\khom & -\khom^t\shom^{-1} \\
-\shom^{-1}\khom & \shom^{-1}
\end{pmatrix}\,,
\end{equation}
and define~$\bhom$ as the top left block
\begin{equation}
\label{e.bhom.def}
\bhom := \shom + \khom^t\shom^{-1}\khom\,.
\end{equation}
We see that~$\lim_{n\to\infty} \bfAhom(\cudot_n) = \bfAhom$ and~$\lim_{n\to\infty} \bhom(\cudot_n) = \bhom$.

In Section~\ref{s.hc} we used a pigeonholing lemma to find a sequence of scales where the coarse-grained matrices~$\bfAhom(\cudot_n)$ did not change much. We now state a simple lemma which proves that in the small contrast regime all of our double-variable matrices are essentially equivalent. Recall that
\begin{equation*}
\mathbf{M}_0 := \begin{pmatrix}
\m_0 & 0 \\ 0 & \m_0^{-1}
\end{pmatrix}\,,
\end{equation*}
where in this section we define
\begin{equation}
\label{e.ch.four.m0}
\m_0 \coloneqq \s_0\,.
\end{equation}

\begin{lemma}
\label{l.small.theta}
Suppose that
\begin{equation*}
n \geq C(d)\log\bigl(C(d)K_{\Psi_{\S}}\bigr)
\end{equation*}
and~$\Theta \leq 2$. Then
\begin{equation*}
\bfAhom(\cudot_n) \leq 2\bfE \leq C(d)\mathbf{M}_0 \leq C(d)\bfAhom(\cudot_n)\,.
\end{equation*}
\end{lemma}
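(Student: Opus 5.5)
The plan is to prove the three inequalities separately. Each one is a consequence of bounds already stated, together with the hypothesis~$\Theta\le 2$ and the choice~$\m_0=\s_0$ in~\eqref{e.ch.four.m0}.

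\emph{First inequality.} We use~\eqref{e.bfAhom.by.E0}, which gives~$\bfAhom(\cudot_n)\le (1+3^{3-n}K_{\Psi_\S}^2)\bfE$. If~$n\ge C(d)\log(C(d)K_{\Psi_\S})$ with~$C$ large enough, then~$3^{3-n}K_{\Psi_\S}^2\le 1$, and hence~$\bfAhom(\cudot_n)\le 2\bfE$. This is the only place where the lower bound on~$n$ enters.

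\emph{Second inequality,~$\bfE\le C(d)\mathbf{M}_0$.} By the computation following~\eqref{e.E0.blocks.def} we have the block-diagonal bound
\begin{equation*}
\bfE\le 2\begin{pmatrix}\b_0&0\\0&\s_{*,0}^{-1}\end{pmatrix}.
\end{equation*}
The statement does not specify a centering, so I take the one in which~$\k_0$ equals the minimizer~$\h_0$ in~\eqref{e.Theta.def}; this is permitted by~\eqref{e.Ak.formula}. Under this centering~$\b_0=\s_0+(\k_0-\h_0)^t\s_{*,0}^{-1}(\k_0-\h_0)$. Since~$\Theta\le2$, this gives~$\b_0\le 2\s_{*,0}$, and the ordering~$\s_{*,0}\le\s_0$ then gives~$\b_0\le 2\s_0=2\m_0$.

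The ordering~$\s_{*,0}\le\s_0$ is not automatic for an arbitrary~$\bfE$. I would deduce it from~\eqref{e.bfAhom.by.E0}, taking~$n$ large and using that~$\shom_*(\cudot_n)\le\shom(\cudot_n)$. If that step fails, I would instead bound~$\s_0\ge \b_0-(\k_0-\h_0)^t\s_{*,0}^{-1}(\k_0-\h_0)$ and control the skew term by~$(\Theta-1)\s_{*,0}$.

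For the second diagonal block, we have~$\s_{*,0}^{-1}\le 2\b_0^{-1}$, and~$\b_0\ge\s_0$ gives~$\b_0^{-1}\le \s_0^{-1}=\m_0^{-1}$. Combining the two blocks yields~$\bfE\le 4\mathbf{M}_0$.

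\emph{Third inequality,~$\mathbf{M}_0\le C(d)\bfAhom(\cudot_n)$.} This is the main obstacle, because~$\bfAhom(\cudot_n)$ has off-diagonal blocks and we are comparing it with a diagonal matrix. The upper bound in~\eqref{e.bfAhom.by.E0} gives~$\bfE\le 2(1+32(\Theta-1))\bfAhom(\cudot_n)\le 66\,\bfAhom(\cudot_n)$. It therefore suffices to show~$\mathbf{M}_0\le C\bfE$.

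To compare the diagonal matrix~$\mathbf{M}_0$ with~$\bfE$, I would use the factorization~$\bfE=\mathbf{G}_{-\k_0}^t\,\mathrm{diag}(\s_0,\s_{*,0}^{-1})\,\mathbf{G}_{-\k_0}$. In our centering~$\k_0=\h_0$ is skew-symmetric, with~$\h_0^t\s_{*,0}^{-1}\h_0\le(\Theta-1)\s_{*,0}$ small relative to~$\s_{*,0}$. The plan is to bound~$\mathbf{G}_{-\k_0}^{-t}\mathbf{M}_0\mathbf{G}_{-\k_0}^{-1}$ by~$C\,\mathrm{diag}(\s_0,\s_{*,0}^{-1})$.

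The top-left block of this conjugate is~$\m_0+\k_0^t\m_0^{-1}\k_0$. Since~$\m_0=\s_0\ge\s_{*,0}$, we have~$\m_0^{-1}\le\s_{*,0}^{-1}$, so this block is at most~$\s_0+(\Theta-1)\s_{*,0}\le 2\s_0$. The bottom-right block is~$\m_0^{-1}\le\s_{*,0}^{-1}$. The off-diagonal terms are absorbed by Young's inequality, as in~\eqref{e.bigA.diag.bounds}. This gives~$\mathbf{M}_0\le C\bfE$, and the chain of inequalities closes.

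If the centering assumption is not available, I expect~\eqref{e.symm.k.controlled} and Lemma~\ref{l.bfE.bounds} to supply the same control on~$\k_0$, at least on its symmetric part.
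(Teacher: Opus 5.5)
Your proof is correct. Its skeleton matches the paper's: \eqref{e.bfAhom.by.E0} gives the two outer inequalities, the block-diagonal bound $\bfE\le 2\,\mathrm{diag}(\b_0,\s_{*,0}^{-1})$ handles the middle, and $\Theta\le 2$ compares $\b_0$ with $\s_0$.

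You differ from the paper in two places.

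\emph{Comparing $\b_0$ with $\s_0$.} The paper uses a trace estimate,
$|\s_0^{-\nicefrac12}\b_0\s_0^{-\nicefrac12}|\le 16\,\mathrm{tr}(\s_0^{-\nicefrac12}\b_0\s_0^{-\nicefrac12})\le 16d\Theta$.
This presupposes a frame in which $\h=0$ minimizes the trace, and it loses a factor of $d$. You instead recentre at the minimizer in \eqref{e.Theta.def} itself. That gives $\b_0\le 2\s_{*,0}\le 2\s_0$ with dimension-free constants.

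Both routes need the ordering $\s_{*,0}\le\s_0$. The paper uses it tacitly when it identifies an infimum conjugated by $\s_0^{-\nicefrac12}$ with $\Theta$. Your derivation of it works: conjugate the lower bound in \eqref{e.bfAhom.by.E0} by $\mathbf{G}_{\k_0}$ and compare diagonal blocks. This gives $\s_{*,0}\le c_n\shom_*(\cudot_n)\le c_n\shom(\cudot_n)\le c_n^2\s_0$ with $c_n:=1+3^{3-n}K_{\Psi_{\S}}^2\to1$.

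\emph{The last inequality.} The paper writes $48d\Theta\mathbf{M}_0\le 48d\Theta\bfE$, i.e.\ it asserts $\mathbf{M}_0\le\bfE$. With constant one this fails whenever $\k_0\neq0$: testing $\bfE-\mathbf{M}_0$ on $(x,\k_0x)$ gives $-\k_0x\cdot\s_0^{-1}\k_0x<0$. Your conjugation by $\mathbf{G}_{\k_0}$ followed by Young's inequality, using the smallness of the centred $\k_0$, is exactly the missing justification. It yields $\mathbf{M}_0\le 3\bfE$.

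Two corrections to your write-up.

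First, the centred $\k_0$ is $\k_0-\h_0$ in the original frame, not $\h_0$, and it is not skew-symmetric in general. What you actually use is $\k_0^t\s_{*,0}^{-1}\k_0=\b_0-\s_0\le(\Theta-1)\s_{*,0}$ in the centred frame. That bound holds, and skewness plays no role.

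Second, drop both fallbacks. The one for the ordering is circular: bounding the skew term by $(\Theta-1)\s_{*,0}$ already uses $\s_0\ge\s_{*,0}$. The uncentred one cannot work, because $\mathbf{M}_0$ is invariant under recentring while $\bfE$ is not. A large skew part of $\k_0$ makes $\b_0\gg\s_0$ and destroys $\bfE\le C\mathbf{M}_0$, and \eqref{e.symm.k.controlled} only controls the symmetric part. The lemma must be read in a centred frame, as the paper's proof also tacitly does.
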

\begin{proof}
We have
\begin{align*}
|\s_0^{-\nicefrac12}\b_0\s_0^{-\nicefrac12}| & \leq 16\mathrm{tr}(\s_0^{-\nicefrac12}\b_0\s_0^{-\nicefrac12})  = 16\inf_{\h\in\mathbb{R}^{d\times d}_{\mathrm{skew}}} \mathrm{tr}(\s_0^{-\nicefrac12}\b_{0,\h}\s_0^{-\nicefrac12}) \\
& \leq 16d \inf_{\h\in\mathbb{R}^{d\times d}_{\mathrm{skew}}}|\s_0^{-\nicefrac12}\b_{0,\h}\s_0^{-\nicefrac12}| = 16d\Theta\,,
\end{align*}
and therefore applying~\eqref{e.bfAhom.by.E0}, our lower bound on~$n$ and the definition of~$\Theta$,
\begin{align*}
\bfAhom(\cudot_n) \leq 2\bfE \leq 4\begin{pmatrix}
\b_0 & 0 \\ 0 & \s_{*,0}^{-1}
\end{pmatrix} \leq 48d\Theta \mathbf{M}_0 \leq 48d\Theta \bfE \leq 96d\Theta(1+32(\Theta - 1)) \bfAhom(\cudot_n)\,.
\end{align*}
\end{proof}

In order to prove Theorem~\ref{t.theta.rate} we need to work in adapted parabolic cylinders. In this section the adapted parabolic cubes are defined by~$\m_0$ as in~\eqref{e.ch.four.m0}, for which~$\Lambda_{\m_0} \leq \Lambda$. We make the standing assumption that we work above the scale
\begin{equation}
k_0 := C(d)\log (1+\lambda)
\end{equation}
at which (for an appropriate constant) the adapted parabolic cubes are stationary. By choosing a constant~$0< c \ll 1$ depending only on~$\lambda_{\m_0}$ and~$\Lambda_{\m_0}$ an application of Lemma~\ref{l.adapted.means} gives
\begin{equation*}
\bfAhom(\cudot_{c^{-1}n}) \leq \bfAhom(\cusdot_n) + C(d,\gamma,K_{\Psi_{\S}},\Pi_{\mathrm{par}}) 3^{-(1-\gamma)(c^{-1}-1)n}\bfE
\end{equation*}
and
\begin{equation*}
\bfAhom(\cusdot_n) \leq \bfAhom(\cusdot_{cn}) + C(d,\gamma,K_{\Psi_{\S}},\Pi_{\mathrm{par}}) 3^{-(1-\gamma)(1-c)n}\bfE\,,
\end{equation*}
so that~$\lim_{n\to\infty} \bfAhom(\cusdot_n) = \bfAhom$ and we similarly obtain the immediate qualitative convergence of the quantities in the adapted parabolic cylinders. 

The proof of Theorem~\ref{t.small.contrast.rate} works with adapted versions of the ellipticity defined by
\begin{equation}
\label{e.def.hat.Theta}
\hat{\Theta}_n := \frac{1}{d}\mathrm{trace}\bigl(\shom_*^{-\nicefrac12}(\cusdot_n)\shom(\cusdot_n)\shom_*^{-\nicefrac12}(\cusdot_n)\bigr)\quad \mbox{and} \quad \tilde{\Theta}_n = \bigl|\shom_*^{-\nicefrac12}(\cusdot_n)\shom(\cusdot_n)\shom_*^{-\nicefrac12}(\cusdot_n)\bigr|\,.
\end{equation}
These are monotone by~\eqref{e.monotone.s}, and we let~$\hat{\Theta}$ and~$\tilde{\Theta}$ denote the analogous quantities defined for~$\bfE$. We use the trace because additivity defects which appear in the proof are more easily estimated by a linear quantity. Of course we have that
\begin{equation*}
\frac{1}{d}(\tilde{\Theta}_n - 1) \leq \hat{\Theta}_n - 1 \leq \tilde{\Theta}_n - 1\,.
\end{equation*}
Since~$\Theta_n$ is defined in the normal parabolic cylinders while~$\hat{\Theta}_n$ and~$\tilde{\Theta}_n$ are defined in the adapted parabolic cylinders we need to transfer bounds on~$\tilde{\Theta}_n$ to those on~$\Theta_n$ and vice-versa which is the statement of the following lemma; note that the constant in the brackets will be small provided the scale separation parameters~$n-k$ and~$m-n$ are large. We include the following straightforward lemma because its proof does not appear in~\cite[Section 4]{AK.HC}.

\begin{lemma}
\label{l.bfAhoms.equivalent}
If~$k < n$ such that~$\cudot_k \subseteq \cusdot_n$ and~$m > n$ such that~$\cusdot_n \subseteq \cudot_m$ then for the length scales
\begin{align*}
& L \coloneqq \frac{Cd^{\nicefrac{3}{2}}K_{\Psi_{\S}}^9}{1-\gamma}\max\{\Pi_{\m_0}^{\nicefrac{\gamma}{2}},\lambda_{\m_0}^{-\nicefrac{\gamma}{2}}\} (1+3^{3-k}K_{\Psi_{\S}}^2)(1+32(\Theta - 1))
\\
& L' \coloneqq \frac{Cd^{\nicefrac{3}{2}}K_{\Psi_{\S}}^9}{1-\gamma}\Pi_{\m_0}^{\frac{1-\gamma}{2}} (1+32(\Theta-1))
\end{align*}
we have
\begin{align}
\bfAhom(\cusdot_n) \leq \bfAhom (\cudot_k)\bigl( 1 + L3^{-(1-\gamma)(n-k)}\bigr) \quad \mathrm{and} \quad \bfAhom(\cudot_m) \leq \bfAhom(\cusdot_n) \bigl( 1 + L'3^{-(1-\gamma)(m-n)}\bigr) \label{e.bfAhoms.equiv}\,.
\end{align}
Consequently, for any~$\delta \in (0,1]$, if
\begin{equation}
\label{e.scale.sep}
l := \bigg\lceil\frac{C(d)}{1-\gamma}\log\biggl(\frac{C(d)K_{\Psi_{\S}}\max\{\Pi_{\m_0},\lambda^{-1}_{\m_0}\}}{\delta(1-\gamma)} \biggr)\bigg\rceil
\end{equation}
and we assume further that~$n \geq k + l$,~$m\geq n + l$ and~$\Theta \leq 4$ then
\begin{align}
\tilde{\Theta}_n - 1 \leq (\Theta_k - 1) + \delta 3^{-(1-\gamma)(n-k-l)} \quad \mathrm{and} \quad \Theta_m - 1 \leq 4(\tilde{\Theta}_n - 1) + \delta 3^{-(1-\gamma)(m-n-l)}\,. \label{e.thetas.equiv}
\end{align}
\end{lemma}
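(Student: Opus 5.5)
Using Lemma~\ref{l.adapted.means}, I would turn the additive errors $C\,3^{-(1-\gamma)(\cdot)}\mathbf{E}_0$ into multiplicative ones. Lemma~\ref{l.adapted.means} gives
\begin{equation*}
\bfAhom(\cusdot_n) \leq \bfAhom(\cudot_k) + \frac{C(d)K_{\Psi_{\S}}^9}{1-\gamma}\max\{\Pi_{\m_0}^{\nicefrac{\gamma}{2}},\lambda_{\m_0}^{-\nicefrac{\gamma}{2}}\}3^{-(1-\gamma)(n-k)}\mathbf{E}_0\,.
\end{equation*}
The upper half of~\eqref{e.bfAhom.by.E0}, $\mathbf{E}_0 \leq 2(1+32(\Theta-1))\bfAhom(\cudot_j)$, holds for every $j$, since it uses only monotonicity of $\bfAhom(\cudot_j)$ and $\bfAhom(\cudot_j) \geq \bfAhom(\cudot_\infty)$. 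Applying it with $j=k$ replaces $\mathbf{E}_0$ by a multiple of $\bfAhom(\cudot_k)$. This yields the first inequality in~\eqref{e.bfAhoms.equiv}; the factor $(1+3^{3-k}K_{\Psi_\S}^2)$ in $L$ is generous bookkeeping that covers the lower half of~\eqref{e.bfAhom.by.E0} if needed.

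For the second inequality I need $\mathbf{E}_0 \leq C\,\bfAhom(\cusdot_n)$. The ordering $\bfAhom(\cusdot_n) \geq \bfAhom(\cudot_m)$ holds up to an error, and this is circular. To break the circularity I would instead use $\mathbf{E}_0\le 2(1+32(\Theta-1))\bfAhom(\cudot_{m'})$ for $m'\geq m$ large, together with the subadditivity decomposition from the proof of Lemma~\ref{l.adapted.means}, which yields $\bfAhom(\cudot_{m'})\le \bfAhom(\cusdot_n)+\text{error}$. Alternatively one can absorb: write $X=\bfAhom(\cudot_m)$, $Y=\bfAhom(\cusdot_n)$, and $\varepsilon = C L' 3^{-(1-\gamma)(m-n)}$. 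Then $X \leq Y + \varepsilon X$, since $\mathbf{E}_0 \lesssim X$, so $X \leq (1-\varepsilon)^{-1}Y$. When $\varepsilon\le\frac12$ this gives $X \leq (1+2\varepsilon)Y$. When $\varepsilon>\frac12$ I would use the crude bound $X \leq \mathbf{E}_0$-type estimates, adjusting $C$ in $L'$. This case split is the one genuinely delicate point.

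Next I would derive~\eqref{e.thetas.equiv} from~\eqref{e.bfAhoms.equiv}. Choosing $l$ as in~\eqref{e.scale.sep} makes $L3^{-(1-\gamma)l}\le \delta/C$ and $L'3^{-(1-\gamma)l}\le\delta/C$, using $\Theta\le4$. Conjugating $\bfAhom(\cusdot_n)\le(1+\eta)\bfAhom(\cudot_k)$ by $\mathbf{G}_{\h}$ as in~\eqref{e.monotone.s} and comparing diagonal blocks gives
\begin{equation*}
\shom(\cusdot_n)\le (1+\eta)\bhom_{\h}(\cudot_k) \quad\mbox{and}\quad \shom_*^{-1}(\cusdot_n)\le(1+\eta)\shom_*^{-1}(\cudot_k)\,,
\end{equation*}
for every skew $\h$ (with $\h=\khom(\cusdot_n)$ giving the left block exactly). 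Hence, choosing $\h$ to be the minimizer $\h_0$ in $\Theta_k$,
\begin{equation*}
\tilde\Theta_n \le (1+\eta)^2\Theta_k\,.
\end{equation*}
With $\Theta_k \leq 4$ this gives $\tilde\Theta_n - 1 \leq (\Theta_k-1) + C\eta$, and $C\eta\le\delta3^{-(1-\gamma)(n-k-l)}$.

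For the reverse direction, $\Theta_m\le(1+\eta')^2\min_{\h}|\shom_*^{-1/2}\bhom_{\h}\shom_*^{-1/2}(\cusdot_n)|$. Taking $\h$ equal to the skew part of $\khom(\cusdot_n)$, the symmetric remainder is controlled by Lemma~\ref{l.bfE.bounds}, estimate~\eqref{e.symm.k.controlled}, applied to $\bfAhom(\cusdot_n)\ge\bfAhom_*(\cusdot_n)$. This shows $|\shom_*^{-1/2}\bhom_{\h}\shom_*^{-1/2}(\cusdot_n)|-1\le (\tilde\Theta_n-1)+(\tilde\Theta_n-1)^2\le 4(\tilde\Theta_n-1)$, using $\tilde\Theta_n\le C$. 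Adding the $\eta'$ error then gives the second inequality.
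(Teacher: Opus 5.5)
Your proposal is correct and follows the paper's proof. The shared steps are: the multiplicative conversion via \eqref{e.adapted.mean.by.Ahom}, \eqref{e.Ahom.by.adapted.mean} and \eqref{e.bfAhom.by.E0}; the bound $\mathbf{E}_0\le 4(1+32(\Theta-1))\bfAhom(\cusdot_n)$, obtained by passing to a much larger normal cube and reabsorbing as in \eqref{e.bfE.bfAhom}; and the diagonal-block comparison after conjugating by $\mathbf{G}_{\h}$ together with \eqref{e.symm.k.controlled}. Your first route to that bound is what breaks the circularity, so the case-split absorption is unnecessary, and it would not close on its own when $\varepsilon>\tfrac12$. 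Applying \eqref{e.symm.k.controlled} in $\cusdot_n$ rather than in $\cudot_m$, as the paper does, is only a harmless reordering.

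Keep the factor $\tfrac14$ in $\tilde\Theta_n+\tfrac14(\tilde\Theta_n-1)^2$, which comes from $\khom-\h=\tfrac12(\khom+\khom^t)$. Dropping it, as your bound $(\tilde\Theta_n-1)+(\tilde\Theta_n-1)^2$ does, would require $\tilde\Theta_n\le 4$ exactly to get the constant $4$, whereas $\tilde\Theta_n$ is only known to be slightly above $4$.
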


\begin{proof}
\emph{Step 1: Bounds on coarse-grained matrices.}
Take~$k < n$ such that~$\cudot_k \subseteq \cusdot_n$. Then by~\eqref{e.adapted.mean.by.Ahom} and~\eqref{e.bfAhom.by.E0}
\begin{align*}
\bfAhom(\cusdot_n) & \leq \bfAhom(\cudot_k) + \frac{Cd^{\nicefrac{3}{2}}K_{\Psi_{\S}}^9}{1-\gamma}\max\{\Pi_{\m_0}^{\nicefrac{\gamma}{2}},\lambda_{\m_0}^{-\nicefrac{\gamma}{2}}\} 3^{-(1-\gamma)(n-k)}\bfE \\
& \leq \bfAhom (\cudot_k)\biggl( 1 + \frac{Cd^{\nicefrac{3}{2}}K_{\Psi_{\S}}^9}{1-\gamma}\max\{\Pi_{\m_0}^{\nicefrac{\gamma}{2}},\lambda_{\m_0}^{-\nicefrac{\gamma}{2}}\} 3^{-(1-\gamma)(n-k)}(1+3^{3-k}K_{\Psi_{\S}}^2)(1+32(\Theta - 1))\biggr)\,,
\end{align*}
which is the first inequality in~\eqref{e.bfAhoms.equiv}. To prove the second inequality we fix~$m>n$ such that~$\cusdot_n \subseteq \cudot_m$ and use~\eqref{e.Ahom.by.adapted.mean} to obtain
\begin{align}
\label{e.something.to.bound}
\bfAhom(\cudot_m) & \leq  \bfAhom(\cusdot_n) + \frac{Cd^{\nicefrac{3}{2}}K_{\Psi_{\S}}^9}{1-\gamma}\Pi_{\m_0}^{\frac{1-\gamma}{2}} 3^{-(1-\gamma)(m-n)}\bfE \,.
\end{align}
We can bound~$\bfE$ by a multiple of~$\bfAhom(\cusdot_n)$ by taking~$l' > n$ such that~$\cusdot_n \subseteq \cudot_{l'}$, combining~\eqref{e.bfAhom.by.E0} and~\eqref{e.Ahom.by.adapted.mean} to get
\begin{align*}
\bfE & \leq 2(1+32(\Theta-1))\bfAhom(\cudot_{l'}) \\
& \leq 2(1+32(\Theta-1))\biggl(\bfAhom(\cusdot_n) + \frac{C(d)K_{\Psi_{\S}}^9}{1-\gamma}\Pi_{\m_0}^{\frac{1-\gamma}{2}} 3^{-(1-\gamma)(l'-n)}\bfE \biggr)\,.
\end{align*}
Taking~$l'$ large enough we may make the coefficient on~$\bfE$ on the right-hand side less than~$\frac{1}{2}$, re-absorb, and obtain
\begin{equation}
\label{e.bfE.bfAhom}
\bfE \leq 4(1+32(\Theta-1))\bfAhom(\cusdot_n)\,.
\end{equation}
We then combine the above display with~\eqref{e.something.to.bound} to obtain~\eqref{e.bfAhoms.equiv}.

\smallskip
\emph{Step 2: Bounds on ellipticity.}
\newline
Retaining~$k,n,m$ as above, we have from the definition of~$\tilde{\Theta}_n$ and~\eqref{e.bfAhoms.equiv} that
\begin{align*}
\tilde{\Theta}_n & = |(\shom_*^{-\nicefrac12}\shom \shom_*^{-\nicefrac12})(\cusdot_n)| \\
& \leq \biggl( 1 + \frac{Cd^{\nicefrac{3}{2}}K_{\Psi_{\S}}^9}{1-\gamma}\max\{\Pi_{\m_0}^{\nicefrac{\gamma}{2}},\lambda_{\m_0}^{-\nicefrac{\gamma}{2}}\} 3^{-(1-\gamma)(n-k)}(1+3^{3-k}K_{\Psi_{\S}}^2)(1+32(\Theta - 1))\biggr)^2  \\
& \qquad \times |(\shom_*^{-\nicefrac12}\shom \shom_*^{-\nicefrac12})(\cudot_k)| \\
& \leq \Theta_k \biggl( 1 + \frac{Cd^{\nicefrac{3}{2}}K_{\Psi_{\S}}^9}{1-\gamma}\max\{\Pi_{\m_0}^{\nicefrac{\gamma}{2}},\lambda_{\m_0}^{-\nicefrac{\gamma}{2}}\} 3^{-(1-\gamma)(n-k)}(1+3^{3-k}K_{\Psi_{\S}}^2)(1+32(\Theta - 1))\biggr)^2\,.
\end{align*}
Applying the scale separation in~\eqref{e.scale.sep} with appropriate constants we obtain
\begin{equation*}
\tilde{\Theta}_n - 1 \leq \Theta_k\bigl(1 + \frac{\delta}{10}3^{-(1-\gamma)(n-k-l)} + \frac{\delta^2}{100}3^{-2(1-\gamma)(n-k-l)}\bigr) - 1 \leq (\Theta_k - 1) + \frac{\delta}{8}\Theta_k 3^{-(1-\gamma)(n-k-l)}\,,
\end{equation*}
and we conclude since by assumption~$\Theta_k \leq 4$. In the other direction we note that~\eqref{e.symm.k.controlled} gives
\begin{equation*}
\bigl|\bigl(\shom_*^{-\nicefrac12}(\khom + \khom^t)\shom_*^{-1}(\khom + \khom^t)\shom_*^{-\nicefrac12}\bigr)(\cudot_m)\bigr| \leq (|(\shom_*^{-\nicefrac12}\shom\shom_*^{-\nicefrac12})(\cudot_m)| - 1)^2\,,
\end{equation*}
so that applying the definition of~$\Theta_m$
\begin{align*}
\Theta_m & \leq \bigl|\bigl(\shom_*^{-\nicefrac12}( \shom + \frac{1}{4}(\khom+\khom^t)\shom_*^{-1}(\khom+\khom^t) )\shom_*^{-\nicefrac12}\bigr)(\cudot_m)\bigr| \\
& \leq |(\shom_*^{-\nicefrac12}\shom\shom_*^{-\nicefrac12})(\cudot_m)| + \frac{1}{4}(|(\shom_*^{-\nicefrac12}\shom\shom_*^{-\nicefrac12})(\cudot_m)| - 1)^2\,.
\end{align*}
Re-arranging and using the assumption~$\Theta \leq 4$ we obtain
\begin{equation*}
\Theta_m - 1 \leq  4 \bigl(|(\shom_*^{-\nicefrac12}\shom\shom_*^{-\nicefrac12})(\cudot_m)| - 1 \bigr)
\end{equation*}
and we then conclude as before using the scale separation in~\eqref{e.scale.sep} since
\begin{equation*}
|\shom_*^{-\nicefrac12}(\cudot_m)\shom(\cudot_m)\shom_*^{-\nicefrac12}(\cudot_m)| \leq \tilde{\Theta}_n \biggl( 1 + \frac{Cd^{\nicefrac{3}{2}}K_{\Psi_{\S}}^9}{1-\gamma}\Pi_{\m_0}^{\frac{1-\gamma}{2}} 3^{-(1-\gamma)(m-n)}(1+32(\Theta-1))\biggr)^2
\end{equation*}
which follows from~\eqref{e.bfAhoms.equiv}.
\end{proof}

Lemma~\ref{l.bfAhoms.equivalent} means that we can freely move back and forth between normal parabolic cylinders and adapted parabolic cylinders provided that have a scale separation~$l$ given by~\eqref{e.scale.sep}. For instance, fixing~$\delta=1$ in the definition of~$l$, if~$n\geq 2l$ then
\begin{equation}
\label{e.bfA.cus.equiv}
\bfAhom(\cusdot_n) \leq C(d)\bfAhom(\cudot_l) \leq C(d)\bfE \leq C(d)\mathbf{M}_0 \leq C(d)\bfAhom(\cudot_{2n}) \leq C(d)\bfAhom(\cusdot_n)\,.
\end{equation}
The proof of Theorem~\ref{t.small.contrast.rate} is now as follows. It has been checked\footnote{This calculation has been fully written out, but we have chosen not to include it because there are no substantial differences in the parabolic case} that the arguments of~\cite[Section 4]{AK.HC} generalize immediately to the parabolic setting, with the same proofs. This is essentially the statement that the properties of the coarse-grained matrices proved in Section~\ref{s.coarse.graining} are the same as the properties of the elliptic coarse-grained matrices, and that these properties alone suffice to prove Theorem~\ref{t.small.contrast.rate}. This implies that there exists a constant~$\sigma(d)$ such that
\begin{equation*}
\hat{\Theta}_n - 1 \leq \sigma(d) \implies \hat{\Theta}_{n+m} - 1 \leq 3^{-\kappa (m-m_0)}\,,
\end{equation*}
with~$\kappa$ and~$m_0$ given by~\eqref{e.small.c.params}. Combining this with Lemma~\ref{l.bfAhoms.equivalent} we see that there exists a constant~$\sigma_0(d)$ such that for~$l$ given as in~\eqref{e.scale.sep}, if~$n\geq Cl$ then
\begin{equation*}
\Theta_0 - 1 \leq \sigma_0(d) \implies \hat{\Theta}_n -1 \leq \sigma(d) \implies \hat{\Theta}_{n+m} - 1 \leq 3^{-\kappa (m-m_0)} \implies \Theta_{2n+m} - 1 \leq 3^{-\kappa (m-m_0)}\,.
\end{equation*}
By relabelling parameters and enlarging the constants we obtain Theorem~\ref{t.small.contrast.rate}. For details on the proofs of Theorems~\ref{t.small.contrast.rate} and~\ref{t.quenched} we refer the reader to~\cite[Section 4]{AK.HC}.

\section{Quantitative homogenization and coarse-grained parabolic estimates}
\label{s.homogenize}

In Section~\ref{s.coarse.graining} we established the qualitative well-posedness of the Cauchy-Dirichlet problem for coefficient fields~$\a\in\Omega$, and we will throughout this section understand the equation~$\partial_t u = \nabla \cdot \a\nabla u + \nabla \cdot \mathbf{f}$ in that sense. In Section~\ref{ss.coarse.estimates} we go beyond this and develop parabolic estimates depending quantitatively on the coarse-grained ellipticity constants. We use this framework in Section~\ref{ss.homogenize} to prove a black box homogenization statement and prove Theorem~\ref{t.theoremA}.

\subsection{Coarse-grained parabolic estimates}
\label{ss.coarse.estimates}

Suppose that~$I = (0,T)$ is a finite time interval,~$U\subset \mathbb{R}^d$ is a bounded Lipschitz domain and~$m\in\mathbb{Z}$ is the smallest integer such that~$I\times U \subseteq \cudot_m$. We will make the assumption that there is some~$0<c<1$ such that~$c|\cudot_m| \leq |I\times U|$ for convenience, since otherwise factors of~$\frac{|\cudot_m|}{|I\times U|}$ would appear in all of our estimates. We therefore allow constants in this section to depend on the shape of~$I\times U$, but not on its size, which is on scale~$3^m$. We will use the notation~$\partial_{\sqcup}(I\times U)$ defined in~\eqref{e.sqcup.def}, define the set of lattice points ``close" to the domain by
\begin{equation}
\label{e.def.Lstar}
\mathcal{Z}_j(I\times U) := \{z\in\mathcal{Z}_j : (z+\cudot_j) \cap (I\times U) \neq \emptyset\}\,,
\end{equation}
and for~$s \in (0,1)$,~$p \in [1,\infty)$ and~$q\in [1,\infty]$ define the semi-norm
\begin{equation}
\label{e.Besov.semi.def}
[g]_{\underline{B}_{p,q}^s(I\times U)}  := \biggl( \sum_{j=-\infty}^m 3^{-sqj} \biggl(\avsum_{z\in\mathcal{Z}_{j-1}^*} \|g-(g)_{(z+\cudot_j) \cap (I\times U)}\|_{\L^p( (z+\cudot_j) \cap (I\times U))}^p \biggr)^{\frac{q}{p}} \biggr)^{\frac{1}{q}}\,,
\end{equation}
and norm
\begin{equation}
\label{e.Besov.gen.def}
\|g\|_{\underline{B}_{p,q}^s(I\times U)} = 3^{-sm}|(g)_{I\times U}| + [g]_{\underline{B}_{p,q}^s(I\times U)}\,.
\end{equation}

In comparison to~\eqref{e.Besov.def}, our choice of the domains in~\eqref{e.Besov.semi.def} ensures that each subcube~$(z+\cudot_j) \cap (I\times U)$, with~$z\in\mathcal{Z}_{j-1}$, has a volume equal in size, up to constants depending only on the Lipschitz norm of~$U$, to~$z+\cudot_j$, while the set of all such domains is still a partition, with bounded overlaps, of~$I\times U$. By Proposition~\ref{p.Besov.equiv},~$B_{2,2}^s(I\times U) = H^{s,\nicefrac{s}{2}}(I\times U)$, where the latter space is the standard space defined in~\cite[Chapter 4, Section 2]{LMV2}. 
For~$s\in (0,1)$,~$p\in [1,\infty)$,~$q\in [1,\infty]$, and~$p',q'$ the respective H\"older conjugates, the dual norms are defined by
\begin{align}
\|f\|_{\underline{\hat{B}}_{p,q}^{-s}(I\times U)} \coloneqq \sup \bigg\{ \fint_{I\times U} fg : g\in C^\infty(I\times U)\,, \|g\|_{\underline{B}_{p',q'}^s(I\times U)} \leq 1 \bigg\} \label{e.Besov.dual.hat.def.gen}\,, \\
\|f\|_{\underline{B}_{p,q}^{-s}(I\times U)} \coloneqq \sup \bigg\{ \fint_{I \times U} fg : g\in C_c^\infty(I\times U) \,, \|g\|_{\underline{B}_{p',q'}^s(I\times U)} \leq 1 \bigg\} \label{e.Besov.dual.def.gen}\,.
\end{align}
By Lemma~\ref{l.general.dual.norm}, if~$0< s < \nicefrac12$ then there exists a constant~$C = C(I,U,s,d)$ such that
\begin{equation}
\label{e.interior.cubes}
\|f\|_{\underline{\hat{B}}_{2,2}^{-s}(I\times U)} \leq C\biggl(\sum_{k=-\infty}^n 3^{2sk} \avsum_{z\in\mathcal{Z}_k, z+\cudot_k \subseteq I\times U} | (f)_{z+\cudot_k}|^2 \biggr)^{\nicefrac{1}{2}}\,.
\end{equation}
If~$\partial_t u = \nabla \cdot \a\nabla u$ in~$I\times U$ then this implies that, analogous to Lemma~\ref{l.mathringB.bounds}, we have
\begin{align}
\|\nabla u\|_{\underline{\hat{B}}_{2,2}^{-s}(I\times U)} & \leq C 3^{sm}\lambda_{s,2}^{-\nicefrac12}(\cudot_m) \|\s^{\nicefrac12}\nabla u\|_{\L^2(I\times U)} \label{e.general.grad}\\
\|\a\nabla u\|_{\underline{\hat{B}}_{2,2}^{-s}(I\times U)} & \leq C 3^{sm}\Lambda_{s,2}^{\nicefrac12}(\cudot_m) \|\s^{\nicefrac12}\nabla u\|_{\L^2(I\times U)} \label{e.general.flux}\,.
\end{align}

Recall that by~\eqref{e.bounded.cg} our ellipticity assumption~\ref{a.ellipticity.dagger} implies boundedness of the coarse-grained ellipticity constants for~$s > \nicefrac{\gamma}{2}$. The next proposition gives an analogue of~\eqref{e.general.grad} and~\eqref{e.general.flux} for equations with non-zero right-hand side.

\begin{proposition}[Coarse-grained Poincar\'e inequality with RHS]
\label{p.poincare.RHS}
Suppose that~$\partial_t u - \nabla \cdot \a \nabla u = \nabla \cdot \mathbf{f}$ in~$I\times U$,~$0< s < \nicefrac12$ and~$0< \epsilon < s$. There exists a constant~$C = C(s,\epsilon,U,I,d)$ such that
\begin{equation}
\label{e.CG.grad}
\|\nabla u \|_{\underline{\hat{B}}_{2,2}^{-s}(I\times U)} \leq C 3^{sm} \lambda_{s-\epsilon,2}^{-\nicefrac12}(\cudot_m)\|\s^{\nicefrac12}\nabla u\|_{\L^2(I\times U)} + C3^{2sm}\lambda_{s-\epsilon,2}^{-1}(\cudot_m)\|\mathbf{f}\|_{\underline{B}_{2,2}^s(I\times U)} 
\end{equation}
and
\begin{equation}
\label{e.CG.flux}
\| \a\nabla u\|_{\underline{\hat{B}}_{2,2}^s(I\times U)} \leq C 3^{sm}\Lambda_{s,2}^{\nicefrac12}(\cudot_m) \|\s^{\nicefrac12}\nabla u \|_{\L^2(I\times U)} + C \frac{\Lambda_{s,2}^{\nicefrac12}(\cudot_m)}{\lambda_{s-\epsilon,2}^{\nicefrac12}(\cudot_m)} 3^{2sm}\|\mathbf{f}\|_{\underline{B}_{2,2}^s(I\times U)}\,.
\end{equation}
\end{proposition}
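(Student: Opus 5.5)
Write $u = w + h$, where $w$ absorbs the forcing and $h$ solves the homogeneous equation. Concretely, let $w$ be the solution of the Cauchy--Dirichlet problem
\begin{equation*}
\partial_t w - \nabla\cdot\a\nabla w = \nabla\cdot\mathbf{f} \ \mbox{ in } I\times U\,, \qquad w = 0 \ \mbox{ on } \partial_\sqcup(I\times U)\,,
\end{equation*}
which is well-posed by Section~\ref{s.coarse.graining}, and set $h := u - w$. Then $\partial_t h = \nabla\cdot\a\nabla h$ in $I\times U$. Applying~\eqref{e.general.grad} and~\eqref{e.general.flux} to $h$ controls $\|\nabla h\|_{\underline{\hat B}^{-s}_{2,2}}$ and $\|\a\nabla h\|_{\underline{\hat B}^{-s}_{2,2}}$ by $3^{sm}\lambda_{s,2}^{-\nicefrac12}$, respectively $3^{sm}\Lambda_{s,2}^{\nicefrac12}$, times $\|\s^{\nicefrac12}\nabla h\|_{\L^2}$. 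By the triangle inequality $\|\s^{\nicefrac12}\nabla h\|_{\L^2}\le \|\s^{\nicefrac12}\nabla u\|_{\L^2}+\|\s^{\nicefrac12}\nabla w\|_{\L^2}$. Since $\lambda_{s,2}\ge\lambda_{s-\epsilon,2}$, everything reduces to two bounds on $w$:
\begin{equation*}
\|\s^{\nicefrac12}\nabla w\|_{\L^2(I\times U)} \lesssim 3^{sm}\lambda_{s-\epsilon,2}^{-\nicefrac12}(\cudot_m)\|\mathbf{f}\|_{\underline{B}^s_{2,2}(I\times U)}\,,
\end{equation*}
together with a direct bound of $\|\nabla w\|_{\underline{\hat B}^{-s}_{2,2}}$ by the second term of~\eqref{e.CG.grad}. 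The latter in fact follows from the energy bound by applying the gradient estimate to $w$ itself.

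To obtain the energy bound, test the equation for $w$ with $w$. This is justified by an approximation argument analogous to Lemma~\ref{l.testing.with.u}, using the zero initial and lateral data, and the time derivative term is nonnegative. The result is
\begin{equation*}
\fint_{I\times U}\nabla w\cdot\s\nabla w \le -\fint_{I\times U}\mathbf{f}\cdot\nabla w \le \|\mathbf{f}\|_{\underline{B}^{s}_{2,2}}\,\|\nabla w\|_{\underline{B}^{-s}_{2,2}}\,.
\end{equation*}
Here the $C_c^\infty$ dual norm~\eqref{e.Besov.dual.def.gen} suffices, since $w$ vanishes on the lateral and initial boundary. The pairing at the final time must be handled, for instance by extending $w$ by zero in time or using the $\hat{B}$ norm with Lemma~\ref{l.general.dual.norm}, which is why $s<\nicefrac12$ is needed.

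The crux is the bound
\begin{equation*}
\|\nabla w\|_{\underline{B}^{-s}_{2,2}} \lesssim 3^{sm}\lambda_{s-\epsilon,2}^{-\nicefrac12}\|\s^{\nicefrac12}\nabla w\|_{\L^2}\,.
\end{equation*}
It is obtained from~\eqref{e.interior.cubes}: averages of $\nabla w$ on subcubes are controlled by~\eqref{e.energymaps}, but now $w$ is not $\a$-caloric. I would handle this by a multiscale argument. On each subcube $z+\cudot_k$, replace $w$ by the local solution $w_k$ of the homogeneous equation with the same data, and control the defect $w-w_k$ through the local energy estimate with the local oscillation of $\mathbf{f}$, namely $\|\mathbf{f}-(\mathbf{f})_{z+\cudot_k}\|$. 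A constant part of $\mathbf{f}$ does not force anything, so it can be subtracted. Summing over scales with weights $3^{2sk}$ costs the $\epsilon$ loss, since the geometric sum converts $\lambda_{s,2}$ into $\lambda_{s-\epsilon,2}$ and the Besov seminorm of $\mathbf{f}$ appears. Absorbing $\|\s^{\nicefrac12}\nabla w\|$ by Young's inequality then gives the energy bound with the factor $3^{2sm}\lambda_{s-\epsilon,2}^{-1}$ squared appropriately.

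For the flux, $\|\a\nabla w\|$ is bounded in the same way by coarse-graining with~\eqref{e.energymaps.flux}, giving a factor $3^{sm}\Lambda_{s,2}^{\nicefrac12}\|\s^{\nicefrac12}\nabla w\|$. Inserting the energy bound on $w$ yields the mixed ratio $\Lambda^{\nicefrac12}/\lambda^{\nicefrac12}$ in~\eqref{e.CG.flux}.

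I expect the main obstacle to be the localized comparison in the multiscale step, since the coarse-graining inequalities only apply to true solutions and one must control the error $w-w_k$ uniformly in the degenerate weights $\s$. If this fails, the fallback is to estimate $\nabla w$ via duality against solutions of the adjoint problem in $\mathcal{A}^*$, using~\eqref{e.firstvar} for $J^*$ on each cube.
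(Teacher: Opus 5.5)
Your reduction to the zero-data problem for $w$ is correct: the caloric part $h$ is handled by \eqref{e.general.grad}, \eqref{e.general.flux} and $\lambda_{s,2}\geq\lambda_{s-\epsilon,2}$. Your local comparison $w-w_k$ is exactly the paper's corrector $\rho_{z+\cudot_k}$. But the step you flag as the main obstacle is where the proof actually lives, and as written it is circular. The statement allows only coarse-grained constants; Cauchy--Schwarz against $\s^{-1}$ would bring in pointwise ellipticity. So you get the ``local energy estimate'' for $\rho=w-w_k$ by testing its equation with $\rho$, which gives $\|\s^{\nicefrac12}\nabla\rho\|^2_{\L^2(z+\cudot_k)}\leq\|\mathbf{f}\|_{\underline{B}^s_{2,2}(z+\cudot_k)}\|\nabla\rho\|_{\underline{\hat{B}}^{-s}_{2,2}(z+\cudot_k)}$. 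That right-hand side is the very quantity you are trying to bound, a negative norm of the gradient of a non-caloric function, now one scale down, and there is no base case since the scales run to $-\infty$.

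The missing idea is how the paper closes this loop. Write $\nabla\rho=\nabla u-\nabla(u-\rho)$, bound the caloric piece by \eqref{e.general.grad}, and absorb its energy locally by Young's inequality. Then \emph{keep} the term $\delta_k\|\nabla u\|_{\underline{\hat{B}}^{-s}_{2,2}(z+\cudot_k)}$ with the scale-dependent Young weight $\delta_k=\delta\,3^{\epsilon(k-m)}$. Summed over subcubes and scales, these terms total at most $C\epsilon^{-1}\delta^2$ times the global multiscale norm of $\nabla u$, which is absorbed into the left-hand side for $\delta$ small. This is the real source of the $\epsilon$-loss. The factor $\delta_k^{-1}=\delta^{-1}3^{\epsilon(m-k)}$ falls on the $\mathbf{f}$-term, and summing this growing factor against the local ellipticity constants turns $\lambda_s$ into $\lambda_{s-\epsilon}$. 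Because this argument works for any solution of the forced equation, whatever its boundary values, the global splitting $u=w+h$ buys nothing. The energy bound for $w$ is a consequence of \eqref{e.CG.grad} (see Proposition~\ref{p.cg.energy}), not an input.

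The flux estimate has a separate gap. Since $w$ is not caloric, \eqref{e.energymaps.flux} does not apply to it, and the Dirichlet comparison does not help. The defect $\rho$ has zero mean gradient because it vanishes on the lateral boundary, but its mean flux $(\a\nabla\rho)_{z+\cudot_k}$ is in general nonzero, and you have no coarse-grained bound for it. The paper uses a different corrector for the fluxes: the time-periodic solution $\chi_{z+\cudot_k}$ of the forced equation with conormal condition $\hat{n}\cdot(\a\nabla\chi_{z+\cudot_k}+\mathbf{f})=0$. Testing with $x\mapsto x_i$ gives $(\a\nabla\chi_{z+\cudot_k})_{z+\cudot_k}=0$ once $\mathbf{f}$ is normalized to have mean zero. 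Its energy is controlled by applying the already-established gradient bound \eqref{e.CG.grad} to $\chi_{z+\cudot_k}$ and absorbing. Since $\chi_{z+\cudot_k}$ has no Dirichlet data, this is another reason the gradient bound must hold for arbitrary solutions, not just your zero-data $w$. Then \eqref{e.energymaps.flux} is applied to the caloric function $u-\chi_{z+\cudot_k}$.
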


\begin{proof}
\emph{Step 1: Gradients.}
In view of~\eqref{e.interior.cubes} we need to control averages~$(\nabla u)_{z+\cudot_j}$ in interior cubes. For each subcube~$z+\cudot_k$ we fix the constant in~$\mathbf{f}$ so that it is mean-zero and let~$\rho_{z+\cudot_k}$ solve
\begin{equation*}
\left\{
\begin{aligned}
& \partial_t \rho_{z+\cudot_k} - \nabla \cdot \a\nabla \rho_{z+\cudot_k} = \nabla \cdot \mathbf{f}  & \quad \mbox{in} & \quad z+\cudot_k \\
& \rho_{z+\cudot_k} = 0 & \quad \mbox{on} & \quad \partial_{\sqcup}(z+\cudot_k)\\
\end{aligned}
\right.
\end{equation*}
This construction ensures that~$(\nabla \rho_{z+\cudot_k} )_{z+\cudot_k} = 0$, and that~$\partial_t (u - \rho_{z+\cudot_k}) =\nabla \cdot \a \nabla ( u - \rho_{z+\cudot_k} )$. This means that~$(\nabla u)_{z+\cudot_k} = (\nabla u - \nabla \rho_{z+\cudot_k} )_{z+\cudot_k}$, and we can apply the coarse-graining inequality~\eqref{e.energymaps}. From testing the equation for~$\rho(z+\cu_k)$ with itself we have
\begin{align*}
\|\s^{\nicefrac12}\nabla \rho_{z+\cudot_k}\|_{\L^2(z+\cudot_k)} & \leq \|\mathbf{f}\|_{\underline{B}_{2,2}^s(z+\cudot_k)}^{\nicefrac12}\|\nabla \rho_{z+\cudot_k}\|_{\underline{\hat{B}}_{2,2}^{-s}(z+\cudot_k)}^{\nicefrac12} \\
& \leq C\|\mathbf{f}\|_{\underline{B}^{s}_{2,2}(z+\cudot_k)}^{\nicefrac12}\|\nabla u\|_{\underline{\hat{B}}^{-s}_{2,2}(z+\cudot_k)}^{\nicefrac12} +  C\|\mathbf{f}\|_{\underline{B}_{2,2}^{s}(z+\cudot_k)}^{\nicefrac12}\|\nabla u-\nabla\rho_{z+\cudot_k}\|_{\underline{\hat{B}}_{2,2}^{-s}(z+\cudot_k)}^{\nicefrac12}\,.
\end{align*}
Now by~\eqref{e.general.grad},
\begin{align*}
\|\nabla u-\nabla\rho_{z+\cudot_k}\|_{\underline{B}_{2,2}^{-s}(z+\cudot_k)} \leq C 3^{sk}\lambda_s^{-\nicefrac12}(z+\cudot_k)\|\s^{\nicefrac12}(\nabla u-\nabla \rho_{z+\cudot_k})\|_{\L^2(z+\cudot_k)}\,.
\end{align*}
Then combining the above,
\begin{align*}
\|\s^{\nicefrac12}(\nabla u -\nabla\rho_{z+\cudot_k})\|_{\L^2(z+\cudot_k)}
& \leq
\|\s^{\nicefrac12}\nabla u\|_{\L^2(z+\cudot_k)} + \|\s^{\nicefrac12}\nabla \rho_{z+\cudot_k}\|_{\L^2(z+\cudot_k)}
\\ & \leq
\|\s^{\nicefrac12}\nabla u\|_{\L^2(z+\cudot_k)} + C\|\mathbf{f}\|_{\underline{B}_{2,2}^s(z+\cudot_k)}^{\nicefrac12}\|\nabla u\|_{\underline{\hat{B}}^{-s}_{2,2}(z+\cudot_k)}^{\nicefrac12}
\\ & \qquad
+ C\|\mathbf{f}\|_{\underline{B}_{2,2}^s(z+\cudot_k)}^{\nicefrac12} 3^{\frac{sk}{2}}\lambda_s^{-\nicefrac{1}{4}}(z+\cudot_k) \|\s^{\nicefrac12}\nabla (u-\rho_{z+\cudot_k})\|_{\L^2(z+\cudot_k)}^{\nicefrac12}\,.
\end{align*}
Using Young's inequality to re-absorb the energy factor in the last term we obtain
\begin{align}
\label{e.energy.error}
\|\s^{\nicefrac12}\nabla (u -\rho_{z+\cudot_k})\|_{\L^2(z+\cudot_k)} & \leq
C \|\s^{\nicefrac12}\nabla u\|_{\L^2(z+\cudot_k)} + C\|\mathbf{f}\|_{\underline{B}_{2,2}^s(z+\cudot_k)}^{\nicefrac12}\|\nabla u\|_{\underline{\hat{B}}^{-s}_{2,2}(z+\cudot_k)}^{\nicefrac12}
\notag \\ & \qquad
+ C\|\mathbf{f}\|_{\underline{B}_{2,2}^s(z+\cudot_k)} 3^{sk} \lambda_s^{-\nicefrac{1}{2}}(z+\cudot_k) \,.
\end{align}
Then using the definition of the dual norm,~\eqref{e.general.grad},~\eqref{e.energy.error}, and Young's inequality, for any~$\delta_k > 0$
\begin{align*}
\lefteqn{
3^{sk}|(\nabla u - \nabla \rho_{z+\cudot_k})_{z+\cudot_k}| 
} \qquad \qquad  & \\ 
& \leq \|\nabla u - \nabla \rho_{z+\cudot_k}\|_{\underline{\hat{B}}^{-s}_{2,2}(z+\cudot_k)} \\
& \leq  C 3^{sk}\lambda_s^{-\nicefrac12}(z+\cudot_k) \|\s^{\nicefrac12}\nabla (u-\rho_{z+\cudot_k})\|_{\L^2(z+\cudot_k)} \\
& \leq C 3^{sk}\lambda_s^{-\nicefrac12}(z+\cudot_k)\|\s^{\nicefrac12}\nabla u\|_{\L^2(z+\cudot_k)} + C 3^{sk}\lambda_s^{-\nicefrac12}(z+\cudot_k)\|\mathbf{f}\|_{\underline{B}_{2,2}^s(z+\cudot_k)}^{\nicefrac12} \|\nabla u\|_{\underline{\hat{B}}^{-s}_{2,2}(z+\cudot_k)}^{\nicefrac12} \\
& \qquad + C 3^{2sk}\lambda_s^{-1}(z+\cudot_k)\|\mathbf{f}\|_{\underline{B}_{2,2}^s(z+\cudot_k)} \\
& \leq C 3^{sk}\lambda_s^{-\nicefrac12}(z+\cudot_k)\|\s^{\nicefrac12}\nabla u\|_{\L^2(z+\cudot_k)} + \delta^{-1}_k C 3^{2sk}\lambda_s^{-1}(z+\cudot_k)\|\mathbf{f}\|_{\underline{B}_{2,2}^s(z+\cudot_k)} \\
& \qquad + \delta_k \|\nabla u\|_{\underline{\hat{B}}^{-s}_{2,2}(z+\cudot_k)}\,.
\end{align*}
We make the choice~$\delta_k = \delta 3^{\epsilon(k-m)}$ so that, adding in the zero-mean term~$(\nabla \rho_{z+\cudot_k})_{z+\cudot_k}$,
\begin{align*}
\|\nabla u\|_{\underline{\hat{B}}_{2,2}^{-s}(I\times U)}^2 & \leq C\sum_{k=-\infty}^m 3^{2sk} \avsum_{z\in\mathcal{Z}_k, z+\cudot_k \subseteq I\times U} | (\nabla u - \nabla \rho_{z+\cudot_k})_{z+\cudot_k}|^2  \\
& \leq C \|\s^{\nicefrac12}\nabla u\|_{\L^2(I\times U)}^2 \sum_{k=-\infty}^m  \sup_{z \in \mathcal{Z}_k \cap \cudot_m} 3^{2sk}\lambda_s^{-1}(z+\cudot_k)  \\
& \qquad + C \delta^{-2} \|\mathbf{f}\|_{\underline{B}_{2,2}^s (I\times U)}^2 \sum_{k=-\infty}^m   \sup_{z \in \mathcal{Z}_k \cap \cudot_m} 3^{-2\epsilon(k-m)} 3^{4sk}\lambda_s^{-2}(z+\cudot_k) \\
& \qquad + C \sum_{k=-\infty}^m \delta^2 \avsum_{z\in\mathcal{Z}_k, z+\cudot_k \subseteq I\times U} 3^{2\epsilon(k-m)}\|\nabla u\|_{\underline{\hat{B}}^{-s}_{2,2}(z+\cudot_k)}^2\,.
\end{align*}
We control the ellipticity factor by bounding
\begin{align*}
\lefteqn{
\sum_{k=-\infty}^m \sup_{z \in \mathcal{Z}_k \cap \cudot_m} 3^{-2\epsilon(k-m)} 3^{4sk}\lambda_s^{-2}(z+\cudot_k)
} \qquad & \\
& \leq \biggl( \sum_{k=-\infty}^m \sup_{z \in \mathcal{Z}_k \cap \cudot_m} 3^{-\epsilon(k-m)} 3^{2sk}\lambda_{s-\epsilon}^{-1}(z+\cudot_k) \biggr)^2 \\
& = \biggl( \sum_{k=-\infty}^m \sup_{z \in \mathcal{Z}_k \cap \cudot_m} 3^{-\epsilon(k-m)} 3^{2sk} \sum_{j=-\infty}^k 3^{2(s-\epsilon)(j-k)} \sup_{z'\in\mathcal{Z}_j \cap z+\cudot_k} |\s_*^{-1}(z'+\cudot_j)| \biggr)^2 \\
& \leq 3^{sm} \biggl( \sum_{j=-\infty}^m 3^{2(s-\epsilon)(j-m)} \sup_{z'\in\mathcal{Z}_j \cap \cudot_m} |\s_*^{-1}(z+\cudot_j)| \sum_{k=j}^m 3^{\epsilon (k-m)} \biggr)^2 \\
& \leq C\epsilon^{-1} 3^{sm} \lambda_{s-\epsilon,2}^{-2}(\cudot_m)\,,
\end{align*}
with the other ellipticity factor bounded similarly. Finally,
\begin{align*}
\lefteqn{
\sum_{k=-\infty}^m \avsum_{z\in\mathcal{Z}_k, z+\cudot_k \subseteq I\times U} 3^{2\epsilon (k-m)}\|\nabla u\|_{\underline{\hat{B}}^{-s}_{2,2}(z+\cudot_k)}^2
} \qquad & \\
& \leq \sum_{k=-\infty}^m \delta^2 \avsum_{z\in\mathcal{Z}_k, z+\cudot_k \subseteq I\times U} 3^{2\epsilon (k-m)} \sum_{j=-\infty}^k 3^{2sj} \avsum_{z'\in\mathcal{Z}_j \cap z+\cudot_k} | (\nabla u)_{z' + \cudot_j}|^2 \\
& \leq C \epsilon^{-1} \| \nabla u\|_{\underline{\hat{B}}_{2,2}^{-s}(I\times U)}^2 \,.
\end{align*}
We then re-absorb the last term by taking~$\delta$ small enough.

\smallskip

\emph{Step 2: Fluxes.} For the fluxes, if~$z = (z^0,z') \in \mathcal{Z}_k$ we fix the constant in~$\mathbf{f}$ so that it is mean zero in the cube and let~$\chi_{z+\cudot_k}$ be the unique solution to
\begin{equation*}
\left\{
\begin{aligned}
& \partial_t \chi_{z+\cudot_k} - \nabla \cdot \a\nabla \chi_{z+\cudot_k} = \nabla \cdot \mathbf{f}  & \quad \mbox{in}  \quad z+\cudot_k \\
& \hat{n}\cdot (\a\nabla\chi_{z+\cudot_k}  +\mathbf{f}) = 0 & \quad \mbox{on}  \quad (z_0 + I_k) \times \partial (z' + \cu_k)\\
& \chi(z+\cudot_k) & \quad   \mbox{periodic in time}
\end{aligned}
\right.
\end{equation*}
Then testing the equation with the function~$x \mapsto x_i$
\begin{equation*}
\int_{z+\cudot_k} \a\nabla \chi_{z+\cudot_k} = \int_{z'+\cu_k} (\chi_{z+\cudot_k}(\cdot,x) - \chi(\cudot_k)(\cdot,x) )\biggr|_{z^0 +I_k} x_i \, dx= 0\,.
\end{equation*}
Now testing the equation for~$\chi_{z+\cudot_k}$ with itself and then applying~\eqref{e.CG.grad},
\begin{align*}
\|\s^{\nicefrac12}\nabla \chi_{z+\cudot_k}\|_{\L^2(z+\cudot_k)} & \leq \|\mathbf{f}\|_{\underline{B}_{2,2}^s(z+\cudot_k)} \|\nabla \chi_{z+\cudot_k}\|_{\underline{B}^{-s}_{2,2}(z+\cudot_k)} \\
& \leq C 3^{sk}\lambda_{s-\epsilon,2}^{-\nicefrac12}(z+\cudot_k) \|\s^{\nicefrac12}\nabla \chi_{z+\cudot_k}\|_{\L^2(z+\cudot_k)}\|\mathbf{f}\|_{\underline{B}_{2,2}^s(z+\cudot_k)}  \\
& \qquad + C3^{2sk} \lambda_{s-\epsilon,2}^{-1}(z+\cudot_k) \|\mathbf{f}\|_{\underline{B}_{2,2}^s(z+\cudot_k)}^2\,,
\end{align*}
so that after re-absorbing the energy factor,
\begin{align*}
\|\s^{\nicefrac12}\nabla \chi_{z+\cudot_k}\|_{\L^2(z+\cudot_k)} \leq C3^{sk}\lambda_{s-\epsilon,2}^{-\nicefrac12}(z+\cudot_k) \|\mathbf{f}\|_{\underline{B}_{2,2}^s(z+\cudot_k)}\,.
\end{align*}
Combining~\eqref{e.energymaps.flux} with the above display,
\begin{align*}
3^{sk}| (\a\nabla u - \a\nabla \chi_{z+\cudot_k})_{z+\cudot_k} | & \leq 3^{sk}|\b^{\nicefrac12}(z+\cudot_k)| \|\s^{\nicefrac12}\nabla ( u - \chi_{z+\cudot_k} ) \|_{\L^2(z+\cudot_k)} \\
& \leq 3^{sk}|\b^{\nicefrac12}(z+\cudot_k)| \|\s^{\nicefrac12}\nabla  u \|_{\L^2(z+\cudot_k)} \\
& \qquad + C3^{2sk}|\b^{\nicefrac12}(z+\cudot_k)| \lambda_{s-\epsilon,2}^{-\nicefrac12}(z+\cudot_k) \|\mathbf{f}\|_{\underline{B}_{2,2}^s(z+\cudot_k)}\,.
\end{align*}
Using Lemma~\ref{l.general.dual.norm},~$(\a\nabla u - \a\nabla \chi_{z+\cudot_k})_{z+\cudot_k} = (\a\nabla u)_{z+\cudot_k}$ and the above display,
\begin{align*}
\| \a\nabla u\|_{\underline{\hat{B}}_{2,2}^{-s}(I\times U)}^2 & \leq C \sum_{k=-\infty}^m \avsum_{z+\cudot_k \subseteq I\times U} 3^{2sk}| (\a\nabla u - \a\nabla \chi_{z+\cudot_k})_{z+\cudot_k} |^2  \\
& \leq C 3^{2sm}\|\s^{\nicefrac12}\nabla u\|_{\L^2(I\times U)}^2 \Lambda_{s,2}(\cudot_m) \\
& \qquad  +  C \|\mathbf{f}\|_{\underline{B}_{2,2}^s(I\times U)}^2 \sum_{k=-\infty}^m 3^{2sk}\sup_{z\in \mathcal{Z}_k\cap \cudot_m}|\b(z+\cudot_k)| 3^{2sk}\sup_{z\in\mathcal{Z}_k\cap \cudot_m} \lambda_{s-\epsilon,2}^{-1}(z+\cudot_k) \\
& \leq C\|\s^{\nicefrac12}\nabla u\|_{\L^2(I\times U)}^2 3^{2sm}\Lambda_{s,2}(\cudot_m) + C 3^{4sm}\|\mathbf{f}\|_{\underline{B}_{2,2}^s(I\times U)}^2\Lambda_{s,2}(\cudot_m) \lambda_{s-\epsilon,2}^{-1}(\cudot_m)\,.
\end{align*}
where we use that since~$\mathbf{f}$ is zero mean in each subcube, we really have semi-norms which are bounded by
\begin{equation*}
\avsum_{z\in\mathcal{Z}_k, z+\cudot_k \subseteq (I\times U)} [\mathbf{f}-(\mathbf{f})_{z+\cudot_k}]^2_{\underline{B}_{2,2}^s(z+\cudot_k} \leq \|\mathbf{f}\|_{\underline{B}_{2,2}^s(I\times U)}^2\,,
\end{equation*}
by~\eqref{e.fractional.integral}.
\end{proof}

The next proposition establishes the basic energy estimate for parabolic equations, but with dependence on the coarse-grained ellipticity constants. The spatial boundary data~$g$ is effectively treated as a right-hand side in the equation, which explains the somewhat complicated dependence on~$g$. In our context what matters is the dependence on the coarse-grained ellipticity constants.

\begin{proposition}[Coarse-grained energy estimate]
\label{p.cg.energy}
Let~$I = (0,T)$ be a finite interval,~$U$ a\newline bounded Lipschitz domain,~$s\in (0,\nicefrac12)$,~$\mathbf{f}\in B^s_{2,2}(I\times U)^d$,~$u_0 \in L^2(U)$, and~$g$ such that the norm on the right-hand side of~\eqref{e.cg.energy} is finite. There exists a constant~$C = C(I,U,s,d)$ such that if~$u \in W_{\s}^1(I \times U)$ solves
\begin{equation}
\label{e.parabolic.eq}
\left\{
\begin{aligned}
& \partial_t u - \nabla \cdot \a\nabla u = \nabla \cdot \mathbf{f}  & \quad \mbox{in} & \quad I\times U \\
& u = g & \quad \mbox{on} & \quad I \times \partial U \\
& u = u_0 & \quad \mbox{on} & \quad  \{0\} \times U \\
\end{aligned}
\right.
\end{equation}
then
\begin{align}
\label{e.cg.energy}
\lefteqn{
|I|^{-\nicefrac12} \sup_{t\in I} \|u(t)\|_{\L^2(U)} + \|\s^{\nicefrac12}\nabla u\|_{\L^2(I\times U)}
} \qquad & \notag \\
& \leq  C3^{sm}\lambda_{s-\epsilon,2}^{-\nicefrac12}(\cudot_m)\|\mathbf{f}\|_{\underline{B}^s_{2,2}(I\times U)} + C|I|^{-\nicefrac12} \|u_0\|_{\L^2(U)} +  C|I|^{-\nicefrac12} \sup_{t\in I} \|g(t)\|_{\L^2(U)} \notag \\
& \qquad + C 3^{sm}\Lambda_{s,2}^{\nicefrac12}(\cudot_m)\|\nabla g\|_{\underline{B}_{2,2}^s(I\times U)} + C3^{sm} \lambda_{s,2}^{-\nicefrac12}(\cudot_m)  \|\nabla \Delta_U^{-1} \partial_t g\|_{\underline{B}_{2,2}^s(I\times U)} \notag \\
& \qquad + C\|\nabla g\|_{\L^2(I\times U)}^{\nicefrac12} \|\partial_t g \|^{\nicefrac12}_{\L^2(I;\underline{H}^{-1}(U))}\,.
\end{align}
\end{proposition}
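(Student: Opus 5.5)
I plan to reduce to zero boundary data by subtracting a lift of $g$ and then run the standard energy argument, using the coarse-grained dual-norm bounds of Proposition~\ref{p.poincare.RHS} in place of pointwise ellipticity.

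\emph{Lift of the boundary data.} Let $G(t,\cdot)$ be the extension of $g(t,\cdot)$ into $U$; more precisely, take $G$ agreeing with $g$ in $I\times U$. Set $w := u - G$. Then $w$ vanishes on $I\times\partial U$, has initial value $u_0 - g(0)$, and solves
\begin{equation*}
\partial_t w - \nabla\cdot\a\nabla w = \nabla\cdot \mathbf{f} + \nabla\cdot \a\nabla g - \partial_t g .
\end{equation*}
I write $\partial_t g = \nabla\cdot(\nabla \Delta_U^{-1}\partial_t g)$, where $\Delta_U^{-1}$ is the Dirichlet inverse Laplacian on $U$ at each time. The right-hand side is then $\nabla\cdot \mathbf{F}$ with
\begin{equation*}
\mathbf{F} = \mathbf{f} + \a\nabla g - \nabla\Delta_U^{-1}\partial_t g .
\end{equation*}

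\emph{Energy identity.} Next I test the equation for $w$ with $w\indc_{(0,t)}$. This is justified by the well-posedness theory of Section~\ref{s.coarse.graining}, since $w\in W^1_{\s,0}$ and $\partial_t w\in W^{-1}_\s$; equivalently, an approximation as in Lemma~\ref{l.testing.with.u} can be used. It gives
\begin{equation*}
\tfrac12\|w(t)\|_{L^2(U)}^2 + \int_0^t\!\!\int_U \nabla w\cdot\s\nabla w = \tfrac12\|w(0)\|_{L^2(U)}^2 - \int_0^t\!\!\int_U \mathbf{F}\cdot\nabla w .
\end{equation*}
I split the last term by the three pieces of $\mathbf{F}$.

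For the $\mathbf{f}$ and $\nabla\Delta_U^{-1}\partial_t g$ pieces I use the $B^s_{2,2}$ / $\hat B^{-s}_{2,2}$ duality:
\begin{equation*}
\Bigl|\fint \mathbf{f}\cdot\nabla w\Bigr| \le \|\mathbf{f}\|_{\underline{B}^s_{2,2}}\,\|\nabla w\|_{\underline{\hat B}^{-s}_{2,2}} .
\end{equation*}
I then bound $\|\nabla w\|_{\underline{\hat B}^{-s}_{2,2}}$ by~\eqref{e.CG.grad} applied to $w$. Its $\mathbf{F}$-dependent part I handle by linearity: split $w$ into the solutions driven separately by $\mathbf{f}$ and by the $g$-terms, or simply apply Proposition~\ref{p.poincare.RHS} to $u$ and to $G$ separately. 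Young's inequality then absorbs $\|\s^{\nicefrac12}\nabla w\|_{L^2}$ into the left side, producing the $3^{sm}\lambda_{s-\epsilon,2}^{-\nicefrac12}\|\mathbf{f}\|$ term, and similarly the $3^{sm}\lambda_{s,2}^{-\nicefrac12}\|\nabla\Delta_U^{-1}\partial_t g\|$ term. A time cutoff at $t$ enters the Besov norm; since $s<\nicefrac12$, multiplication by $\indc_{(0,t)}$ is bounded on $B^s_{2,2}$, so this costs only a constant.

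For the $\a\nabla g\cdot\nabla w$ piece I exchange roles, pairing $\nabla g\in B^s_{2,2}$ against the flux $\a^t\nabla w$ in $\hat B^{-s}_{2,2}$. This flux is controlled by~\eqref{e.CG.flux} (or~\eqref{e.general.flux} for the adjoint), which gives the $3^{sm}\Lambda_{s,2}^{\nicefrac12}\|\nabla g\|_{\underline{B}^s_{2,2}}$ term after Young. The $\partial_t G$ contribution at the level of $\sup_t\|u(t)\|$ is handled differently: I bound $\|G(t)\|$ directly, and treat the cross term $\int\partial_t g\, w$ via $H^{-1}$--$H^1_0$ duality and an interpolation. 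This yields $\|\nabla g\|^{\nicefrac12}\|\partial_t g\|^{\nicefrac12}_{L^2H^{-1}}$ and $|I|^{-\nicefrac12}\sup_t\|g(t)\|$. Finally I return from $w$ to $u = w + G$ by the triangle inequality, and divide by $|I\times U|\sim 3^{(d+2)m}$ to pass to normalized norms.

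\emph{Main obstacle.} The main difficulty is the $\a\nabla g$ term, because $\a$ is unbounded, so $\a\nabla g$ need not lie in $L^2$ with a $\s^{-\nicefrac12}$ weight. The duality with fluxes must therefore be done carefully: I would pair $\nabla g$ with $\a^t\nabla w$ and use that $w$ solves an equation to apply the coarse-grained flux bound. Keeping that bound free of circular dependence on $\|\mathbf{F}\|$ requires splitting $w$ into a homogeneous solution plus the $\rho$-type local correctors, exactly as in Step~2 of Proposition~\ref{p.poincare.RHS}.
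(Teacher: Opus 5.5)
There is a genuine gap, at the term you flag as the main obstacle, and the remedy you propose does not close it. With $w=u-g$, testing with $w$ puts $\|\s^{\nicefrac12}\nabla w\|^2$ on the left and leaves the cross term $\int \a\nabla g\cdot\nabla w=\int\nabla g\cdot\a^t\nabla w$ on the right. No coarse-grained bound controls $\a^t\nabla w$ in $\underline{\hat B}^{-s}_{2,2}$:
\begin{itemize}
\item $w$ is not a solution of the adjoint equation, so \eqref{e.general.flux} ``for the adjoint'' does not apply.
\item Applying \eqref{e.CG.flux} (or \eqref{e.CG.grad}) to $w$ as a forward solution requires $\|\mathbf F\|_{\underline B^s_{2,2}}$, hence $\|\a\nabla g\|_{\underline B^s_{2,2}}$. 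This quantity is not available for $\a\in\Omega$ and does not appear among the data in \eqref{e.cg.energy}.
\item Splitting off local correctors as in Step~2 of Proposition~\ref{p.poincare.RHS} does not break the circularity. The correctors are driven by the same $\nabla\cdot\mathbf F$, so their energy bound again needs $\mathbf F\in B^s_{2,2}$.
\end{itemize}
Independently, your last step $u=w+g$ costs $\|\s^{\nicefrac12}\nabla g\|_{\L^2(I\times U)}$ in the triangle inequality. That term is not on the right side of \eqref{e.cg.energy}. It is not controlled by $\|\nabla g\|_{\underline B^s_{2,2}}$ and the coarse-grained constants, and it can be infinite when $\s$ is only locally integrable.

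The missing idea is to never move $\a\nabla g$ to the right-hand side, and to keep the energy of the actual solution on the left. The paper proceeds in two steps.
\begin{itemize}
\item It splits $u=v+w$. Here $w$ has zero lateral data and carries $\mathbf f$ and $u_0$; it is handled by testing with $w$, \eqref{e.CG.grad}, and Young, essentially as in your $\mathbf f$ step. The function $v$ solves the \emph{homogeneous} equation with lateral data $g$.
\item For $v$ it writes $\partial_t(v-g)=\nabla\cdot\a\nabla v-\nabla\cdot\nabla\Delta_U^{-1}\partial_t g$ and tests with $v-g$, keeping the flux as $\a\nabla v$.
\end{itemize}
Since $\nabla(v-g)\cdot\a\nabla v=\nabla v\cdot\s\nabla v-\nabla g\cdot\a\nabla v$, the energy of $v$ itself appears on the left. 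The cross term pairs $\nabla g$ with the forward flux $\a\nabla v$ of a homogeneous solution, which is exactly what \eqref{e.general.flux} bounds by $C3^{sm}\Lambda_{s,2}^{\nicefrac12}(\cudot_m)\|\s^{\nicefrac12}\nabla v\|_{\L^2(I\times U)}$. The remaining terms are handled as follows:
\begin{itemize}
\item $\nabla\Delta_U^{-1}\partial_t g\cdot\nabla v$ is bounded by \eqref{e.general.grad};
\item $\nabla\Delta_U^{-1}\partial_t g\cdot\nabla g$ gives the last product term in \eqref{e.cg.energy};
\item passing from $\sup_t\|(v-g)(t)\|$ to $\sup_t\|v(t)\|$ costs only the unweighted $\sup_t\|g(t)\|_{\L^2(U)}$.
\end{itemize}
The essential point is this algebraic rearrangement. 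The split is what lets you use the homogeneous bounds \eqref{e.general.grad}--\eqref{e.general.flux}, rather than estimates with a right-hand side that contains $\a\nabla g$.
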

\begin{proof}
Let~$u = v+w$ where~$v$ solves~\eqref{e.parabolic.eq} with~$\mathbf{f} = 0$ and~$u_0 = 0$, and~$w$ solves~\eqref{e.parabolic.eq} with~$g = 0$.

\emph{Step 1:} By testing the equation for~$w$ with itself and applying~\eqref{e.CG.grad},
\begin{align*}
\frac{1}{2|I|}\sup_{t \in I} \fint_U w^2(t) + \fint_I \fint_U \nabla w \cdot \s \nabla w & \leq \fint_I \fint_U |\mathbf{f}\cdot \nabla w| + \frac{1}{2|I|} \fint_U u_0^2 \\
& \leq \|\mathbf{f}\|_{\underline{B}_{2,2}^s(I\times U)} \|\nabla w\|_{\underline{\hat{B}}^{-s}_{2,2}(I\times U)} + \frac{1}{2|I|} \fint_U u_0^2 \\
& \leq C3^{sm}\lambda_{s-\epsilon,2}^{-\nicefrac12}(\cudot_m)\|\mathbf{f}\|_{\underline{B}_{2,2}^s(I\times U)}\|\s^{\nicefrac12}\nabla w\|_{\L^2(I\times U)} \\
& \qquad + C3^{2sm}\lambda_{s-\epsilon,2}^{-1}(\cudot_m)\|\mathbf{f}\|_{\underline{B}_{2,2}^s(I\times U)}^2 + \frac{1}{2|I|} \fint_U u_0^2\,.
\end{align*}
Re-absorbing the energy term,
\begin{align*}
\|\s^{\nicefrac12} \nabla w\|_{\L^2(I\times U)} \leq C3^{sm}\lambda_{s-\epsilon,2}^{-\nicefrac12}(\cudot_m)\|\mathbf{f}\|_{\underline{B}^s_{2,2}(I\times U)} + C|I|^{-\nicefrac12} \|u_0\|_{\L^2(U)}\,,
\end{align*}
and using this in the first estimate (from testing the equation), we also get the same bound for~$|I|^{-\nicefrac12}\sup_{t\in I} \|w(t)\|_{\L^2(U)}$.

\emph{Step 2:} Now consider the equation for~$v$, and write~$\partial_t g = \nabla \cdot \nabla \Delta^{-1}_U \partial_t g$, where~$\Delta^{-1}_U$ inverts the Laplacian with zero Dirichlet data. Then~$v-g$ satisfies
\begin{equation*}
\partial_t (v-g) = \nabla \cdot \a\nabla v - \nabla \cdot \nabla \Delta_U^{-1} \partial_t g\,.
\end{equation*}
Testing this equation with~$v-g$ yields
\begin{align*}
\fint_I \fint_U \frac{1}{2}\partial_t(v-g)^2 + \nabla v \cdot \s \nabla v = \fint_I \fint_U \a\nabla v \cdot \nabla g + \nabla \Delta_U^{-1} \partial_t g \cdot \nabla (v-g)\,.
\end{align*}
Then estimating the integrals by duality and applying~\eqref{e.general.grad} and~\eqref{e.general.flux}
\begin{align*}
\lefteqn{
\frac{1}{2|I|}\sup_{t\in I} \fint_U (v-g)^2(t) + \fint_I \fint_U \nabla v \cdot \s \nabla v 
} \qquad & \\
& \leq \|\a\nabla v \|_{\underline{\hat{B}}^{-s}_{2,2}(I\times U)} \|\nabla g\|_{\underline{B}_{2,2}^s(I\times U)} + \|\nabla v \|_{\underline{\hat{B}}^{-s}_{2,2}(I\times U)} \|\nabla \Delta_U^{-1} \partial_t g\|_{\underline{B}_{2,2}^s(I\times U)} \\
& \qquad + \|\nabla g\|_{\L^2(I\times U)}\| \nabla \Delta_U^{-1} \partial_t g\|_{\L^2(I\times U)} \\
& \leq C3^{sm}\Lambda_{s,2}^{\nicefrac12}(\cudot_m)\|\s^{\nicefrac12}\nabla v\|_{\L^2(I\times U)} \|\nabla g\|_{\underline{B}_{2,2}^s(I\times U)}  \\
& \qquad + C3^{sm}\lambda_{s,2}^{-\nicefrac12}(\cudot_m) \|\s^{\nicefrac12}\nabla v\|_{\L^2(I\times U)}\|\nabla \Delta_U^{-1} \partial_t g\|_{\underline{B}_{2,2}^s(I\times U)} + \|\nabla g\|_{\L^2(I\times U)} \|\partial_t g \|_{\L^2(I;\underline{H}^{-1}(U))}
\end{align*}
where the last term was bounded by~\cite[Remark 3.5]{ABM}. Therefore after re-absorbing the energy factor
\begin{align*}
\|\s^{\nicefrac12}\nabla v\|_{\L^2(I\times U)} & \leq C 3^{sm}\Lambda_{s,2}^{\nicefrac12}(\cudot_m)\|\nabla g\|_{\underline{B}_{2,2}^s(I\times U)} + C3^{sm} \lambda_{s,2}^{-\nicefrac12}(\cudot_m)  \|\nabla \Delta_U^{-1} \partial_t g\|_{\underline{B}_{2,2}^s(I\times U)} \\
& \qquad + C\|\nabla g\|_{\L^2(I\times U)}^{\nicefrac12} \|\partial_t g \|^{\nicefrac12}_{\L^2(I;\underline{H}^{-1}(U))}\,.
\end{align*}
We then obtain~\eqref{e.cg.energy} by putting together all our estimates and using the triangle inequality.
\end{proof}

In Lemma~\ref{l.Besov.regularity.solns} we proved a coarse-grained Poincar\'e inequality where the coarse-grained ellipticity constants replace the standard point-wise ellipticity factors. We can do the same for the Caccioppoli inequality. The proof is almost identical to the elliptic case in~\cite[Proposition 2.5]{AK.HC}. The statement below is not formulated in diffusively scaled domains -- see Corollary~\ref{c.adapted.caccio}.

\begin{proposition}[Coarse-grained Caccioppoli inequality]
\label{p.coarse.caccio}
Let~$s,t\in (0,1)$ such that~$s+t <1$ and suppose that~$u\in \mathcal{A}(\cudot_m)$. Then
\begin{align}
\label{e.coarse.caccio}
& \|\s^{\nicefrac12} \nabla u\|_{\L^2(\cudot_{m-1})}^2 \nonumber \\
& \leq \biggl(\frac{C(d)}{1-s-t}\biggr)^{2 + \frac{4s}{1-s-t}}\biggl( 1 + (\Lambda_{s,1}(\cudot_m))^{\frac{1-t}{1-s-t}} \bigl( \lambda^{-1}_{t,1}(\cudot_m) + \Lambda_{t,1}(\cudot_m) \bigr)^{\frac{s}{1-s-t}}\biggr) 3^{-2m}\|u\|_{\L^2(\cudot_m)}^2\,.
\end{align}
\end{proposition}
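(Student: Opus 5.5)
The plan is to mimic the elliptic proof of~\cite[Proposition 2.5]{AK.HC}: test the equation against a cutoff times $u$ by means of Lemma~\ref{l.testing.with.u}, bound the resulting cross term in Besov duality by coarse-grained norms, and then close the estimate by interpolating between intermediate scales.

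\emph{Step 1: the energy identity.} Fix radii $m-1\le r<r'\le m$ in the parabolic sense. Choose $\varphi\in C_c^\infty(\cudot_{r'})$ with $\varphi=1$ on $\cudot_r$ and $0\le\varphi\le1$. Take the cutoff scale of $\varphi$ to be $h=(r'-r)3^m$ in space and $h^2$ in time, so that
\begin{equation*}
|\nabla\varphi|\lesssim h^{-1}\,,\qquad |\nabla^2\varphi|+|\partial_t\varphi|\lesssim h^{-2}\,.
\end{equation*}
Applying Lemma~\ref{l.testing.with.u} with $\varphi^2$ gives
\begin{equation*}
\fint \varphi^2\nabla u\cdot\s\nabla u=-2\fint u\varphi\nabla\varphi\cdot\a\nabla u+\frac12\fint u^2\partial_t(\varphi^2)\,.
\end{equation*}
The time-derivative term is harmless: it is at most $Ch^{-2}\|u\|^2_{\L^2(\cudot_m)}$. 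In the elliptic case there is no such term, and this is the only genuinely new contribution.

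\emph{Step 2: the cross term.} Estimate the cross term by duality,
\begin{equation*}
\|u\varphi\nabla\varphi\|_{\underline B^{s}_{2,\infty}}\;[\a\nabla u]_{\Bring^{-s}_{2,1}}\,.
\end{equation*}
For the flux factor, use Lemma~\ref{l.mathringB.bounds}, which gives the bound $C3^{sm}\Lambda_{s,1}^{1/2}\|\s^{1/2}\nabla u\|_{\L^2}$. For the Besov norm of the product, use Lemma~\ref{l.multiply.in.besov.norm}: it is bounded by $h^{-1}$ times $\|u\|_{\underline B^s_{2,\infty}(\cudot_{r'})}$, plus a term $h^{-1-s}\|u\|_{\L^2}$ coming from the oscillation of the cutoff.

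To control $\|u\|_{\underline B^s_{2,\infty}}$, interpolate between $L^2$ and the regularity given by the coarse-grained Poincar\'e inequality (Lemma~\ref{l.Besov.regularity.solns}) at exponent $t$. That lemma bounds $[u]_{\underline B^{t'}}$ for any $t'$, with the error measured in energy weighted by $\Lambda_{t,1}+\lambda_{t,1}^{-1}$. Since $s+t<1$, interpolating the $B^s$ seminorm between $L^2$ and $B^{1-t}$-type control gives
\begin{equation*}
\|u\|_{\underline B^s}\lesssim \|u\|_{\L^2}^{1-\frac{s}{1-t}}\Bigl(3^{tm}(\Lambda_{t,1}+\lambda^{-1}_{t,1})^{1/2}\|\s^{1/2}\nabla u\|_{\L^2(\cudot_{r'})}\Bigr)^{\frac{s}{1-t}}\,.
\end{equation*}
This interpolation is where the exponents $\frac{1-t}{1-s-t}$ and $\frac{s}{1-s-t}$ in the statement come from.

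\emph{Step 3: closing the estimate.} Insert the bounds of Step 2 into Step 1. Apply Young's inequality so that the energy on $\cudot_{r'}$ appears with a small coefficient. This yields
\begin{equation*}
E(r)\le \tfrac12E(r')+C(r'-r)^{-p}\,A\,3^{-2m}\|u\|^2_{\L^2(\cudot_m)}\,,
\end{equation*}
where $E(r)=\|\s^{1/2}\nabla u\|^2_{\L^2(\cudot_r)}$ and $A$ is the ellipticity factor in the statement. The exponent $p$ grows like $\frac{s}{1-s-t}$, and Young's inequality raises $\Lambda_{s,1}$ to the power $\frac{1-t}{1-s-t}$ and $(\lambda_{t,1}^{-1}+\Lambda_{t,1})$ to the power $\frac{s}{1-s-t}$. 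The standard iteration lemma over $r\in[m-1,m]$ then absorbs $E(r')$ and produces the constant $(C/(1-s-t))^{2+\frac{4s}{1-s-t}}$.

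\emph{Main obstacle.} The hardest step is Step 2. In the Besov product estimate and the interpolation, the coarse-grained constants are defined on $\cudot_m$ but must be applied on intermediate parabolic cylinders $\cudot_{r'}$ that are not triadic. I would handle this by monotonicity: cover $\cudot_{r'}$ by subcubes $z+\cudot_k$ with $z\in\mathcal{Z}_k$, and bound the constants on the subcubes by those on $\cudot_m$ directly from the definition~\eqref{e.coarse.grained.ellipticity}. I would also check that the $u^2\partial_t\varphi$ term does not require $\varphi$ to be compactly supported in time in a way that is incompatible with the multiplier lemma. If it does, I would instead use a cutoff that vanishes at the initial time only, and use the sign of $\partial_t\varphi$.
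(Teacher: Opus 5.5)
Your proposal is correct, but it gets the smallness of the energy coefficient by a different mechanism from the paper.

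\textbf{The paper's route.} The paper never interpolates. After testing with $u\varphi$, it splits the cross term over cubes $z+\cudot_h$ at an auxiliary scale $h\le k-4$, far below the cutoff width $3^k\sim R-r$, and subtracts the local means. The mean-free part is handled with the product estimate \eqref{e.other.div.curl} and the $\underline{B}^s$ Poincar\'e inequality on these small cubes. There the constants appear at exponent $1-s$ and are downgraded to exponent $t$ because $t<1-s$. Poincar\'e gains $3^{(1-s)h}$ while localizing the constants costs $3^{-th}$, so the energy coefficient is $3^{-k+(1-s-t)h}(\lambda_{t,1}^{-1/2}+\Lambda_{t,1}^{1/2})\Lambda_{s,1}^{1/2}$. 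The scale $h$ is chosen to make this $\tfrac12$. Inserting that $h$ into the term pairing local means with flux averages, which costs $3^{-k-sh}\Lambda_{s,1}^{1/2}\|u\|$, produces the exponents $\frac{1-t}{1-s-t}$ and $\frac{s}{1-s-t}$.

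\textbf{Your route.} Your interpolation $[u]_{\underline{B}^s}\lesssim\|u\|^{1-\theta}[u]_{\underline{B}^{1-t}}^{\theta}$ with $\theta=\frac{s}{1-t}$, followed by Young's inequality on the sublinear power $E^{(1+\theta)/2}$, is the same scale optimization packaged as a lemma. It is more transparent and needs no auxiliary scale. It also reads the exponent $t$ off directly: the $\underline{B}^{1-t}$ Poincar\'e inequality requires $\Bring^{-t}$ norms, hence Lemma~\ref{l.mathringB.bounds} at exponent $t$. That is the correct reading of Lemma~\ref{l.Besov.regularity.solns}, whose second line should carry constants at exponent $1-s$.

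\textbf{Localization.} The price is that you must localize to cubes of size comparable to the cutoff width, not merely to $\cudot_{r'}$. There are three reasons: Lemma~\ref{l.Besov.regularity.solns} needs the threefold enlarged cube; the multiplier estimate gives $h^{-1-s}$ only on cubes of size $\sim h$; and the energy on the right must be $E(r')$ rather than the energy on $\cudot_m$ for the iteration to close. On cubes of size $3^k$, the factor $3^{tk}$ from Poincar\'e exactly cancels the $3^{-tk}$ lost in comparing $\Lambda_{t,1},\lambda_{t,1}^{-1}$ on the subcube with those on $\cudot_m$, and likewise for $s$ in the flux bound. So your covering fix works, and H\"older's inequality with exponents $\frac{2}{1-\theta},\frac{2}{\theta},2$ reassembles the sums.

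\textbf{Two small points.} The cutoff-oscillation term $h^{-2-2s}\Lambda_{s,1}\|u\|^2$ is absorbed into the stated factor because $\Lambda_{s,1}\lambda_{t,1}^{-1}\ge|\b(\cudot_m)||\s_*^{-1}(\cudot_m)|\ge1$. Your concern about the time cutoff is unnecessary: $\cudot_{m-1}$ is interior to $\cudot_m$ in time, so a compactly supported $\varphi$ is admissible.
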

\begin{proof}
By scaling we may take~$m=1$ and suppose that~$u\in\mathcal{A}(\cudot_1)$. Fix~$s,t \in (0,1)$ such that~$s+t < 1$, let~$0 \leq r < R < 1$ and for any length scale~$\xi$ let~$\nu_{\xi} := \log_3(\xi)$. We will compare the Dirichlet energy of~$u$ in~$\cudot_{\nu_r}$ to that in~$\cudot_{\nu_R}$ in such a way that iteration (via \cite[Lemma C.6]{AKMBook}) yields the desired inequality.

For appropriate constants depending only on dimension take~$c(R-r) \leq 3^{k} \leq C(R-r)$ and fix~$\varphi \in C^\infty(\cudot_0)$ such that
\begin{equation}
\label{e.cutoff.size}
\left\{
\begin{aligned}
&\varphi \equiv 1 \quad \mbox{ on } \cudot_{\nu_r}\,, \quad \supp(\varphi) \subseteq \cudot_{\nu_{\nicefrac{(r+R)}{2}}} \\
& \|\shom^{\nicefrac12} \nabla^j\varphi\|_{L^\infty(\cudot_0)} \leq C3^{-kj} \quad \mbox{ for } \quad j\in \{1,2\} \\
& \|\partial_t \varphi\|_{L^\infty(\cudot_0)} \leq C3^{-2k}
\end{aligned}
\right. \,.
\end{equation}
By Lemma~\ref{l.testing.with.u} we can test the equation for~$u$ with~$u\varphi$ to get
\begin{align}
\label{e.caccio.terms}
\fint_{\cudot_0} \varphi \nabla u \cdot \a \nabla u & = - \fint_{\cudot_0} u \nabla \varphi \cdot \a\nabla u + \frac{1}{2}\fint_{\cudot_0} \partial_t \varphi u^2 \nonumber \\
& = - \avsum_{z\in\mathcal{Z}_h \cap \cudot_0} \fint_{z+\cudot_h} (u - (u)_{z+\cudot_h})\nabla \varphi \cdot \a\nabla u + (u)_{z+\cudot_h} \fint_{z+\cudot_h} \nabla \varphi \cdot \a \nabla u \nonumber \\
& \hspace*{1cm}+ \frac{1}{2}\fint_{\cudot_0} \partial_t \varphi u^2\,.
\end{align}
where~$h \leq k - 4$ is a scale much smaller than~$k$ which, when fixed below, will allow us to gain from scale discount factors. We bound the first term in~\eqref{e.caccio.terms} by
\begin{align*}
& \fint_{z+\cudot_h} (u - (u)_{z+\cudot_h})\nabla \varphi \cdot \a\nabla u  \leq \| (u - (u)_{z+\cudot_h})\nabla \varphi\|_{\underline{B}_{2,\infty}^s(z+\cudot_h)}[\a\nabla u]_{\underline{\hat{B}}_{2,1}^{-s}(z+\cudot_h)} \\
& \leq C(d)\biggl(\|\nabla \varphi\|_{\underline{W}^{1,\infty}_{\mathrm{par}}(z+\cudot_h)} ( [\nabla u]_{\Bring^{-(1-s)}(z+\cudot_{h+1})} + [\a\nabla u]_{\Bring_{2,1}^{s-1}(z+\cudot_{h+1})} ) \biggr) [\a\nabla u]_{\Bring_{2,1}^{-s}(z+\cudot_h)}
\end{align*}
using~\eqref{e.other.div.curl}. Our assumptions in~\eqref{e.cutoff.size} and~$h\leq k$ imply that~$\|\nabla \varphi\|_{\underline{W}^{1,\infty}_{\mathrm{par}}(z+\cudot_h)} \leq 3^{-k}$, while using Lemma~\ref{l.mathringB.bounds} to bound the negative Besov semi-norms,
\begin{align*}
\fint_{z+\cudot_h} & (u - (u)_{z+\cudot_h})\nabla \varphi \cdot \a\nabla u \\
& \leq C(d) 3^{-k} 3^{(1-s)h}\bigl(\lambda_{1-s,1}^{-\nicefrac12}(z+\cudot_{h+1}) + \Lambda_{1-s,1}^{\nicefrac12}(z+\cudot_{h+1}) \bigr) 3^{sh}\Lambda_{s,1}^{\nicefrac12}(z+\cudot_h) \|\s^{\nicefrac12}\nabla u\|_{\L^2(z+\cudot_{h+1})}^2 \\
& \leq C(d) 3^{-k+(1-s-t) h} \bigl( \lambda^{-\nicefrac12}_{t,1}(\cudot_0) + \Lambda^{\nicefrac12}_{t,1}(\cudot_0) \bigr)\Lambda^{\nicefrac12}_{s,1}(\cudot_0) \|\s^{\nicefrac12}\nabla u\|_{\L^2(z+\cudot_{h+1})}^2\,,
\end{align*}
where we have used
\begin{equation*}
\left\{
\begin{aligned}
& \lambda_{1-s,1}^{-\nicefrac12}(z+\cudot_{h}) \leq \lambda_{t,1}^{-\nicefrac12}(z+\cudot_{h}) \leq 3^{-th}\lambda_{t,1}^{-\nicefrac12}(z+\cudot_{h}) \\
& \Lambda_{1-s,1}^{\nicefrac12}(z+\cudot_{h}) \leq \Lambda_{t,1}^{\nicefrac12}(z+\cudot_{h}) \leq 3^{-th}\Lambda_{t,1}^{\nicefrac12}(z+\cudot_{h})
\end{aligned}
\right.
\end{equation*}
to bound the coarse-grained ellipticity constants. The second term in~\eqref{e.caccio.terms} is bounded similarly as
\begin{align*}
(u)_{z+\cudot_h} \fint_{z+\cudot_h} \nabla \varphi \cdot \a \nabla u & \leq \|u\|_{\L^2(z+\cudot_h)}\|\nabla \varphi\|_{\underline{B}_{2,\infty}^s(z+\cudot_h)} [\a\nabla u]_{\underline{\hat{B}}_{2,1}^{-s}(z+\cudot_h)} \\
& \leq \|u\|_{\L^2(z+\cudot_h)} 3^{-k-sh}3^{sh}\Lambda_{s,1}^{\nicefrac12}(z+\cudot_h)\|\s^{\nicefrac12}\nabla u\|_{\L^2(z+\cudot_h)} \\
& \leq 3^{-k-sh}\Lambda_{s,1}^{\nicefrac12}(\cudot_0)\|u\|_{\L^2(z+\cudot_h)} \|\s^{\nicefrac12}\nabla u\|_{\L^2(z+\cudot_h)}\,.
\end{align*}
Bounding the third term in~\eqref{e.caccio.terms} trivially by~$3^{-2k}\|u\|_{\L^2(\cudot_0)}$ and summing the other terms over~$z\in \mathcal{Z}_h$ such that~$z+\cudot_h$ intersects the support of~$\varphi$, we obtain
\begin{align*}
\|\s^{\nicefrac12}\nabla u\|^2_{\L^2(\cudot_{\nu_r})} & \leq C(d)3^{-k+(1-s-t)h} \bigl( \lambda^{-\nicefrac12}_{t,1}(\cudot_0) + \Lambda^{\nicefrac12}_{t,1}(\cudot_0) \bigr)\Lambda^{\nicefrac12}_{s,1}(\cudot_0) \|\s^{\nicefrac12}\nabla u\|_{\L^2(\cudot_{\nu_R})}^2 \\
& \quad + C(d) 3^{-k-sh}\Lambda_{s,1}^{\nicefrac12}(\cudot_0)\|u\|_{\L^2(\cudot_0)} \|\s^{\nicefrac12}\nabla u\|_{\L^2(\cudot_{\nu_R})} \\
& \quad + 3^{-2k}\|u\|^2_{\L^2(\cudot_0)}\,.
\end{align*}
We now choose~$h$ small enough that the first term has a factor of~$\nicefrac12$, namely
\begin{equation*}
h :=  \bigg\lfloor \frac{1}{1-s-t}\log_3 \biggl( \frac{3^k}{C(d)\bigl( \lambda^{-\nicefrac12}_{t,1}(\cudot_0) + \Lambda^{\nicefrac12}_{t,1}(\cudot_0) \bigr)\Lambda^{\nicefrac12}_{s,1}(\cudot_0)} \biggr) \bigg\rfloor
\end{equation*}
so we get
\begin{align*}
\|\s^{\nicefrac12} & \nabla u\|^2_{\L^2(\cudot_{\nu_r})} \leq \frac{1}{2}\|\s^{\nicefrac12}\nabla u\|^2_{\L^2(\cudot_{\nu_R})}  \\
& \quad + \biggl(\frac{1}{R-r} \biggr)^{\frac{2(1-t)}{1-s-t}}\biggl[ 1 + C(d)^{\frac{2s}{1-s-t}}(\Lambda_{s,1}(\cudot_0))^{\frac{2(1-t)}{1-s-t}} \bigl( \lambda^{-\nicefrac12}_{t,1}(\cudot_0) + \Lambda^{\nicefrac12}_{t,1}(\cudot_0) \bigr)^{\frac{2s}{1-s-t}}\biggr] \|u\|^2_{\L^2(\cudot_0)}\,.
\end{align*}
We complete the proof by applying~\cite[Lemma C6]{AK.Book}.
\end{proof}

Choosing the proper parabolic scaling for the Caccioppoli inequality is non-obvious, because there is a feedback loop: the ellipticity depends on the domains in which we coarse-grain, and the domains we choose to coarse-grain are adapted to the ellipticity. The following corollary illustrates one possible choice. The coarse-grained ellipticity constants in adapted domains are defined in~\eqref{e.adapted.Lambda} and~\eqref{e.adapted.lambda}, and Lemma~\ref{l.change.variables} states how the coarse-grained matrices behave under a change of variables.

\begin{corollary}[Caccioppoli in parabolic domains]
\label{c.adapted.caccio}
Suppose that~$s\in (0,\nicefrac12)$, the adapted cubes are defined by~$\m_0$ such that
\begin{equation}
\label{e.good.bound}
\lambda_{s,1}(\cusdot_m) \leq 1\,.
\end{equation}
and~$\partial_t u = \nabla \cdot \a \nabla u$ in~$\cusdot_m$. Then
\begin{align}
\label{e.coarse.caccio.adapted}
\lambda_{\m_0}^{-\nicefrac12}\|\s^{\nicefrac12} \nabla u\|_{\L^2(\cusdot_{m-1})}^2 \leq C(d,s) \biggl( \frac{\Lambda_{s,1}(\cusdot_m)}{\lambda_{s,1}(\cusdot_m)} \biggr)^{\frac{1}{1-2s}} 3^{-2m}\|u\|_{\L^2(\cusdot_m)}^2\,.
\end{align}
\end{corollary}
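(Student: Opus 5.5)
The plan is to reduce the corollary to Proposition~\ref{p.coarse.caccio} by the change of variables of Lemma~\ref{l.change.variables}, and then choose the parameters $s,t$ in that proposition.

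\emph{Step 1: transform to standard cubes.} Set $\tilde u(s',y)=u(\lambda_r^{-1}s',\q_0 y)$ and define $\tilde\a$ as in~\eqref{e.transform.eq}. Then $\tilde u\in\mathcal{A}(\cudot_m)$ solves $\partial_{s'}\tilde u=\nabla\cdot\tilde\a\nabla\tilde u$. Lemma~\ref{l.change.variables} gives
\[
\b_{\tilde\a}(y+\cudot_k)=\lambda_r^{-1}\q_0^{-1}\b_\a(z+\cusdot_k)\q_0^{-1},
\qquad
\s_{*,\tilde\a}^{-1}(y+\cudot_k)=\lambda_r\q_0\s_{*,\a}^{-1}(z+\cusdot_k)\q_0 .
\]
By~\eqref{e.q0.by.m0}, $\q_0$ is comparable to $\lambda_{\m_0}^{-1/2}\m_0^{1/2}$, and $\lambda_r$ is comparable to $\lambda_{\m_0}$. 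Hence $|\b_{\tilde\a}|$ is comparable to $|\m_0^{-1/2}\b_\a\m_0^{-1/2}|$, and $|\s_{*,\tilde\a}^{-1}|$ is comparable to $|\m_0^{1/2}\s_{*,\a}^{-1}\m_0^{1/2}|$, up to factors depending only on~$d$. Consequently $\Lambda_{s,1}(\cudot_m;\tilde\a)$ and $\lambda_{s,1}(\cudot_m;\tilde\a)$ are comparable to the adapted constants~\eqref{e.adapted.Lambda} and~\eqref{e.adapted.lambda}. The lattices $\mathcal{Z}_k$ and $\mathbb{L}_k$ correspond under the map, so the maxima over subcubes match.

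For the energies, $\nabla\tilde u=\q_0\nabla u$ and $\tilde\s=\lambda_r^{-1}\q_0^{-1}\s\q_0^{-1}$ give $\nabla\tilde u\cdot\tilde\s\nabla\tilde u=\lambda_r^{-1}\nabla u\cdot\s\nabla u$. Normalized $L^2$ norms are invariant under the change of variables. This produces the factor of $\lambda_{\m_0}^{-1}$, or the power of it stated in~\eqref{e.coarse.caccio.adapted}, on the left-hand side after tracking normalizations.

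\emph{Step 2: apply Proposition~\ref{p.coarse.caccio} with $t=s$.} This requires $2s<1$, which holds since $s\in(0,\nicefrac12)$, and gives
\[
\|\tilde\s^{1/2}\nabla\tilde u\|^2_{\L^2(\cudot_{m-1})}
\le C(d,s)\Bigl(1+\Lambda_{s,1}^{\frac{1-s}{1-2s}}\bigl(\lambda_{s,1}^{-1}+\Lambda_{s,1}\bigr)^{\frac{s}{1-2s}}\Bigr)3^{-2m}\|\tilde u\|^2_{\L^2(\cudot_m)} .
\]
We now use the normalization~\eqref{e.good.bound}, $\lambda_{s,1}\le1$. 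Since $\b\ge\s_*$ on every subcube, $\Lambda_{s,1}\ge\lambda_{s,1}$, so $\Lambda_{s,1}/\lambda_{s,1}\ge1$. The aim is to bound the bracket by a constant times $(\Lambda_{s,1}/\lambda_{s,1})^{1/(1-2s)}$.

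\emph{Step 3: the exponent bookkeeping, which is the main obstacle.} Write $\Lambda=\Lambda_{s,1}$ and $\lambda=\lambda_{s,1}$. Then $\lambda^{-1}+\Lambda\le2\max\{\lambda^{-1},\Lambda\}$, and both quantities are at most $\Lambda/\lambda$: the first because $\Lambda\ge\lambda$, the second because $\lambda\le1$. So the bracket is at most
\[
1+2\,\Lambda^{\frac{1-s}{1-2s}}(\Lambda/\lambda)^{\frac{s}{1-2s}} .
\]
This would equal a constant times $(\Lambda/\lambda)^{1/(1-2s)}$ if $\Lambda\le\Lambda/\lambda$, which again uses $\lambda\le1$. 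The factor $1$ is absorbed since $\Lambda/\lambda\ge1$. Combining with Step~1 and undoing the change of variables gives~\eqref{e.coarse.caccio.adapted}.

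The delicate points are tracking the precise power of $\lambda_{\m_0}$ through the normalization of the adapted constants and checking that the rounding of $\lambda_r$ and $\q_0$ only costs dimensional constants. Mapping $\cusdot_{m-1}$ to $\cudot_{m-1}$ is consistent with the cut-off region used in Proposition~\ref{p.coarse.caccio}.
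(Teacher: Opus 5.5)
Your route is the paper's: transform via \eqref{e.transform.eq}, apply Proposition~\ref{p.coarse.caccio} with $t=s$, transform back, and simplify the ellipticity factor using \eqref{e.good.bound}. However, Step~3 contains a false step.

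Write $\Lambda=\Lambda_{s,1}(\cusdot_m)$ and $\lambda=\lambda_{s,1}(\cusdot_m)$. The inequality $\lambda^{-1}\le\Lambda/\lambda$ is equivalent to $\Lambda\ge 1$. It does not follow from $\Lambda\ge\lambda$, and nothing in the hypotheses gives $\Lambda\ge1$. It genuinely fails when $\m_0$ is large relative to the coefficients. For example, take $\a=\Id$ and $\m_0=c\,\Id$ with $c\gg1$: then \eqref{e.good.bound} holds, $\Lambda$ and $\lambda$ are both small, and $\Lambda/\lambda=O(1)$ while $\lambda^{-1}$ is large. In that regime your intermediate bound $1+2\Lambda^{\frac{1-s}{1-2s}}(\Lambda/\lambda)^{\frac{s}{1-2s}}$ is smaller than the bracket it is supposed to dominate; try $s=\nicefrac14$, $\lambda=0.01$, $\Lambda=0.02$.

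The target inequality is still true, and the repair is the paper's rearrangement. Since $\frac{1-s}{1-2s}-\frac{s}{1-2s}=1$,
\[
\Lambda^{\frac{1-s}{1-2s}}\bigl(\lambda^{-1}+\Lambda\bigr)^{\frac{s}{1-2s}}
=\lambda\Bigl(\frac{\Lambda}{\lambda}\Bigr)^{\frac{1-s}{1-2s}}\bigl(1+\lambda\Lambda\bigr)^{\frac{s}{1-2s}}
\le 2^{\frac{s}{1-2s}}\Bigl(\frac{\Lambda}{\lambda}\Bigr)^{\frac{1}{1-2s}}.
\]
The last step uses $\lambda\le1$, $\lambda\Lambda=\lambda^2(\Lambda/\lambda)\le\Lambda/\lambda$, and $1\le\Lambda/\lambda$.

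There are also two smaller points in Step~1.

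First, your computation $\nabla\tilde u\cdot\tilde\s\nabla\tilde u=\lambda_r^{-1}\nabla u\cdot\s\nabla u$ is correct. So what you actually obtain is $\lambda_r^{-1}\|\s^{\nicefrac12}\nabla u\|^2_{\L^2(\cusdot_{m-1})}$, i.e.\ $\lambda_{\m_0}^{-1}$ (up to the factor $9$ from rounding $\lambda_r$) in front of the squared energy. Commit to this rather than hedging with ``or the power of it stated''. The paper's own proof display places $\lambda_{\m_0}^{-\nicefrac12}$ on the unsquared norm. The literal combination $\lambda_{\m_0}^{-\nicefrac12}\|\cdot\|^2$ in \eqref{e.coarse.caccio.adapted} cannot hold when $\lambda_{\m_0}\gg1$: test $\a=\m_0=c\,\Id$, $u=x_1$, $c\gg1$. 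So the exponent there should be read as $\lambda_{\m_0}^{-1}$ on the squared norm.

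Second, the Loewner bounds \eqref{e.q0.by.m0} alone do not give comparability of $|\q_0^{-1}\b\q_0^{-1}|$ with $\lambda_{\m_0}|\m_0^{-\nicefrac12}\b\m_0^{-\nicefrac12}|$. Squaring is not operator monotone (cf.\ Appendix~\ref{aa.matrices}), and $\m_0$ may be badly conditioned. Instead, use the norm estimate $|\q_0-\lambda_{\m_0}^{-\nicefrac12}\m_0^{\nicefrac12}|\le C(d)3^{-k_0}$ together with $\lambda_{\m_0}^{-\nicefrac12}\m_0^{\nicefrac12}\ge\Id$. This gives $|\lambda_{\m_0}^{-\nicefrac12}\m_0^{\nicefrac12}\q_0^{-1}|+|\q_0\lambda_{\m_0}^{\nicefrac12}\m_0^{-\nicefrac12}|\le 4$, and hence comparability of both conjugated matrices up to absolute constants.
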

\begin{proof}
By transforming as in~\eqref{e.transform.eq}, applying Proposition~\ref{p.coarse.caccio} with~$s=t$ and transforming back, we obtain
\begin{align*}
\lefteqn{
\lambda_{\m_0}^{-\nicefrac12}\|\s^{\nicefrac12}\nabla u\|_{\L^2(\cusdot_m)}
} \qquad & \\
& \leq C(d,s)\biggl( 1 + (\Lambda_{s,1}(\cusdot_m))^{\frac{1-s}{1-2s}} \bigl( \lambda^{-1}_{s,1}(\cusdot_m) + \Lambda_{s,1}(\cusdot_m) \bigr)^{\frac{s}{1-2s}}\biggr) 3^{-2m}\|u\|_{\L^2(\cudot_m)}^2\,.
\end{align*}
The ellipticity factor is re-arranged, using~\eqref{e.good.bound}, as
\begin{align*}
\lefteqn{
(\Lambda_{s,1}(\cusdot_m))^{\frac{1-s}{1-2s}} \bigl(\lambda^{-1}_{s,1}(\cusdot_m) + \Lambda_{s,1}(\cusdot_m) \bigr)^{\frac{s}{1-2s}}
} \qquad & \\
& \leq \lambda_{s,1}(\cusdot_m) \biggl( \frac{\Lambda_{s,1}(\cusdot_m)}{\lambda_{s,1}(\cusdot_m)}\biggr)^{\frac{1-s}{1-2s}} \biggl( 1 + \frac{\Lambda_{s,1}(\cusdot_m)}{\lambda_{s,1}(\cusdot_m)} \lambda_{s,1}^2(\cusdot_m)\biggr)^{\frac{s}{1-2s}} \\
& \leq \biggl( \frac{\Lambda_{s,1}(\cusdot_m)}{\lambda_{s,1}(\cusdot_m)} \biggr)^{\frac{1}{1-2s}}\,,
\end{align*}
which concludes the proof.
\end{proof}

Finally, we state the coarse-grained Caccioppoli inequality for equations with non-zero right-hand side.

\begin{proposition}[Coarse-grained Caccioppoli with RHS]
Let~$s,t \in (0,1)$ such that~$s+t < 1$, and suppose that~$\partial_t u = \nabla \cdot \a \nabla u + \nabla \cdot \mathbf{f}$ in~$\cudot_m$. There exists a constant~$C = C(d)$ such that
\begin{align*}
\lefteqn{
\|\s^{\nicefrac12}\nabla u\|_{\L^2(\cudot_{m-1})}
} \quad & \\
& \leq D 3^{-2m}\|u\|_{\L^2(\cudot_m)} + CD(1+ \lambda_{1,1}^{-\nicefrac12}(\cudot_m) + \Lambda_{1,1}^{\nicefrac12}(\cudot_m))(1 + \lambda_{s,2}^{-\nicefrac12}(\cudot_m)) 3^{sm}\|\mathbf{f}\|_{\underline{B}_{2,2}^s(\cudot_m)}\,,
\end{align*}
where 
\begin{equation*}
D \coloneqq \biggl(\frac{C(d)}{1-s-t}\biggr)^{2 + \frac{4s}{1-s-t}}\biggl( 1 + (\Lambda_{s,1}(\cudot_m))^{\frac{1-t}{1-s-t}} \bigl( \lambda^{-1}_{t,1}(\cudot_m) + \Lambda_{t,1}(\cudot_m) \bigr)^{\frac{s}{1-s-t}}\biggr)\,.
\end{equation*}
\end{proposition}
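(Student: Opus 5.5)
The plan is to reduce to Proposition~\ref{p.coarse.caccio} by splitting $u$ into a solution of the homogeneous equation plus a corrector that carries the right-hand side. Let $w$ solve $\partial_t w - \nabla\cdot\a\nabla w = \nabla\cdot\mathbf{f}$ in $\cudot_m$ with $w=0$ on $\partial_{\sqcup}\cudot_m$, after first subtracting from $\mathbf{f}$ its mean on $\cudot_m$. This subtraction is harmless since constants are divergence free, and it means only the seminorm of $\mathbf{f}$ enters. Set $h:=u-w$, so that $h$ solves the homogeneous equation $\partial_t h=\nabla\cdot\a\nabla h$ in $\cudot_m$. We have $h\in\mathcal{A}(\cudot_m)$ up to an additive constant, and constants do not affect either side of \eqref{e.coarse.caccio}. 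Then
\[
\|\s^{\nicefrac12}\nabla u\|_{\L^2(\cudot_{m-1})}\le \|\s^{\nicefrac12}\nabla h\|_{\L^2(\cudot_{m-1})}+\|\s^{\nicefrac12}\nabla w\|_{\L^2(\cudot_{m})},
\]
and we apply Proposition~\ref{p.coarse.caccio} to $h$, taking square roots, with $D$ absorbing the constant.

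Applying Proposition~\ref{p.coarse.caccio} to $h$ gives a bound by $D^{\nicefrac12}3^{-m}\|h\|_{\L^2(\cudot_m)}$, and this is then bounded by $D$ times the stated scaling, since $D\ge1$. Next we write $\|h\|_{\L^2}\le\|u\|_{\L^2}+\|w\|_{\L^2}$. So everything reduces to controlling the corrector $w$ in two norms: the energy $\|\s^{\nicefrac12}\nabla w\|_{\L^2(\cudot_m)}$ and the scaled mass $3^{-m}\|w\|_{\L^2(\cudot_m)}$.

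For the energy of $w$ we use Proposition~\ref{p.cg.energy}, taking $g=0$ and $u_0=0$. This gives
\[
\|\s^{\nicefrac12}\nabla w\|_{\L^2(\cudot_m)}\le C3^{sm}\lambda_{s-\epsilon,2}^{-\nicefrac12}(\cudot_m)\|\mathbf{f}\|_{\underline{B}^s_{2,2}(\cudot_m)}.
\]
Running Step~1 of that proof directly, via \eqref{e.general.grad}, should give the same bound with $\lambda_{s,2}$ in place of $\lambda_{s-\epsilon,2}$, which matches the factor $(1+\lambda_{s,2}^{-\nicefrac12})$ in the statement.

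For the mass of $w$ we use the coarse-grained Poincar\'e inequality, Lemma~\ref{l.Besov.regularity.solns}, in the version with divergence-form right-hand side. Here $\partial_t w=\nabla\cdot(\a\nabla w+\mathbf{f})$, and the multiscale Poincar\'e inequality (Lemma~\ref{l.multiscale.poincare}) with $\g=\a\nabla w+\mathbf{f}$ bounds $3^{-m}\|w-(w)\|_{\L^2}$ by $C(\lambda_{1,1}^{-\nicefrac12}+\Lambda_{1,1}^{\nicefrac12})$ times the energy, plus a term $3^{-m}[\mathbf{f}]_{\Bring^{-1}_{2,1}}$. That last term is at most $C3^{sm}\|\mathbf{f}\|_{\underline{B}^s_{2,2}}$ up to harmless scale factors, because $\mathbf{f}$ has zero mean and so its averages over subcubes are controlled by its oscillation.

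The mean $(w)_{\cudot_m}$ is handled by the zero initial and lateral data: either the energy estimate $\sup_t\|w(t)\|_{\L^2}$ from Proposition~\ref{p.cg.energy}, or a Poincar\'e inequality on the slices using $w=0$ on the lateral boundary. Combining these gives the factor $(1+\lambda_{1,1}^{-\nicefrac12}+\Lambda_{1,1}^{\nicefrac12})(1+\lambda_{s,2}^{-\nicefrac12})3^{sm}\|\mathbf{f}\|$ in the statement.

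The main obstacle is the Poincar\'e step for $w$. Lemma~\ref{l.Besov.regularity.solns} is stated for $u\in\mathcal{A}$ on the larger cube $\cudot_{n+1}$, whereas $w$ lives only on $\cudot_m$ and carries the extra flux $\mathbf{f}$. I would first try the zero-Dirichlet energy route: bound $|I_m|^{-\nicefrac12}\sup_t\|w(t)\|_{\L^2}$, which directly controls $3^{-m}\|w\|_{\L^2(\cudot_m)}$ and avoids multiscale Poincar\'e altogether. I would fall back on the multiscale version only if the $\Lambda_{1,1}$ factor is genuinely needed.
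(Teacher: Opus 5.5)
Your decomposition is the same as the paper's. The paper also lets $v$ solve the equation with right-hand side $\nabla\cdot\mathbf{f}$ and zero data on $\partial_{\sqcup}\cudot_m$, applies Proposition~\ref{p.coarse.caccio} to $u-v$, and bounds $\|\s^{\nicefrac12}\nabla v\|_{\L^2(\cudot_m)}$ by Proposition~\ref{p.cg.energy}.

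The two routes differ only in how the mass $\|v\|_{\L^2(\cudot_m)}$ is controlled.
\begin{itemize}
\item The paper uses the global multiscale Poincar\'e inequality \eqref{e.boundary.poincare} with $\mathbf{g}=\a\nabla v+\mathbf{f}$. This is exactly the ``version on $\cudot_m$'' whose absence you flagged as the main obstacle; it is available because $v$ vanishes on $\partial_{\sqcup}\cudot_m$. The paper then converts $[\nabla v]_{\Bring^{-1}_{2,1}}$ and $[\a\nabla v]_{\Bring^{-1}_{2,1}}$ into energy via Lemma~\ref{l.mathringB.bounds}. That step is the sole source of the factor $(1+\lambda_{1,1}^{-\nicefrac12}+\Lambda_{1,1}^{\nicefrac12})$ in the statement.
\item Your preferred route bounds $3^{-m}\|w\|_{\L^2(\cudot_m)}\le |I_m|^{-\nicefrac12}\sup_t\|w(t)\|_{\L^2(\cu_m)}$ and then uses the $\sup_t$ part of \eqref{e.cg.energy} with $g=0$, $u_0=0$. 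This is valid and shorter. It gives a bound without that factor, which implies the statement since the factor is at least $1$.
\end{itemize}
Your route also avoids a soft spot in the paper's. Lemma~\ref{l.mathringB.bounds} assumes membership in $\mathcal{A}$, since it rests on \eqref{e.energymaps} and \eqref{e.energymaps.flux} in subcubes, and the corrector $v$ does not solve the homogeneous equation. In exchange, you lean on the time-slice half of Proposition~\ref{p.cg.energy}, which comes from testing on truncated time intervals. The paper's route uses only the space-time energy bound.

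There are two smaller points.
\begin{itemize}
\item Your aside that running Step~1 of Proposition~\ref{p.cg.energy} ``via \eqref{e.general.grad}'' yields $\lambda_{s,2}$ instead of $\lambda_{s-\epsilon,2}$ does not work. Estimate \eqref{e.general.grad} is for solutions of $\partial_t u=\nabla\cdot\a\nabla u$, and $w$ is not one. The $\epsilon$-loss in \eqref{e.CG.grad} is exactly the cost of the right-hand side. The paper silently writes $\lambda_{s,2}$ at the same point, so what either argument actually delivers carries $\lambda_{s-\epsilon,2}^{-\nicefrac12}$, which is the larger quantity.
\item Proposition~\ref{p.coarse.caccio} gives $D^{\nicefrac12}3^{-m}\|h\|_{\L^2(\cudot_m)}$. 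The fact that $D\ge 1$ only upgrades $D^{\nicefrac12}$ to $D$; it does not turn $3^{-m}$ into $3^{-2m}$. The $3^{-2m}$ in the displayed statement is a scaling typo: it fails for $\a=\Id$, $\mathbf{f}=0$, $u=x_1$ as $m\to\infty$. The paper's own proof likewise produces $3^{-m}$.
\end{itemize}
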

\begin{proof}
Let~$v$ solve
\begin{equation*}
\left\{
\begin{aligned}
& \partial_t v = \nabla \cdot \a \nabla v + \nabla \cdot \mathbf{f} & \quad \mbox{ in } \quad \cudot_m \\
& v = 0 & \quad \mbox{ on } \partial_{\sqcup} \cudot_m
\end{aligned}
\right.
\end{equation*}
By Proposition~\ref{p.coarse.caccio},
\begin{align}
& \|\s^{\nicefrac12} \nabla (u-v)\|_{\L^2(\cudot_{m-1})}^2 \nonumber \\
& \leq \biggl(\frac{C(d)}{1-s-t}\biggr)^{2 + \frac{4s}{1-s-t}}\biggl( 1 + (\Lambda_{s,1}(\cudot_m))^{\frac{1-t}{1-s-t}} \bigl( \lambda^{-1}_{t,1}(\cudot_m) + \Lambda_{t,1}(\cudot_m) \bigr)^{\frac{s}{1-s-t}}\biggr) 3^{-2m}\|u-v\|_{\L^2(\cudot_m)}^2\,.
\end{align}
Now by the triangle inequality and Proposition~\ref{p.cg.energy},
\begin{align*}
\|\s^{\nicefrac12} \nabla u\|_{\L^2(\cudot_{m-1})} & \leq \|\s^{\nicefrac12} \nabla (u-v)\|_{\L^2(\cudot_{m-1})} + \|\s^{\nicefrac12} \nabla v\|_{\L^2(\cudot_{m-1})} \\
& \leq \|\s^{\nicefrac12} \nabla (u-v)\|_{\L^2(\cudot_{m-1})} + C\lambda_{s,2}^{-\nicefrac12}(\cudot_m) 3^{sm}\|\mathbf{f}\|_{\underline{B}_{2,2}^s(\cudot_m)}
\end{align*}
and by Proposition~\ref{l.multiscale.poincare} and Proposition~\ref{p.cg.energy},
\begin{align*}
\lefteqn{
\|u-v\|_{\L^2(\cudot_m)}
} \quad & \\
& \leq \|u\|_{\L^2(\cudot_m)} + \|v\|_{\L^2(\cudot_m)} \\
& \leq \|u\|_{\L^2(\cudot_m)} + C\|\nabla v\|_{\Bring_{2,1}^{-1}(\cudot_m)} + C\|\a\nabla v\|_{\Bring_{2,1}^{-1}(\cudot_m)} + C\|\mathbf{f}\|_{\Bring_{2,1}^{-1}(\cudot_m)} \\
& \leq \|u\|_{\L^2(\cudot_m)}  + C\|\mathbf{f}\|_{\Bring_{2,1}^{-1}(\cudot_m)} + 3^m C( \lambda_{1,1}^{-\nicefrac12}(\cudot_m) + \Lambda_{1,1}^{\nicefrac12}(\cudot_m))\|\s^{\nicefrac12}\nabla v\|_{\L^2(\cudot_m)} \\
& \leq \|u\|_{\L^2(\cudot_m)}  + C3^m\|\mathbf{f}\|_{\L^2(\cudot_m)} + C3^m( \lambda_{1,1}^{-\nicefrac12}(\cudot_m) + \Lambda_{1,1}^{\nicefrac12}(\cudot_m))\lambda_{s,2}^{-\nicefrac12}(\cudot_m) 3^{sm}\|\mathbf{f}\|_{\underline{B}_{2,2}^s(\cudot_m)}\,.
\end{align*}
Plugging in these estimates concludes the proof.
\end{proof}

\subsection{Homogenization of the Dirichlet Problem}
\label{ss.homogenize}

Define the homogenized matrix
\begin{equation}
\label{e.ahom.def}
\ahom \coloneqq \shom + \khom\,,
\end{equation}
with~$\shom$ and~$\khom$ the homogenized matrices defined in~\eqref{e.bfAhom.def}. Throughout this section we center the coefficient field such that
\begin{equation}
\label{e.khom.zero}
\khom = 0\,,
\end{equation}
as in Section~\ref{ss.bfA.def}. This assumption means that where we write the coefficient field~$\a(\cdot)$ in this section, we are really referring to the centred coefficient field~$\a(\cdot) - \khom$. In this section we work with the adapted cubes defined by~$\m_0=\ahom$ (which is symmetric, by the centring). The only reason for this choice is that it makes the dependence on the ellipticity of homogenized matrix~$\ahom$ explicit in our estimates. If~$I = (0,T)$ is a finite time interval and~$U\subseteq \mathbb{R}^n$ is a bounded Lipschitz domain we let~$n\in\mathbb{Z}$ be the smallest integer such that~$I\times U \subseteq \cusdot_n$ and assume that there exists a constant~$c >0$ such that~$c|\cusdot_n| \leq |I\times U|$. That is, we assume that~$I\times U$ is on the same scale as a parabolically scaled cube, and we allow the constants in this section to depend on the volume ratio~$c$.

We define
\begin{equation}
\label{e.mathcalE.def}
\mathcal{E}_s(\cusdot_m) \coloneqq \biggl(\sum_{k=-\infty}^m 3^{2s(k-m)} \sup_{z \in \mathbb{L}_k \cap \cusdot_m} |( \bfAhom^{-\nicefrac12}( \bfA(z+\cusdot_k) - \bfAhom )_+ \bfAhom^{-\nicefrac12}| \biggr)^{\nicefrac12}\,.
\end{equation}
Note that the centring assumption means that
\begin{equation*}
\bfAhom = \begin{pmatrix}
\ahom & 0 \\ 0 & \ahom^{-1}
\end{pmatrix}\,.
\end{equation*}

The top left block of~$\bfA(z+\cusdot_k)$ is~$\b(z+\cusdot_k)$. Since one possible norm on a block matrix is the sum of the norms of the block entries and all norms are equivalent, up to constants, we have
\begin{align*}
|\ahom^{-\nicefrac12}\b(z+\cusdot_k)\ahom^{-\nicefrac12}| & \leq |\ahom^{-\nicefrac12}(\b(z+\cusdot_k) - \ahom)_+\ahom^{-\nicefrac12}| + 1 \\
& \leq C(|( \bfAhom^{-\nicefrac12}( \bfA(z+\cusdot_k) - \bfAhom )_+ \bfAhom^{-\nicefrac12}| + 1)\,.
\end{align*}
Using the analogous bound for~$\s_*^{-1}(z+\cusdot_k)$ and the definitions in~\eqref{e.adapted.Lambda} and~\eqref{e.adapted.lambda}, we have
\begin{equation}
\label{e.lambdas.finite}
\Lambda_{s,2}(z+\cusdot_n) + \lambda_{s,2}^{-1}(\cusdot_n) \leq C(d)(1 + \mathcal{E}_s(z+\cusdot_n))\,.
\end{equation}

\begin{theorem}
\label{t.homogenize}
Suppose that~$I = (0,T)$ is a finite time interval,~$U \subseteq \mathbb{R}^d$ is a bounded domain that is either~$C^{1,1}$ or convex and Lipschitz,~$\ahom$ is the homogenized matrix in~\eqref{e.ahom.def},~$\khom = 0$,~$0 <s < \nicefrac12$,~$0 <\epsilon < s$,~$v\in L^2(I\times U)$ and~$\mathbf{f},\nabla v \in B^{s}_{2,2}(I\times U)^d$ such that~$\partial_t v - \nabla \cdot \ahom \nabla v = \nabla \cdot \mathbf{f}$ in~$I\times U$. There exists a constant~$C = C(I,U,s,d)$ such that if~$u \in W_{\s}^1(I\times U)$ solves
\begin{equation}
\left\{
\begin{aligned}
& \partial_t u -\nabla \cdot \a \nabla u  = \nabla \cdot \mathbf{f} & \quad \mbox{ in } I\times U\,, \\
& u = v & \quad \mbox{ on } \partial_{\sqcup}(I\times U)\,,
\end{aligned}
\right.
\end{equation}
then
\begin{align}
\lefteqn{
3^{-sm}\|\ahom^{\nicefrac12}\nabla (u - v) \|_{\underline{\hat{B}}_{2,2}^{-s}(I\times U)} + 3^{-sm}\|\ahom^{-\nicefrac12}(\a \nabla u - \ahom \nabla v)\|_{\underline{\hat{B}}_{2,2}^{-s}(I\times U)}
} \quad & \notag\\
& \leq C \sup_{z\in\mathbb{L}_n \cap \cusdot_m}\mathcal{E}_s(z+\cusdot_n) \|\s^{\nicefrac12}\nabla u\|_{\L^2(I\times U)} + C(1 + \sup_{z\in\mathbb{L}_n \cap \cusdot_m}\mathcal{E}_{s-\epsilon}(z+\cusdot_n)) 3^{sn}\|\lambda_{\ahom}^{-\nicefrac12}\mathbf{f}\|_{\underline{B}_{2,2}^s(I\times U)}\,.
\end{align}
\end{theorem}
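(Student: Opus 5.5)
The plan is to reduce the statement to two ingredients. The first is a \emph{coarse-grained flux estimate}: the space-time averages of $\a\nabla u$ are close to $\ahom$ times the averages of $\nabla u$, with an error measured by $\mathcal{E}_s$. The second is a \emph{negative-regularity energy estimate} for the constant-coefficient operator $\partial_t-\nabla\cdot\ahom\nabla$ with zero parabolic boundary data. Work throughout in the adapted cubes with $\m_0=\ahom$ and the centring $\khom=0$, so that $\bfAhom=\mathrm{diag}(\ahom,\ahom^{-1})$; after the change of variables of Lemma~\ref{l.change.variables} we may take $\ahom$ to be a multiple of the identity, which is why $\lambda_{\ahom}^{-1/2}$ multiplies $\mathbf{f}$.

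\emph{Step 1 (homogenization defect for solutions).} Fix a subcube $V=z+\cusdot_k$ with $z\in\mathbb{L}_k$ and $k\le n$. For any $w\in\mathcal{A}(V)$, apply \eqref{e.fluxmaps} with $q=\ahom p$. By \eqref{e.Jaas.matform},
\[
J(V,p,\ahom p)=\tfrac12(-p,\ahom p)\cdot\bigl(\bfA(V)-\bfAhom\bigr)(-p,\ahom p),
\]
because the $\bfAhom$ part cancels the term $-p\cdot\ahom p$. Hence this quantity is at most $C|\ahom^{1/2}p|^2\,|\bfAhom^{-1/2}(\bfA(V)-\bfAhom)_+\bfAhom^{-1/2}|$. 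Taking the supremum over $p$ gives
\[
|\ahom^{-1/2}(\a\nabla w-\ahom\nabla w)_V|\le C\,|\bfAhom^{-1/2}(\bfA(V)-\bfAhom)_+\bfAhom^{-1/2}|^{1/2}\|\s^{1/2}\nabla w\|_{\L^2(V)}.
\]
To handle $u$, which solves the equation with right-hand side $\nabla\cdot\mathbf{f}$, I will subtract on each $V$ the local Dirichlet solution $\rho_V$, exactly as in Step~1 of Proposition~\ref{p.poincare.RHS}. Since $(\nabla\rho_V)_V=0$, we have $(\nabla u)_V=(\nabla(u-\rho_V))_V$, and the Neumann-corrector construction of Step~2 of Proposition~\ref{p.poincare.RHS} gives the same control on fluxes. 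Energy bounds for $\rho_V$ then come from \eqref{e.energy.error} and \eqref{e.lambdas.finite}.

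Next, square the resulting averaged bound for $\mathbf{H}:=\a\nabla u-\ahom\nabla u$, weight it by $3^{2sk}$, sum over subcubes and over scales $k\le n$, and use \eqref{e.interior.cubes}. This yields
\[
3^{-sm}\|\ahom^{-1/2}\mathbf{H}\|_{\underline{\hat{B}}^{-s}_{2,2}}\lesssim \sup_z\mathcal{E}_s(z+\cusdot_n)\,\|\s^{1/2}\nabla u\|_{\L^2}+\bigl(1+\sup_z\mathcal{E}_{s-\epsilon}\bigr)3^{sn}\|\lambda_{\ahom}^{-1/2}\mathbf{f}\|_{\underline{B}^s_{2,2}}.
\]
Here the scale sum is organized as in Proposition~\ref{p.poincare.RHS}: the $\epsilon$-loss absorbs the Young's-inequality weights $\delta_k=\delta 3^{\epsilon(k-m)}$. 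Scales above $n$ contribute only through a bounded number of cubes of size $3^n$, so they are handled by the same local estimate.

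\emph{Step 2 (constant-coefficient comparison).} The function $w:=u-v$ vanishes on $\partial_\sqcup(I\times U)$ and solves $\partial_t w-\nabla\cdot\ahom\nabla w=\nabla\cdot\mathbf{H}$. I claim
\[
\|\ahom^{1/2}\nabla w\|_{\underline{\hat{B}}^{-s}_{2,2}}\le C\|\ahom^{-1/2}\mathbf{H}\|_{\underline{\hat{B}}^{-s}_{2,2}}.
\]
I will prove it by duality. Given a test field $\mathbf{g}$ with $\|\mathbf{g}\|_{\underline{B}^s_{2,2}}\le1$, solve the backward adjoint problem $-\partial_t\phi-\nabla\cdot\ahom\nabla\phi=\nabla\cdot\mathbf{g}$ with zero data on the final-time and lateral boundary. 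Then $\fint\nabla w\cdot\mathbf{g}=-\fint\mathbf{H}\cdot\nabla\phi$, so the claim reduces to the estimate $\|\nabla\phi\|_{\underline{B}^s_{2,2}(I\times U)}\le C\|\mathbf{g}\|_{\underline{B}^s_{2,2}}$.

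This $H^{s,s/2}$ regularity of the gradient up to the boundary, for $0<s<\nicefrac12$ (where no compatibility conditions arise), is the main obstacle. I would obtain it by interpolating between the $L^2$ energy estimate ($s=0$) and the $H^{1,1/2}$ estimate for $\nabla\phi$ under $H^{1,1/2}$ data. The latter is where $U$ being $C^{1,1}$ or convex Lipschitz is needed, via $H^2$ elliptic regularity and maximal parabolic regularity as in \cite{LMV2}; Proposition~\ref{p.Besov.equiv} then identifies $B^s_{2,2}$ with the interpolation space.

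\emph{Step 3 (conclusion).} The flux difference splits as $\a\nabla u-\ahom\nabla v=\mathbf{H}+\ahom\nabla w$, so both quantities on the left side of the theorem are bounded by $C\|\ahom^{-1/2}\mathbf{H}\|_{\underline{\hat{B}}^{-s}_{2,2}}$. Combining this with Step~1 gives the stated estimate.
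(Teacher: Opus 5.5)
Your proposal is correct and follows the paper's proof. Steps~2 and~3 are Proposition~\ref{p.duality}: duality with the backward constant-coefficient problem, the interpolated $H^{s,\nicefrac{s}{2}}$ gradient bound of Lemma~\ref{l.heat.estimate}, and the splitting $\a\nabla u-\ahom\nabla v=(\a-\ahom)\nabla u+\ahom\nabla(u-v)$. Step~1 is Propositions~\ref{p.flux.diff} and~\ref{p.flux.diff.RHS}, except that the paper subtracts the local Dirichlet solution $w$ only once on each scale-$n$ cube, then applies the homogeneous defect estimate to $u-w$ and Propositions~\ref{p.poincare.RHS} and~\ref{p.cg.energy} to $w$.

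Two points need tightening when you write Step~1 out.

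First, apply the $J(V,p,\ahom p)$ bound to the single function $u-\rho_V$. The defect estimate cannot combine $\rho_V$ for gradients with $\chi_V$ for fluxes. The leftover $(\a\nabla\rho_V)_V$ then goes into the $\mathbf{f}$-term via the flux estimate of Proposition~\ref{p.poincare.RHS}.

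Second, scales above $n$ do not involve a bounded number of cubes. They are controlled by averaging the scale-$n$ bounds, and this averaging is what produces the factor $3^{sm}$.
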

\begin{proof}
Combine Proposition~\ref{p.duality} and Proposition~\ref{p.flux.diff.RHS}, below.
\end{proof}

\begin{remark}
\label{r.boundary}
The energy of the solution is controlled by the data, according to Proposition~\ref{p.cg.energy}. We omit this substitution because it complicates the statement of the theorem. In particular, the dependence on the spatial boundary data~$g$ in Proposition~\ref{p.cg.energy} results from transforming the boundary data into the right-hand of a divergence-form equation, leading to a somewhat complicated expression. The important point is that given fixed boundary data this is of constant size.
\end{remark}

\begin{proposition}
\label{p.duality}
Suppose that~$I = (0,T)$ is a finite time interval,~$U \subseteq \mathbb{R}^d$ is a bounded domain that is either~$C^{1,1}$ or convex and Lipschitz,~$\ahom$ is the homogenized matrix in~\eqref{e.ahom.def},~$\khom = 0$,~$s \in (0,\nicefrac12)$ and~$v\in L^2(I\times U)$ such that~$\nabla v \in B^{s}_{2,2}(I\times U)$. There exists a constant~$C = C(I,U,s,d)$ such that if~$u \in W_{\s}^1(I\times U)$ solves
\begin{equation}
\left\{
\begin{aligned}
& \partial_t u -\nabla \cdot \a \nabla u  = \partial_t v - \nabla \cdot \ahom \nabla v & \quad \mbox{ in } I\times U\,, \\
& u = v & \quad \mbox{ on } \partial_{\sqcup}(I\times U)\,,
\end{aligned}
\right.
\end{equation}
then
\begin{equation}
\|\ahom^{\nicefrac12}\nabla (u - v) \|_{\underline{\hat{B}}_{2,2}^{-s}(I\times U)} + \|\ahom^{-\nicefrac12}(\a \nabla u - \ahom \nabla v)\|_{\underline{\hat{B}}_{2,2}^{-s}(I\times U)} \leq C \|\ahom^{-\nicefrac12}(\a - \ahom)\nabla u\|_{\underline{\hat{B}}_{2,2}^{-s}(I\times U)}\,.
\end{equation}
\end{proposition}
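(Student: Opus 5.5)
The plan is a duality argument for the homogenized operator. Set $w := u - v$. Then $w$ vanishes on $\partial_{\sqcup}(I\times U)$ and, because the right-hand sides agree,
\begin{equation*}
\partial_t w - \nabla\cdot \ahom \nabla w = \nabla \cdot \bigl( (\a-\ahom)\nabla u \bigr) \quad \mbox{in } I\times U\,.
\end{equation*}
So $w$ solves a constant-coefficient parabolic equation with divergence-form right-hand side $\mathbf{F} := (\a-\ahom)\nabla u$ and zero data on the parabolic boundary. Since $\khom=0$, $\ahom=\shom$ is symmetric. The change of variables \eqref{e.transform.eq} with $\m_0=\ahom$ reduces the operator to the heat operator on a rescaled domain. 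This is why the statement is phrased with $\ahom^{\pm\nicefrac12}$ weights, and it means all constants depend only on $(I,U,s,d)$.

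\emph{Step 1: gradient bound by duality.} To estimate $\|\ahom^{\nicefrac12}\nabla w\|_{\underline{\hat{B}}^{-s}_{2,2}}$, I fix a test field $\mathbf{G}\in C^\infty(I\times U)^d$ with $\|\mathbf{G}\|_{\underline{B}^{s}_{2,2}}\le 1$. I then let $\phi$ solve the backward adjoint problem
\begin{equation*}
-\partial_t \phi - \nabla\cdot\ahom\nabla\phi = \nabla\cdot(\ahom^{\nicefrac12}\mathbf{G})\,, \qquad \phi=0 \ \mbox{on } (I\times\partial U)\cup(\{T\}\times U)\,.
\end{equation*}
Testing the equation for $w$ against $\phi$ and the equation for $\phi$ against $w$ (the boundary terms vanish by the complementary parabolic boundary conditions) gives
\begin{equation*}
\fint \ahom^{\nicefrac12}\nabla w\cdot \mathbf{G} = \fint \nabla\phi\cdot \mathbf{F}\,.
\end{equation*}
Hence $|\fint \ahom^{\nicefrac12}\nabla w\cdot \mathbf{G}| \le \|\ahom^{-\nicefrac12}\mathbf{F}\|_{\underline{\hat{B}}^{-s}_{2,2}}\,\|\ahom^{\nicefrac12}\nabla\phi\|_{\underline{B}^{s}_{2,2}}$, and everything reduces to the regularity estimate
\begin{equation*}
\|\ahom^{\nicefrac12}\nabla\phi\|_{\underline{B}^{s}_{2,2}(I\times U)}\le C\|\mathbf{G}\|_{\underline{B}^{s}_{2,2}(I\times U)}\,.
\end{equation*}
Here the hat norm is used because the dual pairing involves $\nabla\phi$, which is not compactly supported.

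\emph{Step 2 (main obstacle): the $H^{s,\nicefrac s2}$ estimate for $\phi$.} I expect the regularity estimate of Step 1 to be the real work, and it is where the hypothesis that $U$ is $C^{1,1}$ or convex enters. The plan is to interpolate: by Proposition~\ref{p.Besov.equiv}, $\underline{B}^s_{2,2}=H^{s,\nicefrac s2}$. At $s=0$ the estimate is the energy estimate for the heat equation. At $s=1$ one needs $\|\nabla\phi\|_{H^{1,\nicefrac12}}\lesssim\|\mathbf{G}\|_{H^{1,\nicefrac12}}$, which follows from maximal $L^2$ regularity together with $H^2$ regularity of the Dirichlet Laplacian on $C^{1,1}$ or convex domains, as in \cite[Chapter 4]{LMV2}. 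Interpolating gives the estimate for $s\in(0,1)$. The restriction $s<\nicefrac12$ is what makes this work without compatibility conditions at $\{T\}\times U$ and at $\partial U$: below $\nicefrac12$ the space $H^{s,\nicefrac s2}$ coincides with its zero-trace version, so no trace conditions need to be imposed on $\mathbf{G}$. If complex interpolation is awkward with the Besov definitions, I will instead prove the bound directly with the fractional integral \eqref{e.fractional.integral}, using heat-kernel estimates after reflecting across flat pieces of the boundary.

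\emph{Step 3: flux bound.} Write
\begin{equation*}
\a\nabla u-\ahom\nabla v = \mathbf{F} + \ahom\nabla w\,.
\end{equation*}
Then by the triangle inequality,
\begin{equation*}
\|\ahom^{-\nicefrac12}(\a\nabla u-\ahom\nabla v)\|\le\|\ahom^{-\nicefrac12}\mathbf{F}\|+\|\ahom^{\nicefrac12}\nabla w\|\,,
\end{equation*}
both in $\underline{\hat{B}}^{-s}_{2,2}(I\times U)$. The first term is the right-hand side of the proposition and the second was bounded in Step 1, which concludes the proof.
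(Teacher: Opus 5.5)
Your proposal is correct and follows essentially the same route as the paper. The paper also pairs $u-v$ against the backward problem $(\partial_t+\nabla\cdot\ahom\nabla)w=\nabla\cdot\mathbf{h}$ vanishing on $(I\times\partial U)\cup(\{T\}\times U)$, bounds $\nabla w$ by interpolating the energy estimate with the $H^{2,1}$ estimate on $C^{1,1}$ or convex domains (Lemma~\ref{l.heat.estimate}), and handles the flux by the triangle inequality. The only difference is cosmetic: the paper measures the test field only in the weaker norm $L^2(I;\underline{H}^s(U))$, so its intermediate bound is in $L^2(I;\underline{\hat{H}}^{-s}(U))$, and either choice gives the stated $\underline{\hat{B}}^{-s}_{2,2}$ estimate.
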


\begin{proof}
The function~$u-v$ satisfies the equation
\begin{equation}
\label{e.b.equation}
	\left\{
	\begin{aligned}
	& (\partial_t - \nabla \cdot \ahom \nabla ) (u-v) = \nabla \cdot (\a - \ahom)\nabla u & \quad \mbox{in } I\times U\,, \\
	& u-v = 0 & \quad \mbox{on } \partial_{\sqcup} (I \times U)\,.
	\end{aligned}
	\right.
\end{equation}
Let~$\mathbf{h}\in C^\infty(I\times U;\mathbb{R}^d)$ and let~$w$ be the unique function in~$L^2(I; H^1(U))$ with~$\partial_t w \in L^2(I; H^{-1}(U))$ solving
\begin{equation*}
	\left\{
	\begin{aligned}
	& (\partial_t + \nabla \cdot \ahom \nabla ) w = \nabla \cdot \mathbf{h} & \quad \mbox{in } I\times U \\
	& w = 0 & \quad \mbox{on } (I \times \partial\, U) \cup (\{T\}\times U)
	\end{aligned}
	\right.
\end{equation*}
Note that~$w$ vanishes at the final time, so we have regularity estimates on~$w(-t)$. Testing the equation for~$w$ with~$u-v$ and then the equation for~$u-v$ with~$w$ we obtain
\begin{equation}
\label{e.homogenize.duality}
\fint_{I\times U} \ahom^{\nicefrac12}\nabla (u-v)\cdot \ahom^{-\nicefrac12}\mathbf{h} = \fint_{I\times U} \ahom^{\nicefrac12}\nabla w \cdot \ahom^{-\nicefrac12}(\a - \ahom)\nabla u\,.
\end{equation}
We estimate the right-hand side using Lemma~\ref{l.heat.estimate}
\begin{align*}
\fint_{I\times U} \ahom^{\nicefrac12}\nabla w \cdot \ahom^{-\nicefrac12}(\a - \ahom)\nabla u & \leq \|\ahom^{\nicefrac12}\nabla w\|_{\underline{B}_{2,2}^s(I\times U)} \|\ahom^{-\nicefrac12}(\a - \ahom)\nabla u \|_{\underline{\hat{B}}_{2,2}^{-s}(I\times U)} \\
& \leq C \|\ahom^{-\nicefrac12}\h\|_{\L^2(I;\underline{H}^s(U))} \|\ahom^{-\nicefrac12}(\a - \ahom)\nabla u \|_{\underline{\hat{B}}_{2,2}^{-s}(I\times U)}\,.
\end{align*}
Then by duality
\begin{align}
\label{e.grad.by.flux.diff}
\|\ahom^{\nicefrac12} \nabla (u-v)\|_{\underline{\hat{B}}_{2,2}^{-s}(I\times U)} \leq C \|\shom^{\nicefrac12}\nabla (u - v)\|_{\L^2(I;\underline{\hat{H}}^{-s}(U))} & \leq C \|\ahom^{-\nicefrac12}(\a-\ahom)\nabla u\|_{\underline{\hat{B}}^{-s}_{2,2}(I\times U)}\,.
\end{align}
\newline
By the triangle inequality and~\eqref{e.grad.by.flux.diff},
\begin{align}
\label{e.flux.by.flux.diff}
\|\ahom^{-\nicefrac12}(\a\nabla u-\ahom\nabla v)\|_{\underline{\hat{B}}_{2,2}^{-s}(I\times U)}& \leq \|\ahom^{\nicefrac12} \nabla (u-v)\|_{\underline{\hat{B}}_{2,2}^{-s}(I\times U)} + \|\ahom^{-\nicefrac12}(\a-\ahom)\nabla u\|_{\underline{B}_{2,2}^{-s}(I\times U)} \nonumber \\
& \leq C \|\ahom^{-\nicefrac12}(\a-\ahom)\nabla u\|_{\underline{B}^{-s}_{2,2}(I\times U)}\,,
\end{align}
which completes the proof.
\end{proof}

The next proposition will be applied only in sub-cubes, and applies only to solutions with zero right-hand side.
\begin{proposition}
\label{p.flux.diff}
Suppose that~$I$ is a finite time interval,~$U \subseteq \mathbb{R}^d$ is a bounded Lipschitz domain,~$\ahom$ is the homogenized matrix in~\eqref{e.ahom.def},~$\khom = 0$, and~$0 < s < \nicefrac12$. There exists a constant~$C = C(I,U,d)$ such that if~$u \in W_{\s}^1(I\times U)$ solves~$\partial_t u = \nabla \cdot \a\nabla u$ in~$I\times U$ then
\begin{equation}
\|\ahom^{-\nicefrac12}(\a - \ahom)\nabla u\|_{\underline{\hat{B}}_{2,2}^{-s}(I\times U)} \leq C 3^{sm}\mathcal{E}_s(\cusdot_m) \|\s^{\nicefrac12}\nabla u\|_{\L^2(I\times U)}\,.
\end{equation}
\end{proposition}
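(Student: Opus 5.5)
The plan is to estimate the dual norm through the multiscale characterization \eqref{e.interior.cubes}, transported to the adapted cubes. By \eqref{e.interior.cubes}, written in adapted cubes with $\mathbb{L}_k$ in place of $\mathcal{Z}_k$, it suffices to control, for every $k\le m$ and every $z\in\mathbb{L}_k$ with $z+\cusdot_k\subseteq I\times U$, the spatiotemporal average
\begin{equation*}
\bigl(\ahom^{-\nicefrac12}(\a\nabla u - \ahom\nabla u)\bigr)_{z+\cusdot_k},
\end{equation*}
and then sum $3^{2sk}$ times its square over the cubes.

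The key observation is that this average is precisely of the form controlled by the coarse-graining inequality \eqref{e.fluxmaps}. Since $u\in\mathcal{A}(z+\cusdot_k)$ after subtracting its mean, for any $e\in\Rd$ we take $p=\ahom^{-\nicefrac12}e$ and $q=\ahom p=\ahom^{\nicefrac12}e$. This gives
\begin{equation*}
\bigl|e\cdot(\ahom^{-\nicefrac12}(\a-\ahom)\nabla u)_{z+\cusdot_k}\bigr| \le \bigl(2J(z+\cusdot_k,p,q)\bigr)^{\nicefrac12}\|\s^{\nicefrac12}\nabla u\|_{\L^2(z+\cusdot_k)}.
\end{equation*}
Next, by \eqref{e.Jaas.matform} and the centring $\khom=0$, so that $\bfAhom=\mathrm{diag}(\ahom,\ahom^{-1})$, we have
\begin{equation*}
J(z+\cusdot_k,p,q)=\tfrac12 (-p,q)\cdot(\bfA(z+\cusdot_k)-\bfAhom)(-p,q),
\end{equation*}
because $\tfrac12(-p,q)\cdot\bfAhom(-p,q)=p\cdot\ahom p=p\cdot q$. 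Here $|(-p,q)|_{\bfAhom}^2=2|e|^2$, so we obtain
\begin{equation*}
J\le |\bfAhom^{-\nicefrac12}(\bfA(z+\cusdot_k)-\bfAhom)_+\bfAhom^{-\nicefrac12}|\,|e|^2.
\end{equation*}
Taking the supremum over $|e|=1$ yields
\begin{equation*}
3^{2sk}|(\cdots)_{z+\cusdot_k}|^2\le 2\cdot 3^{2sk}\,|\bfAhom^{-\nicefrac12}(\bfA(z+\cusdot_k)-\bfAhom)_+\bfAhom^{-\nicefrac12}|\,\|\s^{\nicefrac12}\nabla u\|^2_{\L^2(z+\cusdot_k)}.
\end{equation*}

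To sum, we average over $z\in\mathbb{L}_k$ with $z+\cusdot_k\subseteq I\times U$. We bound the matrix factor by its supremum over $z\in\mathbb{L}_k\cap\cusdot_m$, and we bound the average of local energies by $C\|\s^{\nicefrac12}\nabla u\|^2_{\L^2(I\times U)}$, using the volume comparability $c|\cusdot_m|\le|I\times U|$. Summing over $k\le m$ then produces
\begin{equation*}
3^{2sm}\sum_k 3^{2s(k-m)}\sup_z|\cdots| = 3^{2sm}\mathcal{E}_s(\cusdot_m)^2,
\end{equation*}
and taking square roots gives the claim.

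I expect the main obstacle to be justifying \eqref{e.interior.cubes} in the adapted geometry, i.e.\ Lemma~\ref{l.general.dual.norm} with $\cusdot$ and $\mathbb{L}_k$. This requires $s<\nicefrac12$ so that boundary layers contribute nothing, and it requires the lattice points to be admissible (stationarity scale $k_0$). I would handle it by applying the change of variables \eqref{e.transform.eq} to reduce to standard parabolic cubes, which is where the dependence of $C$ on $I,U$ enters. A minor point is that $u$ need not have zero mean on subcubes; this is harmless, since $\mathcal{A}$ is defined modulo constants and \eqref{e.fluxmaps} only involves gradients.
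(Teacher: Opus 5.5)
Your proposal is correct and is essentially the paper's own proof. Like the paper, you reduce the $\underline{\hat{B}}^{-s}_{2,2}$ norm to weighted sums of subcube averages via Lemma~\ref{l.general.dual.norm} in adapted cubes. You then bound each average by \eqref{e.fluxmaps} with $p=\ahom^{-\nicefrac12}e$ and $q=\ahom^{\nicefrac12}e$, use \eqref{e.Jaas.matform} and $\khom=0$ to write $J$ as a quadratic form in $\bfA(z+\cusdot_k)-\bfAhom$, and sum against the definition of $\mathcal{E}_s(\cusdot_m)$; your bookkeeping, with the squared local energy and the factor $|(-e,e)|^2=2|e|^2$, is slightly more careful than the written version.
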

\begin{proof}
By Lemma~\ref{l.general.dual.norm} we need to control averages of~$\ahom^{-\nicefrac12}(\a - \ahom)\nabla u$ in sub-cubes, for which we use~\eqref{e.fluxmaps} with~$|p| = 1$ and~$q = \shom p$. This gives
\begin{align*}
\lefteqn{
\sum_{k=-\infty}^m 3^{2sk} \avsum_{z\in\mathbb{L}_k, z+\cusdot_k \subseteq I\times U} |(\ahom^{-\nicefrac12}(\a - \ahom)\nabla u)_{z+\cudot_k} |^2
} \qquad & \\
& \leq \sum_{k=-\infty}^m 3^{2sk} \avsum_{z\in\mathbb{L}_k, z+\cusdot_k \subseteq I\times U} \sup_{|p|=1} J(z+\cusdot_k,\ahom^{-\nicefrac12}p,\ahom^{\nicefrac12} p) \|\s^{\nicefrac12}\nabla u\|_{\L^2(z+\cudot_k)} \\
& \leq C \|\s^{\nicefrac12}\nabla u\|_{\L^2(I\times U)} \sum_{k=-\infty}^m 3^{2sk} \sup_{z \in \mathbb{L}_k \cap \cusdot_m} \sup_{|p|=1} J(z+\cusdot_k,\ahom^{-\nicefrac12}p,\ahom^{\nicefrac12} p)\,.
\end{align*}
Now~\eqref{e.Jaas.matform} implies that
\begin{equation*}
J(z+\cusdot_k,\ahom^{-\nicefrac12}p,\ahom^{\nicefrac12} p) =
\frac{1}{2}\begin{pmatrix}
-p \\ p
\end{pmatrix}
\cdot
( \bfAhom^{-\nicefrac12}( \bfA(z+\cusdot_k) - \bfAhom ) \bfAhom^{-\nicefrac12} )
\begin{pmatrix}
-p \\ p
\end{pmatrix}\,,
\end{equation*}
so we conclude by plugging the above estimates into Lemma~\ref{l.general.dual.norm} and using the definition of~$\mathcal{E}_s(\cusdot_m)$.
\end{proof}

\begin{proposition}
\label{p.flux.diff.RHS}
Suppose that~$I$ is a finite time interval,~$U \subseteq \mathbb{R}^d$ is a bounded Lipschitz domain,~$\ahom$ is the homogenized matrix in~\eqref{e.ahom.def},~$\khom = 0$,~$0 < s  < \nicefrac12$, and~$0 < \epsilon < s$. There exists a constant~$C = C(I,U,\epsilon,d)$ such that if~$u \in W_{\s}^1(I\times U)$ solves~$\partial_t u = \nabla \cdot \a\nabla u + \nabla \cdot \mathbf{f}$ in~$I\times U$ then
\begin{align}
3^{-sm}\|\shom^{-\nicefrac12}(\a - \ahom)\nabla u\|_{\underline{\hat{B}}_{2,2}^{-s}(I\times U)} & \leq C \sup_{z\in\mathbb{L}_n \cap \cusdot_m}\mathcal{E}_s(z+\cusdot_n) \|\s^{\nicefrac12}\nabla u\|_{\L^2(I\times U)} \notag \\
& \quad + C(1 + \sup_{z\in\mathbb{L}_n \cap \cusdot_m}\mathcal{E}_{s-\epsilon}(z+\cusdot_n)) 3^{sn} \|\lambda_{\ahom}^{-\nicefrac12}\mathbf{f}\|_{\underline{B}_{2,2}^{s}(I\times U)}
\end{align}
\end{proposition}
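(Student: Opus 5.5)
The plan is to reduce Proposition~\ref{p.flux.diff.RHS} to the zero right-hand side estimate of Proposition~\ref{p.flux.diff}. This reduction is local: on each mesoscopic adapted cube~$z+\cusdot_n$ with~$z\in\mathbb{L}_n\cap\cusdot_m$, I will subtract a corrector that absorbs~$\mathbf{f}$, in the same way as in Step~1 of Proposition~\ref{p.poincare.RHS}.

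\emph{Decomposition.} By Lemma~\ref{l.general.dual.norm}, the dual norm~$\|\shom^{-\nicefrac12}(\a-\ahom)\nabla u\|_{\underline{\hat{B}}^{-s}_{2,2}(I\times U)}$ is controlled by a square sum of~$3^{2sk}|(\shom^{-\nicefrac12}(\a-\ahom)\nabla u)_{z+\cusdot_k}|^2$ over interior adapted subcubes. I split this sum at scale~$n$.
\begin{itemize}
\item For~$k\le n$, each cube~$z+\cusdot_k$ lies inside some~$z'+\cusdot_n$. In each such mesoscopic cube I fix the additive constant so that~$\mathbf{f}$ has mean zero, and let~$\rho_{z'}$ solve~$\partial_t\rho-\nabla\cdot\a\nabla\rho=\nabla\cdot\mathbf{f}$ in~$z'+\cusdot_n$ with~$\rho=0$ on~$\partial_\sqcup(z'+\cusdot_n)$. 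Then~$u-\rho_{z'}$ solves the homogeneous equation there.
\item Applying Proposition~\ref{p.flux.diff} to~$u-\rho_{z'}$ in~$z'+\cusdot_n$ bounds its contribution by~$3^{sn}\mathcal{E}_s(z'+\cusdot_n)\|\s^{\nicefrac12}\nabla(u-\rho_{z'})\|_{\L^2(z'+\cusdot_n)}$.
\item The remaining piece~$(\a-\ahom)\nabla\rho_{z'}$ is bounded directly. The~$\ahom\nabla\rho_{z'}$ part uses~\eqref{e.CG.grad}. The~$\a\nabla\rho_{z'}$ part uses~\eqref{e.CG.flux}, or simply the energy of~$\rho_{z'}$ via~\eqref{e.energymaps.flux}. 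Together with the energy estimate of Proposition~\ref{p.cg.energy} (zero boundary data), this gives
\[
\|\s^{\nicefrac12}\nabla\rho_{z'}\|_{\L^2}\le C3^{sn}\lambda_{s-\epsilon,2}^{-\nicefrac12}\|\mathbf{f}\|_{\underline{B}^s_{2,2}}.
\]
\end{itemize}

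\emph{Collecting terms.} Through~\eqref{e.lambdas.finite}, the coarse-grained ellipticity constants become~$C(1+\mathcal{E}_{s-\epsilon}(z'+\cusdot_n))$. The factor~$\lambda_{\ahom}^{-\nicefrac12}$ comes from working in adapted cubes with~$\m_0=\ahom$, as in~\eqref{e.adapted.bound}. Summing over~$z'$, the mean-zero normalisation of~$\mathbf{f}$ in each cube is controlled by~$\|\mathbf{f}\|_{\underline{B}^s_{2,2}(I\times U)}$ via~\eqref{e.fractional.integral}, exactly as at the end of Proposition~\ref{p.poincare.RHS}.

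\emph{Large scales.} For~$k>n$ there are only finitely many scales between~$n$ and~$m$. Averages over~$z+\cusdot_k$ are convex combinations of averages over the mesoscopic cubes, so they are bounded by the same quantities. The weights~$3^{2s(k-m)}$ then sum to a constant depending on~$s$. This is the step that produces the global normalisation~$3^{-sm}$ on the left.

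\emph{Main obstacle.} The key bound is~$\|\s^{\nicefrac12}\nabla(u-\rho_{z'})\|\le\|\s^{\nicefrac12}\nabla u\|+\|\s^{\nicefrac12}\nabla\rho_{z'}\|$. The~$\rho$ term must carry only~$\mathcal{E}_{s-\epsilon}$ and must not multiply~$\mathcal{E}_s$ with~$\|\s^{\nicefrac12}\nabla u\|$. To achieve this I would use the absorption argument~\eqref{e.energy.error} with Young's inequality, since a cross term~$\mathcal{E}_s\mathcal{E}_{s-\epsilon}\|\mathbf{f}\|$ might otherwise appear. If it does, I would bound~$\mathcal{E}_s\le\mathcal{E}_{s-\epsilon}$ and accept that the statement's constant absorbs the product on the event that~$\mathcal{E}$ is bounded.
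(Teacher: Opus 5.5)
Your plan is the paper's proof. On mesoscopic cubes $z+\cusdot_n$ you subtract the local Dirichlet solution carrying $\nabla\cdot\mathbf{f}$, apply Proposition~\ref{p.flux.diff} to the difference, control the corrector by Propositions~\ref{p.poincare.RHS} and~\ref{p.cg.energy}, and pay a factor $3^{s(m-n)}$ to reassemble. The paper glues local dual norms over an overlapping cover; your splitting of the multiscale sum at scale $n$, with Jensen for $k>n$, is the same bookkeeping made explicit.

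Two remarks. First, drop the alternative of bounding $\a\nabla\rho_{z'}$ ``simply via'' \eqref{e.energymaps.flux}. That inequality holds only for solutions of the homogeneous equation, and $\rho_{z'}$ is not one, so you need \eqref{e.CG.flux}.

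Second, the cross term you flag is genuine, and the paper's proof also passes over it without comment, in the step from $\|\s^{\nicefrac12}\nabla(u-w)\|$ to $\|\s^{\nicefrac12}\nabla u\|$. The term is $C3^{sn}\mathcal{E}_s\|\s^{\nicefrac12}\nabla w\|\le C3^{2sn}\mathcal{E}_s\lambda_{s-\epsilon,2}^{-\nicefrac12}\|\lambda_{\ahom}^{-\nicefrac12}\mathbf{f}\|_{\underline{B}^s_{2,2}}$. The Young absorption through \eqref{e.energy.error} cannot remove it, because that estimate already contains the term $C\|\mathbf{f}\|3^{sk}\lambda_s^{-\nicefrac12}$, which reproduces the product once multiplied by $\mathcal{E}_s$.

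What the argument actually proves is the stated inequality with $1+\mathcal{E}_{s-\epsilon}$ replaced by $(1+\mathcal{E}_{s-\epsilon})^{\nicefrac{3}{2}}$, using \eqref{e.lambdas.finite} and $\mathcal{E}_s\le\mathcal{E}_{s-\epsilon}$. This is a deterministic inequality, so no event is needed. It agrees with the stated form up to a factor $\sqrt{2}$ whenever $\mathcal{E}_{s-\epsilon}\le 1$, and that is the only regime used in the proof of Theorem~\ref{t.theoremA}.
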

\begin{proof}
First, consider a sub-cube~$z+\cusdot_n$ where~$z\in\mathbb{L}_{n-1}$, and let~$w_{z+\cusdot_n}$ be the unique solution to
\begin{equation}
\left\{
\begin{aligned}
&\partial_t w_{z+\cusdot_n} - \nabla \cdot \a\nabla w_{z+\cusdot_n} = \nabla \cdot \mathbf{f} & \mbox{ in } z+\cusdot_n \cap I\times U \\
& w = 0 & \mbox{ on } \partial_{\sqcup}(z+\cusdot_n) \cap I\times U\\
\end{aligned}
\right.
\end{equation}
Then by the triangle inequality, Proposition~\ref{p.flux.diff} applied to the difference~$u-w_{z+\cusdot_n}$, and Proposition~\ref{p.poincare.RHS} followed by Proposition~\ref{p.cg.energy} applied to~$w_{z+\cusdot_n}$,

\begin{align*} 
\lefteqn{
\| \ahom^{-\nicefrac12}(\a-\ahom)\nabla u\|_{\underline{\hat{B}}_{2,2}^{-s}(z+\cusdot_n \cap I \times U)}
} \qquad & \\
& \leq \| \ahom^{-\nicefrac12}(\a-\ahom)\nabla (u-w)\|_{\underline{\hat{B}}_{2,2}^{-s}(z+\cusdot_n \cap I\times U)} + \| \ahom^{-\nicefrac12}\a\nabla w\|_{\underline{\hat{B}}_{2,2}^{-s}(z+\cusdot_n \cap I \times U)}  \\
& \qquad + \| \ahom^{\nicefrac12}\nabla w\|_{\underline{\hat{B}}_{2,2}^{-s}(z+\cusdot_n \cap I \times U)} \\
& \leq  C3^{sn}\mathcal{E}_s(z+\cusdot_n)\|\s^{\nicefrac12}\nabla (u-w)\|_{\L^2(z+\cusdot_n \cap I \times U)} \\
& \qquad + C(1 + \mathcal{E}_{s-\epsilon}(z+\cusdot_n)) 3^{2sn}\|\lambda_{\ahom}^{-\nicefrac12}\mathbf{f}\|_{\underline{B}_{2,2}^s(z+\cusdot_n \cap I \times U)} \\
& \leq C3^{sn}\mathcal{E}_s(z+\cusdot_n)\|\s^{\nicefrac12}\nabla u\|_{\L^2(z+\cusdot_n \cap I \times U)} + C(1 + \mathcal{E}_{s-\epsilon}(z+\cusdot_n)) 3^{2sn}\|\lambda_{\ahom}^{-\nicefrac12}\mathbf{f}\|_{\underline{B}_{2,2}^s(z+\cusdot_n \cap I\times U)}
\end{align*}
Now partition the domain~$I\times U$ into cubes~$z+\cusdot_{n-1}$ such that~$z\in \mathbb{L}_{n-1}$. Expanding the cubes yields an overlapping partition, so,
\begin{align*}
\| \ahom^{-\nicefrac12}(\a - \ahom)\nabla u \|_{\underline{B}_{2,2}^{-s}(I \times U)} & \leq C3^{s(m-n)}\biggl(\avsum_{z\in\mathbb{L}_{n-1}, z+\cusdot_{n-1} \cap I\times U \neq \emptyset} \|\ahom^{-\nicefrac12}(\a - \ahom)\nabla u \|_{\underline{B}_{2,2}^{-s}(z+\cusdot_n \cap I \times U)}^2 \biggr)^{\nicefrac12} \\
& \leq C 3^{sm}\sup_{z\in \mathbb{L}_n\cap \cusdot_m} \mathcal{E}_s(z+\cusdot_n) \|\s^{\nicefrac12}\nabla u\|_{\L^2(I\times U)} \\
& \qquad + C(1 + \sup_{z\in \mathbb{L}_n\cap \cusdot_m}\mathcal{E}_{s-\epsilon}(z+\cusdot_n)) 3^{sm} 3^{sn}\|\lambda_{\ahom}^{-\nicefrac12}\mathbf{f}\|_{\underline{B}_{2,2}^{s}(I\times U)}\,.
\end{align*}
\end{proof}

\subsection{Proof of Theorem~\ref{t.theoremA}}
The adapted cubes in this section are different from those in Section~\ref{s.smallcontrast}. However, the theorems of that section apply by Lemma~\ref{l.bfAhoms.equivalent} applied to the adapted cubes, noting that the ellipticity constants for the homogenized matrix are smaller than those in the statement of that lemma.

\begin{lemma}
\label{l.mathcalE.rate}
Assume~\ref{a.stationarity},\ref{a.ellipticity.dagger} and~\ref{a.CFS}, and~$\khom = 0$. For any~$\delta > 0$ and~$\gamma' \in (\gamma,1)$ there exists a random variable~$\mathcal{Y}_{\delta,\gamma'}$ and~$\theta >0$ (given by Theorem~\ref{t.quenched}) such that if~$3^m \geq \mathcal{Y}_{\delta,\gamma'}\vee \mathcal{S}$ and~$\nicefrac{\gamma'}{2} < s < 1$ then
\begin{equation}
\label{e.mathcalE.rate}
\sup_{z\in\mathbb{L}_n\cap \cusdot_m} \mathcal{E}_s(z+\cusdot_n) \leq 3^{\gamma'(m-n)}\frac{C(d)\delta^{\nicefrac12}}{2s-\gamma'}\biggl(\frac{\mathcal{Y}_{\delta,\gamma'} \vee \mathcal{S}}{3^m} \biggr)^{\nicefrac{\theta}{2}}\,.
\end{equation}
\end{lemma}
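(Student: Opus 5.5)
The plan is to insert the quenched bound of Theorem~\ref{t.quenched} directly into the definition~\eqref{e.mathcalE.def} of~$\mathcal{E}_s$ and then sum a geometric series in the scale parameter. Throughout I take~$3^m \geq \mathcal{Y}_{\delta,\gamma'}\vee\S$ and~$n\leq m$.

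\emph{Step 1: quenched bound in adapted cubes.} Theorem~\ref{t.quenched} is formulated for the standard cubes~$z+\cudot_k$ with~$z\in\mathcal{Z}_k\cap\cudot_m$. It gives
\begin{equation*}
\bfA(z+\cudot_k)-\bfAhom \leq \delta 3^{\gamma'(m-k)}\biggl(\frac{\mathcal{Y}_{\delta,\gamma'}\vee\S}{3^m}\biggr)^{\theta}\bfAhom\,.
\end{equation*}
Since the centring~$\khom=0$ makes~$\bfAhom$ block diagonal, conjugating by~$\bfAhom^{-\nicefrac12}$ and taking positive parts bounds~$|(\bfAhom^{-\nicefrac12}(\bfA(z+\cudot_k)-\bfAhom)_+\bfAhom^{-\nicefrac12}|$ by the same scalar. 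To pass to adapted cubes~$z+\cusdot_k$ with~$z\in\mathbb{L}_k$, I would reuse the decomposition from the proof of Lemma~\ref{l.adapted.cylinders}. That is, write~$z+\cusdot_k$ as a disjoint union of standard cubes~$V_j$ of scales~$j\leq k$, apply subadditivity~\eqref{e.subadditivity}, and use the volume bound~\eqref{e.cube.decomp.bound}. Because~$\bfAhom$ is constant, subtracting it is compatible with subadditivity, so
\begin{equation*}
\bfA(z+\cusdot_k)-\bfAhom\leq\sum_j\frac{|V_j|}{|\cusdot_k|}\bigl(\bfA(V_j)-\bfAhom\bigr)\,.
\end{equation*}
Each term is bounded by the display above at scale~$j$, possibly after enlarging~$m$ by the~$O(1)$ shift~$l$ from~\eqref{e.adapted.scales.normal} (here~$\m_0=\ahom$). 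The weights satisfy~$3^{j-k}$ against growth~$3^{\gamma'(k-j)}$, and summing over~$j$ gives a factor of order~$(1-\gamma')^{-1}$, which is absorbed into~$C$. The result is
\begin{equation*}
|(\bfAhom^{-\nicefrac12}(\bfA(z+\cusdot_k)-\bfAhom)_+\bfAhom^{-\nicefrac12}|\leq C\delta 3^{\gamma'(m-k)}\biggl(\frac{\mathcal{Y}_{\delta,\gamma'}\vee\S}{3^m}\biggr)^{\theta}\,.
\end{equation*}

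\emph{Step 2: summation.} For~$z\in\mathbb{L}_n\cap\cusdot_m$, every subcube~$z'+\cusdot_k$ with~$k\leq n$ lies in~$\cusdot_{m+1}$, so Step 1 applies to it. Plugging this into~\eqref{e.mathcalE.def} gives
\begin{equation*}
\mathcal{E}_s(z+\cusdot_n)^2\leq C\delta\biggl(\frac{\mathcal{Y}_{\delta,\gamma'}\vee\S}{3^m}\biggr)^{\theta}\sum_{k\leq n}3^{2s(k-n)}3^{\gamma'(m-k)}\,.
\end{equation*}
Writing~$3^{\gamma'(m-k)}=3^{\gamma'(m-n)}3^{\gamma'(n-k)}$, the remaining sum is~$\sum_{j\geq0}3^{-(2s-\gamma')j}\leq C(2s-\gamma')^{-1}$, which is finite because~$s>\gamma'/2$. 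Taking square roots gives~$\mathcal{E}_s\leq C\delta^{\nicefrac12}(2s-\gamma')^{-\nicefrac12}3^{\gamma'(m-n)/2}(\cdots)^{\theta/2}$. This is stronger than~\eqref{e.mathcalE.rate}, since~$3^{\gamma'(m-n)/2}\leq3^{\gamma'(m-n)}$ and~$(2s-\gamma')^{-\nicefrac12}\leq C(2s-\gamma')^{-1}$ for~$2s-\gamma'\leq 2$.

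\emph{Main obstacle.} The delicate point is Step 1: transferring the quenched estimate from the lattice~$\mathcal{Z}_k$ to the adapted lattice~$\mathbb{L}_k$ in boundary layers at scales~$j<k$. Those small cubes carry the large factor~$3^{\gamma'(m-j)}$, and one must check that the~$3^{j-k}$ volume fraction beats it, using~$\gamma'<1$. One must also check that the scale shift needed to embed~$\cusdot_m$ into a standard cube costs only a constant depending on the ellipticity of~$\ahom$. I expect this constant to be absorbed, in line with the remark that the ellipticity constants of the homogenized matrix are controlled, as in the opening of this subsection.
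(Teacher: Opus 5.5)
Your proposal follows the paper's proof: insert the quenched estimate of Theorem~\ref{t.quenched} into the definition~\eqref{e.mathcalE.def} and sum the geometric series in $k$, using $2s>\gamma'$. Your Step~1 adds the transfer from standard cubes to the adapted cubes $z+\cusdot_k$ (via the decomposition of Lemma~\ref{l.adapted.cylinders} and subadditivity), which the paper simply asserts, and you bound the full matrix quantity directly, as the definition requires, rather than phrasing the step through $J(z+\cusdot_k,\shom^{-\nicefrac12}e,\shom^{\nicefrac12}e)$. One bookkeeping point: the Step~1 constant depends on $1-\gamma'$ and on the ellipticity of $\ahom$ (through $l$ and $\lambda_r$), so to keep the stated $C(d)$ you should not fold it into $C$ but instead apply Theorem~\ref{t.quenched} with $\delta/C$ in place of $\delta$, which only changes the random scale $\mathcal{Y}_{\delta,\gamma'}$.
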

\begin{proof}
By Theorem~\ref{t.theta.rate} and~\ref{t.quenched}, for~$\delta>0, \gamma' \in (\gamma,1)$ there exists a random variable~$\mathcal{Y}_{\delta,\gamma'}$ such that for~$\theta$ as in the statement of that theorem, if~$3^m \geq \mathcal{Y}_{\delta,\gamma'} \vee \mathcal{S}$ then
\begin{equation*}
\bigl| \bigl( \bfAhom^{-\nicefrac12}\bfA(z+\cusdot_k)\bfAhom^{-\nicefrac12} - \mathrm{Id} \bigr)_+ \bigr| \leq \delta 3^{\gamma'(m-k)} \biggl(\frac{\mathcal{Y}_{\delta,\gamma'} \vee \mathcal{S}}{3^m} \biggr)^{\theta}\,.
\end{equation*}
Then for any~$k\leq m, z\in\mathbb{L}_{k-1} \cap \cusdot_m, |e|=1$
\begin{align*}
J(z+\cusdot_k,\shom^{-\nicefrac12}e,\shom^{\nicefrac12}e) \leq \delta 3^{\gamma'(m-k)} \biggl(\frac{\mathcal{Y}_{\delta,\gamma'} \vee \mathcal{S}}{3^m} \biggr)^{\theta}\,,
\end{align*}
Summing in~$k$ up to~$n$ we obtain
\begin{align*}
\sup_{z\in \mathbb{L}_n \cap \cusdot_m} \mathcal{E}_s(z + \cusdot_n) & \leq \biggl(\sum_{k=-\infty}^n 3^{2s(k-n)} \sup_{z\in\mathbb{L}_k\cap \cusdot_m, |e|=1}J(z+\cusdot_k,\shom^{-\nicefrac12}e,\shom^{\nicefrac12}e)\biggr)^{\nicefrac12} \\
& \leq \biggl(\sum_{k=-\infty}^n 3^{2s(k-n)} \delta 3^{\gamma'(m-k)} \biggl(\frac{\mathcal{Y}_{\delta,\gamma'} \vee \mathcal{S}}{3^m} \biggr)^{\theta} \biggr)^{\nicefrac12} \\
&  \leq 3^{\gamma' (m-n)}\frac{C\delta^{\nicefrac12}}{2s-\gamma'}\biggl(\frac{\mathcal{Y}_{\delta,\gamma'} \vee \mathcal{S}}{3^m} \biggr)^{\nicefrac{\theta}{2}}\,,
\end{align*}
which concludes the proof.
\end{proof}

\begin{proof}[Proof of Theorem~\ref{t.theoremA}]
Assume~\ref{a.stationarity},~\ref{a.ellipticity} and~\ref{a.CFS}, and define the adapted domains by~$\m_0 = \shom$. Let~$t_0$ be the initial time in the adapted interval~$J_0$. Define~$\a^\epsilon(t,x) = \a(\nicefrac{t}{\epsilon^2},\nicefrac{x}{\epsilon})$ and for fixed~$\mathbf{f}\in B^s_{2,2}(\cusdot_0)^d$ and~$u_0\in L^2(\cus_0)$ let~$u^\epsilon$ and~$v$ be the unique solutions to
\begin{equation}
\left\{
\begin{aligned}
& \partial_t u^\epsilon -\nabla \cdot \a^\epsilon \nabla u^\epsilon = \nabla \cdot \mathbf{f} & \mbox{ in } \cusdot_0\\
& u^\epsilon = 0 & \mbox{ on }  J_0 \times \partial\cus_0 \\
& u^\epsilon = u_0 & \mbox{ at } t = t_0\,.
\end{aligned}
\right.
\qquad 
\left\{
\begin{aligned}
& \partial_t v - \nabla \cdot \ahom \nabla v = \nabla \cdot \mathbf{f} & \mbox{ in } \cusdot_0 \\
& v = 0 & \mbox{ on }  J_0 \times \partial\cus_0 \\
& v = u_0 & \mbox{ at } t = t_0\,.
\end{aligned}
\right.
\end{equation}

First, it is immediate from the definitions that if~$\bfA^\epsilon(V)$ denotes the coarse-graining of~$\a^\epsilon$ in any space-time domain~$V$, then~$\bfA^\epsilon(V) = \bfA(\epsilon^{-1}V)$. It follows that if~$m = \lceil-\log_3\epsilon\rceil$ and~$\mathcal{E}^\epsilon_s(\cusdot_0)$ denotes the quantity in~\eqref{e.mathcalE.def} for~$\a^\epsilon$, then~$\mathcal{E}^\epsilon_s(\cusdot_0) = \mathcal{E}_s(\cusdot_m)$, where the latter is defined for the coefficient field~$\a$. We will make use of this abuse of notation.

Since $\partial_t (u^\epsilon-v) = \nabla \cdot ((\a^\epsilon-\khom)\nabla u^\epsilon - \shom \nabla v)$, we have by Proposition~\ref{l.multiscale.poincare} and Theorem~\ref{t.homogenize}, for any~$0 < s < \nicefrac{1}{2}$ and~$n \leq m$
\begin{align}
\label{e.first.step.thm}
\lefteqn{
\| u^\epsilon - v \|_{\L^2(\cusdot_0)}
} \qquad & \notag \\
& \leq C \lambda_{\shom}^{-\nicefrac12} \| \shom^{\nicefrac12} \nabla (u^\epsilon - v)\|_{\underline{\hat{B}}_{2,1}^{-1}(\cusdot_0)} + C \lambda_{\shom}^{-\nicefrac12}\|\shom^{-\nicefrac12} ((\a-\khom)\nabla u - \shom \nabla v)\|_{\underline{\hat{B}}_{2,1}^{-1}(\cusdot_0)} \notag \\
& \leq C \sup_{z\in\mathbb{L}_n \cap \cusdot_m}\mathcal{E}_s(z+\cusdot_n) \lambda_{\shom}^{-\nicefrac12}\|(\s^\epsilon)^{\nicefrac12}\nabla u^\epsilon\|_{\L^2(\cusdot_0)} \notag \\
& \qquad + C(1 + \sup_{z\in\mathbb{L}_n \cap \cusdot_m}\mathcal{E}_{\nicefrac{s}{2}}(z+\cusdot_n)) 3^{s(n-m)}\|\lambda_{\shom}^{-1}\mathbf{f}\|_{\underline{B}_{2,2}^s(\cusdot_0)}\,.
\end{align}
We now choose parameter~$\gamma' = s+\frac{\gamma}{2}$ and~$n = n(m)$ satisfying~$m-n = \frac{\theta}{4s +\gamma} m$, where~$\theta > 0$ is the constant given by Theorem~\ref{t.quenched}. This choice of~$n$ serves to balance the two error terms in~\eqref{e.first.step.thm}. Applying Lemma~\ref{l.mathcalE.rate} with these parameters and~$\delta = 1$,
\begin{equation}
\label{e.step2}
\sup_{z\in\mathbb{L}_n \cap \cusdot_m} \mathcal{E}_s(z+\cusdot_n) \leq 3^{-m\frac{\theta s}{4s+\gamma} } (\mathcal{Y}_{1,\gamma'} \vee \S)^{\nicefrac{\theta}{2}} \frac{C(d)}{1-\gamma'}
\end{equation}
and the parameters have been chosen such that the second term in~\eqref{e.first.step.thm} has the matching rate
\begin{equation}
\label{e.step3}
3^{s(n-m)} = 3^{-m \frac{\theta s}{4s+\gamma}} \,.
\end{equation}
By Proposition~\ref{p.cg.energy} and~\eqref{e.lambdas.finite}
\begin{equation}
\label{e.step4}
\|(\s^\epsilon)^{\nicefrac12}\nabla u^\epsilon\|_{\L^2(\cusdot_0)} \leq C(1 + \mathcal{E}_{\nicefrac{s}{2}}(\cusdot_m)) (\|\lambda_{\shom}^{-\nicefrac12}\mathbf{f}\|_{\underline{B}^s_{2,2}(\cusdot_0)} + \|u_0\|_{\L^2(\cus_0)})\,.
\end{equation}
Putting together~\eqref{e.first.step.thm},~\eqref{e.step2},~\eqref{e.step3} and~\eqref{e.step4}
\begin{align*}
\lefteqn{
\| u^\epsilon - v \|_{\L^2(\cusdot_0)}
} \qquad & \\
& \leq \epsilon^{\frac{\theta s}{4s+\gamma} } (\mathcal{Y}_{1,\gamma'} \vee \S)^{\nicefrac{\theta}{2}} \frac{C(d)}{1-\gamma'} \bigl( 1 + \epsilon^{\frac{\theta s}{4s+\gamma} } (\mathcal{Y}_{1,\gamma'} \vee \S)^{\nicefrac{\theta}{2}} \frac{C(d)}{1-\gamma'} \bigr) (\|\mathbf{f}\|_{\underline{B}^s_{2,2}(\cusdot_0)} + \|u_0\|_{\L^2(\cus_0)})  \,.
\end{align*}
Taking~$\rho = \frac{\theta s}{4s+\gamma}$ we obtain Theorem~\ref{t.theoremA}.
\end{proof}

\begin{proof}[Proof of Corollary~\ref{c.frd}]
Suppose that~$\a$ is a uniformly elliptic field satisfying~\eqref{e.uniform.ellipticity} with constants~$0<\lambda \leq \Lambda < \infty$, and with finite range of dependence~$1$ in space and~$T$ in time. We may assume without loss of generality that there exists~$k\in\mathbb{Z}$ such that~$\lambda = 3^{2k}$. Define
\begin{equation}
\label{e.rescale}
\tilde{\a}(t,x) = \lambda^{-1}\a(\lambda^{-1} t,x)\,.
\end{equation}
By Proposition~\ref{p.rescaling} we may apply Theorem~\ref{t.theoremB} to the dilation~$D_{n_0}\tilde{\a}$ provided that~$n_0$ is the smallest integer satisfying~\eqref{e.first.scale.sep}. We then undo the dilation by adding~$3^{n_0}$ to the length scale~$L$ given by Theorem~\ref{t.theoremB}, and move from~$\tilde{\a}$ back to~$\a$ by applying Lemma~\ref{l.change.variables} to obtain the statement of Corollary~\ref{c.frd}.
\end{proof}

\appendix

\section{Functional inequalities}
\label{aa.functions}

In this appendix we state the parabolic functional inequalities used in the paper. These will often be applied in the adapted domains given by~\eqref{e.adapted.cyl.def} with a positive-definite matrix~$\q_0$ and constant~$\lambda_r > 0$; in practice these will be given as in Section~\ref{ss.subadditivity}. The inequalities in adapted domains are obtained by the change of coordinates
\begin{equation}
\label{e.transformers}
\left.
\begin{aligned}
& \partial_t u - \nabla \cdot \a \nabla u = \nabla \cdot \mathbf{h} \quad \mbox{in} \quad \cusdot_n \\
& \tilde{u}(y,s):= u(\q_0(y),\lambda_r^{-1} s) \\
& \tilde{\a}(y,s) := \a(\q_0(y),\lambda_r^{-1}s) \\
& \tilde{\h}(y,s) := \h(\q_0(y),\lambda_r^{-1}s)
\end{aligned}
\right\}
\implies
\partial_s \tilde{u} - \nabla \cdot (\lambda_r^{\nicefrac12}\q_0)^{-1}\tilde{\a} (\lambda_r^{\nicefrac12}\q_0)^{-1}\nabla \tilde{u} = \nabla \cdot (\lambda_r \q_0)^{-1}\tilde{\h} \mbox{ in } \cudot_n\,.
\end{equation}

In our first proposition, the constant depends on norms~$\|\s^{-\nicefrac12}\k\s^{-\nicefrac12}\|_{L^\infty(I\times U)}$,~$\|\s^{-1}\|_{L^1(I\times U)}$ and~$\|\s\|_{\L^1(I\times U)}$ of the coefficient field. We use this proposition only once, in order to qualitatively justify the initial setup of the problem, and therefore make no effort to track the constants or scaling. The proof is a minor modification of results in~\cite[Section 3]{ABM}.
\begin{proposition}
\label{p.dumb.poincare}
Let~$I$ be a finite interval,~$U$ a bounded Lipschitz domain, and~$\partial_t u = \nabla \cdot \a\nabla u$ in~$I\times U$. There exists a constant~$C = C(d, \a, I, U)$ such that
\begin{equation}
\|u -(u)_{I\times U}\|_{\L^1(I\times U)} \leq C \|\s^{\nicefrac12}\nabla u\|_{\L^2(I\times U)}\,.
\end{equation}
\end{proposition}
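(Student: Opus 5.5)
We prove a qualitative Poincaré-type inequality for the solution $u$ in $L^1$, splitting it into a spatial Poincaré estimate at each fixed time plus control of the oscillation in time of the spatial means. The constant may depend on $\a$ through $\|\s^{-1}\|_{L^1}$, $\|\s\|_{L^1}$ and $\|\s^{-\nicefrac12}\k\s^{-\nicefrac12}\|_{L^\infty}$, so crude H\"older estimates are enough.

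\emph{Step 1 (spatial part).} For a.e.\ $t\in I$, the $W^{1,1}$ Poincaré inequality on the Lipschitz domain $U$ gives
\[
\|u(t,\cdot)-(u(t,\cdot))_U\|_{L^1(U)}\leq C(U)\|\nabla u(t,\cdot)\|_{L^1(U)}.
\]
Integrate in $t$ and apply Cauchy--Schwarz in the form $|\nabla u|\leq |\s^{-\nicefrac12}|\,|\s^{\nicefrac12}\nabla u|$. This yields
\[
\int_I\|u-(u(t))_U\|_{L^1(U)}\,dt\leq C\|\s^{-1}\|_{L^1(I\times U)}^{\nicefrac12}\|\s^{\nicefrac12}\nabla u\|_{L^2(I\times U)}.
\]
This is exactly \eqref{e.Omega.imp}.

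\emph{Step 2 (temporal part).} It remains to control $t\mapsto m(t):=(u(t,\cdot))_U$, since
\[
\|u-(u)_{I\times U}\|_{L^1}\leq \int_I\|u-m(t)\|_{L^1(U)}\,dt+|U|\int_I|m(t)-(m)_I|\,dt.
\]
Since the equation does not conserve the mean (there is flux through $\partial U$), we do not use $m(t)$ directly. Instead, following \cite[Section 3]{ABM}, we fix a smooth nonnegative weight $\psi\in C_c^\infty(U)$ with $\int_U\psi=1$ and set $m_\psi(t):=\int_U u(t)\psi$. Testing the equation with $\psi(x)\eta(t)$, which is admissible by the weak formulation and by $\partial_t u\in W^{-1}_\s$, gives in the distributional sense
\[
m_\psi'(t)=-\int_U \a\nabla u\cdot\nabla\psi.
\]
Therefore
\[
\int_I|m_\psi'|\leq \|\nabla\psi\|_\infty\|\a\nabla u\|_{L^1(I\times U)}.
\]
The right-hand side is bounded using $|\a\nabla u|\leq |\s^{\nicefrac12}|\bigl(1+|\s^{-\nicefrac12}\k\s^{-\nicefrac12}|\bigr)|\s^{\nicefrac12}\nabla u|$, giving
\[
C\|\s\|_{L^1}^{\nicefrac12}\bigl(1+\|\s^{-\nicefrac12}\k\s^{-\nicefrac12}\|_{L^\infty}\bigr)\|\s^{\nicefrac12}\nabla u\|_{L^2}.
\]
Hence $\int_I|m_\psi-(m_\psi)_I|\leq |I|\int_I|m_\psi'|$ is controlled.

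\emph{Step 3 (matching the two means).} We compare the weighted and unweighted means at each time:
\[
|m(t)-m_\psi(t)|=\Bigl|\int_U (u(t)-m(t))\psi\Bigr|\leq \|\psi\|_\infty\|u(t)-m(t)\|_{L^1(U)},
\]
and this is controlled after integration in time by Step 1. Combining Steps 1 to 3 with the triangle inequality, and replacing $(m)_I$ by $(m_\psi)_I$ at the cost of another Step 1 term, gives the claim.

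The main obstacle is justifying the identity for $m_\psi'$ under only the qualitative assumptions. The test function $\psi\eta$ is smooth, so this reduces to pairing $\partial_t u\in W^{-1}_\s$ with a smooth compactly supported function, and this is legitimate by the construction of $W^1_{\s,0}$. The rest is bookkeeping of H\"older constants.
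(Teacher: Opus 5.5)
Your proof is correct and follows the paper's argument: a time-slice $W^{1,1}$ Poincar\'e inequality, then a weighted spatial mean $m_\psi$ with $\psi\in C_c^\infty(U)$ (chosen to avoid boundary flux) whose time derivative is controlled by testing the equation with $\psi$, then the triangle inequality. The only difference is that you compare $m(t)$ and $m_\psi(t)$ directly via $\bigl|\int_U (u(t)-m(t))\psi\bigr|\le\|\psi\|_{\infty}\|u(t)-m(t)\|_{L^1(U)}$, whereas the paper bounds $\fint_U u(1-\psi)$ through an auxiliary weighted Neumann problem $-\nabla\cdot\s\nabla w=1-\psi$; your route is simpler and does not need to justify solvability of that degenerate elliptic problem.
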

\begin{proof}
\emph{Step 1:} We first work in time slices. Fix a spatial function~$\psi \in C_c^\infty(U)$ with~$\fint_{U} \psi = 1$ and let~$w$ be the unique mean-zero function solving the elliptic problem
\begin{equation*}
\left\{
\begin{aligned}
-\nabla \cdot \s \nabla w = 1-\psi & \mbox{ in } U \\
\mathbf{n} \cdot \s \nabla w = 0 & \mbox{ on } \partial U\,.
\end{aligned}
\right.
\end{equation*}
Then by testing the equation with itself,
\begin{align*}
\fint_{U} \nabla w \cdot \s \nabla w & = \fint_{U} (1-\psi)w \leq \|1-\psi\|_{L^\infty(U)}\|w\|_{\L^1(U)}  \leq C\|1-\psi\|_{L^\infty(U)}\|\nabla w\|_{\L^1(U)}\\
& \leq C\|1-\psi\|_{L^\infty(U)}\|\s^{-\nicefrac12}\|_{\L^2(U)}\|\s^{\nicefrac12}\nabla w\|_{\L^2(U)}\,,
\end{align*}
from which it follows that~$\|\s^{\nicefrac12}\nabla w\|_{\L^2(U)} \leq C\|1-\psi\|_{L^\infty(U)}\|\s^{-\nicefrac12}\|_{\L^2(U)}$. We may now estimate
\begin{align*}
\biggl| \fint_{U} u(1-\psi)\biggr| & = \biggl| \fint_{U} \s^{\nicefrac12} \nabla u \cdot \s^{\nicefrac12} \nabla w \biggr| \\
& \leq \|\s^{\nicefrac12} \nabla u\|_{\L^2(U)}\|\s^{\nicefrac12} \nabla w\|_{\L^2(U)} \\
& \leq C \|1-\psi\|_{L^\infty(U)} \|\s^{-\nicefrac12}\|_{\L^2(U)} \|\s^{\nicefrac12} \nabla u\|_{\L^2(U)}\,.
\end{align*}
Applying the usual Poincar\'e inequality and the triangle inequality we obtain, for each time,
\begin{align}
\label{e.time.slice.poincare}
\bigg\| u - \fint_{U}\psi u \bigg\|_{\L^1(U)} & \leq \| u - (u)_{U} \|_{\L^1(U)} + \biggl| \fint_{U} u(1-\psi) \biggr| \notag \\
& \leq C(1 + \|\psi\|_{L^\infty(U)} )\|\s^{-\nicefrac12}\|_{\L^2(U)} \|\s^{\nicefrac12} \nabla u\|_{\L^2(U)}\,.
\end{align}

\smallskip
\emph{Step 2:} We use the equation to compare the integral~$\fint_{U} \psi u$ in time slices to the space-time integral. Since~$\psi$ is compactly supported in space,
\begin{align*}
\lefteqn{
\sup_{t\in I} \biggl| \fint_{U} u(t,x)\psi(x) \, dx - \fint_{I} \fint_{U} u(t,x)\psi(x)\, dtdx \biggr|
} \qquad & \\
& \leq \int_{I} \bigg|\fint_{U} \partial_t u \psi \bigg| \leq \int_{I} \bigg| \fint_{U} \a\nabla u \cdot \nabla \psi\bigg| \\
& \leq |I| (1+\|\s^{-\nicefrac12}\k\s^{-\nicefrac12}\|_{L^\infty(I\times U)})\|\s^{\nicefrac12}\|_{\L^2(I\times U)}\|\s^{\nicefrac12}\nabla u\|_{\L^2(I\times U)} \|\nabla \psi\|_{L^\infty(U)}\,.
\end{align*}

\smallskip
\emph{Step 3:}
Combining the above two steps we get
\begin{align*}
\|u-(u)_{I\times U}\|_{\L^1(I\times U)} & \leq 2\inf_{c\in\mathbb{R}}\|u-c\|_{\L^1(I\times U)} \leq 2 \bigg\| u - \fint_{I}\fint_{U} u \psi\bigg\|_{\L^1(I\times U)} \\
& \leq \sup_{t\in I} \biggl| \fint_{U} u(t,x)\psi(x) \, dx - \fint_{I} \fint_{U} u(t,x)\psi(x)\, dtdx \biggr| \\
& \qquad + \bigg\| u - \fint_{U} u \psi\bigg\|_{\L^1(I\times U)} \\
& \leq C (1+\|\s^{-\nicefrac12}\k\s^{-\nicefrac12}\|_{L^\infty(I\times U)})\|\s^{\nicefrac12}\|_{\L^2(I\times U)}\|\s^{\nicefrac12}\nabla u\|_{\L^2(I\times U)} \|\nabla \psi\|_{L^\infty(U)} \\
& \qquad + C \|1-\psi\|_{L^\infty(U)} \|\s^{-\nicefrac12}\|_{\L^2(I\times U)} \|\s^{\nicefrac12} \nabla u\|_{\L^2(I\times U)}\,.
\end{align*}
Therefore fixing a function~$\psi \in C_c^\infty(U)$ such that~$\fint_{\cu_n}\psi = 1$,~$\|\psi\|_{L^\infty(\cu_n)} + \|\nabla \psi\|_{L^\infty(\cu_n)} \leq C$ we obtain the result.
\end{proof}

Our next proposition is the parabolic multiscale Poincar\'e inequality. We first introduce some notation. Recall that~$\partial_{\sqcup}(I\times U)$ is defined in~\eqref{e.sqcup.def} and define the volume-normalized norm
\begin{equation*}
\|g\|_{\underline{H}^1(U)} := |U|^{-\frac{1}{d}}\|g\|_{\L^2(U)} + \|\nabla g\|_{\L^2(U)}\,,
\end{equation*}
and the dual norms
\begin{align*}
\|f\|_{\underline{H}^{-1}(U)} & := \sup \bigg\{ \fint_U fg : g\in H_0^1(U) \,, \|g\|_{\underline{H}^1(U)} \leq 1 \bigg\} \\
\|f\|_{\underline{\hat{H}}^{-1}(U)} & := \sup \bigg\{ \fint_U fg : g\in H^1(U) \,, \|g\|_{\underline{H}^1(U)} \leq 1 \bigg\}\,.
\end{align*}
%Similarly
%\begin{equation*}
%\|g\|_{\underline{H}^2(U)} := |U|^{-\frac{2}{d}}\|g\|_{\L^2(U)} + |U|^{-\frac{1}{d}}\|\nabla g\|_{\L^2(U)} + \|\nabla^2 g\|_{\L^2(U)} \,,
%\end{equation*}
%and~$\|\cdot \|_{\underline{H}^{-2}(U)}$ and~$\|\cdot \|_{\underline{\hat{H}}^{-2}(U)}$ denote the dual norms with zero boundary data and arbitrary boundary data respectively.
The parabolic norm is
\begin{equation*}
\|u\|_{\underline{H}^1_{\mathrm{par}}(I\times U)} := \|u\|_{\L^2(I;\underline{H}^1(U))} + \|\partial_t u \|_{\L^2(I; \underline{H}^{-1}(U))}\,,
\end{equation*}
and we denote by~$\|\cdot \|_{\underline{H}^{-1}_{\mathrm{par}}(I\times U)}$ and~$\|\cdot \|_{\underline{\hat{H}}^{-1}_{\mathrm{par}}(I\times U)}$ the dual norms testing against, respectively, functions vanishing on~$\partial_{\sqcup}(I\times U)$ and arbitrary functions in~$H^1_{\mathrm{par}}(I\times U)$. By~\cite[Proposition 3.6]{ABM}, noting that the proof there extends immediately to include all scales down to~$k=-\infty$, we have
\begin{equation}
\label{e.bound.Hpar.dual}
\|f\|_{\underline{\hat{H}}^{-1}_{\mathrm{par}}(\cudot_n)} \leq C \|f\|_{\Bring^{-1}_{2,1}(\cudot_n)}\,,
\end{equation}
where the right-hand side was defined in~\eqref{e.Besov.dot.def}.

We state here the parabolic multiscale inequality proved in~\cite[Proposition 3.6, Proposition 3.7]{ABM}, with the differences that we use here all scales down to~$k=-\infty$ (as done in~\cite[Proposition 1.10]{AK.Book} in the elliptic case), and the estimate here is stated in terms of Besov spaces. Applying the transformation in~\eqref{e.transformers}, the statement in adapted domains is
\begin{equation*}
\| u- (u)_{\cusdot_n}\|_{\L^2(\cusdot_n)} \leq C(d)\bigl([\q_0\nabla u]_{\Bring_{2,1}^{-1}(\cusdot_{n+1})} + [\lambda_r^{-1}\q_0^{-1}\g]_{\Bring_{2,1}^{-1}(\cusdot_{n+1})}\bigr)\,.
\end{equation*}

\begin{lemma}[Parabolic Multiscale Poincar\'e Inequality]
\label{l.multiscale.poincare}
If~$\partial_t u = \nabla \cdot \g$ in~$\cudot_{n+1}$ then we have the interior estimate
\begin{align}
\label{e.parabolic.multiscale.poincare}
\| u- (u)_{\cudot_n}\|_{\L^2(\cudot_n)} \leq C(d)\bigl([\nabla u]_{\Bring_{2,1}^{-1}(\cudot_{n+1})} + [\g]_{\Bring_{2,1}^{-1}(\cudot_{n+1})}\bigr)\,.
\end{align}
If~$\partial_t u = \nabla \cdot \mathbf{g}$ in~$\cudot_n$ with $u=0$ on~$\partial_{\sqcup}\cudot_n$ then we have the global estimate
\begin{equation}
\label{e.boundary.poincare}
\|u\|_{\L^2(\cudot_n)} \leq C \bigl( \|\mathbf{g}\|_{\Bring^{-1}_{2,1}(\cudot_n)} + \|\nabla u\|_{\Bring^{-1}_{2,1}(\cudot_n)}\bigr)\,.
\end{equation}
\end{lemma}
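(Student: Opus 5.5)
The plan is to deduce both estimates from the parabolic $H^{-1}$ duality bound~\eqref{e.bound.Hpar.dual}, after decomposing the equation $\partial_t u = \nabla\cdot \g$ so that the time derivative is absorbed into the flux. This follows \cite[Propositions 3.6, 3.7]{ABM}; the only new work is keeping all scales down to $k=-\infty$ and writing the result in the Besov notation~\eqref{e.Besov.dot.def}.

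\textbf{Global estimate~\eqref{e.boundary.poincare}.} Suppose $u=0$ on $\partial_{\sqcup}\cudot_n$.
\begin{itemize}
\item Test $u$ against the solution $w$ of the backward heat problem $-\partial_t w - \Delta w = u$ in $\cudot_n$, with $w=0$ on the lateral boundary and at the final time. Since $u$ vanishes at the initial time and $w$ at the final time, the time boundary terms drop, giving
\begin{equation*}
\|u\|_{\L^2(\cudot_n)}^2 = \fint_{\cudot_n} u(-\partial_t w - \Delta w) = \fint_{\cudot_n} \bigl(-\g\cdot\nabla w + \nabla u\cdot \nabla w\bigr),
\end{equation*}
after using the equation for $u$ to move $\partial_t$ onto the flux.
\item Standard parabolic regularity for $w$ gives $\|\nabla w\|_{\L^2(I_n;\underline{H}^1(\cu_n))} + \|\partial_t \nabla w\|_{\L^2(I_n;\underline{H}^{-1}(\cu_n))} \leq C\|u\|_{\L^2(\cudot_n)}$. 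This uses that $\cu_n$ is convex, so $H^2$ estimates hold for the Dirichlet Laplacian.
\item So $\nabla w$ is controlled in $\underline{H}^1_{\mathrm{par}}$. Pairing by duality and applying~\eqref{e.bound.Hpar.dual} to $\nabla u$ and to $\g$ bounds the right side by $C(\|\g\|_{\Bring^{-1}_{2,1}} + \|\nabla u\|_{\Bring^{-1}_{2,1}})\|u\|_{\L^2}$. Dividing by $\|u\|_{\L^2}$ gives the claim.
\end{itemize}

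\textbf{Interior estimate~\eqref{e.parabolic.multiscale.poincare}.} There is no boundary condition, so I work by localization, in three steps.
\begin{itemize}
\item Fix a cutoff $\psi$ equal to $1$ on $\cudot_n$ and supported in $\cudot_{n+1}$, and set $\tilde u := \psi(u - c)$ for a constant $c$. Then $\tilde u$ solves
\begin{equation*}
\partial_t \tilde u = \nabla\cdot(\psi \g) - \g\cdot\nabla\psi + (u-c)\partial_t\psi
\end{equation*}
and vanishes on the parabolic boundary of $\cudot_{n+1}$.
\item The lower-order terms are handled by a Caccioppoli-free trick. Write $-\g\cdot\nabla\psi = \nabla\cdot \mathbf{r}$, with $\mathbf{r}$ obtained by solving a Poisson problem in each time slice. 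Then treat $(u-c)\partial_t \psi$ as a source of size $3^{-2n}\|u-c\|_{\L^2}$, which is reabsorbed after a parabolic Poincar\'e inequality on the scale $3^n$.
\item An alternative, which I would try first because it is closer to ABM, avoids localization entirely. Decompose $u - (u)_{\cudot_n}$ through the dyadic averages $(u)_{z+\cudot_k}$ and write each increment $(u)_{z+\cudot_k} - (u)_{z'+\cudot_{k-1}}$ as a pairing of $(\nabla u)$ and $(\g)$ averages against fixed test functions, using the equation in time and integration in space. Summing with the weights $3^{k}$ then gives exactly $[\nabla u]_{\Bring^{-1}_{2,1}} + [\g]_{\Bring^{-1}_{2,1}}$.
\end{itemize}

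\textbf{Expected obstacle.} The hard part is the interior estimate: the increments of $u$ in time must be controlled using only averaged quantities. Concretely, one has to show that $|(u)_{z+\cudot_k} - (u)_{z'+\cudot_k}|$ for time-adjacent cubes is bounded by $3^{k}$ times averages of $\g$ over the enclosing cube. That bound comes from testing with a spatial bump function, as in Step~2 of Proposition~\ref{p.dumb.poincare}. The overlapping choice $z\in\mathcal{Z}_{k-1}$ in~\eqref{e.Besov.dot.def} is what makes these adjacent-cube comparisons available.
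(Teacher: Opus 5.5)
Your argument for the global estimate \eqref{e.boundary.poincare} is the paper's. Your $w$ is $-v$ for the paper's auxiliary function solving $(\partial_t+\Delta)v=u$ with $v=0$ on $\partial_{\sqcap}\cudot_n$. The identity $\|u\|_{\L^2(\cudot_n)}^2=\fint_{\cudot_n}(\nabla u-\g)\cdot\nabla w$ is the same, and the conclusion again comes from $\|\nabla w\|_{\underline{H}^1_{\mathrm{par}}(\cudot_n)}\le C\|u\|_{\L^2(\cudot_n)}$ together with \eqref{e.bound.Hpar.dual}. The gap is in the interior estimate \eqref{e.parabolic.multiscale.poincare}: neither of your routes closes.

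\emph{Localization route.} The Poisson problem for $\mathbf r$ is unnecessary, since in the duality identity $-\g\cdot\nabla\psi$ just pairs $\g$ with $w\nabla\psi$. The real problem is the terms $(u-c)\partial_t\psi$ and $(u-c)\nabla\psi$; the second comes from $\nabla\tilde u$, which you did not expand. Because $\|w\|_{\L^2}\lesssim 3^{2n}\|\tilde u\|_{\L^2}$ and $\|\nabla w\|_{\L^2}\lesssim 3^{n}\|\tilde u\|_{\L^2}$, these terms contribute $C\|u-c\|_{\L^2(\cudot_{n+1})}\|\tilde u\|_{\L^2}$ with $C$ of order one. That is the quantity you are estimating, on a larger cube, so there is nothing small to reabsorb. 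A standard Poincar\'e inequality at scale $3^n$ would reintroduce $\|\nabla u\|_{\L^2}$ and $\|\g\|_{\L^2}$, which is exactly what \eqref{e.parabolic.multiscale.poincare} must avoid: in applications $\g=\a\nabla u$, and only its cube averages are controlled.

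\emph{Telescoping route.} Differences of averages of $u$ over adjacent cubes are pairings of $\nabla u$ and $\g$ with Lipschitz tent or bump weights at scale $3^k$, not single cube averages. Step~2 of Proposition~\ref{p.dumb.poincare}, which you cite, bounds such pairings by $\L^2$ norms of the flux, which is not allowed here. Expanding a pairing into cube averages costs $\sum_{j\le k}3^{j}\bigl(\avsum_{z}|(\g)_{z+\cudot_j}|^2\bigr)^{\nicefrac12}$ at each level $k$. Summing over $k\le n$ then produces an extra factor $n-j+1$ in front of the scale-$j$ term. So, as sketched, you do not obtain $[\g]_{\Bring^{-1}_{2,1}(\cudot_{n+1})}$.

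\emph{The missing idea} is to do the multiscale decomposition on the test-function side, as the paper does by taking the interior estimate from \cite[Propositions 3.6, 3.7]{ABM}. First prove by duality
\begin{equation*}
\|u-(u)_{\cudot_n}\|_{\L^2(\cudot_n)}\le C\bigl(\|\nabla u\|_{\underline{\hat H}^{-1}_{\mathrm{par}}(\cudot_{n+1})}+\|\g\|_{\underline{\hat H}^{-1}_{\mathrm{par}}(\cudot_{n+1})}\bigr),
\end{equation*}
then apply \eqref{e.bound.Hpar.dual} on $\cudot_{n+1}$. That bound uses a Poincar\'e inequality at each scale on the $H^1_{\mathrm{par}}$ test function, so it loses no logarithm.

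For the interior duality step, cut off the dual solution rather than $u$:
\begin{itemize}
\item Let $w$ solve $-\partial_t w-\Delta w=h:=(u-(u)_{\cudot_n})\indc_{\cudot_n}$ in $\R\times\Rd$, with $w=0$ after the final time of $\cudot_n$.
\item Take $\eta\in C^\infty_c(\cudot_{n+1})$ with $\eta=1$ on a neighborhood of $\overline{\cudot_n}$ of parabolic width comparable to $3^n$.
\item Then $-\partial_t(\eta w)-\Delta(\eta w)=h+R$. Here $R$ is supported where $w$ solves the homogeneous backward heat equation, so it is smooth with bounds in terms of $\|h\|_{\L^2}$.
\item Moreover $\int R=-\int h=0$, so $R=\nabla\cdot\mathbf G+\partial_t G_0$ with smooth $\mathbf G,G_0$ compactly supported in $\cudot_{n+1}$.
\end{itemize}
Testing the equation for $u$ with $\eta w$ and with $G_0$ expresses $\int_{\cudot_n}(u-(u)_{\cudot_n})^2$ purely through pairings of $\nabla u$ and $\g$ with $\nabla(\eta w)$, $\mathbf G$ and $\nabla G_0$. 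Each of these is bounded in $H^1_{\mathrm{par}}$ by $C\|h\|_{\L^2}$, with no term involving $u$ itself.
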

\begin{proof}
The interior estimate~\eqref{e.parabolic.multiscale.poincare} can be found in~\cite[Proposition 3.6, Proposition 3.7]{ABM}, noting that the proof there can immediately be adapted to include sums going all the way down to~$-\infty$.
\newline
\emph{Proof of~\eqref{e.boundary.poincare}:}
Suppose that~$\partial_t u = \nabla \cdot \mathbf{g}$ in~$\cudot_n$ and~$u = 0$ on~$\partial_{\sqcup}\cudot_n$. Let
\begin{equation*}
\partial_{\sqcap}\cudot_n \coloneqq (\{\nicefrac{3^{2n}}{2}\} \times \cu_n) \cup (I_n \times \partial \cu_n)\,,
\end{equation*}
define an auxiliary function
\begin{align}
\begin{cases}
(\partial_t + \Delta)v = u & \mbox{in } \cudot_n\,, \\
v = 0 & \mbox{on }  \partial_{\sqcap}\cudot_n\,,
\end{cases}
\end{align}
which is a time-reversed solution to the heat equation. By standard arguments (for instance see~\cite[Lemma 3.9]{ABM} and use that our boundary data replaces periodicity) we have\newline $\|\nabla v\|_{\underline{H}^1_{\mathrm{par}}(\cudot_n)} \leq C \|u\|_{\L^2(\cudot_n)}$. Then using the equation for~$v$, the equation for~$u$ and duality,
\begin{align}
\label{e.integrate.parts}
\| u\|_{\L^2(\cudot_n)}^2 & = \fint_{\cudot_n} (\partial_t v + \Delta v) u  = \fint_{\cudot_n} -\partial_t u v - \nabla v \cdot \nabla u \nonumber \\
& = \fint_{\cudot_n} \mathbf{g}\cdot \nabla v - \nabla v \cdot \nabla u \nonumber \\
& \leq \bigl(\|\mathbf{g}\|_{\underline{\hat{H}}_{\mathrm{par}}^{-1}(\cudot_n)} + \|\nabla u\|_{\underline{\hat{H}}_{\mathrm{par}}^{-1}(\cudot_n)} \bigr) \|\nabla v\|_{\underline{H}_{\mathrm{par}}^{1}(\cudot_n)} \nonumber \\
& \leq C\bigl(\|\mathbf{g}\|_{\underline{\hat{H}}_{\mathrm{par}}^{-1}(\cudot_n)} + \|\nabla u\|_{\underline{\hat{H}}_{\mathrm{par}}^{-1}(\cudot_n)} \bigr) \|u\|_{\L^2(\cudot_n)}\,.
\end{align}
so by re-absorbing the last factor in~\eqref{e.integrate.parts} and applying~\eqref{e.bound.Hpar.dual},
\begin{equation}
\label{e.poincare.result}
\|u\|_{\L^2(\cudot_n)} \leq C( \|\mathbf{g}\|_{\Bring^{-1}_{2,1}(\cudot_n)} + \|\nabla u\|_{\Bring^{-1}_{2,1}(\cudot_n)} )\,.
\end{equation}

\end{proof}

Throughout the paper we bound weak norms by weighted sums of spatial averages as given in the next lemma.
\begin{lemma}
\label{l.bound.dual.seminorm}
Let~$q\in [1,\infty],p\in (1,\infty),s\in (0,1)$, with~$s = 1$ allowed if~$q=\infty$. Then for any~$n\in\Z$,
\begin{equation*}
[f]_{\underline{\hat{B}}_{p,q}^{-s}(\cudot_n)} \leq 3^{d+2+s} \biggl( \sum_{k=-\infty}^n \biggl( 3^{spk} \avsum_{z\in\mathcal{Z}_{k-1}, z+\cudot_k \subseteq \cudot_n} \bigl| (f)_{z+\cudot_k}\bigr|^p \biggr)^{\nicefrac{q}{p}}\biggr)^{\nicefrac{1}{q}}\,.
\end{equation*}
\end{lemma}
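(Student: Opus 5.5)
The plan is to bound $[f]_{\underline{\hat{B}}^{-s}_{p,q}(\cudot_n)}$ by duality. Fix $g\in C^\infty(\cudot_n)$ with $\|g\|_{\underline{B}^s_{p',q'}(\cudot_n)}\le 1$. The goal is to estimate $\fint_{\cudot_n} fg$, or more precisely the pairing with the mean of $g$ removed, since we are bounding a seminorm. The approach is a multiscale (Haar-type) decomposition of $g$ along the triadic parabolic cubes, together with summation by parts across scales. This is the parabolic analogue of \cite[Lemma A.2]{AK.HC}, with $d$ replaced by $d+2$ because each parabolic cube $\cudot_k$ splits into $3^{d+2}$ subcubes in $\mathcal{Z}_{k-1}$.

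\textbf{Step 1: telescoping.} For each $k\le n$, let $g_k$ be the piecewise-constant function equal to $(g)_{z+\cudot_k}$ on each cube $z+\cudot_k$ of the non-overlapping partition $z\in\mathcal{Z}_k\cap\cudot_n$. Since $g$ is smooth, $g_k\to g$ in $L^{p'}$ as $k\to-\infty$. Hence
\begin{equation*}
g-g_n=\sum_{k=-\infty}^{n}(g_{k-1}-g_k).
\end{equation*}
On each cube $z+\cudot_k$, the function $g_{k-1}-g_k$ is constant on the $3^{d+2}$ subcubes and has zero mean on $z+\cudot_k$. Pairing it with $f$ therefore gives
\begin{equation*}
\fint_{\cudot_n}f\,(g_{k-1}-g_k)=\avsum_{z'\in\mathcal{Z}_{k-1}\cap\cudot_n}(f)_{z'+\cudot_{k-1}}\bigl((g)_{z'+\cudot_{k-1}}-(g)_{[z']+\cudot_k}\bigr),
\end{equation*}
where $[z']+\cudot_k$ denotes the parent cube.

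\textbf{Step 2: bounding each scale.} Apply H\"older in the normalized sum. The difference $|(g)_{z'+\cudot_{k-1}}-(g)_{[z']+\cudot_k}|$ is controlled by Jensen:
\begin{equation*}
|(g)_{z'+\cudot_{k-1}}-(g)_{[z']+\cudot_k}|\le 3^{(d+2)/p'}\,\|g-(g)_{[z']+\cudot_k}\|_{\L^{p'}([z']+\cudot_k)}.
\end{equation*}
The parent cube $[z']+\cudot_k$ is one of the cubes $z+\cudot_k$ with $z\in\mathcal{Z}_{k-1}$, $z+\cudot_k\subseteq\cudot_n$, that appear in the Besov seminorm. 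Some care is needed here: the dual side is indexed by $\mathcal{Z}_{k-1}$ cubes of size $3^k$, so I will reindex so that averages of $f$ at scale $k$ (over cubes $z+\cudot_k$, $z\in\mathcal{Z}_{k-1}$) pair with oscillations of $g$ on the same cubes. This shift of one scale, together with the counting of overlapping cubes, produces the constant $3^{d+2+s}$: a factor $3^{d+2}$ from cardinality/Jensen and a factor $3^{s}$ from comparing $3^{sk}$ with $3^{s(k-1)}$.

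Inserting the weights $3^{sk}\cdot 3^{-sk}$ gives, at each scale,
\begin{equation*}
\Bigl(3^{spk}\avsum|(f)|^p\Bigr)^{1/p}\Bigl(3^{-sp'k}\avsum\|g-(g)\|^{p'}_{\L^{p'}}\Bigr)^{1/p'}.
\end{equation*}
Summing over $k$ with the discrete H\"older inequality in $\ell^q\times\ell^{q'}$ gives the right-hand side of the lemma times $[g]_{\underline{B}^s_{p',q'}}\le 1$. The top term $g_n$ contributes only $(f)_{\cudot_n}(g)_{\cudot_n}$. In the seminorm (mean-zero pairing) this term either vanishes or is absorbed as the $k=n$ term, and I would handle it that way.

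\textbf{Main obstacle.} I expect the main obstacle to be the bookkeeping between the non-overlapping partition $\mathcal{Z}_k$ used in the telescoping and the overlapping family $\mathcal{Z}_{k-1}$ used in both the Besov seminorm \eqref{e.Besov.def} and the dual expression \eqref{e.Besov.dot.def}. Two points need checking. First, the cubes of the partition must be members of the overlapping family. Second, each average must be counted at most $3^{d+2}$ times, so that the normalized averages $\avsum$ over the two families are comparable with exactly the claimed constant.

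The endpoint $s=1$, $q=\infty$ goes through the same way, since $q'=1$ requires no summability of the weights. The restriction $s<1$ for finite $q$ appears only to ensure that $\|g\|_{\underline{B}^s_{p',q'}}$ is finite for smooth $g$.
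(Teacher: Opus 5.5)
This is the same approach the paper relies on. The paper's proof is only a pointer to \cite[Lemma A.1]{AK.HC}, and your argument is that summation-by-parts proof with $d$ replaced by $d+2$: telescope $g$ through its piecewise-constant averages on the nested partitions $\{z+\cudot_k: z\in\mathcal{Z}_k\cap\cudot_n\}$, apply H\"older at each scale, then H\"older in $\ell^q\times\ell^{q'}$. Step 1 is correct. The one step that does not deliver the stated constant as written is Step 2.

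\textbf{The constant in Step 2.} Suppose you insert the pointwise bound $|(g)_{z'+\cudot_{k-1}}-(g)_{[z']+\cudot_k}|\le 3^{(d+2)/p'}\|g-(g)_{[z']+\cudot_k}\|_{\L^{p'}([z']+\cudot_k)}$ into the normalized H\"older sum. You then pay $3^{(d+2)/p'}$ there. You still have to compare partition averages with the overlapping families in the lemma. At each scale the partition is a subfamily of the overlapping family, which is at most $3^{d+2}$ times larger. That comparison costs a further $3^{(d+2)/p}$ on the $f$ side and $3^{(d+2)/p'}$ on the $g$ side. The total is $3^{(d+2)(1+1/p')+s}$, which is too large.

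To get exactly $3^{d+2+s}$, apply Jensen only after summing over the $3^{d+2}$ children of each parent:
\begin{equation*}
\sum_{z'\in\mathcal{Z}_{k-1}\cap(z+\cudot_k)} \bigl|(g)_{z'+\cudot_{k-1}}-(g)_{z+\cudot_k}\bigr|^{p'} \le \sum_{z'\in\mathcal{Z}_{k-1}\cap(z+\cudot_k)} \fint_{z'+\cudot_{k-1}} \bigl|g-(g)_{z+\cudot_k}\bigr|^{p'} = 3^{d+2}\,\bigl\|g-(g)_{z+\cudot_k}\bigr\|_{\L^{p'}(z+\cudot_k)}^{p'}\,.
\end{equation*}
After normalizing, this gives with no loss
\begin{equation*}
\avsum_{z'\in\mathcal{Z}_{k-1}\cap\cudot_n}\bigl|(g)_{z'+\cudot_{k-1}}-(g)_{[z']+\cudot_k}\bigr|^{p'} \le \avsum_{z\in\mathcal{Z}_{k}\cap\cudot_n} \bigl\|g-(g)_{z+\cudot_k}\bigr\|_{\L^{p'}(z+\cudot_k)}^{p'}\,.
\end{equation*}
The only remaining losses are then:
\begin{itemize}
\item the two cardinality ratios, whose product is $3^{(d+2)(1/p+1/p')}=3^{d+2}$;
\item the factor $3^{-s(k-1)}3^{sk}=3^{s}$.
\end{itemize}
The second factor arises because averages of $f$ at scale $k-1$ are paired with oscillations of $g$ at scale $k$. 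Your phrase about pairing ``on the same cubes'' is therefore not what happens, and the scale shift is exactly why $3^s$ appears.

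\textbf{The top term.} The pairing $(f)_{\cudot_n}(g)_{\cudot_n}$ does not vanish in general. The dual norm \eqref{e.Besov.dual.hat.def} tests against $g$ normalized by \eqref{e.Besov.norm.def}, which does not force $(g)_{\cudot_n}=0$. Bound $|(g)_{\cudot_n}|\le\|g\|_{\L^{p'}(\cudot_n)}$. Then absorb $3^{sn}|(f)_{\cudot_n}|\cdot 3^{-sn}\|g\|_{\L^{p'}(\cudot_n)}$ as the $k=n$ entry of the $\ell^q\times\ell^{q'}$ H\"older inequality. This works because the $\ell^{q'}$ norm of the $g$-side sequence, augmented by $3^{-sn}\|g\|_{\L^{p'}(\cudot_n)}$, is at most $\|g\|_{\underline{B}^s_{p',q'}(\cudot_n)}\le 1$.

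\textbf{The endpoint remark.} Your closing remark has the roles reversed. When $q=\infty$ it is the $g$-side that carries the $\ell^{1}$ sum. The endpoint is harmless simply because no step of the argument uses $s<1$.
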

\begin{proof}
See~\cite[Lemma A.1]{AK.HC}.
\end{proof}

We will use in Section~\ref{s.hc} a div-curl lemma which requires us to estimate the norms of products in Besov spaces. In order to state the lemma we first define
\begin{equation}
\label{e.W1.par}
\|\psi\|_{\underline{W}^{1,\infty}_{\mathrm{par}}(\cudot_n)} := \|\psi\|_{L^\infty(\cudot_n)} + 3^{n}\|\nabla \psi\|_{L^\infty(\cudot_n)} + 3^{2n}\|\partial_t \psi\|_{L^\infty(\cudot_n)}\,.
\end{equation}
The scaling is chosen such that a cutoff function~$\psi\in C_c^\infty(\cudot_{n-1})$ can satisfy~$\|\psi\|_{\underline{W}^{1,\infty}_{\mathrm{par}}(\cudot_n)} \leq C$.

\begin{lemma}
\label{l.multiply.in.besov.norm}
Let~$s\in [0,1]$, $n\in\mathbb{N}$ and suppose that~$u\in B_{2,\infty}^s(\cudot_{n})$ and~$\varphi\in C^\infty(\cudot_n)$.Then
\begin{equation}
\label{e.other.div.curl}
\| (u-(u)_{\cudot_{n}})\nabla \varphi\|_{\underline{B}_{2,\infty}^s(\cudot_n)} \leq C(d)\|\nabla \varphi\|_{\underline{W}^{1,\infty}_{\mathrm{par}}(\cudot_n)}\bigl( [\nabla u]_{\Bring_{2,1}^{s-1}(\cudot_{n+1})} + [\a\nabla u]_{\Bring_{2,1}^{s-1}(\cudot_{n+1})} \bigr)\,.
\end{equation}
If we have~$\varphi \in C_c^\infty(\cudot_{n-1})$ then
\begin{equation}
\label{e.div.curl.lemma.eq}
\| (u-(u)_{\cudot_{n-1}})\nabla \varphi\|_{\underline{B}_{2,\infty}^s(\cudot_n)} \leq C(d)\|\nabla \varphi\|_{\underline{W}^{1,\infty}_{\mathrm{par}}(\cudot_n)}\bigl( [\nabla u]_{\Bring_{2,1}^{s-1}(\cudot_n)} + [\a\nabla u]_{\Bring_{2,1}^{s-1}(\cudot_n)} \bigr)\,.
\end{equation}
%\begin{align}
%\|v\varphi\|_{\underline{B}_{2,\infty}^s(\cusdot_n)}  & \leq C \|\varphi\|_{L^\infty(\cusdot_n)}[v]_{\underline{B}_{2,\infty}^s(\cusdot_n)} \nonumber \\
%& \quad + C\bigl(3^{(1-s)n}\|\q_0\nabla \varphi\|_{L^\infty(\cusdot_n)} + 3^{(2-s)n}\|\lambda_r^{-1}\partial_t \varphi\|_{L^\infty(\cusdot_n)} + 3^{-sn}\norm{\varphi}_{L^\infty(\cusdot_n)}\bigr)\|v\|_{\L^2(\cusdot_n)} \label{e.multiply.in.besov.norm}\,.
%\end{align}
\end{lemma}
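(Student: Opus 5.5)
The plan is to estimate the Besov seminorm scale by scale, using a discrete product rule on each subcube, and then to control the oscillation of $u$ on each subcube by the parabolic multiscale Poincar\'e inequality (Lemma~\ref{l.multiscale.poincare}). That inequality applies with $\g = \a\nabla u$, since implicitly $\partial_t u = \nabla\cdot\a\nabla u$; this is the only place the flux enters.

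Write $\psi := \nabla\varphi$, $c := (u)_{\cudot_n}$ and $w := (u-c)\psi$. Fix a scale $k\le n$ and a cube $Q = z+\cudot_k$ with $z\in\mathcal{Z}_{k-1}$ and $Q\subseteq\cudot_n$. We decompose
\begin{equation*}
w = (u-(u)_Q)\psi + ((u)_Q - c)\bigl(\psi-(\psi)_Q\bigr) + ((u)_Q-c)(\psi)_Q\,.
\end{equation*}
The last term is constant on $Q$ and disappears after subtracting $(w)_Q$. Hence
\begin{equation*}
\|w-(w)_Q\|_{\L^2(Q)} \le 2\|\psi\|_{L^\infty}\|u-(u)_Q\|_{\L^2(Q)} + |(u)_Q-c|\,\mathrm{osc}_Q\psi\,.
\end{equation*}
The oscillation satisfies $\mathrm{osc}_Q\psi \le C(3^{k}\|\nabla\psi\|_\infty + 3^{2k}\|\partial_t\psi\|_\infty) \le C3^{k-n}\|\psi\|_{\underline{W}^{1,\infty}_{\mathrm{par}}(\cudot_n)}$, using $k\le n$ and the scaling in~\eqref{e.W1.par}.

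\emph{First term.} Apply~\eqref{e.parabolic.multiscale.poincare} on $Q$, enlarged to the neighbouring cube at scale $k+1$. This bounds $\|u-(u)_Q\|_{\L^2(Q)}$ by $\sum_{j\le k+1} 3^{j}$ times averaged subcube means of $\nabla u$ and $\a\nabla u$. Multiply by $3^{-sk}$ and average over $z$ (the average in $z$ passes inside the sums, and overlaps are bounded). In the resulting sum over $j\le k+1$ we use $3^{-sk}3^{j} = 3^{(1-s)j}3^{s(j-k)} \le 3^s 3^{(1-s)j}$. The result is at most $C$ times the $\Bring_{2,1}^{s-1}$ seminorms of $\nabla u$ and $\a\nabla u$ over $\cudot_{n+1}$, uniformly in $k$, so the supremum over $k$ defining the $q=\infty$ seminorm is controlled.

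\emph{Second term.} We have $\avsum_z |(u)_Q-c|^2 \le C\|u-c\|^2_{\L^2(\cudot_n)}$. The weight is $3^{-sk}3^{k-n} = 3^{-sn}3^{(1-s)(k-n)} \le 3^{-sn}$, so this term is bounded by $3^{-sn}\|u-c\|_{\L^2(\cudot_n)}$. The same quantity, multiplied by $\|\psi\|_\infty$, bounds the $\L^2$ part $3^{-sn}\|w\|_{\L^2}$ of the norm. By~\eqref{e.parabolic.multiscale.poincare} on $\cudot_n\subset\cudot_{n+1}$, and the fact that $3^{-sn}3^{j}\le 3^{(1-s)j}$ for $j\le n+1$, this is again bounded by the $\Bring_{2,1}^{s-1}(\cudot_{n+1})$ seminorms. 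Together these give~\eqref{e.other.div.curl}.

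\emph{Compactly supported case.} For~\eqref{e.div.curl.lemma.eq}, take $\varphi\in C_c^\infty(\cudot_{n-1})$ and $c=(u)_{\cudot_{n-1}}$. Cubes $Q$ that miss $\supp\varphi$ contribute nothing. Cubes at scales $k\le n-2$ that meet the support have their enlarged parent inside $\cudot_n$, so the interior Poincar\'e inequality on these cubes uses only data in $\cudot_n$. The finitely many remaining top scales, together with the constant $c$, are handled through $\|u-c\|_{\L^2(\cudot_{n-1})}$, which~\eqref{e.parabolic.multiscale.poincare} bounds using data in $\cudot_n$.

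The main obstacle is this boundary bookkeeping, namely ensuring that every Poincar\'e application stays inside $\cudot_n$ (respectively $\cudot_{n+1}$) while keeping uniform constants. The scale-discount computations themselves are routine.
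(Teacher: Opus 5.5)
Your proposal is correct and follows essentially the paper's route. The paper uses the same subcube-by-subcube product decomposition and then the parabolic multiscale Poincar\'e inequality; you apply the latter directly on each enlarged cube $z+\cudot_{k+1}$, whereas the paper packages that step as the coarse-grained Poincar\'e inequality of Lemma~\ref{l.Besov.regularity.solns}.

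One small point in the compactly supported case: for cubes of scale $k\le n-2$ that straddle $\partial\cudot_{n-1}$, the factor $|(u)_Q-c|$ sees $u$ on a slight enlargement of $\cudot_{n-1}$ (still inside $\cudot_n$). So you need a chained Poincar\'e bound on that enlargement, not only $\|u-c\|_{\L^2(\cudot_{n-1})}$. This is routine, and the paper's own treatment of this step is no more detailed.
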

\begin{proof}
We first prove that for any~$\psi \in C^\infty(\cudot_n)$
\begin{align}
\|u\psi\|_{\underline{B}_{2,\infty}^s(\cudot_n)}  & \leq C \|\psi\|_{L^\infty(\cudot_n)}[u]_{\underline{B}_{2,\infty}^s(\cudot_n)} \nonumber \\
& \quad + C\bigl(3^{(1-s)n}\|\nabla \psi\|_{L^\infty(\cudot_n)} + 3^{(2-s)n}\|\partial_t \psi\|_{L^\infty(\cudot_n)} + 3^{-sn}\norm{\psi}_{L^\infty(\cudot_n)}\bigr)\|u\|_{\L^2(\cudot_n)}\,. \label{e.multiply.in.Besov}
\end{align}
We decompose
\begin{equation*}
u\psi - (u\psi)_{z+\cudot_k} = (u-(u)_{z+\cudot_k})\psi + (u)_{z+\cudot_k}(\psi - (\psi)_{z+\cudot_k}) + (u)_{z+\cudot_k}(\psi)_{z+\cudot_k} - (u\psi)_{z+\cudot_k}\,,
\end{equation*}
where the last two terms can be written
\begin{align*}
(u)_{z+\cudot_k}(\psi)_{z+\cudot_k} - (u\psi)_{z+\cudot_k} = -\fint_{z+\cudot_k} u(\psi - (\psi)_{z+\cudot_k})\,.
\end{align*}
Then
\begin{align*}
[u\psi]_{\underline{B}_{2,\infty}^s(\cudot_n)} & = \sup_{k\in (-\infty,n] \cap \Z} 3^{-sk} \biggl(\avsum_{z\in\mathcal{Z}_{k-1}, z+\cudot_k \subseteq \cudot_n}\|u\psi -(u\psi)_{z+\cudot_k}\|_{\L^2(z+\cudot_k)}^2 \biggr)^{\nicefrac12} \\
& \leq C\sup_{k\in (-\infty,n] \cap \Z} 3^{-sk} \biggl( \avsum_{z\in\mathcal{Z}_{k-1}, z+\cudot_k \subseteq \cudot_n} \|\psi\|_{L^\infty(z+\cudot_k)}^2\|u-(u)_{z+\cudot_k}\|_{\L^2(z+\cudot_k)}^2 \\
& \hspace*{5cm} + \|u\|_{\L^2(z+\cudot_k)}^2\|\psi -(\psi)_{z+\cudot_k}\|_{\L^2(z+\cudot_k)}^2  \biggr)^{\nicefrac12} \\
& \leq C \|\psi\|_{L^\infty(\cudot_n)}[u]_{\underline{B}_{2,\infty}^s(\cudot_n)} + C\bigl(3^{(1-s)n}\|\nabla \psi\|_{\L^\infty(\cudot_n)} + 3^{(2-s)n}\|\partial_t \psi\|_{L^\infty(\cudot_n)}\bigr)\|u\|_{\L^2(\cudot_n)}\,.
\end{align*}
We complete the proof of~\eqref{e.multiply.in.Besov} by noting that~$3^{-sn}|(u\psi)_{\cudot_n}| \leq 3^{-sn}\norm{\psi}_{L^\infty(\cudot_n)}\norm{u}_{\L^2(\cudot_n)}$.

To obtain~\eqref{e.div.curl.lemma.eq} we fix~$\varphi \in C_c^\infty(\cudot_{n-1})$ and use~\eqref{e.multiply.in.Besov} with the replacements~$\psi \to \nabla \varphi$, and~$u \to u-(u)_{\cudot_{n-1}}$. If our cutoff function~$\varphi$ is supported in the interior cube~$\cudot_{n-1}$ our norms on the left-hand side are the interior norms; then we use Lemma~\ref{l.Besov.regularity.solns} to bound (assuming that the right-hand side is finite)
\begin{equation*}
[u - (u)_{\cudot_{n-1}}]_{\underline{B}^s_{2,\infty}(\cudot_{n-1})} \leq C(d) \bigl( [\nabla u]_{\Bring_{2,1}^{s-1}(\cudot_n)} + [\a\nabla u]_{\Bring_{2,1}^{s-1}(\cudot_n)}\bigr)\,,
\end{equation*}
and
\begin{align*}
3^{-sn}\|u-(u)_{\cudot_{n-1}}\|_{\L^2(\cudot_{n-1})} & \leq C(d)3^{-sn} \bigl( [\nabla u]_{\Bring_{2,1}^{-1}(\cudot_n)} + [\a\nabla u]_{\Bring_{2,1}^{-1}(\cudot_n)} \bigr) \\
& \leq C(d)\bigl( [\nabla u]_{\Bring_{2,1}^{s-1}(\cudot_n)} + [\a\nabla u]_{\Bring_{2,1}^{s-1}(\cudot_n)}\bigr)\,.
\end{align*}
The proof is completed by using the definition in~\eqref{e.W1.par}. We obtain~\eqref{e.other.div.curl} in the case that our cutoff function is not compactly supported by using Lemma~\ref{l.Besov.regularity.solns} directly with the larger domain~$\cudot_{n+1}$ on the right-hand side.

\end{proof}

The following simple lemma will be used in Proposition~\ref{p.Besov.equiv}.
\begin{lemma}
\label{l.avgs}
Suppose that~$V \subseteq U \subseteq \mathbb{R}^d$, and let~$\varphi \in C^\infty(U)$ be a smooth, non-negative function such that~$\int_U \varphi \geq c|V|$ for some constant~$c >0$. Then
\begin{align*}
\| u - (u)_V \|^p_{\L^p(V)} \leq C(p,d) \frac{1}{|V|} \fint_V \int_U |u(x) - u(y)|^p \varphi(y) dy\,.
\end{align*}
\end{lemma}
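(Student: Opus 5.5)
The plan is to average the pointwise identity \(u(x)-(u)_V\) against the weight \(\varphi\) and use Jensen's inequality twice. Fix \(x\in V\) and define the \(\varphi\)-weighted average \(m_\varphi := \bigl(\int_U \varphi\bigr)^{-1}\int_U u(y)\varphi(y)\,dy\).

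The first step is to reduce to comparing with \(m_\varphi\). For any constant \(c\) we have \(\|u-(u)_V\|_{\L^p(V)}\le 2\|u-c\|_{\L^p(V)}\). This holds by the triangle inequality together with \(|(u)_V-c|\le \|u-c\|_{\L^p(V)}\), which is Jensen's inequality for the normalized measure. So it suffices to bound \(\|u-m_\varphi\|_{\L^p(V)}^p\), at the cost of a factor \(2^p\).

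The second step handles a single point. For each \(x\in V\),
\begin{equation*}
u(x)-m_\varphi = \Bigl(\int_U\varphi\Bigr)^{-1}\int_U (u(x)-u(y))\varphi(y)\,dy .
\end{equation*}
Since \(\varphi\,dy/\int_U\varphi\) is a probability measure, Jensen's inequality gives
\begin{equation*}
|u(x)-m_\varphi|^p\le \Bigl(\int_U\varphi\Bigr)^{-1}\int_U|u(x)-u(y)|^p\varphi(y)\,dy .
\end{equation*}
The hypothesis \(\int_U\varphi\ge c|V|\) then bounds this by \((c|V|)^{-1}\int_U|u(x)-u(y)|^p\varphi(y)\,dy\).

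The final step averages in \(x\). Taking \(\fint_V\,dx\) of the previous bound, and recalling that \(\|\cdot\|_{\L^p(V)}^p\) is the volume-normalized integral, gives exactly
\begin{equation*}
\|u-m_\varphi\|_{\L^p(V)}^p\le \frac{1}{c|V|}\fint_V\int_U|u(x)-u(y)|^p\varphi(y)\,dy\,dx .
\end{equation*}
Combining with the first step yields the claim with constant \(2^p/c\). The constant therefore depends on \(c\) as well as \(p\); this is harmless, since in the intended application (Proposition~\ref{p.Besov.equiv}) \(c\) is a fixed dimensional constant.

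There is no real obstacle here. The only care needed is the normalization convention for \(\L^p\), which is averaged, and the fact that \(\varphi\ge0\) is what makes \(\varphi\,dy/\int_U\varphi\) a probability measure so that Jensen's inequality applies.
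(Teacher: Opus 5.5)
Your argument is correct and is essentially the paper's proof: bound $\|u-(u)_V\|_{\L^p(V)}$ by $2\inf_a\|u-a\|_{\L^p(V)}$, take $a$ to be the $\varphi$-weighted average of $u$ over $U$, and apply Jensen's inequality for the probability measure $\varphi(y)\,dy/\int_U\varphi$. You are also right that the resulting constant is $2^p/c$, so it depends on $c$ and not only on $(p,d)$; the paper's $C(p,d)$ silently absorbs this, which is harmless because $c$ is a dimensional constant in the application to Proposition~\ref{p.Besov.equiv}.
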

\begin{proof}
First, we have
\begin{equation}
\label{e.inf}
\| u - (u)_V \|_{\L^p(V)} \leq \inf_{a\in \mathbb{R}}\bigl( \| u - a\|_{\L^p(V)} + \|(u)_V - a \|_{\L^p(V)} \bigr)\leq 2 \inf_{a\in\mathbb{R}}\|u - a\|_{\L^p(V)}\,.
\end{equation}
If we then make the choice
\begin{equation*}
a_* = \biggl( \int_U \varphi(y) dy \biggr)^{-1} \int_U u(y) \varphi(y) dy 
\end{equation*}
in~\eqref{e.inf} and apply Jensen's inequality to the measure~$\varphi(y)dy$, we obtain
\begin{align*}
\| u - (u)_V \|^p_{\L^p(V)} & \leq 2^p\| u - a_* \|^p_{\L^p(V)} \leq  2^p \fint_V \biggl( \int_U \varphi(y) dy \biggr)^{-p} \biggl| u(x) \biggl( \int_U \varphi(y) dy \biggr) - \int_U u(y)\varphi(y) dy \biggr|^p dx \\
& \leq \frac{C}{|V|}\fint_V \int_U |u(x)-u(y)|^p \varphi(y)dy dx\,.
\end{align*}
\end{proof}

For~$0<s<1$ and~$1\leq p <\infty$ the (volume-normalized) fractional regularity Sobolev space~$W^{s,p}$ on a bounded Lipschitz domain~$U\subseteq \mathbb{R}^d$ has semi-norm
\begin{equation*}
[g]_{\underline{W}^{s,p}(U)} \coloneqq \left(\fint_U \int_U \frac{|g(x)-g(y)|^p}{|x-y|^{d+sp}} dxdy \right)^{\nicefrac{1}{p}}\,.
\end{equation*}
We prove in the following proposition that
\begin{equation*}
B_{p,p}^s(I\times U) = L^p(I;W^{s,p}(U)) \cap W^{\nicefrac{s}{2},p}(I;L^p(U))\,,
\end{equation*}
where the left-hand side is defined in~\eqref{e.Besov.norm.def} for parabolic cubes, and extended to general domains in~\eqref{e.Besov.gen.def}.

\begin{proposition}
\label{p.Besov.equiv}
For any~$1\leq p < \infty$ and~$0<s<1$, there exists a constant~$C = C(s,d,p,I,U)$, such that
\begin{equation}
\label{e.Besov.upper}
\| g \|_{\underline{B}_{p,p}^{s}(I\times U)} \leq  C\bigl( \| g \|_{\underline{L}^p(I; \underline{W}^{s,p}(U))} + \|g\|_{\underline{W}^{\nicefrac{s}{2},p}(I;\L^p(U))} \bigr) \,,
\end{equation}
and
\begin{equation}
\label{e.Besov.lower}
\| g \|_{\underline{L}^p(I; \underline{W}^{s,p}(U))} + \|g\|_{\underline{W}^{\nicefrac{s}{2},p}(I;\L^p(U))} \leq C\| g \|_{\underline{B}_{p,p}^{s}(I\times U)} \,.
\end{equation}
\end{proposition}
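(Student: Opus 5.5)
The plan is to compare both sides with the parabolic fractional integral
\begin{equation*}
\mathcal{I}_p(g) := \biggl( \fint_{I\times U} \int_{I\times U} \frac{|g(t,x) - g(\tau,y)|^p}{( |x-y| + |t-\tau|^{\nicefrac12} )^{d+2+sp} } \, dy\, d\tau\, dx\, dt \biggr)^{\nicefrac{1}{p}}\,,
\end{equation*}
as in~\eqref{e.fractional.integral}. First we show $[g]_{\underline{B}^s_{p,p}(I\times U)} \simeq \mathcal{I}_p(g)$. Then we show that $\mathcal{I}_p(g)$ is comparable to the sum of the anisotropic Sobolev seminorms. The zero-order pieces are handled separately. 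The term $3^{-sm}|(g)_{I\times U}|$ is bounded by $\|g\|_{\L^p(I\times U)}$, which appears on the right of~\eqref{e.Besov.upper}. In the other direction, $\|g\|_{\L^p}$ is controlled by $|(g)_{I\times U}| + \|g-(g)_{I\times U}\|_{\L^p}$, and the second piece is the $k=m$ term of the Besov seminorm.

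For the upper bound on the Besov seminorm, fix $z\in\mathcal{Z}_{j-1}$ and set $V=(z+\cudot_j)\cap(I\times U)$. Apply Lemma~\ref{l.avgs} with $\varphi=\indc_V$; the Lipschitz assumption gives $|V|\ge c|z+\cudot_j|$. This yields
\begin{equation*}
\|g-(g)_V\|_{\L^p(V)}^p \leq \frac{C}{|V|}\fint_V\int_V |g(t,x)-g(\tau,y)|^p \, dy\, d\tau\,.
\end{equation*}
Inside $V$ the parabolic distance is at most $C3^{j}$, so $|V|^{-1} \lesssim 3^{-(d+2)j} \lesssim (|x-y|+|t-\tau|^{1/2})^{-(d+2)}$. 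Multiplying by $3^{-spj}$ and summing over $j$ and $z$ recovers the kernel. Each pair of points is counted only at scales $3^j \gtrsim$ their parabolic distance, by bounded overlap. The resulting geometric sum $\sum_{3^j\gtrsim r} 3^{-(d+2+sp)j}\lesssim r^{-(d+2+sp)}$ gives $[g]_{\underline{B}^s_{p,p}} \lesssim \mathcal{I}_p(g)$.

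Next split $|g(t,x)-g(\tau,y)| \le |g(t,x)-g(t,y)| + |g(t,y)-g(\tau,y)|$. For the spatial difference, integrate the kernel in $\tau$:
\begin{equation*}
\int (r+|t-\tau|^{1/2})^{-(d+2+sp)} \, d\tau \lesssim r^{-(d+sp)}\,,
\end{equation*}
which gives the $L^p(I;W^{s,p}(U))$ seminorm. For the temporal difference, integrate in $y$ to get $|t-\tau|^{-(1+sp/2)}$, which is the $W^{s/2,p}(I;L^p)$ seminorm with exponent $s/2$. Domain issues for this splitting are handled with a Lipschitz extension to a parabolic cube, so that the intermediate point $(t,y)$ stays in the domain. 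Together this proves~\eqref{e.Besov.upper}.

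For~\eqref{e.Besov.lower}, bound $\mathcal{I}_p(g)$ by the Besov seminorm. Decompose the pairs of points by the dyadic scale $3^j$ of their parabolic distance. Such a pair lies in a common enlarged cube $z+\cudot_{j+1}$ with $z\in\mathcal{Z}_j$; this is why the Besov definition uses overlapping cubes indexed by $\mathcal{Z}_{j-1}$. Then $|g(t,x)-g(\tau,y)|^p \le 2^{p-1}(|g(t,x)-(g)_V|^p+|g(\tau,y)-(g)_V|^p)$. The kernel on that shell is about $3^{-(d+2+sp)j}$ and the shell has measure about $3^{(d+2)j}$, so the scale-$j$ contribution is at most $C3^{-spj}\avsum_z \|g-(g)_V\|_{\L^p(V)}^p$. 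Summing over $j$ gives $\mathcal{I}_p(g)^p \lesssim [g]^p_{\underline{B}^s_{p,p}}$.

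It remains to bound each Sobolev seminorm by $\mathcal{I}_p$, and this is the main obstacle. The point is that a pure spatial difference at fixed time is not directly a space-time pair. I would average over an auxiliary time. For $|x-y|\sim r$, choose $\tau$ with $|t-\tau|\le r^2$ and write
\begin{equation*}
|g(t,x)-g(t,y)|\le|g(t,x)-g(\tau,w)|+|g(\tau,w)-g(t,y)|\,,
\end{equation*}
then average over $(\tau,w)$ in a parabolic cylinder of size $r$ near both points. Both resulting pairs have parabolic distance of order $r$, so the kernel is comparable. The temporal seminorm is handled the same way with an auxiliary spatial point $w$, $|w-x|\le|t-\tau|^{1/2}$. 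Near the boundary these auxiliary cylinders must stay inside $I\times U$; the Lipschitz cone condition guarantees a fixed fraction does, and I expect this bookkeeping to take most of the effort.
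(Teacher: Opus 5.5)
Your argument is correct, but it is organized differently from the paper's. You route both inequalities through the joint parabolic kernel $\mathcal{I}_p$; the paper never uses that kernel in this proof.

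For \eqref{e.Besov.upper}, the paper works on each cube. It splits the oscillation into the spatial oscillation at fixed time plus the temporal oscillation of the spatial averages, and bounds the latter by the temporal oscillation at fixed $x$ using Jensen. It then applies Lemma~\ref{l.avgs} with separate partitions of unity and uses the one-variable equivalence of \cite[Lemma A.4]{AK.HC} in space and in time independently.

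For \eqref{e.Besov.lower}, the paper decomposes $|x-y|$ dyadically and compares $g(t,x)$ and $g(t,y)$ directly with the average over one parabolic cube of comparable size containing both points. This shows that the ``main obstacle'' you identify comes from the detour through $\mathcal{I}_p$. Your own argument for $\mathcal{I}_p\lesssim[g]_{\underline{B}^s_{p,p}}$ applies verbatim to same-time pairs. Indeed, $(t,x)$ and $(t,y)$ lie in a common cube $V$ of the family, and $|g(t,x)-g(t,y)|^p\le 2^{p-1}(|g(t,x)-(g)_V|^p+|g(t,y)-(g)_V|^p)$. So no auxiliary cylinder and no Lipschitz-cone bookkeeping is needed. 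Your auxiliary-point averaging is the same estimate in disguise, since $|g(t,x)-(g)_V|\le\fint_V|g(t,x)-g(\tau,w)|$.

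Likewise, the extension you invoke in the upper bound is unnecessary. $I\times U$ is a product, so $(t,y)\in I\times U$ automatically whenever $(t,x),(\tau,y)\in I\times U$.

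What your route buys is a proof of the equivalence $[g]_{\underline{B}^s_{p,p}}\simeq\mathcal{I}_p(g)$, that is, of \eqref{e.fractional.integral}. The paper justifies this in one line and relies on it later, in Lemma~\ref{l.general.dual.norm} and Proposition~\ref{p.poincare.RHS}. Your reduction of $\mathcal{I}_p$ to the anisotropic seminorms is also a transparent kernel integration that shows directly why the time exponent is $s/2$. The paper's route is shorter and stays entirely at the level of cube oscillations.
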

\begin{proof}
We will give the proof in the case that~$I = I_n = (-\frac{3^{2n}}{2}, \frac{3^{2n}}{2} )$ and~$U = \cu_n = (-\frac{3^n}{2},\frac{3^n}{2})^d$. For a general domain, using the definition in~\eqref{e.Besov.semi.def}, the fact that the sub-domains~$z+\cudot_n \cap (I\times U)$ in that definition are chosen to have volume comparable to~$|\cudot_n|$ and partition the domain at each scale is sufficient for the following proof to apply with only notational modification.

\emph{Step 1:} We first give the proof of~\eqref{e.Besov.upper}.
Suppose that~$z= (z^0,z') \in \mathcal{Z}_k$. By the triangle inequality,
\begin{align*}
\| g - (g)_{z+\cudot_k}\|_{\L^p(z+\cudot_k)} & \leq \| g(x,t) - (g(t))_{z'+\cu_k} \|_{\L^p(z+\cudot_k)} + \| ( g(t) )_{z'+ \cu_k} - (g)_{z+\cudot_k} \|_{\L^p(z+\cudot_k)} \\
& \leq \biggl(\fint_{z^0 + I_k} \| g(t) - (g(t))_{z' + \cu_k}\|_{\L^p(z'+ \cu_k)}^p dt \biggr)^{\nicefrac{1}{p}} \\
& \qquad + \biggl(\fint_{z'+\cu_k} \| g(x) - (g(x))_{z^0+I_k}\|_{\L^p(z^0+I_k)}^p dx \biggr)^{\nicefrac{1}{p}}\,.
\end{align*}
Now let~$\{\psi_k\}_{k=-\infty}^\infty$ be a partition of unity in the spatial variable, so that~$\sum_{k\in\mathbb{Z}} \psi_k(x) = 1$ for~$x\neq 0$,~$\psi_k$ is supported in~$\cu_{k+2} \setminus \cu_k$,~$\|\nabla \psi_k\|_{L^\infty} \leq C3^{-k}$ and the support of~$\psi_k$ intersects only the supports of~$\psi_{n}$ for~$n \in \{k-1,k+1\}$. Similarly, let~$\varphi_k$ be a partition of unity in the time variable such that~$\sum_{k\in\mathbb{Z}} \varphi_k(t) = 1$ for~$t\neq 0$,~$\varphi_k$ is supported in~$I_{k+2} \setminus I_k$,~$\|\nabla \varphi_k\|_{L^\infty} \leq C3^{-2k}$ and the support of~$\varphi_k$ intersects only the supports of~$\varphi_{n}$ for~$n \in \{k-1,k+1\}$. It follows that for~$k\leq n$ we have~$\int_{\cu_n} \psi_k \geq c|\cu_k|$ and~$\int_{I_n}\varphi_k \geq c|I_k|$, so by the above display and Lemma~\ref{l.avgs},
\begin{align*}
\lefteqn{
\| g - (g)_{\cudot_k}\|^p_{\L^p(z+\cudot_k)}
} \quad & \\
& \leq \frac{C}{|\cu_k|} \fint_{z+\cudot_k} \int_U |g(x,t) - g(y,t)|^p \psi_k(y) dx dy dt  + \frac{C}{|I_k|} \fint_{z+\cudot_k} \int_I |g(x,t) - g(x,s)|^p \varphi_k(t) dx dt ds\,.
\end{align*}
Applying this in each sub-cube, summing, and using the result of~\cite[Lemma A.4]{AK.HC} independently in space and time,
\begin{align*}
\lefteqn{
\sum_{k=-\infty}^n 3^{-spk} \avsum_{z\in\mathcal{Z}_k \cap \cudot_n} \| g- (g)_{z+\cudot_k}\|^p_{\L^p(z+\cudot_k)}
} \qquad & \\
& \leq \sum_{k=-\infty}^n 3^{-spk} \fint_{I_n} \avsum_{z'\in 3^k\mathbb{Z}^d \cap \cudot_n}\frac{C}{|\cu_k|} \fint_{z'+ \cu_k} \int_{\cu_n} |g(x,t) - g(y,t)|^p \psi_k(z' - y) dx dy dt  \\
& \qquad  + \sum_{k=-\infty}^n 3^{-spk} \fint_{\cu_n} \avsum_{z^0\in 3^{2k}\mathbb{Z} \cap \cudot_n} \frac{C}{|I_k|} \fint_{z^0 + I_k} \int_{I_n} |g(x,t) - g(x,s)|^p \varphi_k(z^0 - t) dx dt ds \\
& \leq C\fint_{I_n} \fint_{\cu_n} \int_{\cu_n} \frac{|g(x,t) - g(y,t)|^p}{|x-y|^{d+sp}} dx dy dt + C\fint_{I_n} \fint_{\cu_n} \int_{I_n} \frac{|g(x,t) - g(x,s)|^p}{|t-s|^{1 + \nicefrac{sp}{2}}} dx dt ds\,.
\end{align*}
This concludes the proof of~\eqref{e.Besov.upper}.

\emph{Step 2:} We now give the proof of~\eqref{e.Besov.lower}.
Let~$\{\psi_k\}_{-\infty}^\infty$ be a partition of unity on~$\mathbb{R}^d$ such that~$\sum_{j\in \mathbb{Z}} \psi_j(x) = 1$ for all~$x\neq 0$,~$\mathrm{supp}\, \psi_j \subset \cu_{j+1}\setminus \cu_{j-1}$ and for~$x\in \cu_{j+1}\setminus \cu_{j-1}$ only~$\psi_{j-1}(x),\psi_j(x)$ and~$\psi_{j+1}(x)$ are nonzero. Then summing over the partition,
\begin{align*}
	\| g \|^p_{\underline{L}^p(I_n; W^{s,p}(\cu_n))} & = \fint_{I_n}\fint_{\cu_n} \int_{\cu_n} \frac{|u(x,t)-u(y,t)|^p}{|x-y|^{d+sp}} \sum_{k=-\infty}^{n+1} \psi_k(x-y) dxdydt \\
	& \leq C(d,s,p) \sum_{k=-\infty}^{n+1} \fint_{I_n}\fint_{\cu_n} \int_{\cu_n} \frac{|u(x,t)-u(y,t)|^p}{3^{k(d+sp)}}\psi_k(x-y) dxdydt\,.
\end{align*}
Since~$\{z+\cudot_{k+1}:z = (z_0,z')\in \mathcal{Z}_{k+1} \cap \cudot_n\}$ is a partition of~$\cudot_n$ we may decompose the integral in~$(t,y)$ into a sum over this partition. By our assumption on the support of~$\psi_k$, if~$y\in z' +\cu_{k+1}$ and~$\psi_k(x-y)\neq 0$ then~$x\in z'+\cu_{k+2}$ so we can restrict to integrating in~$x$ over this cube. We finally compare by the triangle inequality to the average over the parabolic cube containing both~$x$ and~$y$ so that we obtain
\begin{align*}
\| g & \|^p_{\underline{L}^p(I_n; W^{s,p}(\cu_n))} \\
& \leq C \sum_{k=-\infty}^{n+1} 3^{-spk}\avsum_{z\in \mathcal{Z}_{k+1} \cap \cudot_n} \fint_{z_0+I_{k+1}}\fint_{z'+\cu_{k+1}}\fint_{z'+\cu_{k+2} \cap \cu_n} |g(x,t)-g(y,t)|^p dxdydt \\
& \leq C\sum_{k=-\infty}^{n+1} 3^{-spk}\avsum_{z\in\mathcal{Z}_{k+1} \cap \cudot_n} \|g - (g)_{z+\cudot_{k+2}\cap \cudot_n}\|_{\L^p(z+\cudot_{k+2}\cap \cudot_n)}^p\,.
\end{align*}
Noting that the top terms~$k=n-1,n,n+1$ can all be combined into the~$k=n-2$ term (which integrates over~$\cudot_n$) and re-indexing by~$k\to k-2$ we get exactly the definition in~\eqref{e.Besov.def}.

The proof of the bound for~$W^{\nicefrac{s}{2},p}(I_n;L^p(\cu_n))$ is identical, by taking the partitions in time instead of space.
\end{proof}

We will use Lemma~\ref{l.dist} to prove Lemma~\ref{l.general.dual.norm}, which bounds negative norms in general domains. We first define the parabolic distance by
\begin{equation}
\label{e.dpar}
\mathrm{d}_{\mathrm{par}}( (x,t), \partial (I\times U) ) = \min\big\{ \dist(x,\partial U), \dist(t,\partial I)^{\nicefrac12} \big\}\,.
\end{equation}

The following lemma extends naturally by density, but to avoid unnecessary definitions we simply state it for compactly supported functions.

\begin{lemma}[Space-time Hardy-Poincar\'e Inequality]
\label{l.dist}
Suppose that~$g \in C_c^\infty(I\times U)$,~$p\in (1,\infty)$ and~$s \in (0,1)\setminus \{\nicefrac{1}{p},\nicefrac{2}{p}\}$. Then there exists a constant~$C = C(I,U,d,s,p)$ such that
\begin{equation*}
\fint_{I\times U} \frac{|g(x,t)|^p}{\mathrm{d}_{\mathrm{par}}((x,t),\partial (I\times U) )^{sp}}\, dxdt \leq C \|g\|_{\underline{B}^s_{p,p}(I\times U)}
\end{equation*}
\end{lemma}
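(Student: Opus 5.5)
The plan is to reduce the space-time Hardy inequality to the two one-dimensional-in-structure fractional Hardy inequalities, one in space and one in time. We then glue them together using the equivalence $B^s_{p,p}(I\times U)=L^p(I;W^{s,p}(U))\cap W^{\nicefrac{s}{2},p}(I;L^p(U))$ from Proposition~\ref{p.Besov.equiv}. (The right-hand side of the statement should be read with a $p$-th power, $\|g\|^p$, as homogeneity demands.)

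The key observation is that $\mathrm{d}_{\mathrm{par}}^{-sp}=\max\{\dist(x,\partial U)^{-sp},\dist(t,\partial I)^{-\nicefrac{sp}{2}}\}$. This is bounded by the sum of the two terms, so the left-hand side splits as
\begin{equation*}
\fint_I\fint_U \frac{|g|^p}{\dist(x,\partial U)^{sp}} + \fint_U\fint_I \frac{|g|^p}{\dist(t,\partial I)^{\nicefrac{sp}{2}}}\,.
\end{equation*}

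For the first term we freeze $t$. Since $g(t,\cdot)\in C_c^\infty(U)$ and $U$ is bounded Lipschitz, we apply the classical fractional Hardy inequality for compactly supported functions (Dyda; Edmunds--Evans):
\begin{equation*}
\int_U |f|^p\dist(x,\partial U)^{-sp}\le C\bigl(\|f\|^p_{L^p(U)}+[f]^p_{W^{s,p}(U)}\bigr)
\end{equation*}
for $f\in C_c^\infty(U)$ and $sp\neq 1$. This gives control by $\|g\|^p_{\underline{L}^p(I;\underline{W}^{s,p}(U))}$ after integrating in $t$.

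For the second term we freeze $x$ and apply the one-dimensional version on the interval $I$ with exponent $\nicefrac{s}{2}$. This requires $\nicefrac{sp}{2}\neq 1$, which is exactly the excluded value $s=\nicefrac{2}{p}$, and it gives control by $\|g\|^p_{\underline{W}^{\nicefrac{s}{2},p}(I;\L^p(U))}$. The constants depend on $I,U,d,s,p$ only, and the volume normalizations just introduce scale factors absorbed into $C(I,U)$. Finally, \eqref{e.Besov.lower} bounds both norms by $C\|g\|^p_{\underline{B}^s_{p,p}(I\times U)}$.

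The main obstacle is justifying the fractional Hardy inequality with the right dependence and exclusions. If I prove it rather than cite it, I would split into two cases.
\begin{itemize}
\item If $sp<1$ (resp.\ $\nicefrac{sp}{2}<1$), I would use the known result that $\dist^{-sp}$-weighted $L^p$ is controlled by the full $W^{s,p}$ norm. In this regime zero extension is bounded on $W^{s,p}$, and Hardy's inequality follows via a Whitney decomposition, comparing $|f|$ on each Whitney cube with its average and telescoping.
\item If $sp>1$, compact support gives the trace-zero condition, and the standard argument applies: one-dimensional Hardy along normal directions after Lipschitz flattening, plus a telescoping sum over dyadic layers near the boundary.
\end{itemize}
The borderline values $\nicefrac{1}{p}$ and $\nicefrac{2}{p}$ fail logarithmically, which is why they are excluded. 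The remaining steps are routine.
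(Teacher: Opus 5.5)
Your proposal is correct and follows the paper's proof: bound $\mathrm{d}_{\mathrm{par}}^{-sp}$ by the sum of the spatial weight $\dist(x,\partial U)^{-sp}$ and the temporal weight $\dist(t,\partial I)^{-\nicefrac{sp}{2}}$, apply the fractional Hardy inequality in space for each fixed $t$ and on $I$ with exponent $\nicefrac{s}{2}$ for each fixed $x$ (the paper cites Triebel for both), and conclude with \eqref{e.Besov.lower}. Your remark that the right-hand side should carry a $p$-th power is also correct, and the paper's statement and its intermediate displays omit it.
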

\begin{proof}
The fractional Hardy-Poincar\'e inequality~\cite[Section 3.2.6, Lemma 1b]{Triebel1978} applied in time, integrated in space, implies that
\begin{equation*}
\fint_{I\times U} \frac{|g(x,t)|^p}{\dist(t,\partial I)^{\frac{sp}{2}}}\, dxdt  \leq C \|g \|_{\underline{W}^{\nicefrac{s}{2},p}(I;\L^p(U))}\,.
\end{equation*}
Similarly, the fractional Hardy-Poincar\'e inequality in space, integrated in time, implies that
\begin{equation*}
\fint_{I\times U} \frac{|g(x,t)|^p}{\dist(x,\partial U)^{sp}}\,dx dt  \leq C \|g \|_{\L^p(I; \underline{W}^{s,p}(U))} \,.
\end{equation*}
Combining these and using Proposition~\ref{p.Besov.equiv} yields
\begin{align*}
\fint_{I\times U} \frac{|g(x,t)|^p}{\mathrm{d}_{\mathrm{par}}((x,t),\partial (I\times U) )^{sp}}\, dx dt & \leq \fint_{I\times U} \frac{|g(x,t)|^p}{\dist(t,\partial I)^{\frac{sp}{2}}}\, dx dt + \fint_{I\times U} \frac{|g(x,t)|^p}{\dist(x,\partial U)^{sp}} \, dx dt \\
& \leq C\|g\|_{\underline{B}^s_{p,p}(I\times U)}\,.
\end{align*}
\end{proof}

In the following lemma we assume that~$n\in\mathbb{Z}$ is the smallest integer such that~$I\times U \subseteq \cudot_n$, and we let constants depend on the ratio~$\frac{|\cudot_n|}{|I\times U|}$. Lemma~\ref{l.general.dual.norm} is stated for the~$B^{-s}_{p,p}(I\times U)$ norm, which tests against compactly supported functions. If~$s < \nicefrac12$ then compactly supported functions are dense in~$B^{s}_{p,p}(I\times U)$, so we can replace the left-hand side with the~$\hat{B}^{-s}_{p,p}(I\times U)$ norm.

\begin{lemma}
\label{l.general.dual.norm}
Let~$p\in (1,\infty)$,~$s\in (0,1)\setminus \{1-\nicefrac{1}{p},2-\nicefrac{2}{p}\}$, and suppose that~$I$ is a finite time interval,~$U$ is a bounded Lipschitz domain and~$n\in\mathbb{Z}$ is the smallest integer such that~$I\times U \subseteq \cudot_n$. There exists a constant~$C = C(I,U,p,s,d)$ such that
\begin{equation}
\|f\|_{\underline{B}_{p,p}^{-s}(I\times U)} \leq C\biggl(\sum_{k=-\infty}^n 3^{spk} \avsum_{z\in\mathcal{Z}_k, z+\cu_k \subseteq I\times U} | (f)_{z+\cu_k}|^p \biggr)^{\nicefrac{1}{p}}\,.
\end{equation}
\end{lemma}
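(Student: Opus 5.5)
We argue by duality and reduce the estimate to the space-time Hardy--Poincar\'e inequality of Lemma~\ref{l.dist}, combined with a Whitney-type multiscale decomposition of $I\times U$ into parabolic cubes. Fix $g\in C_c^\infty(I\times U)$ with $\|g\|_{\underline{B}^{s}_{p',p'}(I\times U)}\leq 1$, where $p'$ is the H\"older conjugate. We aim to show
\[
\fint_{I\times U} fg \leq C\biggl(\sum_{k=-\infty}^n 3^{spk} \avsum_{z\in\mathcal{Z}_k,\, z+\cudot_k \subseteq I\times U} | (f)_{z+\cudot_k}|^p \biggr)^{\nicefrac{1}{p}}\,.
\]
The excluded values $s\in\{1-\nicefrac1p,2-\nicefrac2p\}$ are exactly the values $s\in\{\nicefrac1{p'},\nicefrac2{p'}\}$, which are the exceptions in Lemma~\ref{l.dist} applied with exponent $p'$.

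\textbf{Step 1: Whitney decomposition.} Partition $I\times U$, up to a null set, into maximal dyadic-type parabolic cubes $z+\cudot_k$ with $z\in\mathcal{Z}_k$, chosen so that each cube has size comparable to its parabolic distance to the boundary:
\[
3^k\simeq \mathrm{d}_{\mathrm{par}}\bigl(\cdot,\partial(I\times U)\bigr) \quad \text{on } z+\cudot_k.
\]
This is the same recursive construction as the sets $V_j$ in the proofs of Lemmas~\ref{l.adapted.cylinders} and~\ref{l.adapted.means}. Call the resulting family $\mathcal{W}$.

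\textbf{Step 2: telescoping on each Whitney cube.} On each $Q=z+\cudot_k\in\mathcal{W}$, write
\[
g = (g-(g)_Q) + (g)_Q .
\]
The oscillatory part is handled by iterating the decomposition inside $Q$. Subdividing $Q$ into its children and telescoping the averages of $g$ down all scales $j\le k$ gives
\[
\int_Q fg=\sum_{j\leq k}\sum_{Q'\subseteq Q,\ |Q'|\sim 3^{(d+2)j}} |Q'|\,(f)_{Q'}\bigl((g)_{Q'}-(g)_{\hat Q'}\bigr) + |Q|(f)_Q(g)_Q ,
\]
where $\hat Q'$ is the parent of $Q'$. All cubes appearing here lie inside $I\times U$ and belong to the lattice families $\mathcal{Z}_j$, so they are admissible in the sum on the right-hand side of the claim.

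\textbf{Step 3: H\"older and the two contributions.} Apply H\"older's inequality with weights $3^{\pm sj}$. The $f$ factor produces exactly the right-hand side of the claim. The $g$ factor splits into two pieces:
\begin{itemize}
\item The parent--child differences contribute
\[
\sum_j 3^{-sp'j}\avsum \bigl|(g)_{Q'}-(g)_{\hat Q'}\bigr|^{p'} \le C\,[g]^{p'}_{\underline{B}^{s}_{p',p'}}
\]
by Jensen's inequality, since each difference is controlled by the oscillation of $g$ on $\hat Q'$.
\item The top-level averages $(g)_Q$ on Whitney cubes contribute
\[
\sum_{Q\in\mathcal{W}}\frac{|Q|}{|I\times U|}\,3^{-sp'k(Q)}|(g)_Q|^{p'} \le C\fint_{I\times U}\frac{|g|^{p'}}{\mathrm{d}_{\mathrm{par}}^{sp'}} ,
\]
using Jensen's inequality and $3^{k(Q)}\simeq\mathrm{d}_{\mathrm{par}}$ on $Q$. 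Lemma~\ref{l.dist} then bounds the right-hand side by $C\|g\|_{\underline{B}^s_{p',p'}}$.
\end{itemize}

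\textbf{Main obstacle: bounded overlap across Whitney cubes.} The Whitney cubes have varying sizes, so when the contributions of all $Q\in\mathcal{W}$ are summed, the $3^{spj}$-weighted averages of $|(f)_{Q'}|^p$ must be reorganized scale by scale into the normalized sums $\avsum_{z\in\mathcal{Z}_j}$. In the same way, the $g$-differences must be dominated by the global seminorm $[g]_{\underline{B}^s_{p',p'}(I\times U)}$ with bounded overlap. This requires checking two things: at each scale $j$ the cubes used from different Whitney cubes are disjoint, and the normalizing factors $|Q'|/|I\times U|$ match those in the definition~\eqref{e.Besov.semi.def}. A further concern is that Lemma~\ref{l.dist} is stated with the norm to the first power on the right. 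I will treat this as a typo, read it as the $p'$-th power, and rely on homogeneity. Constants depend on the Lipschitz character of $U$ and on the ratio $|\cudot_n|/|I\times U|$, as permitted.
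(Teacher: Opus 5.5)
Your proposal is correct and is essentially the paper's proof. You decompose $I\times U$ into Whitney-type lattice cubes with $\mathrm{d}_{\mathrm{par}}\lesssim 3^{k}$ on each; this upper bound, which comes from maximality, is what the Hardy step uses. On each cube you split $g$ into its oscillation, controlled by the interior multiscale bound (you unroll the telescoping instead of citing Lemma~\ref{l.bound.dual.seminorm}), and its average, controlled by Jensen and Lemma~\ref{l.dist}, whose right-hand side should indeed carry the power $p'$.

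The bookkeeping you flag as the main obstacle is routine. Your cubes form a disjoint nested partition, and the normalizations agree up to the factor $|\cudot_n|/|I\times U|$, which is absorbed into $C$. This is slightly cleaner than the paper's overlapping boundary layers $V_k$, which do not directly justify bounding the signed integral $\fint fg$ by a sum over cubes.
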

\begin{proof}
Define for each~$k\leq n$ the set of boundary layer cubes
\begin{equation*}
V_k = \{z + \cudot_k : z\in\mathcal{Z}_k\,, z+\cudot_{k+1} \subseteq I \times U\,, z+\cudot_{k+2} \cap \partial (I\times U) \neq \emptyset \}\,,
\end{equation*}
Fix~$g\in C_c^\infty(I\times U)$, let~$p'$ denote the H\"older conjugate to~$p$, and decompose the domain into the (overlapping) boundary layers to get
\begin{align}
\label{e.general.splitting}
\lefteqn{
\fint_I \fint_U fg
} \quad & \notag \\
& \leq C\sum_{k=-\infty}^n \frac{|V_k|}{|I\times U|} \avsum_{z\in V_k \cap \mathcal{Z}_k} \fint_{z+\cudot_k} fg  \notag\\
& \leq C\sum_{k=-\infty}^n \frac{|V_k|}{|I\times U|} \avsum_{z\in V_k \cap \mathcal{Z}_k} \bigl( \|f\|_{\underline{\hat{B}}_{p,p}^{-s}(z+\cudot_k)}[g-(g)]_{\underline{B}_{p',p'}^{s}(z+\cudot_k)} + |(f)_{z+\cudot_k}| |(g)_{z+\cudot_k}| \bigr) \notag\\
& \leq C\biggl( \sum_{k=-\infty}^n \frac{|V_k|}{|I\times U|} \avsum_{z\in V_k \cap \mathcal{Z}_k}\|f\|_{\underline{\hat{B}}_{p,p}^{-s}(z+\cudot_k)}^p  \biggr)^{\nicefrac{1}{p}} \biggl( \sum_{k=-\infty}^n \frac{|V_k|}{|I\times U|} \avsum_{z\in V_k \cap \mathcal{Z}_k}[g-(g)]_{\underline{B}_{p',p'}^{s}(z+\cudot_k)}^{p'} \biggr)^{\nicefrac{1}{p'}} \notag\\
& \qquad + \biggl( \sum_{k=-\infty}^n \frac{|V_k|}{|I\times U|} 3^{spk} \avsum_{z\in V_k \cap \mathcal{Z}_k}|(f)_{z+\cudot_k}|^p  \biggr)^{\nicefrac{1}{p}} \biggl( \sum_{k=-\infty}^n \frac{|V_k|}{|I\times U|} 3^{-sp'k} \avsum_{z\in V_k \cap \mathcal{Z}_k} |(g)_{z+\cudot_k}|^{p'} \biggr)^{\nicefrac{1}{p'}}\,.
\end{align}
The factors involving~$f$ we bound by
\begin{align*}
\sum_{k=-\infty}^n \frac{|V_k|}{|I\times U|} \avsum_{z\in V_k \cap \mathcal{Z}_k}\|f\|_{\underline{\hat{B}}_{p,p}^{-s}(z+\cudot_k)}^p & \leq \sum_{k=-\infty}^n \frac{|V_k|}{|I\times U|} \avsum_{z\in V_k \cap \mathcal{Z}_k} \sum_{j=-\infty}^k 3^{spj} \avsum_{z' \in \mathcal{Z}_j \cap (z+\cudot_k)} |(f)_{z'+\cudot_j}|^p \\
& \leq C \sum_{j=-\infty}^n 3^{spj} \frac{|\cudot_j|}{|I\times U|} \sum_{k=j}^n \sum_{z'\in\mathcal{Z}_j, z' + \cudot_j \subseteq V_k} |(f)_{z'+\cudot_j}|^p \\
& = C \sum_{j=-\infty}^n 3^{spj} \avsum_{z'\in\mathcal{Z}_j, z' + \cudot_j \subseteq I \times U} |(f)_{z'+\cudot_j}|^p\,.
\end{align*}
Since~$3^{sk}|(f)_{z+\cudot_k}|$ is just the top term in~$\|f\|_{\underline{B}_{p,p}^{-s}(z+\cudot_k)}$ we can likewise bound the second factor in~\eqref{e.general.splitting} by the same quantity.

For the terms involving~$g$, we use the integral representation~\eqref{e.fractional.integral} to find
\begin{align*}
\lefteqn{
\biggl( \sum_{k=-\infty}^n \frac{|V_k|}{|I\times U|} \avsum_{z\in V_k \cap \mathcal{Z}_k}[g-(g)]_{\underline{B}_{p',p'}^{s}(z+\cudot_k)}^{p'} \biggr)^{\nicefrac{1}{p'}}
} \qquad \qquad & \\
& \leq  \biggl( \sum_{k=-\infty}^n \frac{|V_k|}{|I\times U|} \avsum_{z\in V_k \cap \mathcal{Z}_k} \fint_{z+\cudot_k} \int_{z+\cudot_k} \frac{|g(x,t)-g(y,s)|^{p'}}{(|x-y| + |t-s|^{\nicefrac12} )^{d+sp'}} dxdy dt ds\biggr)^{\nicefrac{1}{p'}} \\
& \leq \biggl( \fint_{I\times U} \int_{I\times U} \frac{|g(x)-g(y)|^{p'}}{(|x-y| + |t-s|^{\nicefrac12} )^{d+sp'}} dxdy dt ds \biggr)^{\nicefrac{1}{p'}}\,.
\end{align*}
For the second term in~\eqref{e.general.splitting} involving $g$, we use Lemma~\ref{l.dist} and the assumption~$s\neq \nicefrac{1}{p'},\nicefrac{2}{p'}$ to bound
\begin{align*}
\sum_{k=-\infty}^n \frac{|V_k|}{|I\times U|} 3^{-sp'k} \avsum_{z\in V_k \cap \mathcal{Z}_k} |(g)_{z+\cudot_k}|^{p'}
& \leq \sum_{k=-\infty}^n \frac{|V_k|}{|I\times U|} \avsum_{z\in V_k \cap \mathcal{Z}_k} \fint_{z+\cudot_k} \frac{|g(x,t)|^{p'}}{\mathrm{d}_{\mathrm{par}}( (x,t), \partial(I\times U) )^{sp'}} \\
& \leq C \fint_{I\times U} \int_{I\times U} \frac{|g(x,t)|^{p'}}{\mathrm{d}_{\mathrm{par}}((x,t),\partial(I\times U) )^{sp'}} \\
& \leq C \|g\|_{\underline{B}^s_{p',p'}(I\times U)}^{p'} \,.
\end{align*}
Putting our estimates together, we have shown that for~$g\in C_c^\infty(I\times U)$,
\begin{equation*}
\fint_{I\times U} fg \leq C \|g\|_{\underline{B}^s_{p',p'}(I\times U)} \biggl(\sum_{j=-\infty}^n 3^{spj} \avsum_{z'\in\mathcal{Z}_j, z' + \cudot_j \subseteq I \times U} |(f)_{z'+\cudot_j}|^p \biggr)^{\nicefrac{1}{p}}\,,
\end{equation*}
which concludes the proof.
\end{proof}

The following lemma obtains estimates for the heat equation by interpolating between the standard energy estimate and the~$H^{2,1}$ regularity estimate. This is a standard estimate, but a direct reference could not be found.

\begin{lemma}
\label{l.heat.estimate}
Let~$U\subseteq \mathbb{R}^n$ be a bounded domain which is either~$C^{1,1}$ or convex and Lipschitz,~$I = (0,T)$ a finite time interval,~$s\in (0,1)$,~$\mathbf{f}\in L^2(I;H^s(U))^d$, and~$u \in H^1_{\mathrm{par}}(I\times U)$ the unique function satisfying~$\partial_t u = \Delta u + \nabla \cdot \mathbf{f}$ with~$u=0$ on~$\partial_{\sqcup} (I\times U)$. Then there exists a constant~$C = C(I,U,s,d)$ such that
\begin{equation}
\label{e.heat.eq}
\|\nabla u\|_{\underline{B}^s_{2,2}(I\times U)} \leq C \| \mathbf{f}\|_{\L^2(I;\underline{H}^s(U) )}\,.
\end{equation}
\end{lemma}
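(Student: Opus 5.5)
We will prove \eqref{e.heat.eq} by interpolating between two endpoint estimates for the solution operator $T:\mathbf{f}\mapsto \nabla u$. By linearity and uniqueness of the Cauchy--Dirichlet problem, $T$ is a well-defined linear map. We will show it is bounded at $s=0$ and $s=1$, with the target space at each endpoint chosen so that complex (or real $(\cdot,\cdot)_{s,2}$) interpolation produces $B^s_{2,2}(I\times U)$. All constants may depend on $I$, $U$ and $d$, so we will not track scaling.

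\emph{Endpoint $s=0$.} Testing the equation with $u$ and using $u(0)=0$ gives
\begin{equation*}
\tfrac12\sup_t\|u(t)\|_{L^2(U)}^2+\|\nabla u\|_{L^2(I\times U)}^2\leq \|\mathbf{f}\|_{L^2}\|\nabla u\|_{L^2}.
\end{equation*}
Hence $\|\nabla u\|_{L^2(I\times U)}\le \|\mathbf{f}\|_{L^2(I\times U)}$, so $T:L^2(I;L^2(U))\to L^2(I\times U)$ is bounded.

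\emph{Endpoint $s=1$.} For $\mathbf{f}\in L^2(I;H^1(U))$ we have $\nabla\cdot\mathbf{f}\in L^2(I\times U)$. The maximal $H^{2,1}$ regularity for the heat equation with zero Dirichlet data and zero initial data holds on $C^{1,1}$ domains, and also on convex Lipschitz domains via the Kadlec/Grisvard $H^2$ estimate for the Laplacian combined with the spectral (Galerkin) argument. It gives
\begin{equation*}
\|u\|_{L^2(I;H^2(U))}+\|\partial_t u\|_{L^2(I\times U)}\le C\|\nabla\cdot\mathbf{f}\|_{L^2}\le C\|\mathbf{f}\|_{L^2(I;H^1(U))}.
\end{equation*}
Hence $\nabla u\in L^2(I;H^1(U))$. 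For the time regularity, $\partial_t\nabla u=\nabla\partial_t u$ lies in $L^2(I;H^{-1}(U))$ with norm at most $\|\partial_t u\|_{L^2}$. Interpolating $L^2(I;H^1)\cap H^1(I;H^{-1})$ shows that $\nabla u\in H^{\nicefrac12}(I;L^2(U))$. So $T$ maps $L^2(I;H^1(U))$ into $L^2(I;H^1(U))\cap H^{\nicefrac12}(I;L^2(U))$, which is the anisotropic space $H^{1,\nicefrac12}(I\times U)$.

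\emph{Interpolation.} Interpolating between the two endpoints gives that $T$ is bounded from $[L^2(I;L^2),L^2(I;H^1)]_s=L^2(I;H^s(U))$ into $[L^2(I\times U),H^{1,\nicefrac12}(I\times U)]_s=H^{s,\nicefrac{s}{2}}(I\times U)$. The latter identification follows from the Lions--Magenes interpolation theory for the intersection spaces $L^2(I;H^r)\cap H^{r/2}(I;L^2)$, cf.\ \cite[Chapter 4, Section 2]{LMV2}. Proposition~\ref{p.Besov.equiv} then identifies $H^{s,\nicefrac{s}{2}}(I\times U)$ with $B^s_{2,2}(I\times U)$, which yields \eqref{e.heat.eq}. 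The lower-order term $3^{-sm}|(\nabla u)_{I\times U}|$ in the norm is already controlled by the $s=0$ estimate.

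The main obstacle is the $s=1$ endpoint on convex Lipschitz domains, where one cannot cite $C^{1,1}$ parabolic regularity directly. Our plan is to expand $u$ in Dirichlet eigenfunctions, use that $\|\Delta w\|_{L^2}\simeq\|w\|_{H^2}$ on $H^2\cap H^1_0$ for convex domains, and obtain the $L^2(I;D(\Delta))$ bound from the scalar ODE estimates for each mode.

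A secondary technical point is that interpolation of intersection spaces is not automatic. To sidestep it, we can work with real interpolation and verify the two parts separately: the spatial part $L^2(I;H^s)$ of the target norm follows by interpolating $T$ slicewise in space, and the time part $H^{s/2}(I;L^2)$ follows by interpolating between $L^2$ and $H^{\nicefrac12}(I;L^2)$.
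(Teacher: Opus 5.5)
Your proposal is correct and follows essentially the paper's proof. The paper also uses the energy estimate at the $L^2$ endpoint and the $H^{2,1}$ estimate (through the Grisvard/Kadlec $H^2$ bound on convex Lipschitz or $C^{1,1}$ domains) to get $\nabla u\in H^{1,\nicefrac12}(I\times U)$ at the $L^2(I;H^1)$ endpoint, then interpolates to $H^{s,\nicefrac{s}{2}}(I\times U)$ and applies Proposition~\ref{p.Besov.equiv}. The differences are minor: the paper obtains $\nabla u\in H^{\nicefrac12}(I;L^2(U))$ from the intermediate derivatives theorem applied to $u\in L^2(I;H^2)\cap H^1(I;L^2)$, which is slightly cleaner than your interpolation between $H^1(U)$ and $H^{-1}(U)=(H^1_0(U))^*$, and it interpolates the intersection spaces $H^{r,q}$ directly instead of splitting the target space as you propose.
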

\begin{proof}
We use in this proof the spaces,
\begin{equation*}
H^{r,q}(I\times U) = L^2(I; H^r(U)) \cap H^q(I;L^2(U))\,,
\end{equation*}
defined in~\cite[Chapter 4, Section 2]{LMV2} for~$r,q\geq 0$. By the intermediate derivatives theorem~\cite[Theorem 2.3, Theorem 4.1]{LMV1}, a function~$v \in H^{r,q}(I\times U)$ belongs to~$H^{\alpha}(I; H^{(1-\nicefrac{\alpha}{q})r}(U))$ with the bound
\begin{equation}
\label{e.mixed.diff}
\|u\|_{H^{\alpha}(I; H^{(1-\nicefrac{\alpha}{q})r}(U))} \leq C(\|u\|_{L^2(I; H^r(U))} + \|u\|_{H^q(I;L^2(U))} )\,.
\end{equation}
Proposition~\ref{p.Besov.equiv} states that~$B_{2,2}^s(I\times U) = H^{s,\nicefrac{s}{2}}(I\times U)$. By the interpolation theorem~\cite[Theorem 7.23]{Adams} with parameter $s$,
\begin{align}
\label{e.to.proof}
\left.
\begin{aligned}
& \|\nabla u\|_{H^{1,\nicefrac12}(I\times U)} \leq C \|\mathbf{f}\|_{L^2(I;H^1(U))} & \\
& \|\nabla u\|_{L^2(I\times U)} \leq C \|\mathbf{f}\|_{\L^2(I\times U)} &
\end{aligned}
\right\}
\implies \|\nabla u\|_{H^{s,\nicefrac{s}{2}}(I\times U)} \leq C \|\mathbf{f}\|_{\L^2(I;H^{s}(U))}\,.
\end{align}
Of the two estimates on the left-hand side, the lower one comes directly from testing the equation with itself, while the upper one is a consequence of the~$H^{2,1}$ estimate
\begin{equation*}
\|u\|_{H^1(I; L^2(U))} + \|u\|_{L^2(I;H^2(U))} \leq C \|\nabla \cdot \mathbf{f}\|_{L^2(I\times U)}\,,
\end{equation*}
because
\begin{align*}
\|\nabla u\|_{H^{1,\nicefrac12}(I\times U)}  = \|\nabla u\|_{\L^2(I;H^1(U))} + \|\nabla u \|_{H^{\nicefrac12}(I;L^2(U))} & \leq \|u\|_{\L^2(I;H^2(U))} + \|u \|_{H^{\nicefrac12}(I;H^1(U))} \\
& \leq C \|u\|_{H^{2,1}(I\times U)}\,,
\end{align*}
using in the last line~\eqref{e.mixed.diff}. Finally, we make use of the zero boundary data to prove the~$H^{2,1}$ estimate. First, by standard arguments (as in~\cite[Chapter 7.1]{Evans})~$\|\partial_t u\|_{\L^2(I\times U)} \leq C\|\nabla \cdot \mathbf{f}\|_{\L^2(I\times U)}$. Then~$\Delta u = \partial_t u - \nabla \cdot \mathbf{f}$ in time slices, so by~\cite[Theorems 2.4.2.5 and 3.1.2.1]{Grisvard} we can apply the elliptic~$H^2$ estimate provided that the domain is either convex and Lipschitz or~$C^{1,1}$, which concludes the proof.
\end{proof}

\section{Matrix Partial Ordering and Geometric Means}
\label{aa.matrices}

This appendix collects some elementary facts which are used repeatedly in the technical work of the paper. We denote the set of real-valued~$d\times d$ matrices by~$\R^{d\times d}$ and the subset of symmetric matrices by~$\R^{d\times d}_{\mathrm{sym}}$, with the Loewner partial ordering. That is, if~$A,B\in\R^{d\times d}_{\mathrm{sym}}$ then we write~$A\leq B$ if~$B-A$ has nonnegative eigenvalues. We use the spectral norm on matrices, defined for any~$A\in\R^{d\times d}$ by~$|A| = \sup_{|e|=1}|Ae|$.

\smallskip

It is true that~$A \leq B$ if and only if~$e \cdot (B-A)e \geq 0$ for all~$e\in\Rd$. If~$0\leq A\leq B$ it is true that~$A^r \leq B^r$ for~$0\leq r\leq 1$, but not generally for~$r>1$. For instance a counter-example with~$r=2$ is
\begin{equation*}
\begin{pmatrix}
1 & 0 \\ 0 & 0
\end{pmatrix}
\,,
\begin{pmatrix}
2 & 1 \\ 1 & 1
\end{pmatrix}
\end{equation*}
from \cite{Zhan}. If~$A\leq B$ and~$C\in\R^{d\times d}_{\mathrm{sym}}$ then~$CAC \leq CBC$. In particular, if~$0<A\leq B$ then~$B^{-\nicefrac12}AB^{-\nicefrac12} \leq \Id$. If~$A \leq B$ it is not necessarily true that~$ACA \leq BCB$, or (assuming further that~$A,B\geq 0$) that~$A^{\nicefrac12}CA^{\nicefrac12} \leq B^{\nicefrac12}CB^{\nicefrac12}$ or even~$(A^{\nicefrac12}CA^{\nicefrac12})^{\nicefrac12} \leq (B^{\nicefrac12}CB^{\nicefrac12})^{\nicefrac12}$. Even taking any two of the matrices to be diagonal still does not suffice. As counter-examples one can take
\begin{equation*}
A^{\nicefrac12} = \begin{pmatrix}
1&0\\0&0
\end{pmatrix}
\,,
B^{\nicefrac12} = \begin{pmatrix}
1 & 0 \\ 0 & 1
\end{pmatrix}
\,,
C = \begin{pmatrix}
1&1\\1&1
\end{pmatrix}
\end{equation*}
which has both~$A$ and~$B$ diagonal. If we want~$A$ and~$C$ diagonal then take
\begin{equation*}
A^{\nicefrac12} = \begin{pmatrix}
1 & 0 \\ 0 & 1
\end{pmatrix}
\,,
B^{\nicefrac12} = \begin{pmatrix}
3 & -1 \\ -1 & 3
\end{pmatrix}
\,,
C = \begin{pmatrix}
1 & 0 \\ 0 & 0
\end{pmatrix}\,,
\end{equation*}
while if we want~$B$ and~$C$ diagonal then for small~$\epsilon >0$,
\begin{equation*}
A^{\nicefrac12} = \begin{pmatrix}
1 & \epsilon \\
\epsilon & 1
\end{pmatrix}
\,,
B^{\nicefrac12} = \begin{pmatrix}
3 & 0 \\
0 & 3
\end{pmatrix}
\,,
C = \begin{pmatrix}
1 & 0 \\
0 & 0
\end{pmatrix}\,.
\end{equation*}

\smallskip

The spectral norm~$|A|$ is the largest eigenvalue of~$(A^tA)^{\nicefrac12}$. It follows that~$|A| = |A^t|$ so if~$A,B\in \R^{d\times d}_{\mathrm{sym}}$ then~$|AB| = |BA|$, although this is not generally true for non-symmetric matrices.
%\begin{equation*}
%A = \begin{pmatrix}
%1 & 0 \\ 0 & 0
%\end{pmatrix}
%\,,
%B = \begin{pmatrix}
%0 & 1 \\ 0 & 0
%\end{pmatrix}
%\end{equation*}
The spectral norm satisfies~$|AB| \leq |A||B|$. If~$A,B$ are positive-definite, symmetric matrices then
\begin{equation}
\label{e.symmetrization}
|A^{-\nicefrac12}BA^{\nicefrac12}| \geq |B|\,,
\end{equation}
although the reverse inequality is not in general true. Applying this to~$A^{-\nicefrac12}BA^{-\nicefrac12}$ yields~$|A^{-1}B| \geq |A^{-\nicefrac12}BA^{-\nicefrac12}|$.

The spectral norm and the Loewner partial ordering are related. For instance~$0\leq A \leq B \implies |A| \leq |B|$. If~$0\leq A \leq B$ and further~$A^2 \leq B^2$ then~$|AC| \leq |BC|$ for any~$C\in\R^{d\times d}_{\mathrm{sym}}$. If we remove the condition~$A^2 \leq B^2$ the statement may not hold; for instance we may take
\begin{equation*}
A = \begin{pmatrix}
1 & 0 \\ 0 & 0
\end{pmatrix}
\,,
B = \begin{pmatrix}
2 & 1 \\ 1 & 1
\end{pmatrix}
\,,
C = \begin{pmatrix}
\frac{3}{4} & -1 \\ - 1 & \frac{3}{2}
\end{pmatrix}
\end{equation*}
%In case it is needed square root is:
%\begin{equation*}
%B^{\nicefrac12} = \begin{pmatrix}
%\frac{3}{\sqrt{5}} & \frac{1}{\sqrt{5}} \\ \frac{1}{\sqrt{5}} & \frac{2}{\sqrt{5}}
%\end{pmatrix}
%\end{equation*}
%Here~$|AC| = 1 > |BC| \approx 0.89$.
However, if~$0\leq A\leq B$ then~$|A^{\nicefrac12}CA^{\nicefrac12}| \leq |B^{\nicefrac12}CB^{\nicefrac12}|$ for any~$C\in\R^{d\times d}_{\mathrm{sym}}$, because, denoting by~$\lambda_{\mathrm{max}}(A)$ the largest eigenvalue of a symmetric matrix,
\begin{align*}
|A^{\nicefrac12}CA^{\nicefrac12}|^2 & = \lambda_{\mathrm{max}}(A^{\nicefrac12}CACA^{\nicefrac12}) \leq \lambda_{\mathrm{max}}(A^{\nicefrac12}CBCA^{\nicefrac12}) = |B^{\nicefrac12}CA^{\nicefrac12}|^2 = |(B^{\nicefrac12}CA^{\nicefrac12})^t|^2 \\
& = |A^{\nicefrac12}CB^{\nicefrac12}|^2 = \lambda_{\mathrm{max}}(B^{\nicefrac12}CACB^{\nicefrac12}) \leq \lambda_{\mathrm{max}}(B^{\nicefrac12}CBCB^{\nicefrac12}) = |B^{\nicefrac12}CB^{\nicefrac12}|^2\,.
\end{align*}

\smallskip

There are two different notions of geometric mean for positive definite matrices. The first one, introduced by Ando~\cite{Ando}, is called the \emph{metric geometric mean}. It is defined for any pair of positive definite matrices~$A$ and~$B$ by 
\begin{equation*}
A\#B = A^{\nicefrac12} \bigl(A^{-\nicefrac12} B  A^{-\nicefrac12}  \bigr)^{\nicefrac12} \! A^{\nicefrac12} \,.
\end{equation*}
The matrix~$A\#B$ is the unique positive definite matrix solution~$X$ of the equation
\begin{equation}
\label{e.riccati}
X A^{-1} X = B. 
\end{equation}
We see from this characterization that the metric geometric mean is symmetric in~$A$ and~$B$, that is,~$A\#B = B\#A$. The harmonic mean is defined for positive matrices~$A,B$ by
\begin{equation*}
A:B : = \bigg(\frac{A^{-1} + B^{-1}}{2}\bigg)^{-1}\,.
\end{equation*}
The geometric mean is bounded above by the arithmetic mean and below by the harmonic mean. That is,
\begin{equation*}
A:B \leq A\#B \leq \frac{A+B}{2}\,.
\end{equation*}
In particular, if~$0\leq A \leq B$ then
\begin{equation*}
A \leq A\# B \leq B\,,
\end{equation*}
and if we assume no ordering of~$A$ and~$B$ we still have
\begin{equation*}
(|A^{-1}|^{-1} \wedge |B^{-1}|^{-1})\Id \leq A\# B \leq (|A| \vee |B|)\Id\,.
\end{equation*}
These results can be found in~\cite{Ando} and~\cite{FP}.

\subsubsection*{\bf Acknowledgments}
The author was partially supported by NSF grant DMS-2350340. The author acknowledges the support of the Natural Sciences and Engineering Research Council of Canada (NSERC) through a PGS-D award (598693-2025). L'auteur remercie le Conseil de recherches en sciences naturelles et en g\'enie du Canada (CRSNG) de son soutien (PGS-D 598693-2025).

{\small
\bibliographystyle{alpha}
\bibliography{parahighcontrast}
}

\end{document}